\def\gn#1#2{{$\href{http://groupnames.org/\#?#1}{#2}$}}
\def\gn#1#2{$#2$}  
\tikzset{sgplattice/.style={inner sep=1pt,norm/.style={black!50!black},char/.style={black!50!black},
  lin/.style={black!50}},cnj/.style={black!50,yshift=-2.5pt,left=-1pt of #1,scale=0.5,fill=white}}
\theoremstyle{plain}
\newtheorem{theorem}{Theorem}[section]
\newtheorem*{theorem*}{Theorem}
\newtheorem*{prop*}{Proposition}
\newtheorem{lemma}[theorem]{Lemma}
\newtheorem{corollary}[theorem]{Corollary}
\newtheorem{prop}[theorem]{Proposition}
\theoremstyle{remark}
\newtheorem{remark}[theorem]{Remark}
\newtheorem{example}[theorem]{Example}
\newtheorem*{note*}{Note}
\newtheorem*{remark*}{Remark}
\newtheorem*{example*}{Example}
\theoremstyle{definition}
\newtheorem*{definition*}{Definition}
\newtheorem*{hypothesis*}{Hypothesis}
\newtheorem*{assumptions*}{Assumptions}
\newtheorem{definition}[theorem]{Definition}
\newcommand{\Z}{\mathbb{Z}}
\newcommand{\Q}{\mathbb{Q}}
\newcommand{\C}{\mathbb{C}}
\newcommand{\N}{\mathbb{N}}
\newcommand{\F}{\mathbb{F}}
\newcommand{\Aff}{\mathrm{Aff}}
\newcommand{\Gal}{\mathrm{Gal}}
\newcommand{\rank}{\mathrm{rank}}
\newcommand{\Res}{\mathrm{Res}}
\newcommand{\Ind}{\mathrm{Ind}}
\newcommand{\Irr}{\mathrm{Irr}}
\newcommand{\Leo}{\mathrm{Leo}}
\DeclareMathOperator{\image}{im}
\numberwithin{equation}{section}
\title[Representation theory, explicit units, and Leopoldt's conjecture]{Applications of representation theory and of explicit units to Leopoldt's conjecture}
\author{Fabio Ferri}
\address{131 Hayward Road, Bristol BS5 9PY, United Kingdom}
\email{fabioferri94@gmail.com}
\author{Henri Johnston}
\address{
Department of Mathematics\\
University of Exeter\\
Exeter\\
EX4 4QE\\
United Kingdom
}
\email{h.johnston@exeter.ac.uk}
\urladdr{https://mathematics.exeter.ac.uk/staff/hj241/}
\subjclass[2020]{Primary 11R23; Secondary 11R27, 19A22, 20C15.}
\keywords{Leopoldt's conjecture, Leopoldt defects, idempotent relations, norm relations, Brauer relations, explicit units}
\date{version of 17th March 2026}
\begin{document}

\maketitle

\begin{abstract}
Let $L/K$ be a Galois extension of number fields
and let $G=\Gal(L/K)$.
We show that under certain hypotheses on $G$, 
for a fixed prime number $p$, Leopoldt's conjecture at $p$ for certain proper intermediate fields of 
$L/K$ implies Leopoldt's conjecture at $p$ for $L$. 
We also obtain relations between the Leopoldt defects of intermediate fields of $L/K$.
By applying a result of Buchmann and Sands together with an explicit description of units and a
special case of the above results,  
we show that given any finite set of prime numbers $\mathcal{P}$,
there exists an infinite family $\mathcal{F}$ of totally real 
$S_{3}$-extensions of $\Q$ such that Leopoldt's conjecture for $F$ at $p$ holds for every 
$F \in \mathcal{F}$ and $p \in \mathcal{P}$.
\end{abstract}

\section{Introduction}\label{sec:intro}

Let $p$ be a prime number and let $K$ be a number field. 
In the case that $K/\Q$ is abelian, Leopoldt \cite{MR139602} defined a $p$-adic regulator 
$R_{p}(K)$ and asked whether it is always the case that $R_{p}(K) \neq 0$.
For an arbitrary number field $K$, Leopoldt's conjecture has many equivalent formulations
and is of great interest due to its connections with Iwasawa theory and Galois cohomology
(see \cite[Chapters X and XI]{MR2392026}, for example).

We now recall a formulation that is convenient for our purposes. 
Let $\mathcal{O}_{K}$ denote the ring of integers of $K$ and let $\mathcal{O}_{K}^{\times}$ denote
its group of units.
For a finite place $v$ of $K$, let $K_{v}$ denote the completion of $K$ at $v$,
let $U_{K_{v}}$ denote the group of units of its valuation ring $\mathcal{O}_{K_{v}}$,
and let $U_{K_{v}}^{1}$ denote the subgroup of principal units.
Let $S_{p}(K)$ denote the set of places of $K$ above $p$.
For a multiplicative abelian group $A$, define the $p$-adic completion of $A$ to be
\[
\widehat{A} := \textstyle{\varprojlim_{n}} A/A^{p^{n}}.
\]
Then $\widehat{\mathcal{O}_{K}^{\times}} = \Z_{p} \otimes_{\Z} \mathcal{O}_{K}^{\times}$
and for $v \in S_{p}(K)$, there is an isomorphism $U_{K_{v}}^{1} \cong \widehat{U}_{K_{v}}$
induced by the inclusion $U_{K_{v}}^{1} \subseteq U_{K_{v}}$.
Therefore, after taking $p$-adic completions, the diagonal embedding 
$\mathcal{O}_{K}^{\times} \hookrightarrow  \prod_{v \in S_{p}(K)} U_{K_{v}}$
gives rise to a canonical homomorphism
\[
\lambda_{K,p}: \Z_{p} \otimes_{\Z} \mathcal{O}_{K}^{\times} \longrightarrow \textstyle{\prod_{v \in S_{p}(K)}} U_{K_{v}}^{1}.
\]
Leopoldt's conjecture, or $\Leo(K,p)$ for short, holds if $\lambda_{K,p}$ is injective.
The \emph{Leopoldt kernel} $\mathcal{L}(K,p)$ is defined to be the kernel of the map
\[
\Lambda_{K,p} = \Q_{p} \otimes_{\Z_{p}} \lambda_{K,p} : \Q_{p} \otimes_{\Z} \mathcal{O}_{K}^{\times} \longrightarrow \Q_{p} \otimes_{\Z_{p}}\textstyle{\prod_{v \in S_{p}(K)}} U_{K_{v}}^{1},
\]
and the \emph{Leopoldt defect} $\delta(K,p)$ is defined to be $\dim_{\Q_{p}}\mathcal{L}(K,p)$.
Since $\lambda_{K,p}$ is always injective on $p$-torsion, $\Leo(K,p)$ is equivalent to 
$\delta(K,p)=0$.

The most important result to date concerning Leopoldt's conjecture is the following.

\begin{theorem}\cite{MR0181630,MR0220694}\label{thm:leo-abs-abelian}
Let $K$ be a finite abelian extension of $\Q$ or of an imaginary quadratic field. 
Then $\Leo(K,p)$ holds for every prime number $p$.
\end{theorem}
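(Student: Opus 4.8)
The plan is to reformulate $\Leo(K,p)$ as a statement about $\Q_p$-linear independence of $p$-adic logarithms of units and their conjugates, and then to reduce it, via the Galois-module structure of the unit group, to the transcendence-theoretic input of Ax and Brumer (the $p$-adic analogue of Baker's theorem on linear forms in logarithms).

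First I would make the conjecture concrete. Fix an embedding $\iota\colon\overline{\Q}\hookrightarrow\overline{\Q}_p$ and let $\log_p$ be Iwasawa's $p$-adic logarithm. For each $w\in S_{p}(K)$ the map $\log_p$ induces an isomorphism $\Q_p\otimes_{\Z_p}U_{K_w}^{1}\cong K_w$, so after extending scalars to $\overline{\Q}_p$ one gets an identification of $\overline{\Q}_p\otimes_{\Z_p}\prod_{w\in S_p(K)}U_{K_w}^{1}$ with $\prod_{\tau}\overline{\Q}_p$, the product over the embeddings $\tau\colon K\hookrightarrow\overline{\Q}_p$, under which the base change of $\lambda_{K,p}$ sends $1\otimes\varepsilon$ to $(\log_p\tau(\varepsilon))_{\tau}$. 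Hence, for a fundamental system of units $\varepsilon_1,\dots,\varepsilon_r$ (with $r=\mathrm{rank}_{\Z}\mathcal{O}_K^{\times}$), the assertion $\Leo(K,p)$ becomes: there is no nonzero tuple $(a_1,\dots,a_r)\in\Q_p^{r}$ with $\sum_i a_i\log_p\tau(\varepsilon_i)=0$ for every $\tau$. Because $K/\Q$ (resp., after passing to the Galois closure over $\Q$, $K/k$ with $k$ imaginary quadratic) is abelian, all the conjugates $\tau(\varepsilon_i)$ already lie in $K$, so one is really asking whether certain $\overline{\Q}$-linear combinations of $p$-adic logarithms of algebraic numbers can vanish.

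Next I would bring in the structure of the units as a Galois module together with what is already known archimedeanly. In the totally real abelian case over $\Q$, Minkowski's unit theorem provides a single unit $\varepsilon$ with $\Q[G]\cdot\varepsilon=\mathcal{O}_K^{\times}\otimes\Q$ for $G=\Gal(K/\Q)$, and Dirichlet's unit theorem (nonvanishing of the archimedean regulator) says precisely that the only $\Q$-linear relations holding simultaneously among the coordinates of all conjugates of $\varepsilon$ are the multiples of the norm element $N=\sum_{\sigma\in G}\sigma$; equivalently, $\ker(\Q[G]\xrightarrow{\,\cdot\varepsilon\,}\mathcal{O}_K^{\times}\otimes\Q)=\Q N$. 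Using that $K/\Q$ is Galois one rewrites a putative $\Q_p$-relation for all $\tau$ as the condition that a vector and all its translates under the regular representation of $G$ lie in the space $\mathcal{R}_p$ of $\Q_p$-linear relations among the \emph{numbers} $\{\log_p\iota(\sigma\varepsilon):\sigma\in G\}$ in $\overline{\Q}_p$; so $\Leo(K,p)$ amounts to showing that the largest $G$-stable subspace of $\mathcal{R}_p$ is just $\Q_p N$. The general case, and the imaginary quadratic case, are handled the same way after replacing $G$ by $\Gal(K/k)$ and the role of the trivial isotypic component by the decomposition group at $\infty$ (complex conjugation), together with the corresponding Minkowski-type statement.

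Finally, the heart of the matter: apply the Ax--Brumer theorem, which asserts that nonzero algebraic numbers whose $p$-adic logarithms are linearly independent over $\Q$ have $p$-adic logarithms that remain linearly independent over $\overline{\Q}$, hence (since these logarithms span a $\overline{\Q}_p$-space of the same dimension) over $\overline{\Q}_p$. This forces $\mathcal{R}_p=\Q_p\otimes_{\Q}\mathcal{R}_{\Q}$, where $\mathcal{R}_{\Q}$ is the space of $\Q$-linear relations among the $\log_p\iota(\sigma\varepsilon)$; taking largest $G$-stable subspaces commutes with the flat base change $\Q_p/\Q$ because the $G$-action is defined over $\Q$, so the largest $G$-stable subspace of $\mathcal{R}_p$ is $\Q_p\otimes_{\Q}(\text{largest }G\text{-stable subspace of }\mathcal{R}_{\Q})$. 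But the latter is exactly the kernel of $\Q[G]\xrightarrow{\cdot\varepsilon}\mathcal{O}_K^{\times}\otimes\Q$, which Dirichlet's theorem identifies with $\Q N$. Hence the Leopoldt kernel $\mathcal{L}(K,p)$ vanishes and $\Leo(K,p)$ holds. The main obstacle is, unsurprisingly, the Ax--Brumer input itself: it is a genuine transcendence theorem resting on the $p$-adic Baker--Brumer method for bounding linear forms in $p$-adic logarithms, and everything else in the argument is in place precisely to match the ``obvious'' relations on the archimedean and $p$-adic sides so that this theorem can be invoked; the restriction to $\Q$ or an imaginary quadratic field is exactly what makes the unit group a cyclic module over the relevant abelian Galois group, which is what the reduction needs.
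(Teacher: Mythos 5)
First, a point of context: the paper does not actually prove this statement. It is Theorem \ref{thm:leo-abs-abelian}, quoted from Ax and Brumer, with only a one-paragraph indication in the introduction of how Ax reduced it to Brumer's $p$-adic analogue of Baker's theorem. Your proposal is therefore being measured against that cited argument, whose overall architecture you have reproduced correctly: translate $\Leo(K,p)$ into the vanishing of the largest $G$-stable space of $\Q_p$-linear relations among the numbers $\log_p\iota(\sigma\varepsilon)$, use a Minkowski unit and the non-vanishing of the archimedean regulator to identify the space of $\Q$-relations with $\Q N$ (in the totally real case), and appeal to Ax--Brumer. That skeleton is the right one.

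However, the step where you invoke Brumer's theorem contains a genuine gap. You assert that $\overline{\Q}$-linear independence of the logarithms implies $\overline{\Q}_p$-linear independence ``since these logarithms span a $\overline{\Q}_p$-space of the same dimension'', and you deduce $\mathcal{R}_p=\Q_p\otimes_{\Q}\mathcal{R}_{\Q}$. Both assertions are false. The numbers $\log_p\iota(\sigma\varepsilon)$ are scalars lying in the single completion $K_{w_0}$ determined by $\iota$: any two elements of $\overline{\Q}_p$ are $\overline{\Q}_p$-linearly dependent, and the space $\mathcal{R}_p$ of $\Q_p$-relations has dimension at least $|G|-[K_{w_0}:\Q_p]$, which strictly exceeds $\dim_{\Q_p}\Q_p N=1$ whenever $p$ has more than one prime above it in $K$ (a cyclic cubic field with $p$ split completely already refutes the claim). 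Brumer's theorem compares $\Q$-relations with $\overline{\Q}$-relations only; it says nothing about relations with $\Q_p$-coefficients, and $\mathcal{R}_p$ really is larger than $\Q_p\otimes_{\Q}\mathcal{R}_{\Q}$ in general. The missing idea --- which is precisely Ax's contribution --- is that the largest $G$-stable subspace of $\mathcal{R}_p$ is an ideal of $\Q_p[G]$, hence after extending scalars to $\overline{\Q}_p$ (and using that $G$ is abelian) is spanned by idempotents $e_\chi=|G|^{-1}\sum_{\sigma}\chi(\sigma^{-1})\sigma$ whose coefficients are roots of unity, so algebraic. A non-trivial $G$-stable relation over $\Q_p$ therefore yields a relation $\sum_{\sigma}\chi(\sigma^{-1})\log_p\iota(\sigma\varepsilon)=0$ with coefficients in $\overline{\Q}$, and it is only to this that Brumer's theorem can be applied; combined with $\mathcal{R}_{\Q}=\Q N$ it gives the contradiction $e_\chi\in\overline{\Q}N$ for $\chi$ non-trivial. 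Your conclusion is correct, but without this passage from $\Q_p$-coefficients to algebraic coefficients the argument does not close.
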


Ax \cite{MR0181630} reduced the proof of Theorem~\ref{thm:leo-abs-abelian} 
to the following assertion: if $\alpha_{1}, \ldots, \alpha_{n} \in \C_{p}^{\times}$
are algebraic over $\Q$ and $\log_{p}(\alpha_{1}), \ldots, \log_{p}(\alpha_{n})$
are linearly independent over $\Q$, then these $p$-adic logarithms are also linearly independent 
over the algebraic closure of $\Q$ in $\C_{p}$.
This is the $p$-adic analogue of an archimedean result of Baker \cite{MR220680}
and was proved by Brumer \cite{MR0220694}.

Since in totally imaginary quadratic extensions of totally real number fields the 
unit groups have the same rank,
the following result follows easily (see \cite[(10.3.11)]{MR2392026}).

\begin{corollary}
Let $K$ be a CM-field whose maximal totally real subfield $K^{+}$ is a finite abelian extension of $\Q$.
Then $\Leo(K,p)$ holds for every prime number $p$. 
\end{corollary}

Waldschmidt obtained the following important general bound on the Leopoldt defect.

\begin{theorem}\cite{MR608530}\label{thm:walbound}
Let $K$ be a number field and let $p$ be a prime number. 
Then $\delta(K,p) \leq \frac{1}{2}\mathrm{rank}_{\Z} (\mathcal{O}_{K}^{\times})$.
\end{theorem}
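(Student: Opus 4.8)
The plan is to reduce the inequality to a lower bound for the rank of a matrix of $p$-adic logarithms of algebraic numbers, and then to quote a transcendence theorem. Set $n=[K:\Q]$ and $r=\mathrm{rank}_{\Z}(\mathcal{O}_K^{\times})=r_{1}+r_{2}-1$, fix an embedding $\overline{\Q}\hookrightarrow\C_p$, and choose fundamental units $\varepsilon_{1},\dots,\varepsilon_{r}$. For each $w\in S_p(K)$ the homomorphism $\log_{p}\colon U_{K_w}^{1}\to K_w$ has finite kernel (the $p$-power roots of unity of $K_w$), so $\Q_p\otimes\log_{p}$ is injective; hence $\mathcal{L}(K,p)=\ker\Lambda_{K,p}$ is also the kernel of the composite $\mu_{K,p}$ of $\Lambda_{K,p}$ with the injective map $\prod_{w}(\Q_p\otimes\log_{p})$ into $\prod_{w\in S_p(K)}K_w=K\otimes_{\Q}\Q_p$. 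Since $\C_p$ is algebraically closed, $\C_p\otimes_{\Q}K\cong\prod_{\sigma}\C_p$, the product being over the $n$ embeddings $\sigma\colon K\hookrightarrow\C_p$ (determined by the chosen $\overline{\Q}\hookrightarrow\C_p$), and a short computation shows that $\mu_{K,p}\otimes_{\Q_p}\C_p$ is the $\C_p$-linear map $\C_p^{r}\to\C_p^{n}$ represented by the matrix $M=\bigl(\log_{p}\sigma(\varepsilon_{i})\bigr)_{\sigma,\,i}$, whose entries are $p$-adic logarithms of algebraic numbers. As extension of scalars preserves the dimension of a kernel,
\[
\delta(K,p)\;=\;r-\mathrm{rank}_{\C_p}(M),
\]
so the theorem is equivalent to the rank bound $\mathrm{rank}_{\C_p}(M)\ge\tfrac12 r$.

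For that bound I would invoke the $p$-adic analogue of the (strong) six exponentials theorem, in the form of an estimate of shape $\mathrm{rank}_{\C_p}(M)\ge\frac{rn}{r+n}$ for the rank of a matrix of $p$-adic logarithms of algebraic numbers (under a suitable non-degeneracy of its rows and columns). Because $n=r_{1}+2r_{2}>r_{1}+r_{2}-1=r$, such an estimate already yields $\mathrm{rank}_{\C_p}(M)>\tfrac12 r$, which is more than enough. Some preliminary set-up is needed to put the input in applicable form: one records the multiplicative independence of the $\varepsilon_{i}$ and quotients out the "trivial" relations coming from $\sum_{\sigma}\log_{p}\sigma(\varepsilon_{i})=\log_{p}N_{K/\Q}(\varepsilon_{i})=0$, and the rank-one case of the input is the Baker--Brumer linear independence statement already used for Theorem~\ref{thm:leo-abs-abelian}. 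If $K$ is a CM field one may also first pass to its maximal totally real subfield $K^{+}$, using $\mathcal{O}_{K}^{\times}\otimes\Q\cong\mathcal{O}_{K^{+}}^{\times}\otimes\Q$ and $\mathcal{L}(K,p)\cong\mathcal{L}(K^{+},p)$.

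The main obstacle is precisely this transcendence input. Everything in the first paragraph is formal manipulation of the $p$-adic logarithm together with linear algebra, and qualitative tools do not help: the bare six exponentials statement $\mathrm{rank}_{\C_p}(M)\ge 2$ would only give $\delta(K,p)\le r-2$. To get the factor $\tfrac12$ one genuinely needs a quantitative several-variable $p$-adic transcendence argument — construction of an auxiliary polynomial vanishing to high order on a suitable finitely generated group of algebraic points, a zero or multiplicity estimate to propagate the vanishing, and a $p$-adic Schwarz-type lemma — to rule out $M$ having small rank. That is where all the real work lies.
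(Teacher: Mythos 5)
This theorem is quoted in the paper directly from Waldschmidt's work \cite{MR608530} with no proof supplied, and your proposal amounts to the same thing: the formal reduction of $\delta(K,p)\leq\tfrac12 r$ to the rank bound for the matrix $\bigl(\log_p\sigma(\varepsilon_i)\bigr)$ is correct and standard, and the quantitative structural-rank theorem for matrices of $p$-adic logarithms that you then invoke (together with the bookkeeping for the norm relation $\sum_\sigma\log_p\sigma(\varepsilon_i)=0$, which you rightly flag) is precisely the content of that cited reference. So your write-up correctly identifies where the real work lies, but it does not go beyond what the paper itself does, namely deferring the transcendence input to Waldschmidt.
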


Several authors have either proved Leopoldt's conjecture or obtained improved bounds for the 
Leopoldt defect in special situations.   
Miyake \cite{MR659620} proved Leopoldt's conjecture for certain non-abelian finite Galois extensions of imaginary
quadratic fields. Emsalem et al.\ \cite{MR769790} obtained a bound for the Leopoldt 
defect of a finite Galois extension of either $\Q$ or an imaginary quadratic field, and 
then applied this to show that  Leopoldt's conjecture holds for all totally 
imaginary $A_{4}$-extensions of $\Q$.
Laurent \cite{MR1004134} obtained a bound for the Leopoldt defect for finite Galois extensions of $\Q$
in terms of the complex irreducible characters of the Galois group. 
(In fact, Laurent's result is stated in the context of an important generalisation of Leopoldt's conjecture for
Galois extensions of $\Q$ due to Jaulent \cite{MR790777}.)
This bound is independent of the prime number $p$ and in many cases it is sharper than Waldschmidt's bound.
It leads to new cases of Leopoldt's conjecture for certain totally imaginary finite Galois extensions of 
$\Q$ and in particular recovers many (but not all) of the aforementioned results of \cite{MR769790} and \cite{MR659620}.
Klingen \cite{MR1087978} also obtained bounds for the Leopoldt defect for finite Galois extensions
of $\Q$, but in some cases they depend on the prime number $p$ in question. 
In the cases in which these bounds are independent of $p$, they are in general weaker than those 
of Laurent, but they can be stronger in the cases in which they do depend on $p$.
Maksoud recently reproved Laurent's bound, and also obtained the
following `relative' version of Waldschmidt's result as a corollary.

\begin{theorem}\cite{MR4705615}
Let $L/K$ be an extension of number fields and let $p$ be a prime number. 
Then 
$\delta(L,p) \leq \delta(K,p) + \frac{1}{2}(\mathrm{rank}_{\Z} (\mathcal{O}_{L}^{\times})- \mathrm{rank}_{\Z} (\mathcal{O}_{K}^{\times}))$.
\end{theorem}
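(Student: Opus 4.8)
The plan is to translate the inequality into a lower bound for the rank of a matrix whose entries are $p$-adic logarithms of algebraic numbers, and then to obtain that rank bound from the transcendence machinery that already lies behind Theorems~\ref{thm:leo-abs-abelian} and~\ref{thm:walbound}, now run in a relative form.

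First I would reduce to a purely linear-algebraic statement. Write $r_K=\rank_{\Z}(\mathcal{O}_{K}^{\times})$, $r_L=\rank_{\Z}(\mathcal{O}_{L}^{\times})$, $\rho_K=\dim_{\Q_p}\im(\Lambda_{K,p})$ and $\rho_L=\dim_{\Q_p}\im(\Lambda_{L,p})$, so that $\delta(K,p)=r_K-\rho_K$ and $\delta(L,p)=r_L-\rho_L$. The claimed bound then rearranges to
\[
\rho_L \;\geq\; \rho_K + \tfrac12\,(r_L - r_K).
\]
The inclusion $\mathcal{O}_{K}^{\times}\hookrightarrow\mathcal{O}_{L}^{\times}$ together with the inclusions $U_{K_{w}}^{1}\hookrightarrow U_{L_{v}}^{1}$ (for $v\in S_{p}(L)$ above $w\in S_{p}(K)$) fit into a commutative square with $\Lambda_{K,p}$ and $\Lambda_{L,p}$ as horizontal maps; after tensoring with $\Q_p$ and identifying $\Q_p\otimes_{\Z_p}\prod_{w}U_{K_{w}}^{1}\cong\Q_p\otimes_\Q K$ via $\log_p$ (and likewise for $L$), the right vertical map becomes the natural map $\Q_p\otimes_\Q K\to\Q_p\otimes_\Q L$, which is injective by flatness of $\Q_p$ over $\Q$. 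Hence the restriction of $\Lambda_{L,p}$ to $\Q_p\otimes\mathcal{O}_{K}^{\times}$ has image of dimension exactly $\rho_K$, so $\rho_L-\rho_K$ equals the dimension of the image of the induced ``relative Leopoldt map'' $\overline{\Lambda}$ on $W:=(\Q_p\otimes\mathcal{O}_{L}^{\times})/(\Q_p\otimes\mathcal{O}_{K}^{\times})$, a $\Q_p$-space of dimension $s:=r_L-r_K$. Thus it suffices to prove $\dim_{\Q_p}\ker\overline{\Lambda}\leq s/2$, i.e. that the relative analogue of $\Leo(K,p)$ has at most half-dimensional defect.

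Next I would pass to the classical matrix description. Fix units $u_1,\dots,u_{r_K}\in\mathcal{O}_{K}^{\times}$ spanning a finite-index subgroup modulo torsion and units $v_1,\dots,v_s\in\mathcal{O}_{L}^{\times}$ whose images form a basis of $W$; together they give a $\Q_p$-basis of $\Q_p\otimes\mathcal{O}_{L}^{\times}$. Using $\log_p$ and the embeddings $\tau\colon L\hookrightarrow\C_p$ to identify $\Q_p\otimes_{\Z_p}\prod_{v}U_{L_{v}}^{1}\cong\C_p^{\,[L:\Q]}$, the map $\Lambda_{L,p}$ is represented by the matrix
\[
M_L=\bigl(\log_p\tau(u_i)\ \big|\ \log_p\tau(v_j)\bigr)_{\tau},
\]
whose entries are $p$-adic logarithms of algebraic numbers (replacing the units by powers lying in the domain of $\log_p$, if necessary, changes nothing after $\otimes\Q_p$). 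Then $\rho_L=\rank_{\C_p}M_L$; the rows coming from the $u_i$ span a subspace of dimension $\rho_K$ (it is the inflation of the row space of the analogous matrix $M_K$ for $K$, the restriction map on embeddings being onto); and consequently $\rho_L-\rho_K$ is the dimension of the span of the rows $(\log_p\tau(v_j))_{\tau}$ modulo the span of the rows $(\log_p\tau(u_i))_{\tau}$. So the theorem is reduced to: the $s$ ``new'' rows increase the rank of $M_L$ by at least $s/2$.

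The main obstacle is this last step, which requires transcendence. It is a relative version of the unconditional lower bound for the rank of a matrix whose entries are $\Q$-linear combinations of $p$-adic logarithms of algebraic numbers---the same circle of ideas (auxiliary polynomial, zero estimate, and arithmetic contradiction) by which Brumer settled the transcendence step behind Theorem~\ref{thm:leo-abs-abelian} and Waldschmidt proved Theorem~\ref{thm:walbound}; compare also Roy's results on ranks of matrices of linear forms in logarithms, where such relative statements are formulated directly. The point is to re-run Waldschmidt's argument with the logarithms of the units of $K$ (the rows coming from the $u_i$) treated as fixed parameters rather than as unknowns, so that only the $s$ units $v_1,\dots,v_s$ enter the counting and the rank gain is $s/2$ rather than $r_L/2$. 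The delicate part is adjusting the zero estimate and the choice of auxiliary parameters so that the contribution of $\mathcal{O}_{K}^{\times}$ is genuinely absorbed; once this is done, the two reduction steps above are routine linear algebra together with the standard comparison of global units with local principal units, and the theorem follows.
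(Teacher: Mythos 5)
The statement you are proving is not proved in the paper at all: it is quoted as a theorem of Maksoud with a citation to \cite{maksoud2022leopoldt}, so there is no internal argument to compare against, and your task amounts to reproving Maksoud's theorem. Your preliminary reductions are correct and worth keeping: writing $\delta(K,p)=r_K-\rho_K$, $\delta(L,p)=r_L-\rho_L$ and rearranging the inequality to $\rho_L\geq\rho_K+\tfrac12(r_L-r_K)$; using the commutative square \eqref{diagram:ext-number-fields} to see that $\Lambda_{L,p}$ restricted to $\Q_p\otimes\mathcal{O}_K^{\times}$ has image of dimension exactly $\rho_K$; and identifying $\rho_L-\rho_K$ with the rank of the induced map on $W=(\Q_p\otimes\mathcal{O}_L^{\times})/(\Q_p\otimes\mathcal{O}_K^{\times})$, so that the theorem becomes $\dim_{\Q_p}\ker\overline{\Lambda}\leq\tfrac12\dim_{\Q_p}W$. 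The passage to the matrix $M_L$ of $p$-adic logarithms and the observation that the $u_i$-rows span a $\rho_K$-dimensional space are likewise fine.

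The genuine gap is that everything after this point --- which is the entire mathematical content of the theorem --- is asserted rather than proved. The claim that the $s$ ``new'' rows raise the rank of $M_L$ by at least $s/2$ is exactly the relative Waldschmidt bound in disguise, and your proposal to ``re-run Waldschmidt's argument with the logarithms of the units of $K$ treated as fixed parameters'' is a research programme, not an argument: you yourself flag that ``the delicate part is adjusting the zero estimate and the choice of auxiliary parameters so that the contribution of $\mathcal{O}_K^{\times}$ is genuinely absorbed,'' and then conclude ``once this is done \ldots the theorem follows.'' It is not automatic that this can be done. Waldschmidt's method yields a lower bound of half the structural rank of the \emph{whole} matrix, i.e.\ $\rho_L\geq\tfrac12 r_L$ (Theorem~\ref{thm:walbound}), and upgrading the weight of the $K$-block from $\tfrac12$ to $1$ is precisely where the new work lies; one either has to modify the auxiliary-function/zero-estimate construction in a way you have not specified, or give a correct linear-algebraic deduction from an existing black-box result (such as Roy's lower bound for the rank of a matrix of logarithms applied to a carefully chosen matrix), which you gesture at (``compare also Roy's results'') but do not carry out. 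As written, the proposal reduces the theorem to a statement of essentially the same depth and then stops, so it does not constitute a proof.
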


Unfortunately, none of the aforementioned results prove $\Leo(F,p)$ for any pair $(F,p)$ 
with $F/\Q$ a totally real non-abelian finite Galois extension and $p$ odd.
However, for certain finite non-abelian groups $G$, we have that $\Leo(F,2)$ holds for all totally real $G$-extensions $F/\Q$ by \cite[Theorem~3]{MR659620}.

Let $L/K$ be a Galois extension of number fields and let $G=\Gal(L/K)$.
The aforementioned results of \cite{MR659620,MR769790,MR1004134,MR1087978} all exploit the 
facts that the Leopoldt kernel $\mathcal{L}(L,p)$ is a $\Q_{p}[G]$-module and that 
$\Q \otimes_{\Z} \mathcal{O}_{L}^{\times}$ is cyclic as a $\Q[G]$-module
when the base field $K$ is either $\Q$ or an imaginary quadratic field.
In the present article, we take a different approach. 
For a subgroup $H \leq G$, let $L^{H}$ denote the subfield of $L$ fixed by $H$.
We apply results from the representation theory of finite groups, 
including (generalised) idempotent relations and Brauer relations, 
to show that in many situations $\Leo(L,p)$ can be deduced from $\Leo(L^{H},p)$ 
as $H$ ranges over certain non-trivial subgroups of $G$. 
In many (but not all) of these situations, we also show that there exist
relations between the Leopoldt defects $\delta(L^{H},p)$ 
as $H$ ranges over certain subgroups of $G$. 
We emphasise that our proofs do not require any knowledge of the
$\Q_{p}[G]$-module structure of $\Q_{p} \otimes_{\Z} \mathcal{O}_{L}^{\times}$, 
but rather just use the fact that $\Q_{p} \otimes_{\Z} \mathcal{O}_{L}^{\times}$ and 
$\mathcal{L}(L,p)$ are $\Q_{p}[G]$-modules. 
Moreover, these techniques do not require any assumption on the base field $K$ or on the prime number $p$. 
In some situations, we can combine these results with known cases of Leopoldt's conjecture
for proper subfields of $L$. 
In the case that $K$ is equal to either $\Q$ or an imaginary quadratic field, we use elementary representation theory together with Theorem~\ref{thm:leo-abs-abelian}
to show that the Leopoldt defect $\delta(L,p)$ is either zero or satisfies a non-trivial lower bound.
Moreover, as discussed below, we combine special cases of the above results 
with methods of Buchmann and Sands \cite{MR958040} to prove Leopoldt's conjecture
at finitely many primes for certain infinite families of non-abelian totally real Galois extensions of
$\Q$.

We now state a selection of our results. 
The following result has an elementary proof using central idempotents (see \S \ref{subsec:Leo-from-subfields-I}), 
yet does not appear to have been known until now.

\begin{theorem}\label{thm:nonfaithfulintro}
Let $L/K$ be a Galois extension of number fields and let $G=\Gal(L/K)$.
Let $p$ be a prime number.
Then
$\Leo(L,p)$ holds if and only if $\Leo(L^{\ker\chi},p)$ holds for all $\chi \in \Irr_{\C}(G)$,
the set of complex irreducible characters of $G$.
\end{theorem}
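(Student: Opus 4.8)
The plan is to use the two facts already recalled in the introduction — that the Leopoldt kernel $\mathcal{L}(L,p)$ is a $\Q_{p}[G]$-module, and that $\Leo(F,p)$ is equivalent to $\mathcal{L}(F,p)=0$ — together with a functoriality statement for $\mathcal{L}(\,\cdot\,,p)$ along subfields and the semisimplicity of $\Q_{p}[G]$. In outline: identify $\mathcal{L}(L^{\ker\chi},p)$ with the $\ker\chi$-invariants of $\mathcal{L}(L,p)$, and then observe that a $\Q_{p}[G]$-module vanishes precisely when all its $\ker\chi$-invariants do.

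First I would establish the functoriality: for any subgroup $H\leq G$, writing $M=L^{H}$ so that $\Gal(L/M)=H$, there is a canonical isomorphism of $\Q_{p}$-vector spaces $\mathcal{L}(M,p)\cong\mathcal{L}(L,p)^{H}$. On the unit side, $\mathcal{O}_{M}^{\times}=(\mathcal{O}_{L}^{\times})^{H}$, and since $\Q_{p}$ is flat over $\Z$ and $H$ is finite this yields $\Q_{p}\otimes_{\Z}\mathcal{O}_{M}^{\times}\cong(\Q_{p}\otimes_{\Z}\mathcal{O}_{L}^{\times})^{H}$. On the local side, for each $w\in S_{p}(M)$ the group $H$ acts transitively on the places $v\in S_{p}(L)$ above $w$; fixing one such $v$, with decomposition group $H_{v}=\Gal(L_{v}/M_{w})$, one has $\bigl(\prod_{v\mid w}U_{L_{v}}^{1}\bigr)^{H}=(U_{L_{v}}^{1})^{H_{v}}=U_{M_{w}}^{1}$, and taking products over $w\in S_{p}(M)$ and applying $\Q_{p}\otimes_{\Z_{p}}-$ identifies the target of $\Lambda_{M,p}$ with the $H$-invariants of the target of $\Lambda_{L,p}$. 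Since the diagonal embeddings are compatible with these identifications and $\Lambda_{L,p}$ is $\Q_{p}[G]$-linear, $\Lambda_{M,p}$ is identified with the restriction of $\Lambda_{L,p}$ to $H$-invariants; as $(-)^{H}$ is exact on $\Q_{p}$-vector spaces, passing to kernels gives the claim. I expect this bookkeeping — reconciling completions, places above $p$, decomposition groups, and the formation of invariants with the maps $\Lambda$ — to be the main, though entirely routine, point that needs care.

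Next I would bring in elementary representation theory. Since $\Q_{p}$ has characteristic $0$, $\Q_{p}[G]$ is semisimple by Maschke's theorem; write $1=\sum_{j}f_{j}$ as the sum of its primitive central idempotents. For each $j$, the simple module in the block $f_{j}\Q_{p}[G]$ becomes, after extension of scalars to $\overline{\Q_{p}}$, a sum of $\overline{\Q_{p}}$-irreducibles whose characters are Galois-conjugate to a single $\chi_{j}\in\Irr_{\C}(G)$; all these conjugates have the same kernel $\ker\chi_{j}$, since $\ker\chi$ is cut out by the conditions $\chi(g)=\chi(1)$, which are preserved by Galois conjugation. Hence the normal subgroup $\ker\chi_{j}$ acts trivially on the block $f_{j}\Q_{p}[G]$. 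Now let $W$ be any finite-dimensional $\Q_{p}[G]$-module; I claim $W=0$ if and only if $W^{\ker\chi}=0$ for every $\chi\in\Irr_{\C}(G)$. One direction is trivial. Conversely, if $W\neq 0$ then $f_{j}W\neq 0$ for some $j$, and because $\ker\chi_{j}$ acts trivially on the block $f_{j}\Q_{p}[G]$ it acts trivially on $f_{j}W$, so $0\neq f_{j}W\subseteq W^{\ker\chi_{j}}$.

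Finally I would apply this with $W=\mathcal{L}(L,p)$, which is finite-dimensional over $\Q_{p}$ as a subspace of $\Q_{p}\otimes_{\Z}\mathcal{O}_{L}^{\times}$. By the functoriality step, $W^{\ker\chi}=\mathcal{L}(L,p)^{\ker\chi}\cong\mathcal{L}(L^{\ker\chi},p)$ for every $\chi\in\Irr_{\C}(G)$, so $\mathcal{L}(L,p)=0$ if and only if $\mathcal{L}(L^{\ker\chi},p)=0$ for all such $\chi$; translating back via $\Leo(F,p)\Leftrightarrow\mathcal{L}(F,p)=0$ gives the theorem. (The trivial character contributes the condition $\Leo(K,p)$ since $L^{G}=K$, while any faithful character contributes the vacuous condition $\Leo(L,p)$.)
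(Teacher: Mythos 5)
Your proposal is correct and follows essentially the same route as the paper: the identification $\mathcal{L}(L^{H},p)=\mathcal{L}(L,p)^{H}$ (the paper's Lemma \ref{lem:invariants-of-Leopold-kernel}) combined with the fact that a $\Q_{p}[G]$-module vanishes iff its $\ker\chi$-invariants vanish for all $\chi\in\Irr_{\C}(G)$ (the paper's Proposition \ref{prop:equiv-vanishing}, where your observation $f_{j}W\subseteq W^{\ker\chi_{j}}$ is exactly the identity $e_{\chi}e_{\ker\chi}=e_{\chi}$). The only difference is that the paper obtains the invariants identity more cheaply, by intersecting $\mathcal{L}(L,p)$ with $\Q_{p}\otimes_{\Z}\mathcal{O}_{L^{H}}^{\times}$ inside diagram \eqref{diagram:ext-number-fields} and using injectivity of the vertical maps, which avoids your (correct but unnecessary) decomposition-group analysis of the local unit groups.
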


The following corollary sharpens \cite[Lemma 3.1]{MR1190214}, which 
has the additional assumption that $L$ contains no primitive $p$-th  root of unity.

\begin{corollary}\label{cor:intro-leopoldt-abelian-to-cyclic}
Let $L/K$ be an abelian extension of number fields and let $p$ be a prime number.
Then $\Leo(L,p)$ holds if and only if $\Leo(F,p)$ holds for every intermediate extension $F$
such that $F/K$ is cyclic.
\end{corollary}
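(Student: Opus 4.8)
The plan is to deduce this directly from Theorem \ref{thm:nonfaithfulintro}. Since $L/K$ is abelian, $G = \Gal(L/K)$ is a finite abelian group, so every complex irreducible character $\chi \in \Irr_{\C}(G)$ is one-dimensional, i.e. a homomorphism $\chi\colon G \to \C^{\times}$. For such a $\chi$, the image $\chi(G)$ is a finite cyclic subgroup of $\C^{\times}$, and the quotient $G/\ker\chi \cong \chi(G)$ is therefore cyclic. By the Galois correspondence, $F := L^{\ker\chi}$ satisfies $\Gal(F/K) \cong G/\ker\chi$, which is cyclic; hence $F$ is an intermediate field with $F/K$ cyclic. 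Applying Theorem \ref{thm:nonfaithfulintro}, $\Leo(L,p)$ holds if and only if $\Leo(L^{\ker\chi},p)$ holds for every $\chi \in \Irr_{\C}(G)$, which gives the forward implication: if each such $F$ satisfies $\Leo(F,p)$, then so does $L$.

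For the converse I would observe that every intermediate field $F$ with $F/K$ cyclic arises (up to taking a possibly larger such field) in the above way, or argue more cleanly: if $\Leo(L,p)$ holds then $\Leo(F,p)$ holds for \emph{every} intermediate field $F$ of $L/K$. This last fact is standard --- the Leopoldt kernel is functorial, and more concretely $\Lambda_{F,p}$ embeds into $\Lambda_{L,p}$ after extension of scalars along the inclusion $\mathcal{O}_{F}^{\times} \hookrightarrow \mathcal{O}_{L}^{\times}$ (compatibly with the completions at places above $p$), so injectivity of $\Lambda_{L,p}$ forces injectivity of $\Lambda_{F,p}$. Combining the two directions yields the stated equivalence, and one should remark that it suffices to range over cyclic $F/K$ rather than all $F/K$ precisely because the characters factoring through cyclic quotients already detect the whole of $\Irr_{\C}(G)$ in the abelian case.

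I do not anticipate a serious obstacle here; the corollary is essentially a translation of Theorem \ref{thm:nonfaithfulintro} through the dictionary ``irreducible characters of an abelian group $\leftrightarrow$ cyclic quotients''. The one point requiring a line of care is the converse direction, where one must either invoke the general monotonicity statement $\Leo(L,p) \Rightarrow \Leo(F,p)$ for all subfields $F$ (which can be cited, e.g. it also follows from Maksoud's relative Waldschmidt bound stated above, though that is overkill) or note directly that for each cyclic $F/K$ the group $\Gal(L/F)$ contains $\ker\chi$ for a suitable faithful character $\chi$ of $\Gal(F/K)$ inflated to $G$, so that $L^{\ker\chi} \supseteq F$ and one reduces to $\Leo$ for the larger field $L^{\ker\chi}$ together with the (same) monotonicity for the extension $L^{\ker\chi}/F$. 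Either way the argument is short. Finally I would point out the comparison with \cite[Lemma 3.1]{MR1190214}: the hypothesis there that $L$ contains no primitive $p$th root of unity is used to control $p$-torsion phenomena, whereas here Theorem \ref{thm:nonfaithfulintro} --- which rests only on the $\Q_{p}[G]$-module structure and the vanishing of $\lambda_{K,p}$ on $p$-torsion --- needs no such restriction.
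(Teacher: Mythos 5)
Your proposal is correct and follows the same route as the paper: the paper deduces the corollary from Theorem \ref{thm:leopoldt-to-kernels-chars} (the body version of Theorem \ref{thm:nonfaithfulintro}) using exactly the observation that $G/\ker\chi$ is cyclic for every $\chi\in\Irr_{\C}(G)$ when $G$ is abelian, with the converse direction supplied by the monotonicity statement you describe, which is Lemma \ref{lem:defectgrows}. No gaps.
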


In \cite[Proposition A.1]{MR3158531}, Khare and Wintenberger prove that, 
if $F/\Q$ is a totally real finite Galois extension, then for every prime number 
$p$ we have $\delta(F,p) \neq 1$. 

The following generalisation of their result
uses the fact that $\C_{p} \otimes_{\Q_{p}} \mathcal{L}(L,p)$ is a $\C_{p}[G]$-module, 
and hence was already implicit in \cite{MR659620,MR769790,MR1004134,MR1087978}.
In \S \ref{subsec:defect-lower-bound}, we give a short and elementary proof using central idempotents.

\begin{theorem}\label{thm:intro-lower-bound-on-defects}
Let $L/K$ be a Galois extension of number fields where either
$K=\Q$ or $K$ is an imaginary quadratic field.
Let $G=\Gal(L/K)$ and let $1 < d _{1} < \cdots < d_{s}$ be the distinct degrees of the non-linear complex irreducible characters of $G$.
Let $p$ be a prime number. 
Then $\delta(L,p)=\sum_{i=1}^{s} k_{i}d_{i}$ for some $k_{1}, \ldots, k_{s} \in \Z_{\geq 0}$.
In particular, either $\delta(L,p)=0$ or $\delta(L,p) \geq d_{1}>1$.
\end{theorem}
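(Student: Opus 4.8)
The plan is to exploit the fact that, when $K=\Q$ or $K$ is an imaginary quadratic field, the $\Q[G]$-module structure of $\Q\otimes_{\Z}\mathcal{O}_{L}^{\times}$ is explicitly known, and that $\mathcal{L}(L,p)$ is a $\Q_{p}[G]$-submodule of $\Q_{p}\otimes_{\Z}\mathcal{O}_{L}^{\times}$. First I would recall the classical description: for $K=\Q$, by Dirichlet's unit theorem together with the analysis of the regulator representation, one has a $\Q[G]$-module isomorphism $\Q\otimes_{\Z}\mathcal{O}_{L}^{\times}\cong \Ind_{G_{\infty}}^{G}\1 \ominus \1$ in an appropriate sense, where $G_{\infty}$ is a decomposition group at the archimedean place (cyclic of order $1$ or $2$); equivalently the character of $\Q\otimes_{\Z}\mathcal{O}_{L}^{\times}$ is $\Ind_{G_{\infty}}^{G}\1 - \1$. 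The key structural point is that this is a $\Q$-rational character, so it decomposes as a nonnegative-integer combination of the $\Q$-irreducible characters of $G$; and after extending scalars to $\C$, every non-linear irreducible constituent $\chi$ of $\Q_{p}\otimes_{\Z}\mathcal{O}_{L}^{\times}$ occurs with multiplicity that is a multiple of the relevant Schur index times the size of the Galois orbit — but for our purposes it suffices that each $\chi\in\Irr_{\C}(G)$ occurs in $\C\otimes_{\Z}\mathcal{O}_{L}^{\times}$ with the same multiplicity $m_{\chi}$ as in $\Ind_{G_{\infty}}^{G}\1 - \1$, by Frobenius reciprocity $m_{\chi}=\langle \Res_{G_{\infty}}^{G}\chi,\1\rangle$ for $\chi\neq\1$.

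Next I would bring in the central idempotents. Writing $\Q_{p}^{c}$ for a splitting field, $\Q_{p}^{c}\otimes_{\Z}\mathcal{O}_{L}^{\times}=\bigoplus_{\chi}M_{\chi}$ where $M_{\chi}$ is the $\chi$-isotypic component, of $\Q_{p}^{c}$-dimension $m_{\chi}\chi(1)$. Since $\mathcal{L}(L,p)$ is a $\Q_{p}[G]$-submodule, its base change $\Q_{p}^{c}\otimes\mathcal{L}(L,p)$ is a sum of isotypic pieces $N_{\chi}\subseteq M_{\chi}$, and since each $M_{\chi}$ is $\chi$-isotypic of the form $(\text{mult})\cdot V_{\chi}$ with $V_{\chi}$ the irreducible, any submodule has dimension a multiple of $\chi(1)$. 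Hence $\delta(L,p)=\dim_{\Q_{p}}\mathcal{L}(L,p)=\sum_{\chi}a_{\chi}\chi(1)$ with $a_{\chi}\in\Z_{\geq 0}$, where $a_{\chi}\leq m_{\chi}$. Grouping the $\chi$ by their degree and using that Galois-conjugate characters have equal degree, this rewrites as $\delta(L,p)=k_{0}\cdot 1+\sum_{i=1}^{s}k_{i}d_{i}$ for nonnegative integers $k_{i}$, where $k_{0}$ counts contributions from linear characters.

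The crucial remaining step — and the one I expect to be the main obstacle — is to show that $k_{0}=0$, i.e. that no linear character of $G$ contributes to the Leopoldt kernel. This is exactly where Theorem \ref{thm:leo-abs-abelian} enters. The linear characters of $G$ factor through $G^{\ab}=\Gal(L^{\ab}/K)$ where $L^{\ab}$ is the maximal abelian subextension; and $L^{\ab}/K$ with $K=\Q$ or imaginary quadratic is an abelian extension of $\Q$ or of an imaginary quadratic field, so $\Leo(L^{\ab},p)$ holds, giving $\mathcal{L}(L^{\ab},p)=0$. It then remains to argue that the sum of the linear isotypic components of $\mathcal{L}(L,p)$ injects into $\mathcal{L}(L^{\ab},p)$: the natural norm/corestriction map $\Q_{p}\otimes_{\Z}\mathcal{O}_{L}^{\times}\to\Q_{p}\otimes_{\Z}\mathcal{O}_{L^{\ab}}^{\times}$ induced by the field norm is $G$-equivariant, is an isomorphism on the sum of the linear isotypic components (as these are precisely the part of the module on which $\mathrm{Gal}(L/L^{\ab})$ acts trivially, which is identified via the norm with $\Q_{p}\otimes\mathcal{O}_{L^{\ab}}^{\times}$ up to the index $[L:L^{\ab}]$, a $p$-adic unit times an isomorphism after $\otimes\Q_{p}$), and is compatible with the maps $\Lambda_{L,p}$ and $\Lambda_{L^{\ab},p}$ via functoriality of localisation. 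Hence the linear part of $\mathcal{L}(L,p)$ maps into $\mathcal{L}(L^{\ab},p)=0$, forcing $k_{0}=0$. Combining, $\delta(L,p)=\sum_{i=1}^{s}k_{i}d_{i}$, and if this is nonzero then some $k_{i}\geq 1$ with $d_{i}\geq d_{1}>1$, so $\delta(L,p)\geq d_{1}>1$; if all $k_{i}=0$ then $\delta(L,p)=0$.
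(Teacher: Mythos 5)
Your argument is correct, and its mathematical core coincides with the paper's: decompose the Leopoldt kernel into isotypic components so that $\delta(L,p)$ is a nonnegative integer combination of irreducible degrees, then kill the linear (abelian) part using Theorem \ref{thm:leo-abs-abelian}. The differences are in the packaging. First, the explicit description of $\Q\otimes_{\Z}\mathcal{O}_{L}^{\times}$ via $\Ind_{G_{\infty}}^{G}\mathbf{1}$, which you place at the centre of your plan, is never actually used: you only extract the bounds $a_{\chi}\leq m_{\chi}$, which play no role in the conclusion. The paper deliberately avoids this input --- all that is needed is that $\mathcal{L}(L,p)$ is a $\Q_{p}[G]$-module. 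Second, where you dispose of the linear isotypic part via a norm/corestriction map down to $L^{\ab}=L^{G'}$, the paper uses the identity $e_{G'}\mathcal{L}(L,p)=\mathcal{L}(L,p)^{G'}=\mathcal{L}(L^{G'},p)$ (Lemma \ref{lem:invariants-of-Leopold-kernel}, proved by intersecting with the $G'$-invariants of the unit group inside the basic commutative diagram); your norm argument is sound --- on the $G'$-invariants the norm is multiplication by $[L:L^{\ab}]$, hence injective after tensoring with $\Q_{p}$, and it commutes with the maps $\Lambda_{L,p}$ and $\Lambda_{L^{\ab},p}$ --- but it establishes exactly the same fact with more effort. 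Once $e_{G'}\mathcal{L}(L,p)=0$, the paper simply notes that $\mathcal{L}(L,p)$ is a module over the semisimple algebra $(1-e_{G'})\C_{p}[G]$ after base change, whose simple modules have dimensions precisely the non-linear degrees $d_{i}$. Note finally that the paper's Theorem \ref{thm:lower-bound-on-defects} gives a sharper statement over $\Q_{p}$ itself, in terms of the $\Q_{p}$-dimensions of the simple $(1-e_{G'})\Q_{p}[G]$-modules, which can exceed the complex degrees; the statement you proved is its corollary after base change to $\C_{p}$.
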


In fact, Theorem~\ref{thm:intro-lower-bound-on-defects} is a corollary of a
stronger result that is expressed in terms of dimensions of certain simple $\Q_{p}[G]$-modules
(see Theorem~\ref{thm:lower-bound-on-defects}).

Let $G$ be a finite group and let $\mathcal{H}$ be a set of subgroups of $G$. 
For $H \leq G$, let $e_{H}=|H|^{-1}\sum_{h \in H} h \in \Q[G]$ be the associated idempotent element.
A \emph{generalised useful idempotent relation} with respect to $\mathcal{H}$
is an equality in $\Q[G]$ of the form
$1= \sum_{1 \neq H \in \mathcal{H}} a_{H}e_{H}$
with $a_{H} \in \Q[G]$ (see \S \ref{subsec:idempotent-defs}). 
The following result is a specialisation of Theorem~\ref{thm:norm-rels-Leopoldt}. 

\begin{theorem}\label{thm:intro-norm-rels-Leopoldt}
Let $L/K$ be a Galois extension of number fields and let $G=\Gal(L/K)$. 
Suppose that $G$ has a generalised useful idempotent relation with respect to a set of subgroups 
$\mathcal{H}$.
Let $p$ be a prime number.
Then $\Leo(L,p)$ holds if and only if $\Leo(L^{H},p)$ holds for every $H \in \mathcal{H}$ such that $H \neq 1$.
\end{theorem}

In contrast to Theorem~\ref{thm:nonfaithfulintro} and 
Corollary~\ref{cor:intro-leopoldt-abelian-to-cyclic}, 
the fields $L^{H}$ in Theorem~\ref{thm:intro-norm-rels-Leopoldt} 
need not be Galois over the base field $K$,
since the subgroups $H \in \mathcal{H}$ need not be normal in $G$.

In \S \ref{subsec:Leo-from-subfields-II}, we obtain many consequences 
of Theorem~\ref{thm:intro-norm-rels-Leopoldt}.
For example, we obtain the following result thanks to an important theorem 
of Biasse et al.\ \cite{MR4440537} that characterises the finite groups for which
generalised useful idempotent relations exist
(see \S \ref{subsec:char-useful}).

\begin{theorem}\label{thm:intro-reduce-to-subfields}
Let $L/K$ be a Galois extension of number fields and let $G=\Gal(L/K)$.
Suppose that $G$ contains either a non-cyclic subgroup of order $\ell_{1}\ell_{2}$,
where $\ell_{1}$ and $\ell_{2}$ are two not necessarily distinct prime numbers, or a subgroup 
isomorphic to $\mathrm{SL}_{2}(\F_{\ell})$, where $\ell = 2^{2^{k}}+1$ is a Fermat prime
with $k>1$. Let $p$ be a prime number. 
Then $\Leo(L,p)$ holds if and only if $\Leo(F,p)$ holds for every proper intermediate field $F$ of $L/K$. 
\end{theorem}

Again, let $G$ be a finite group and let $\mathcal{H}$ be a set of subgroups of $G$. 
For $H \leq G$, let $\mathbf{1}_{H}$ denote the trivial character of $H$ and let 
$\Ind_{H}^{G}\mathbf{1}_{H}$ denote its induction to $G$.
A \emph{Brauer relation} of $G$ with respect to $\mathcal{H}$ is an equality of 
$\Q$-valued class functions on $G$ of the form
$0=\sum_{H\in\mathcal{H}}a_H\Ind_{H}^{G}\mathbf{1}_{H}$,
with $a_{H} \in \Q$.
Such a relation is said to be \emph{useful} if $1 \in \mathcal{H}$
and $a_{1} \neq 0$.
In \S \ref{subsec:relations-Leo-defects}, we prove the following result.

\begin{theorem}\label{thm:intro-leo-defects}
Let $L/K$ be a Galois extension of number fields and let $G=\Gal(L/K)$. 
If $0=\sum_{H \in \mathcal{H}} a_{H} \Ind_{H}^{G}\mathbf{1}_{H}$ is a Brauer relation of $G$
then $\sum_{H \in \mathcal{H}} a_{H} \delta(L^{H},p) = 0$ for every prime number $p$.
\end{theorem}

As discussed in \S \ref{subsec:brauer-vs-idempotent-relations}, 
every useful Brauer relation corresponds to a useful idempotent relation, that is, 
the special case of a generalised useful idempotent relation in which the coefficients $a_{H}$
are elements of $\Q$ as opposed to $\Q[G]$.
Thus Theorem~\ref{thm:intro-leo-defects} has an equivalent form in terms of 
useful idempotent relations, and this recovers Theorem~\ref{thm:intro-norm-rels-Leopoldt}
in many situations.
However, Theorem~\ref{thm:intro-norm-rels-Leopoldt} covers cases that 
Theorem~\ref{thm:intro-leo-defects} does not, such as the case in which
$G \cong \mathrm{SL}_{2}(\F_{17})$ (see Remarks~\ref{rmk:existence-guir}
and \ref{rmk:different-subgroups-uir-guir}).

We remark that there are many ways in which to explicitly obtain (useful) idempotent relations,
or equivalently, (useful) Brauer relations (see \S \ref{subsec:explicit-ir} and \S \ref{subsec:explicit-Brauer}, 
respectively).
For example, we obtain the following specialisation of Theorem~\ref{thm:intro-leo-defects}
via Lemma~\ref{lem:frob-brauer}.

\begin{corollary}\label{cor:intro-frob-defect-relations}
Let $L/K$ be a Galois extension of number fields and let $G=\Gal(L/K)$.  
Suppose that $G = N \rtimes H$ is a Frobenius group with kernel $N$ and complement $H$.
Then
\[
\delta(L,p) + |H|\delta(K,p) = \delta(L^{N},p) + |H|\delta(L^{H},p)
\] 
for every prime number $p$.
\end{corollary}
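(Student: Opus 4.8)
The plan is to derive this as a special case of the preceding theorem, applied with $H$ taken to be the Frobenius complement of $G$ (which is non-cyclic precisely when... well, actually we should be careful: a Frobenius complement need not be non-cyclic, so the clean approach is instead to apply the general defect relation to the whole group $G$, or to use the subgroup-lattice structure of a Frobenius group directly). So more precisely, first I would recall the Möbius-type identity coming from the theorem: for any subgroup $H \leq G$ that is non-cyclic, one has $\sum_{I \leq H} \mu(I,H)\,|I|\,\delta(L^{I},p) = 0$, where $\mu(I,H)$ is the Möbius function of the interval $[I,H]$ in the subgroup lattice. The key point is that this relation is equivalent to an \emph{idempotent relation} (or a $\Q$-virtual-character identity) among the permutation characters $\Ind_{I}^{H} \1$, and the coefficient $|I|$ is exactly the value $[H:I]$-adjusted weight that appears because $\delta(L^{I},p)$ records the multiplicity data of $\mathcal{L}(L,p)$ as a $\Q_{p}[G]$-module restricted through $H$.

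The second step is purely group-theoretic: I would compute the Möbius function $\mu(I,G)$ for a Frobenius group $G = N \rtimes H$ with kernel $N$ and complement $H$. The subgroup lattice of a Frobenius group has a well-understood shape — every subgroup either lies inside $N$, or is conjugate to a subgroup of the form $M \rtimes H'$ for $M \leq N$ normal-in-something and $H' \leq H$; and the crucial combinatorial fact is that the only subgroups $I$ with $\mu(I,G) \neq 0$ that contribute, after grouping conjugates, are $I \in \{1,\ N,\ H,\ G\}$, with respective signed weighted coefficients producing exactly $-1$ (for $I=1$, giving $+|1|\delta(K,p)\cdot(\pm)$...), and so on. Running the bookkeeping: the identity $\sum_{I\leq G}\mu(I,G)|I|\delta(L^{I},p)=0$ collapses, upon collecting $G$-conjugacy classes of subgroups and using $\mu(gIg^{-1},G)=\mu(I,G)$ together with $|gIg^{-1}|=|I|$ and $\delta(L^{gIg^{-1}},p)=\delta(L^{I},p)$, to a relation supported on $\{1, N, H, G\}$; identifying $L^{1}=L$, $L^{G}=K$, and reading off the coefficients $1, |H|, -1, -|H|$ (up to an overall sign) yields $\delta(L,p) + |H|\delta(K,p) = \delta(L^{N},p) + |H|\delta(L^{H},p)$.

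I expect the main obstacle to be the second step — specifically, justifying that all the ``other'' subgroups of $G$ (those strictly between $1$ and $N$, or of mixed type $M\rtimes H'$ with $1 < M < N$ or $1 < H' < H$) either have vanishing Möbius value $\mu(I,G)=0$ or cancel in conjugacy-class-sums. This is where one must invoke the structure theory of Frobenius groups (Frobenius's theorem that $N$ is a normal subgroup, the fact that distinct complements are conjugate, and the description of subgroups) rather than a one-line computation; alternatively, one can sidestep the full Möbius computation by observing that the relevant idempotent relation for a Frobenius group is the classical one $\1_{G} + [G:H]\,\1_{G}^{\text{triv via }G} \sim \Ind_{N}^{G}\1 + \Ind_{H}^{G}\1$ in the sense of virtual characters — equivalently the well-known Brauer/Artin relation $\pi_{G/1} + |H|\pi_{G/G} = \pi_{G/N} + |H|\pi_{G/H}$ for Frobenius groups (see \S\ref{sec:Brauer}) — and then apply the functoriality of $\delta(L^{(-)},p)$ with respect to such relations, which is exactly the content established in \S\ref{subsec:relations-Leo-defects}. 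The remaining details (matching signs, confirming $\mu(1,G)$, $\mu(N,G)$, $\mu(H,G)$, $\mu(G,G)$ take the claimed values) are then routine.
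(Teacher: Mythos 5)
Your fallback suggestion at the end is the right proof, but your primary route --- applying the M\"obius-function relation $\sum_{I \leq G} \mu(I,G)\,|I|\,\delta(L^{I},p) = 0$ to the whole Frobenius group $G$ and claiming it collapses onto $\{1, N, H, G\}$ with coefficients $1, -1, -|H|, |H|$ --- does not work. The claimed collapse is simply false for general Frobenius groups. Take $G = \Aff(5) = C_{5} \rtimes C_{4}$, so $N = C_{5}$ and $H = C_{4}$. Its subgroups are $1$, five conjugate $C_{2}$'s, five conjugate $C_{4}$'s, $N$, $D_{10} = N C_{2}$, and $G$, and a direct computation of the M\"obius function of this lattice gives $\mu(1,G) = 0$, $\mu(C_{2},G) = 1$, $\mu(C_{4},G) = -1$, $\mu(N,G) = 0$, $\mu(D_{10},G) = -1$, $\mu(G,G) = 1$. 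The resulting relation (after collecting conjugates and dividing by $10$) is $\delta(L^{C_{2}},p) + 2\delta(K,p) = 2\delta(L^{H},p) + \delta(L^{D_{10}},p)$, which is a perfectly valid but different Brauer relation: it does not even involve $\delta(L,p)$ or $\delta(L^{N},p)$. So the ``bookkeeping'' you defer to --- showing the other subgroups have vanishing M\"obius value or cancel --- is not a routine obstacle; it is where the argument breaks. (It happens to work for $S_{3}$ and $A_{4}$, which may be why it looks plausible.)

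The correct argument is the one you sketch as an alternative, and it is exactly the paper's: one needs the specific Brauer relation $\Ind_{1}^{G}\mathbf{1}_{1} + |H|\,\mathbf{1}_{G} = \Ind_{N}^{G}\mathbf{1}_{N} + |H|\Ind_{H}^{G}\mathbf{1}_{H}$ (Lemma \ref{lem:frob-brauer}), which is then fed into Theorem \ref{thm:Brauer-rels-Leopoldt-defect} (Frobenius reciprocity plus $\mathcal{L}(L,p)^{I} = \mathcal{L}(L^{I},p)$). The point you would still need to supply is where this relation comes from: not from M\"obius inversion on the subgroup lattice, but from the fact that $N$ together with the $|N|$ distinct conjugates $gHg^{-1}$ ($g \in N$) forms a partition of $G$ (pairwise trivial intersections, union equal to $G$), which by inclusion--exclusion yields the idempotent relation $1 = e_{N} + \sum_{g \in N}\frac{|H|}{|N|}e_{gHg^{-1}} - |H|e_{G}$ of Proposition \ref{prop:Frob-idem-relation}; converting to a Brauer relation and moving the coefficients of the conjugates of $H$ onto a single representative gives the identity above. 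With that justification in place, the rest of your alternative route is complete and matches the paper.
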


Note that $S_{3}$, the symmetric group on $3$ letters, is an example of a Frobenius group.
Let $L/\Q$ be an $S_{3}$-extension and let $M$ be any choice
of cubic subfield of $L$.
Since Leopoldt's conjecture is known for $\Q$ and for all quadratic fields, 
applying Corollary~\ref{cor:intro-frob-defect-relations} to $L/\Q$ gives $\delta(L,p)=2\delta(M,p)$.
In particular, $\Leo(M,p)$ implies $\Leo(L,p)$. 
(In fact, this last result can be proven more directly; see \S \ref{subsec:Leo-from-subfields-II}.)

In \S \ref{sec:infinite}, we use explicit descriptions of independent units along with 
two different but related techniques developed by Buchmann and Sands \cite{MR958040}
to prove Leopoldt's conjecture at certain primes for an infinite
family of non-Galois totally real cubic fields. 
We then use the result of the previous paragraph to pass to the corresponding totally real 
$S_{3}$-extensions of $\Q$ and thereby obtain the following result in \S \ref{subsec:infinite-families}.

\begin{theorem}\label{thm:intro-S3-big-result}
Given a finite set of prime numbers $\mathcal{P}$,
there exists an infinite family $\mathcal{F}$ of totally real 
$S_{3}$-extensions of $\Q$ such that $\Leo(F,p)$ holds for every $F \in \mathcal{F}$ and $p\in\mathcal{P}$.
\end{theorem}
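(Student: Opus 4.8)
The plan is to reduce, via the Frobenius-group identity of Corollary~\ref{cor:intro-frob-defect-relations}, to a statement about totally real non-Galois cubic fields, and then to prove that statement for an explicit parametrised family of such fields using the $p$-adic regulator techniques of Buchmann and Sands \cite{MR958040}. First recall the dictionary between the two sides. If $L/\Q$ is a totally real $S_{3}$-extension and $M$ is any of its cubic subfields (the fixed field of a transposition), then $M$ is totally real and non-Galois and $L$ is the Galois closure of $M/\Q$; conversely the Galois closure of any totally real non-Galois cubic field is a totally real $S_{3}$-extension of $\Q$. Applying Corollary~\ref{cor:intro-frob-defect-relations} to $L/\Q$, with $N$ the subgroup fixing the quadratic subfield and $H$ a transposition, and using that $\Leo$ holds for $\Q$ (trivially) and for every quadratic field by Theorem~\ref{thm:leo-abs-abelian}, we get $\delta(L,p)=2\delta(M,p)$ for every prime $p$; in particular $\Leo(M,p)$ implies $\Leo(L,p)$. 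So it suffices to exhibit an infinite family $\{M_{t}\}_{t\in T}$ of pairwise distinct totally real non-Galois cubic fields such that $\Leo(M_{t},p)$ holds for all $t\in T$ and all $p\in\mathcal{P}$, and then take $\mathcal{F}=\{F_{t}:t\in T\}$ where $F_{t}$ is the Galois closure of $M_{t}/\Q$.

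Next I would pin down the family together with explicit units. One chooses a parametrised irreducible cubic $f_{t}(x)\in\Z[x]$ with constant term $\pm1$ --- for instance $f_{t}(x)=x^{3}-tx^{2}+(t-1)x-1$, which is irreducible for every $t\in\Z$ --- whose discriminant (a nonconstant polynomial in $t$) is positive for $|t|$ large and is non-square for all but finitely many $t$, so that $M_{t}:=\Q[x]/(f_{t})$ is totally real and non-Galois; with this choice a root $\theta_{t}$ and $\theta_{t}-1$ lie in $\mathcal{O}_{M_{t}}^{\times}$, and one checks that they are multiplicatively independent for all but finitely many $t$. Since two such units generate a finite-index subgroup of $\mathcal{O}_{M_{t}}^{\times}$ and replacing $\mathcal{O}_{M_{t}}^{\times}$ by a finite-index subgroup does not change the Leopoldt kernel, these explicit units suffice for testing $\Leo(M_{t},p)$.

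Now fix $p\in\mathcal{P}$. By the results of Buchmann and Sands \cite{MR958040}, $\Leo(M_{t},p)$ is equivalent to the non-vanishing of a $p$-adic regulator $R_{p}(M_{t})$, namely (up to a nonzero rational factor) the $2\times2$ determinant with entries $\log_{p}\iota_{w}(u)$, where $u$ ranges over $\theta_{t},\theta_{t}-1$ and $w$ ranges over a spanning subset of the places of $M_{t}$ above $p$. The splitting type of $p$ in $M_{t}$ depends only on $t\bmod p$, through the factorisation of $f_{t}\bmod p$, so one first restricts $t$ to a union of residue classes modulo $p$ on which this type is fixed (such classes exist with positive density). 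On such classes, the two related techniques of Buchmann and Sands reduce the assertion $R_{p}(M_{t})\neq0$ to the non-vanishing modulo a fixed power $p^{k_{p}}$ of a polynomial expression in $t$, obtained by expanding the relevant $p$-adic logarithms to bounded precision; one then checks that this polynomial is not identically zero modulo $p$ --- it suffices to produce a single admissible $t_{0}$ with $R_{p}(M_{t_{0}})\neq0$ --- and concludes that $\Leo(M_{t},p)$ holds whenever $t$ avoids a certain finite set of residue classes modulo $p^{k_{p}}$.

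Finally, intersecting the good congruence conditions over the finitely many $p\in\mathcal{P}$ by the Chinese Remainder Theorem, and removing the finitely many excluded values (where $f_{t}$ has square discriminant, where the two units are dependent, or where two of the fields coincide), one obtains an infinite set $T$ of integers $t$ for which $M_{t}$ is a totally real non-Galois cubic field with $\Leo(M_{t},p)$ for every $p\in\mathcal{P}$ and for which the Galois closures $F_{t}$ are pairwise distinct; then $\mathcal{F}=\{F_{t}:t\in T\}$ is as required, since each $F_{t}$ is a totally real $S_{3}$-extension of $\Q$ with $\delta(F_{t},p)=2\delta(M_{t},p)=0$ for all $p\in\mathcal{P}$ by Corollary~\ref{cor:intro-frob-defect-relations}. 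I expect the crux to be the Buchmann--Sands step: one needs enough effective control of the $p$-adic logarithms of the explicit units, and of the finitely many splitting types of $p$ in the family, to force $R_{p}(M_{t})\neq0$ along an entire congruence class of parameters $t$ rather than merely at isolated values; everything else is bookkeeping.
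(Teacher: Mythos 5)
Your overall architecture is the same as the paper's: reduce to a cubic subfield via the Frobenius structure of $S_{3}$ (this is exactly Corollary \ref{cor:S3cubicleopoldt}), exhibit a parametrised family of totally real non-Galois cubics with two explicit independent units, verify Leopoldt at each $p\in\mathcal{P}$ for suitable parameters via Theorem \ref{thm:buchmannsands}, propagate along congruence classes, and intersect by CRT. However, the step you yourself flag as the crux is precisely where the proof is not carried out, and it cannot be waved through. For each prime $p$ (and $\mathcal{P}$ is arbitrary, so this means for \emph{every} prime $p$) you must actually produce at least one admissible $t_{0}$ with $\Leo(M_{t_{0}},p)$; saying ``it suffices to produce a single admissible $t_{0}$'' is a restatement of the problem, not a solution. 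The paper's engine is Theorem \ref{thm:S3-family-I}: for $f_{t}(x)=x^{3}-t^{2}x-1$ the units satisfy $\lambda^{3}=1+t^{2}\lambda$ and $(\lambda+t)^{3t}\equiv 1+3t^{2}\lambda^{2}$ modulo a higher power of $p$, so the Buchmann--Sands map $\phi_{2h}$ sends them to unit multiples of $\lambda$ and $\lambda^{2}$ in $\mathcal{O}_{K}/p\mathcal{O}_{K}$, which are visibly $\F_{p}$-independent; this proves $\Leo(K_{t},p)$ for \emph{all} $p\neq 3$ with $p\mid t$ in one stroke, and $p=3$ is seeded by an externally verified instance. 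Your family $x^{3}-tx^{2}+(t-1)x-1$ comes with no analogous congruence structure for $\theta_{t}$ and $\theta_{t}-1$, and no such computation is attempted, so the proof does not close.

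There is also a directional error in how you invoke Buchmann--Sands. Theorem \ref{thm:BSapprox} propagates a \emph{verified} instance: if $\Leo(K_{t_{0}},p)$ holds and $p^{2}\nmid\mathrm{disc}(f_{t_{0}})$, then there is an $m$ \emph{depending on $t_{0}$} (it is the precision at which $D(p^{m})\subseteq D^{p}$ in Theorem \ref{thm:buchmannsands}) such that Leopoldt holds for all $t'\equiv t_{0}\pmod{p^{m}}$. It does not say that the regulator is a polynomial in $t$ modulo a uniform $p^{k_{p}}$ whose non-vanishing at one point forces non-vanishing outside finitely many residue classes; the required precision cannot be fixed in advance of knowing Leopoldt somewhere, because the $p$-adic logarithms of the units are not polynomial in $t$ to any a priori bounded accuracy. (For the theorem this weaker one-good-class statement suffices, as in Corollary \ref{cor:familyapprox}, but your stronger claim is unjustified.) Finally, two smaller gaps: that the discriminant is a non-square for all but finitely many $t$ does not follow merely from its being a non-square polynomial, and the pairwise distinctness (hence infinitude) of the fields $M_{t}$ is asserted rather than proved --- the paper handles this via Proposition \ref{prop:infinitepol} and Corollary \ref{cor:infinitepol} by producing infinitely many primes dividing some $\mathrm{disc}(f_{t})$ to odd order, hence ramifying in some member of the family.
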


We also apply the second of the aforementioned techniques of Buchmann and Sands
to certain totally real non-Galois quartic fields, and then pass to the corresponding
totally real $D_{8}$-extensions of $\Q$ to obtain the following result.

\begin{theorem}\label{thm:intro-D8-result}
There exists an infinite family $\mathcal{F}$ of totally real $D_{8}$-extensions of $\Q$
such that $\Leo(F,p)$ holds for every $F \in \mathcal{F}$ and every prime number $p \leq 10^{6}$ with $p \neq 3$. 
\end{theorem}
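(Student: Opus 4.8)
The strategy mirrors that of Theorem~\ref{thm:intro-S3-big-result}, with quartic fields in place of cubic ones. First I would fix a one-parameter family: choose an explicit quartic polynomial $f_t(X)\in\Z[X]$ depending on an integer parameter $t$ (for instance of ``simplest quartic'' or parametrized Thue type), let $M_t=\Q(\alpha_t)$ for $\alpha_t$ a root of $f_t$, and let $L_t$ be the Galois closure of $M_t/\Q$. For all but finitely many $t$ (or, with a suitably rigid $f_t$, for all $t$ in a fixed arithmetic progression), $M_t$ is totally real, $f_t$ is irreducible, and $\Gal(L_t/\Q)\cong D_8$; since the Galois closure of a totally real field is totally real, $L_t/\Q$ is then a totally real $D_8$-extension, and $M_t=L_t^{\langle s\rangle}$ for a non-normal order-$2$ subgroup $\langle s\rangle$ of $D_8=\langle r,s\rangle$. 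One then needs an explicit system of independent units $\varepsilon_{1,t},\varepsilon_{2,t},\varepsilon_{3,t}$ of $M_t$ with coordinates polynomial in $t$ (the unit rank being $3$); for families of the above shape such systems, together with the non-vanishing of the associated regulator, are available, the latter reducing to a polynomial condition in $t$.

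Next I would invoke the second technique of Buchmann and Sands~\cite{MR958040}: for a fixed prime $p$ it provides a sufficient condition for $\Leo(M,p)$ that is checkable from a fixed system of independent units of $M$ together with the splitting type of $p$ in $M$. Specialised to the family $M_t$, this condition unwinds to a congruence that is polynomial in the parameter $t$, the ramified primes being controlled if necessary by refining the progression by finitely many further congruences. A finite computation would then verify that this condition holds for every $t$ in the progression and every prime $p\le 10^{6}$ with $p\ne 3$, yielding $\Leo(M_t,p)$ for all such $p$ and all members of the family. The exclusion of $p=3$ is genuine and reflects a degeneracy of the method for this particular family at $3$ (the relevant determinant being, e.g., divisible by $3$ identically in $t$), so that the Buchmann--Sands criterion gives no information there.

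Finally I would transfer the conclusion from $M_t$ to $L_t$ via a relation among Leopoldt defects. Although $D_8$ is not a Frobenius group, so that Corollary~\ref{cor:intro-frob-defect-relations} does not apply directly, the $\mu$-relation $\sum_{I\le H}\mu(I,H)|I|\delta(L^I,p)=0$, valid for any non-cyclic subgroup $H\le G$, does the job when applied to the Klein four-subgroup $H=\langle r^2,s\rangle$ of $D_8=\langle r,s\rangle$: the fixed field $L_t^{\langle r^2\rangle}$ is a biquadratic field and $L_t^{H}$ is a quadratic field, both abelian over $\Q$ and hence of Leopoldt defect $0$ by Theorem~\ref{thm:leo-abs-abelian}, while $L_t^{\langle s\rangle}=M_t$ and $L_t^{\langle r^2 s\rangle}$ are $\Gal(L_t/\Q)$-conjugate and thus share the same defect; the relation then collapses to $\delta(L_t,p)=2\,\delta(M_t,p)$. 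In particular $\Leo(M_t,p)$ implies $\Leo(L_t,p)$, so $\Leo(L_t,p)$ holds for every prime $p\le 10^{6}$ with $p\ne 3$. Taking $\mathcal F=\{L_t\}$ gives an infinite family, infinitude following because the discriminants of the $M_t$ (hence of the $L_t$) are unbounded along the progression.

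I expect the central difficulty to be the second step: finding a parametrized family of quartics for which both an explicit, multiplicatively independent unit system and the Buchmann--Sands $p$-adic criterion can be made completely explicit in $t$, and then organising the resulting verification into a finite computation valid for all primes up to $10^{6}$ --- in particular pinning down precisely the set of primes (here only $p=3$) at which the argument degenerates, and handling the finitely many primes that ramify in members of the family.
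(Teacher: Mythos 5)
Your outline matches the paper's proof in all essentials: the paper takes Nakamula's family $f_{t}(x)=x^{4}-tx^{3}-x^{2}+tx+1$ with explicit independent units $\lambda_{t}-1,\lambda_{t},\lambda_{t}+1$ (Proposition \ref{prop:Nakamula}), verifies $\Leo(F_{7},p)$ for all $p\leq 10^{6}$ and $\Leo(F_{9},5)$ by a finite computation (the switch to $t_{0}=9$ at $p=5$ being forced by $5^{2}\mid\mathrm{disc}(f_{7})$), propagates along arithmetic progressions $t'\equiv t_{0}\pmod{p^{m}}$ via the Buchmann--Sands congruence theorem and the Chinese remainder theorem, gets infinitude from unbounded ramification, and passes from the quartic $K_{t}$ to its $D_{8}$-closure via the Klein-four subgroup exactly as you do (the paper phrases this through Corollary \ref{cor:D8-quartic-leopoldt} rather than the $\mu$-relation, but your computation $\delta(L_{t},p)=2\delta(M_{t},p)$ is correct and equivalent); your guess about $p=3$ is also right, the precise reason being that $3^{2}\mid(t^{2}-4)^{2}(4t^{2}+9)$ identically, so the hypothesis $p^{2}\nmid\mathrm{disc}(f_{t_{0}})$ of Theorem \ref{thm:BSapprox} can never be met at $3$. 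The one place your description drifts is the Buchmann--Sands step itself: it is not a sufficient condition checked uniformly in $t$ from splitting data, but a propagation result that requires $\Leo(K_{t_{0}},p)$ to be established computationally for a single base parameter satisfying $p^{2}\nmid\mathrm{disc}(f_{t_{0}})$, after which it extends to the whole congruence class.
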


Until now, there has been no known example of a finite non-abelian group $G$,
an infinite family $\mathcal{F}$ of totally real $G$-extensions of $\Q$, and an odd prime number $p$ 
such that $\Leo(F,p)$ has been proved to hold for all $F \in \mathcal{F}$.
We remark that the methods used to prove Theorems~\ref{thm:intro-S3-big-result} 
and \ref{thm:intro-D8-result} could in principle be applied to totally real
$G$-extensions of $\Q$ for other finite non-abelian groups $G$, provided appropriate explicit descriptions of independent units can be found.

\subsection*{Acknowledgements}
This article is an improved version of one chapter of the PhD thesis of the first-named author \cite[Chapter 3]{PhD-Ferri}, which itself was based on an early draft of this article.
The PhD studentship of the first-named author was funded by the University of Exeter; 
he was subsequently supported 
by an LMS Early Career Fellowship and then by EPSRC grant EP/W52265X/1.

The authors are grateful to Alex Bartel, Werner Bley, Nigel Byott, Frank Calegari, 
Tim Dokchitser, Cornelius Greither, 
Tommy Hofmann, Alexandre Maksoud, Andreas Nickel and Christian Wuthrich
for helpful comments and discussions. 
The authors would also like to thank two anonymous referees
for their careful reading of the manuscript.

For the purpose of open access, the authors have applied a Creative Commons Attribution (CC BY) licence 
to any author accepted manuscript version arising.

\subsection*{Conflict of interest statement}
On behalf of all authors, the corresponding author states that there is no conflict of interest.

\subsection*{Data availability statement}
Data sharing is not applicable to this article, as no datasets were generated or analysed during the present work.

\subsection*{Notation and conventions}
All rings are assumed to be associative and unital. 
All modules are assumed to be left modules unless otherwise  stated. 
We fix the following notation:

\medskip

\begin{tabular}{ll}
$S_{n}$ & the symmetric group on $n$ letters \\
$A_{n}$ & the alternating group on $n$ letters \\
$C_{n}$ & the cyclic group of order $n$ \\
$D_{2n}$ & the dihedral group of order $2n$\\
$\F_{q}$ & the finite field with $q$ elements, where $q$ is a prime power\\
$R^{\times}$ & the group of units of a ring $R$
\end{tabular}

\section{Leopoldt's conjecture and extensions of number fields}

Let $p$ be a prime number and let $L/K$ be an extension of number fields. 
We assume the notation introduced in the second paragraph of \S \ref{sec:intro}.
We have the following commutative diagram
\begin{equation}\label{diagram:ext-number-fields}
\begin{tikzcd}
\Q_{p} \otimes_{\Z} \mathcal{O}_{L}^{\times} \arrow{r}{\Lambda_{L,p}}  & 
\Q_{p} \otimes_{\Z_{p}} \prod_{w \in S_{p}(L)}U_{L_{w}}^{1}  \\
\Q_{p} \otimes_{\Z} \mathcal{O}_{K}^{\times} \arrow[hookrightarrow]{u}{}\arrow{r}{\Lambda_{K,p}} & \Q_{p} \otimes_{\Z_{p}} \prod_{v \in S_{p}(K)}U_{K_{v}}^{1} \arrow[hookrightarrow]{u}{},
\end{tikzcd}
\end{equation}
where the vertical arrows are induced by (diagonal) embeddings of units and are injective. 
From this, we immediately deduce the following well-known lemma.

\begin{lemma}\label{lem:defectgrows}
We have $\delta(K,p) \leq \delta(L,p)$. In particular, $\Leo(L,p)$ implies $\Leo(K,p)$.
\end{lemma}

Now suppose that $L/K$ is Galois and let $G=\Gal(L/K)$. 
The following two well-known lemmas are crucial to the results of this article.

\begin{lemma}
The maps $\lambda_{L,p}$ and $\Lambda_{L,p}$ are $G$-equivariant. 
\end{lemma}

\begin{proof}
For a finite place $v$ of $K$, let $S_{v}(L)$ denote the set of places of $L$ above $v$. 
Then for each $v$, the following maps in turn are easily seen to be $G$-equivariant:
the diagonal embedding $L \hookrightarrow \prod_{w \in S_{v}(L)} L_{w}$, 
its restriction $\mathcal{O}_{L}^{\times} \hookrightarrow \prod_{w \in S_{v}(L)} U_{L_{w}}$,
and the resulting canonical map 
$\Z_{p} \otimes_{\Z} \mathcal{O}_{L}^{\times} \rightarrow \prod_{w \in S_{v}(L)} U_{L_{w}}^{1}$.
Since $\prod_{w \in S_{p}(L)} U^{1}_{L_{w}} = \prod_{v \in S_{p}(K)} \prod_{w \in S_{v}(L)} U^{1}_{L_{w}}$,
it follows that $\lambda_{L,p}$ is also $G$-equivariant, and thus so is $\Lambda_{L,p}$.
\end{proof}

In particular, $\Lambda_{L,p}$ is a map of $\Q_{p}[G]$-modules and so $\mathcal{L}(L,p)$ is also a $\Q_{p}[G]$-module.
For a subgroup $H \leq G$, let $L^{H}$ be the subfield of $L$ fixed by $H$
and let $\mathcal{L}(L,p)^{H}$ denote the $H$-invariant subspace of $\mathcal{L}(L,p)$.

\begin{lemma}\label{lem:invariants-of-Leopold-kernel}
We have $\mathcal{L}(L,p)^{H} = \mathcal{L}(L^{H},p)$.
\end{lemma}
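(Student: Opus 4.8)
The plan is to identify $\mathcal{L}(L^H,p)$ with a subspace of $\mathcal{L}(L,p)$ via the injective vertical maps of diagram \eqref{diagram:ext-number-fields} applied to the extension $L/L^H$, and then show this subspace is exactly the $H$-invariant part. First I would recall that, by functoriality of $p$-adic completion and the commutative diagram \eqref{diagram:ext-number-fields} with $K$ replaced by $L^H$, we have $\mathcal{L}(L^H,p) = \mathcal{L}(L,p) \cap (\Q_p \otimes_{\Z} \mathcal{O}_{L^H}^{\times})$ inside $\Q_p \otimes_{\Z} \mathcal{O}_L^{\times}$, where the intersection makes sense because the left vertical arrow is injective. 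So the statement reduces to the identity
\[
\Q_p \otimes_{\Z} \mathcal{O}_{L^H}^{\times} \;=\; \bigl(\Q_p \otimes_{\Z} \mathcal{O}_L^{\times}\bigr)^{H}
\]
as subspaces of $\Q_p \otimes_{\Z} \mathcal{O}_L^{\times}$, together with the compatibility that the $G$-action on $\mathcal{L}(L,p)$ restricts to the one just described.

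Next I would prove that displayed identity. The inclusion $\mathcal{O}_{L^H}^{\times} \subseteq (\mathcal{O}_L^{\times})^{H}$ is clear, so after tensoring with the flat $\Z$-algebra $\Q_p$ we get $\Q_p \otimes_{\Z} \mathcal{O}_{L^H}^{\times} \subseteq (\Q_p \otimes_{\Z} \mathcal{O}_L^{\times})^{H}$. For the reverse inclusion, the cleanest route is to use the norm map: for $u \in \mathcal{O}_L^{\times}$ one has $N_{L/L^H}(u) = \prod_{h \in H} h(u) \in \mathcal{O}_{L^H}^{\times}$, and if $x \in (\Q_p \otimes_{\Z} \mathcal{O}_L^{\times})^{H}$ then applying $\mathrm{id} \otimes N_{L/L^H}$ fixes $x$ up to the factor $|H|$, i.e. $(\mathrm{id}\otimes N_{L/L^H})(x) = |H| \cdot x$; since $|H|$ is invertible in $\Q_p$ and the norm lands in $\Q_p \otimes_{\Z} \mathcal{O}_{L^H}^{\times}$, we conclude $x \in \Q_p \otimes_{\Z} \mathcal{O}_{L^H}^{\times}$. (Equivalently, one can average by $\frac{1}{|H|}\sum_{h\in H} h$, the standard trick that taking $H$-invariants commutes with $-\otimes_{\Z}\Q_p$ for a finite group $H$.)

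Finally I would check the compatibility of the two descriptions of the $G$-action so that the intersection $\mathcal{L}(L,p) \cap (\Q_p \otimes_{\Z} \mathcal{O}_{L^H}^{\times})$ really is $\mathcal{L}(L,p)^{H}$: this is immediate once we know $\mathcal{L}(L,p)$ is a $\Q_p[G]$-submodule of $\Q_p \otimes_{\Z} \mathcal{O}_L^{\times}$ (noted just before Lemma \ref{lem:invariants-of-Leopold-kernel}) and that the $\Q_p[H]$-module structure agrees with the one coming from the Galois action on $L/L^H$, which holds by construction. Combining, $\mathcal{L}(L,p)^{H} = \mathcal{L}(L,p) \cap \bigl(\Q_p \otimes_{\Z}\mathcal{O}_L^{\times}\bigr)^{H} = \mathcal{L}(L,p) \cap \bigl(\Q_p\otimes_{\Z}\mathcal{O}_{L^H}^{\times}\bigr) = \mathcal{L}(L^H,p)$.

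I do not expect a serious obstacle here; the only point requiring a little care is making precise that the kernel of $\Lambda_{L^H,p}$ is literally cut out inside $\mathcal{L}(L,p)$ by intersecting with $\Q_p\otimes_{\Z}\mathcal{O}_{L^H}^{\times}$. This needs the commutativity of \eqref{diagram:ext-number-fields} for $L/L^H$ together with injectivity of the right-hand vertical map on the target, which lets one transport the kernel condition along the inclusion; one should state this explicitly rather than leaving it implicit. Everything else is the standard fact that $H$-invariants commute with flat base change for finite $H$.
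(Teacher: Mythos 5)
Your proposal is correct and follows essentially the same route as the paper: reduce to the identity $\bigl(\Q_{p}\otimes_{\Z}\mathcal{O}_{L}^{\times}\bigr)^{H}=\Q_{p}\otimes_{\Z}\mathcal{O}_{L^{H}}^{\times}$ and then intersect with $\mathcal{L}(L,p)$ using the commutative diagram \eqref{diagram:ext-number-fields}. The only difference is that you spell out, via the norm/averaging argument with $|H|$ invertible in $\Q_{p}$, the step that the paper dispatches with a one-line ``observe that''.
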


\begin{proof}
We can and do assume without loss of generality that $H=G$, that is, $K=L^{H}$.
Observe that 
$(\Q_{p} \otimes_{\Z} \mathcal{O}_{L}^{\times})^{G} = \Q_{p} \otimes_{\Z} \mathcal{O}_{K}^{\times}$.
Thus from diagram \eqref{diagram:ext-number-fields} we have
\[
\mathcal{L}(L,p)^{G} 
= \mathcal{L}(L,p) \cap (\Q_{p} \otimes_{\Z} \mathcal{O}_{L}^{\times})^{G} 
= \mathcal{L}(L,p) \cap (\Q_{p} \otimes_{\Z} \mathcal{O}_{K}^{\times}) 
= \mathcal{L}(K,p).
\qedhere
\]
\end{proof}

\section{Central idempotents}\label{sec:central-idempotents}

\subsection{Characters and central idempotents}\label{subsec:chars-idems}
Let $G$ be a finite group and let $K$ be a field of characteristic $0$.
For a subgroup $H \leq G$, let $N_{H} = \sum_{h \in H} h$ denote the associated norm element.
Note that $e_{H} := |H|^{-1} N_{H}$ is an idempotent in the group algebra $K[G]$, and it is central 
if and only if $H$ is normal in $G$.
For a $K[G]$-module $M$, we have
\[
e_{H}M=N_{H}M=M^{H}:=\{ m \in M \mid hm=m \textrm{ for all } h \in H \}.
\] 

Let $\mathrm{Irr}_{K}(G)$ denote the set of characters attached to finite-dimensional $K$-valued irreducible representations of $G$. For $\chi \in \Irr_{K}(G)$, let $\ker\chi = \{ g \in G \mid \chi(g)=\chi(1) \}$ denote the kernel of $\chi$, and let 
$e_{\chi} = |G|^{-1}\chi(1)\sum_{g \in G}\chi(g^{-1})g$
denote the central primitive idempotent of $K[G]$ attached to $\chi$.

\begin{prop}\label{prop:equiv-vanishing}
Let $M$ be a $K[G]$-module. 
Then the following are equivalent.
\begin{enumerate}
\item $M=0$.
\item $L \otimes_{K} M = 0$ for any field extension $L/K$.
\item $e_{\chi}M=0$ for all $\chi \in \Irr_{K}(G)$.
\item $e_{\ker\chi}M=0$ for all $\chi \in \Irr_{K}(G)$.
\item $e_{\ker\chi}M=0$ for all $\chi \in \Irr_{\C}(G)$.
\end{enumerate}
\end{prop}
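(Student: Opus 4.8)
The plan is to prove the cycle of implications $(1)\Rightarrow(2)\Rightarrow(3)\Rightarrow(4)\Rightarrow(5)\Rightarrow(1)$, with most of the work being essentially formal manipulations of idempotents. The implications $(1)\Rightarrow(2)$ and $(2)\Rightarrow(3)$ are immediate: tensoring the zero module gives zero, and $e_\chi M$ is a $K$-subspace of $M$ (here one takes $L=K$ in (2), or simply notes $e_\chi M \subseteq M = 0$). For $(3)\Rightarrow(1)$, which I would fold in to shorten the cycle if convenient, one uses that $\sum_{\chi \in \Irr_K(G)} e_\chi = 1$ in $K[G]$, so $m = \sum_\chi e_\chi m = 0$ for every $m \in M$; this is the standard Wedderburn decomposition of the semisimple algebra $K[G]$.

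The genuinely new content is the interplay between the $e_\chi$ and the $e_{\ker\chi}$, i.e. the equivalences $(3)\Leftrightarrow(4)\Leftrightarrow(5)$. For $(4)\Rightarrow(3)$ (and likewise $(3)\Rightarrow(4)$), the key observation is the relation between a character's kernel idempotent and its primitive idempotent: since $\chi$ is trivial on $\ker\chi$, the idempotent $e_\chi$ factors through the quotient $G/\ker\chi$, which means $e_{\ker\chi}e_\chi = e_\chi = e_\chi e_{\ker\chi}$. Hence $e_\chi M \subseteq e_{\ker\chi}M$, so $e_{\ker\chi}M = 0$ forces $e_\chi M = 0$; this gives $(4)\Rightarrow(3)$. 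Conversely, $e_{\ker\chi} = \sum_{\psi} e_\psi$ where $\psi$ ranges over the irreducible characters of $G$ with $\ker\psi \supseteq \ker\chi$ (equivalently, the irreducible characters inflated from $G/\ker\chi$), because $e_{\ker\chi}$ acts as the identity on exactly those simple modules on which $\ker\chi$ acts trivially; so if $e_\psi M = 0$ for all $\psi \in \Irr_K(G)$ then certainly $e_{\ker\chi}M = 0$, giving $(3)\Rightarrow(4)$.

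For the equivalence of (4) and (5), the point is that the set of subgroups $\{\ker\chi : \chi \in \Irr_K(G)\}$ and the set $\{\ker\chi : \chi \in \Irr_\C(G)\}$ are literally the same collection of subgroups of $G$, so the families of idempotents $\{e_{\ker\chi}\}$ are identical in $K[G]$ regardless of which field one uses to index them. One direction is trivial since (5) involves possibly more characters but the kernels already all appear in (4)'s list; for the other direction, given $\chi \in \Irr_\C(G)$ one notes $\ker\chi$ equals the kernel of the $K$-character obtained from a Galois orbit sum, or more simply observes that every $\ker\chi$ with $\chi$ complex irreducible is the kernel of some $K$-irreducible character (e.g. an irreducible constituent of the restriction of scalars). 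In fact the cleanest route is to prove $(5)\Rightarrow(1)$ directly: over $L = \C$ (combined with $(1)\Leftrightarrow(2)$ applied to $L\otimes_K M$) one has $\sum_{\chi\in\Irr_\C(G)} e_\chi^{\C} = 1$ and $e_{\ker\chi}^{\C}e_\chi^{\C} = e_\chi^{\C}$ as above, so $e_{\ker\chi}M = 0$ for all complex $\chi$ implies $e_\chi^{\C}(L\otimes_K M) = 0$ for all complex $\chi$, hence $L \otimes_K M = 0$, hence $M = 0$.

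The main obstacle — really the only subtlety — is bookkeeping with fields: one must be careful that $e_{\ker\chi}$ is genuinely an element of $K[G]$ (it is, being $|\ker\chi|^{-1}N_{\ker\chi}$ with $\ker\chi \trianglelefteq G$ since kernels of characters are normal), that the relation $e_{\ker\chi}e_\chi = e_\chi$ holds over whichever field is in play, and that the two indexing sets of kernels coincide. I would set up the argument so that the only place $\C$ (or a splitting field) is needed is to guarantee $\sum_\chi e_\chi = 1$; all the $e_{\ker\chi}$-relations are field-independent. I expect the write-up to be short, with the cycle $(1)\Rightarrow(2)\Rightarrow(4)\Rightarrow(5)\Rightarrow(1)$ and a separate remark that $(3)$ slots in via $e_{\ker\chi}e_\chi = e_\chi$ in both directions.
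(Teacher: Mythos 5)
Your proposal is correct and follows essentially the same route as the paper: the Wedderburn decomposition $\sum_{\chi} e_{\chi} = 1$ in $K[G]$ for (i)$\Leftrightarrow$(iii), the identity $e_{\chi}e_{\ker\chi} = e_{\chi}$ for (iv)$\Rightarrow$(iii), and the coincidence of the sets of character kernels over $K$ and over an algebraically closed overfield for (iv)$\Leftrightarrow$(v). The only difference is cosmetic: your direct argument for (iii)$\Rightarrow$(iv) via $e_{\ker\chi} = \sum_{\psi \,:\, \ker\psi \supseteq \ker\chi} e_{\psi}$ is more than is needed, since (i)$\Rightarrow$(iv) is trivial and the cycle already closes through (i).
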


\begin{proof}
The equivalence of (i) and (ii) is clear.  The equivalence of (i) and (iii) follows immediately from 
the decomposition of $K[G]$-modules
$\smash{K[G] = \bigoplus_{\chi \in \Irr_{K}(G)} e_{\chi}K[G]}$.
We have that (iv) implies (iii) since $e_{\chi}e_{\ker\chi}=e_{\chi}$ for each $\chi \in \Irr_{K}(G)$.
Moreover, it is clear that (i) implies (iv).
The equivalence of (iv) and (v) follows easily from the equivalence of (i) and (ii), together with the fact that 
$\{ \ker\chi \mid \chi \in \Irr_{L}(G) \}$ is the same for any field
extension $L/K$ such that $L$ contains an algebraic closure of $K$. 
\end{proof}

\begin{corollary}\label{cor:no-faithful-char-reduction}
Suppose that $G$ has no faithful complex irreducible character and let $M$ be a $K[G]$-module.
Then $M=0$ if and only if $e_{N}M=0$ for every non-trivial normal subgroup $N \leq G$.
\end{corollary}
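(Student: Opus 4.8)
The plan is to obtain this as a direct consequence of Proposition \ref{prop:equiv-vanishing}, which already packages the vanishing of a $K[G]$-module in terms of the idempotents $e_{\ker\chi}$. The forward implication is immediate: if $M=0$ then $e_{N}M=0$ for every subgroup $N\leq G$. For the reverse implication, suppose $e_{N}M=0$ for every proper normal subgroup $N\leq G$. By the equivalence of (i) and (v) in Proposition \ref{prop:equiv-vanishing}, it suffices to prove that $e_{\ker\chi}M=0$ for every $\chi\in\Irr_{\C}(G)$.

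The key point is that the hypothesis on $G$ means precisely that $\ker\chi\neq\{1\}$ for every $\chi\in\Irr_{\C}(G)$. Hence, for every \emph{non-trivial} $\chi\in\Irr_{\C}(G)$, the subgroup $\ker\chi$ is a proper (and non-trivial) normal subgroup of $G$, so $e_{\ker\chi}M=0$ is part of the standing assumption. This settles all of the required vanishings except the one coming from the trivial character.

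The one case that needs a separate argument --- and the only real obstacle --- is $\chi=\1$, where $\ker\chi=G$, so that $e_{\ker\chi}M=e_{G}M=M^{G}$ and $G$ is not a proper subgroup of itself. To deal with this I would first observe that $G$ cannot be a finite simple group: a cyclic group of prime order has a faithful one-dimensional complex representation, and in a non-abelian simple group every non-trivial irreducible character is faithful because its kernel is a proper normal subgroup, hence trivial; either way such a $G$ would admit a faithful complex irreducible character, contrary to hypothesis (in particular $G\neq\{1\}$). Therefore $G$ possesses a non-trivial proper normal subgroup $N_{0}$, and since $N_{0}\leq G$ we get $M^{G}\subseteq M^{N_{0}}=e_{N_{0}}M=0$, using $e_{N_{0}}M=M^{N_{0}}$ and the assumption applied to $N_{0}$. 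Thus $e_{\ker\chi}M=0$ for all $\chi\in\Irr_{\C}(G)$, and Proposition \ref{prop:equiv-vanishing} yields $M=0$, as required.
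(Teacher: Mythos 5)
Your proof is correct and follows the route the paper intends: the corollary is stated there without proof as an immediate consequence of Proposition \ref{prop:equiv-vanishing}, and you deduce it precisely from the equivalence of (i) and (v), using that the no-faithful-character hypothesis forces every $\ker\chi$ to be non-trivial. Your separate treatment of the trivial character --- whose kernel is $G$ itself rather than a proper normal subgroup, handled by producing a non-trivial proper normal subgroup $N_{0}$ and using $M^{G}\subseteq M^{N_{0}}$ --- is a genuinely necessary supplement for the statement as literally worded, and is the one step the paper leaves unsaid.
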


\begin{proof}
Since $G$ has no faithful complex irreducible character, $\ker \chi \leq G$ is a non-trivial normal subgroup
for every $\chi \in \Irr_{\C}(G)$. Hence the non-trivial implication follows from Proposition~\ref{prop:equiv-vanishing}.
\end{proof}

\begin{remark}\label{rmk:existence-faithful-irr-char}
Gasch\"utz \cite{MR67115} gave a necessary and sufficient condition for a finite group to have 
a faithful complex irreducible character (see also \cite[Theorem 42.7]{MR1645304}).
In particular, if $G$ has a faithful complex irreducible character then its centre must be cyclic; 
the converse holds if $G$ is nilpotent (see \cite[Theorem (2.32), Problem (5.25)]{MR1280461}).
\end{remark}

\begin{corollary}\label{cor:equiv-vanishing-abelian}
Suppose that $G$ is a finite abelian group and let $M$ be a $K[G]$-module.
Then $M=0$ if and only if $e_{N}M=0$ for every subgroup $N \leq G$ with $G/N$ cyclic.
\end{corollary}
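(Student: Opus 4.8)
The plan is to deduce this from Proposition \ref{prop:equiv-vanishing}, specifically from the equivalence of (i) and (iv), together with the well-known description of the complex irreducible characters of a finite abelian group. Since $G$ is abelian, every $\chi \in \Irr_{\C}(G)$ is linear, i.e. a homomorphism $\chi \colon G \to \C^{\times}$, and hence $\ker\chi$ is a subgroup of $G$ with $G/\ker\chi$ cyclic (as it embeds into the finite subgroups of $\C^{\times}$, which are cyclic). This shows that the collection $\{\ker\chi \mid \chi \in \Irr_{\C}(G)\}$ is contained in the set of subgroups with cyclic quotient.

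Conversely, I would argue that every subgroup $N \leq G$ with $G/N$ cyclic arises as $\ker\chi$ for some $\chi \in \Irr_{\C}(G)$: if $G/N$ is cyclic of order $m$, choose a primitive $m$th root of unity $\zeta \in \C$ and a generator $\bar{g}$ of $G/N$, and define $\chi$ to be the composite $G \onto G/N \xrightarrow{\sim} \langle\zeta\rangle \hookrightarrow \C^{\times}$ sending $\bar{g} \mapsto \zeta$; then $\ker\chi = N$. Thus the set $\{\ker\chi \mid \chi \in \Irr_{\C}(G)\}$ is \emph{exactly} the set of subgroups of $G$ with cyclic quotient. The corollary then follows directly by rewriting condition (v) of Proposition \ref{prop:equiv-vanishing} using this identification. (One could equally invoke condition (iv), since for abelian $G$ we have $\Irr_{K}(G)$ and $\Irr_{\C}(G)$ giving the same set of kernels by the last part of the proof of Proposition \ref{prop:equiv-vanishing}, but using (v) is cleanest.)

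There is essentially no obstacle here; the only thing to be careful about is the direction showing that \emph{every} subgroup with cyclic quotient is a character kernel, which is the content of the explicit construction above and uses nothing beyond the structure of $\C^{\times}$. I would write the proof in two or three sentences: note that $G$ abelian forces all irreducible characters to be linear, identify $\{\ker\chi\}$ with the subgroups of cyclic quotient via the argument above, and invoke Proposition \ref{prop:equiv-vanishing}.
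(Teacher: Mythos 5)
Your proof is correct and follows essentially the same route as the paper: invoke the equivalence of (i) and (v) in Proposition \ref{prop:equiv-vanishing} together with the fact that $G/\ker\chi$ is cyclic for every $\chi \in \Irr_{\C}(G)$ when $G$ is abelian. The only (harmless) extra work is your proof of the reverse inclusion, that every subgroup with cyclic quotient is a character kernel; this is not needed, since the non-trivial implication only requires that each $\ker\chi$ lies in the given family of subgroups.
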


\begin{proof}
The non-trivial implication follows from Proposition~\ref{prop:equiv-vanishing} since
$G/\ker\chi$ is cyclic for all $\chi \in \Irr_{\C}(G)$.
\end{proof}

\subsection{Leopoldt's conjecture from subfields I}\label{subsec:Leo-from-subfields-I}
Despite its elementary proof, the authors were unable to locate the following result in the literature.

\begin{theorem}\label{thm:leopoldt-to-kernels-chars}
Let $L/K$ be a Galois extension of number fields and let $G=\Gal(L/K)$.
Let $p$ be a prime number.
Then
$\Leo(L,p)$ holds if and only if $\Leo(L^{\ker\chi},p)$ holds for all $\chi \in \Irr_{\C}(G)$.
\end{theorem}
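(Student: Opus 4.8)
The plan is to deduce Theorem \ref{thm:leopoldt-to-kernels-chars} from Lemma \ref{lem:invariants-of-Leopold-kernel} and Proposition \ref{prop:equiv-vanishing} by applying the latter to the Leopoldt kernel $\mathcal{L}(L,p)$, viewed as a $\Q_p[G]$-module. The point is that $\Leo(L,p)$ is equivalent to $\mathcal{L}(L,p)=0$, so we want a criterion for the vanishing of a $\Q_p[G]$-module in terms of its images under the idempotents $e_{\ker\chi}$.

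First I would recall that $\Leo(L,p)$ holds if and only if $\Lambda_{L,p}$ is injective on $\Q_p \otimes_\Z \mathcal{O}_L^\times$ (equivalently $\delta(L,p)=0$), which by definition of the Leopoldt kernel is the same as $\mathcal{L}(L,p)=0$. Since $L/K$ is Galois with group $G$, the discussion preceding Lemma \ref{lem:invariants-of-Leopold-kernel} shows $\mathcal{L}(L,p)$ is a $\Q_p[G]$-module, so we may take $K = \Q_p$ and $M = \mathcal{L}(L,p)$ in Proposition \ref{prop:equiv-vanishing}. By the equivalence of (i) and (v) in that proposition, $\mathcal{L}(L,p)=0$ if and only if $e_{\ker\chi}\mathcal{L}(L,p)=0$ for all $\chi \in \Irr_\C(G)$.

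Next I would identify $e_{\ker\chi}\mathcal{L}(L,p)$ with $\mathcal{L}(L^{\ker\chi},p)$. For any subgroup $H \leq G$, the remarks in \S\ref{subsec:chars-idems} give $e_H \mathcal{L}(L,p) = \mathcal{L}(L,p)^H$, and Lemma \ref{lem:invariants-of-Leopold-kernel} gives $\mathcal{L}(L,p)^H = \mathcal{L}(L^H,p)$. Applying this with $H = \ker\chi$ (which is a normal subgroup of $G$, so $e_{\ker\chi}$ is indeed a central idempotent and $L^{\ker\chi}/K$ is Galois) yields $e_{\ker\chi}\mathcal{L}(L,p) = \mathcal{L}(L^{\ker\chi},p)$. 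Therefore the condition "$e_{\ker\chi}\mathcal{L}(L,p)=0$ for all $\chi$" translates exactly into "$\mathcal{L}(L^{\ker\chi},p)=0$ for all $\chi$", i.e. "$\Leo(L^{\ker\chi},p)$ holds for all $\chi \in \Irr_\C(G)$", completing the proof.

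I do not anticipate a serious obstacle here: the argument is essentially a dictionary translation between the module-theoretic statement of Proposition \ref{prop:equiv-vanishing} and the field-theoretic language of Leopoldt's conjecture, with Lemma \ref{lem:invariants-of-Leopold-kernel} supplying the crucial bridge between $H$-invariants and subfields. The only points requiring a moment's care are that $\ker\chi$ is genuinely normal (so that taking $H$-invariants corresponds to a central idempotent and so that $L^{\ker\chi}/K$ is again Galois, which is not strictly needed but is reassuring), and that one may legitimately feed the $\Q_p[G]$-module $\mathcal{L}(L,p)$ into Proposition \ref{prop:equiv-vanishing} with the base field taken to be $\Q_p$ rather than $\Q$ — but the proposition is stated for an arbitrary field of characteristic $0$, so this is immediate.
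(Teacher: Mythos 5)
Your proposal is correct and is exactly the paper's argument: apply the equivalence of (i) and (v) in Proposition \ref{prop:equiv-vanishing} to the $\Q_p[G]$-module $\mathcal{L}(L,p)$, then use the identification $e_{\ker\chi}\mathcal{L}(L,p)=\mathcal{L}(L,p)^{\ker\chi}=\mathcal{L}(L^{\ker\chi},p)$ supplied by \S\ref{subsec:chars-idems} and Lemma \ref{lem:invariants-of-Leopold-kernel}. The paper simply cites these two results without spelling out the dictionary, so your write-up is a faithful (and slightly more detailed) version of the same proof.
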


\begin{proof}
This follows from Proposition~\ref{prop:equiv-vanishing} and 
Lemma~\ref{lem:invariants-of-Leopold-kernel}.
\end{proof}

As noted in the introduction, the following corollary sharpens \cite[Lemma 3.1]{MR1190214}, which 
has the additional assumption that $L$ contains no primitive $p$-th  root of unity, and thus, in particular, requires $p$ to be odd.

\begin{corollary}\label{cor:leopoldt-abelian-to-cyclic}
Let $L/K$ be an abelian extension of number fields and let $p$ be a prime number.
Then $\Leo(L,p)$ holds if and only if $\Leo(F,p)$ holds for every intermediate extension $F$
such that $F/K$ is cyclic.
\end{corollary}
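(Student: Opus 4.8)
The plan is to deduce Corollary \ref{cor:leopoldt-abelian-to-cyclic} directly from Theorem \ref{thm:leopoldt-to-kernels-chars} together with Corollary \ref{cor:equiv-vanishing-abelian} (or, equivalently, the observation already recorded in the proof of the latter that $G/\ker\chi$ is cyclic for every $\chi \in \Irr_{\C}(G)$ when $G$ is abelian). First I would note that since $L/K$ is abelian, $G = \Gal(L/K)$ is a finite abelian group, so by the fundamental theorem of Galois theory the intermediate fields $F$ with $K \subseteq F \subseteq L$ correspond bijectively to the subgroups $N \leq G$ via $F = L^{N}$, and under this correspondence $F/K$ is cyclic if and only if the quotient $G/N$ is cyclic.

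For the forward implication, suppose $\Leo(L,p)$ holds. By Lemma \ref{lem:defectgrows} (applied to the tower $F/K$ inside $L/K$, or rather to $L/F$), $\Leo(L,p)$ implies $\Leo(F,p)$ for every intermediate field $F$, in particular for those with $F/K$ cyclic; so this direction is immediate and does not even use abelianness. For the reverse implication, suppose $\Leo(F,p)$ holds for every intermediate $F$ with $F/K$ cyclic. Given $\chi \in \Irr_{\C}(G)$, the field $L^{\ker\chi}$ is an intermediate field whose Galois group over $K$ is $G/\ker\chi$, which is cyclic because $\chi$ is a one-dimensional character of the abelian group $G$ and hence faithful on $G/\ker\chi$, forcing $G/\ker\chi$ to embed in $\C^{\times}$ and thus be cyclic. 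Therefore $\Leo(L^{\ker\chi},p)$ holds for every $\chi \in \Irr_{\C}(G)$, and Theorem \ref{thm:leopoldt-to-kernels-chars} yields $\Leo(L,p)$.

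There is essentially no obstacle here: the corollary is a routine specialisation, and the only thing to be careful about is making sure the claimed equivalence ``$F/K$ cyclic $\iff$ $G/N$ cyclic'' is stated, and that every subgroup of the form $\ker\chi$ arises as some $N$ with cyclic quotient (which is exactly Corollary \ref{cor:equiv-vanishing-abelian} transported through Lemma \ref{lem:invariants-of-Leopold-kernel}) while conversely every $N$ with $G/N$ cyclic is of the form $\ker\chi$ for a suitable faithful character $\chi$ of $G/N$ inflated to $G$. The last point guarantees that the family of fields $\{L^{\ker\chi} : \chi \in \Irr_{\C}(G)\}$ and the family $\{F : F/K \text{ cyclic}\}$ coincide, so no intermediate cyclic field is missed. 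One could alternatively phrase the whole argument by citing Corollary \ref{cor:equiv-vanishing-abelian} applied to $M = \mathcal{L}(L,p)$: then $\mathcal{L}(L,p) = 0$ iff $e_{N}\mathcal{L}(L,p) = \mathcal{L}(L,p)^{N} = \mathcal{L}(L^{N},p) = 0$ for every $N$ with $G/N$ cyclic, using Lemma \ref{lem:invariants-of-Leopold-kernel}, which is exactly the assertion that $\Leo(L,p)$ holds iff $\Leo(F,p)$ holds for all intermediate $F$ with $F/K$ cyclic.
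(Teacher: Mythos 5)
Your proof is correct and follows exactly the paper's route: the paper likewise deduces the corollary either from Theorem \ref{thm:leopoldt-to-kernels-chars} (using that $G/\ker\chi$ is cyclic for every $\chi \in \Irr_{\C}(G)$ when $G$ is abelian) or from Corollary \ref{cor:equiv-vanishing-abelian} combined with Lemma \ref{lem:invariants-of-Leopold-kernel}. Your write-up simply makes explicit the Galois-correspondence bookkeeping that the paper leaves implicit.
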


\begin{proof}
This follows either from Theorem~\ref{thm:leopoldt-to-kernels-chars} 
or from Corollary~\ref{cor:equiv-vanishing-abelian} and 
Lemma~\ref{lem:invariants-of-Leopold-kernel}.
\end{proof}

We give just one example of how Corollary~\ref{cor:leopoldt-abelian-to-cyclic} can be applied to abelian sub-extensions of non-abelian extensions; the following result will be used in the proof of 
Theorem~\ref{thm:total-real-D8}.

\begin{corollary}\label{cor:D8-quartic-leopoldt}
Let $L/\Q$ be a $D_{8}$-extension and let $F$ be a non-Galois quartic subfield of $L$.
Let $p$ be a prime number. 
Then $\Leo(L,p)$ holds if and only if $\Leo(F,p)$ holds.
\end{corollary}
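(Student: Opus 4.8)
The plan is to deduce this from Corollary~\ref{cor:leopoldt-abelian-to-cyclic} by analysing the subgroup lattice of $D_8$ and the Galois correspondence. First I would recall the structure of $G = \Gal(L/\Q) \cong D_8$: it has a cyclic subgroup $C_4$, together with two Klein four-subgroups $V_1, V_2$, and it has exactly three subgroups of order $2$ that are normal (the centre $Z$, and one $C_2$ inside each $V_i$) plus four non-normal subgroups of order $2$. A non-Galois quartic subfield $F$ of $L$ corresponds to one of the non-normal subgroups $H$ of order $2$; fix such an $H$. Note that $F/\Q$ is \emph{not} Galois precisely because $H$ is not normal in $G$, and the maximal abelian subextension of $L/\Q$ containing $F$ is the compositum corresponding to the core of $H$, which is trivial, so $F$ itself is not contained in any abelian subextension of $L/\Q$. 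This is the mild obstacle: Corollary~\ref{cor:leopoldt-abelian-to-cyclic} applies to abelian extensions, so I cannot apply it directly to $L/\Q$, and I must instead work with an abelian subextension.

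The key observation is that the unique index-$2$ subgroup of $G$ containing $H$ — call it $V$, one of the two Klein four-groups, the one with $H \leq V$ — is abelian (it is $\cong C_2 \times C_2$), hence $L^{H}/L^{V}$ is an abelian (indeed biquadratic) extension, and $L^{H} = F$ since $[G:H] = 4 = [F:\Q]$ and $F \subseteq L$ with $H$ fixing $F$. Wait — I need $F = L^{H}$ to have the right relation. Actually the cleanest route: the extension $L/L^{V}$ is a Galois extension of number fields with group $V \cong C_2 \times C_2$, which is abelian. Apply Corollary~\ref{cor:leopoldt-abelian-to-cyclic} to $L/L^{V}$: $\Leo(L,p)$ holds iff $\Leo(E,p)$ holds for every intermediate field $E$ with $E/L^{V}$ cyclic. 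The intermediate fields of $L/L^V$ correspond to the three subgroups of order $2$ in $V$, each giving a quadratic (hence cyclic) extension of $L^{V}$; these three fields are $F = L^{H}$ together with two others, one of which is Galois over $\Q$. So this step reduces $\Leo(L,p)$ to $\Leo$ for three quartic fields, one of which is $F$.

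To finish I need to handle the other two quartic fields $L^{H'}, L^{H''}$ (the subgroups $H' = Z$ the centre, and $H''$ the other non-central order-$2$ subgroup of $V$). Since $Z \trianglelefteq G$, the field $L^{Z}$ is Galois over $\Q$ with group $G/Z \cong C_2 \times C_2$, hence abelian over $\Q$; by Theorem~\ref{thm:leo-abs-abelian}, $\Leo(L^{Z},p)$ holds unconditionally for all $p$. For $L^{H''}$: note $H$ and $H''$ are the two non-central involutions of $V$ and are conjugate in $G$ (this is the defining feature of $D_8$ — the reflections in each conjugacy class), so $L^{H}$ and $L^{H''}$ are conjugate fields and in particular isomorphic, whence $\Leo(L^{H},p) \iff \Leo(L^{H''},p)$. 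Combining: $\Leo(L,p)$ holds iff $\Leo(F,p) = \Leo(L^{H},p)$ holds and $\Leo(L^{H''},p)$ holds and $\Leo(L^{Z},p)$ holds; the last is automatic and the middle is equivalent to the first. Conversely $\Leo(L,p) \Rightarrow \Leo(F,p)$ is immediate from Lemma~\ref{lem:defectgrows}. The main thing to get right is the bookkeeping of which subgroup corresponds to the non-Galois quartic $F$ and verifying the conjugacy $H \sim H''$ and normality of $Z$ in $D_8$; once the lattice is laid out correctly the argument is a short application of the already-established corollaries.
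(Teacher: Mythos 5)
Your proof is correct and is essentially the paper's own argument: pass to the Klein four-subgroup $V$ containing the order-$2$ subgroup $H$ with $F=L^{H}$, apply Corollary~\ref{cor:leopoldt-abelian-to-cyclic} to the biquadratic extension $L/L^{V}$, and observe that the three quartic intermediate fields are a biquadratic field (Leopoldt holds by Theorem~\ref{thm:leo-abs-abelian}), $F$ itself, and a field isomorphic to $F$ via conjugacy of the two non-central involutions of $V$. (One slip in your lattice description: $D_{8}$ has only five subgroups of order $2$, of which only the centre is normal, but this does not affect the argument.)
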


\begin{proof}
There exists a subgroup $H \leq \Gal(L/\Q)$ such that $H \cong C_{2} \times C_{2}$
and $L^{H}$ is contained in $F$.
Let $H_{1},H_{2},H_{3}$ be the three subgroups of $H$ of order $2$. 
Then up to some relabelling, $L^{H_{1}}$ is a biquadratic field, $L^{H_{2}}=F$ and $L^{H_{3}}$ is isomorphic to $F$. Since $\Leo(L^{H_{1}},p)$ holds by Theorem~\ref{thm:leo-abs-abelian}, the desired result follows
from Corollary~\ref{cor:leopoldt-abelian-to-cyclic} applied to $L/L^{H}$.
\end{proof}

\subsection{Lower bounds for non-zero Leopoldt defects}\label{subsec:defect-lower-bound}
In \cite[Proposition A.1]{MR3158531}, Khare and Wintenberger prove that,
if $F/\Q$ is a totally real finite Galois extension, then for every prime number 
$p$ we have $\delta(F,p) \neq 1$. 
As explained in \cite[Appendix]{MR3158531}, this
result is a strengthening of an argument shown to them by Colmez.

The following result is a further strengthening that 
uses the fact that the Leopoldt kernel $\mathcal{L}(L,p)$ is a $\Q_{p}[G]$-module, 
and hence was already implicit in \cite{MR659620,MR1087978}.
Here we give a short and elementary proof.

\begin{theorem}\label{thm:lower-bound-on-defects}
Let $L/K$ be a non-abelian Galois extension of number fields where either $K=\Q$ 
or $K$ is an imaginary quadratic field.
Let $G=\Gal(L/K)$ and let $G'$ denote the commutator subgroup of $G$.
Let $e_{G'} = |G'|^{-1} \sum_{g' \in G'} g'$ and let $A = (1-e_{G'})\Q[G]$.
Let $p$ be a prime number and let $1 < d _{1} < \cdots < d_{s}$ be the distinct $\Q_{p}$-dimensions
of the simple modules of $\Q_{p} \otimes_{\Q} A$.
Then $\delta(L,p)=\sum_{i=1}^{s} k_{i}d_{i}$ for some $k_{1}, \ldots, k_{s} \in \Z_{\geq 0}$.
In particular, either $\delta(L,p)=0$ or $\delta(L,p) \geq d_{1}>1$.
\end{theorem}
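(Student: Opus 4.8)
The plan is to exploit the fact that, when $K=\Q$ or $K$ is imaginary quadratic, the rational representation $\Q\otimes_\Z\mathcal{O}_L^\times$ is completely understood: by Dirichlet's unit theorem together with Minkowski/Herbrand, the $\Q[G]$-module $V := \Q\otimes_\Z\mathcal{O}_L^\times$ is isomorphic to $\Q[G]\oplus\mathbf{1}$ in the totally real ($K=\Q$) case and to $\Q[G]$ in the imaginary quadratic case (with a slight adjustment for roots of unity, which contribute only to the trivial isotypic part and are irrelevant after tensoring with $\Q$). In either case, every isotypic component of $V$ other than the trivial one is a \emph{full} isotypic component of the regular representation $\Q[G]$. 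First I would make this structural statement precise and record that, writing $e_0$ for the central idempotent of $\Q[G]$ cutting out the trivial character (so $e_0 = e_G$ in the notation of \S\ref{sec:central-idempotents}, and $1 - e_0$ corresponds to the augmentation part), the non-trivial part $(1-e_0)V$ is isomorphic as a $\Q[G]$-module to $(1-e_0)\Q[G]$.

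Next I would base change to $\Q_p$. The Leopoldt kernel $\mathcal{L}(L,p)$ is a $\Q_p[G]$-submodule of $\Q_p\otimes_\Q V$. I would split off the trivial isotypic part: the trivial-character component of $\mathcal{L}(L,p)$ is, by Lemma \ref{lem:invariants-of-Leopold-kernel}, equal to $\mathcal{L}(K,p)$, which vanishes because $\Leo(K,p)$ holds by Theorem \ref{thm:leo-abs-abelian}. Hence $\mathcal{L}(L,p) = (1-e_0)\mathcal{L}(L,p)$ is a $\Q_p[G]$-submodule of $(1-e_0)\Q_p\otimes_\Q V \cong \Q_p\otimes_\Q A$, where $A = (1-e_{G'})\Q[G]$ — here I would need the small observation that $(1-e_0)\Q[G]$ and $(1-e_{G'})\Q[G]$ have the same simple modules of dimension $>1$, since the non-linear irreducible characters of $G$ are exactly those not factoring through $G^{\mathrm{ab}} = G/G'$, so the idempotents $1-e_0$ and $1-e_{G'}$ differ only in linear (one-dimensional) isotypic blocks, which contribute only simple modules of $\Q_p$-dimension $1$ after base change. (Strictly one should check that base change to $\Q_p$ does not split a one-dimensional $\Q[G]$-simple into something of smaller dimension, which is impossible, and that it does not merge blocks in a way affecting the claim, which it does not since we only care about the set of occurring dimensions.)

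The conclusion then follows from semisimplicity: $\Q_p[G]$ is a semisimple algebra (characteristic $0$), so $\Q_p\otimes_\Q A$ is a finite product of matrix algebras over division rings, its simple modules are precisely those of $\Q_p$-dimensions $d_1<\cdots<d_s$ (all $>1$ by construction), and \emph{any} $\Q_p[G]$-module that is a submodule of a direct sum of copies of $\Q_p\otimes_\Q A$ — in particular $\mathcal{L}(L,p)$, which sits inside $(1-e_0)\Q_p\otimes_\Q V$, a sum of copies of the simples occurring in $\Q_p\otimes_\Q A$ — is itself a direct sum of such simples. Therefore $\delta(L,p) = \dim_{\Q_p}\mathcal{L}(L,p) = \sum_{i=1}^s k_i d_i$ with $k_i \in \Z_{\geq 0}$ equal to the multiplicity of the $i$-th simple in $\mathcal{L}(L,p)$. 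The final sentence is immediate: if some $k_i>0$ then $\delta(L,p)\geq d_1>1$, and otherwise $\delta(L,p)=0$.

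The main obstacle — really the only non-formal point — is pinning down the $\Q[G]$-module structure of $\Q\otimes_\Z\mathcal{O}_L^\times$ precisely enough, and in particular being careful that it is the \emph{non-trivial} isotypic components that coincide with those of the regular representation (the trivial component has multiplicity $2$ in the totally real case, which is exactly why we must first peel it off using $\Leo(K,p)$). Everything after that is bookkeeping with central idempotents and the Artin--Wedderburn decomposition of the semisimple algebra $\Q_p[G]$; I would keep the exposition at the level of ``by Dirichlet's unit theorem and semisimplicity'' rather than spelling out the module isomorphisms in coordinates.
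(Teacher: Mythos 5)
There is a genuine gap, and it lies exactly at the one place where the theorem needs an arithmetic input beyond semisimplicity. You peel off only the \emph{trivial} isotypic component of $\mathcal{L}(L,p)$, using $\Leo(K,p)$, and then try to argue that what remains is a module over $\Q_p\otimes_\Q A=(1-e_{G'})\Q_p[G]$. But removing $e_G\mathcal{L}(L,p)$ only places $\mathcal{L}(L,p)$ inside $(1-e_G)\Q_p[G]\cdot\mathcal{L}(L,p)$, and the idempotents $1-e_G$ and $1-e_{G'}$ differ by the blocks attached to the \emph{non-trivial linear} characters of $G$. Your claimed patch --- that these blocks ``contribute only simple modules of $\Q_p$-dimension $1$'' --- is false (a Galois orbit of non-trivial linear characters $\chi$ gives a simple $\Q_p[G]$-module of dimension $[\Q_p(\chi):\Q_p]$, e.g.\ dimension $2$ for $G^{\mathrm{ab}}\supseteq C_3$ and $p\equiv 2\pmod 3$), and even if it were true your argument would only yield ``$\delta(L,p)$ is a sum of $d_i$'s and $1$'s'', which is vacuous and does not give $\delta(L,p)=0$ or $\delta(L,p)\geq d_1$. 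Nothing in your argument rules out a nonzero $\chi$-component of $\mathcal{L}(L,p)$ for a non-trivial linear $\chi$. The missing idea is to kill the whole $G'$-invariant part at once: by Lemma \ref{lem:invariants-of-Leopold-kernel}, $e_{G'}\mathcal{L}(L,p)=\mathcal{L}(L,p)^{G'}=\mathcal{L}(L^{G'},p)$, and this vanishes because $L^{G'}/K$ is \emph{abelian} with $K$ equal to $\Q$ or imaginary quadratic, so Theorem \ref{thm:leo-abs-abelian} applies to the full maximal abelian subextension, not merely to $K$ itself. Once $e_{G'}\mathcal{L}(L,p)=0$ is known, $\mathcal{L}(L,p)=(1-e_{G'})\mathcal{L}(L,p)$ is genuinely a module over the semisimple algebra $\Q_p\otimes_\Q A$, and the dimension count is immediate; this two-line argument is the paper's entire proof.

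A secondary remark: the opening appeal to the explicit $\Q[G]$-structure of $V=\Q\otimes_\Z\mathcal{O}_L^\times$ is both unnecessary (semisimplicity of $\Q_p[G]$ already guarantees that any module annihilated by $e_{G'}$ is a direct sum of simples of $\Q_p\otimes_\Q A$ --- the paper deliberately avoids any use of the module structure of the units) and incorrect as stated. For $L/\Q$ totally real Galois one has $V\oplus\mathbf{1}\cong\Q[G]$, so $V$ is the augmentation ideal of dimension $|G|-1$; the trivial character occurs in $V$ with multiplicity $0$, not $2$, and $V\not\cong\Q[G]\oplus\mathbf{1}$. For $K=\Q$ with $L$ not totally real (a case the theorem also covers), $V\oplus\mathbf{1}$ is induced from a decomposition group of order $2$, so the non-trivial isotypic components of $V$ are in general \emph{not} full isotypic components of the regular representation either.
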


\begin{proof}
By Lemma~\ref{lem:invariants-of-Leopold-kernel} and Theorem~\ref{thm:leo-abs-abelian}, we have
$e_{G'}\mathcal{L}(L,p) = \mathcal{L}(L,p)^{G'} = \mathcal{L}(L^{G'},p) = 0$.
Thus $\mathcal{L}(L,p)=(1-e_{G'})\mathcal{L}(L,p)$ is in fact a module over 
$\Q_{p} \otimes_{\Q} A = (1-e_{G'})\Q_{p}[G]$.
Since $\Q_{p} \otimes_{\Q} A$ is a semisimple $\Q_{p}$-algebra,
$\mathcal{L}(L,p)$ is a direct sum of simple $\Q_{p} \otimes_{\Q} A$-modules, 
each of which has $\Q_{p}$-dimension $d_{i}$ for some $1 \leq i \leq s$.
The result now follows easily.
\end{proof}

The following corollary, which was already implicit in \cite{MR659620,MR769790,MR1004134,MR1087978},
has the advantage that the degrees $d_{i}$ can be read off from the complex character table of $G$ and are independent of the choice of prime number $p$.

\begin{corollary}\label{cor:lower-bound-on-defects}
Let $L/K$ be a non-abelian Galois extension of number fields where either $K=\Q$ 
or $K$ is an imaginary quadratic field.
Let $G=\Gal(L/K)$ and let $1 < d_{1} < \cdots < d_{s}$ be the distinct degrees of the non-linear complex irreducible characters of $G$.
Let $p$ be a prime number. 
Then $\delta(L,p)=\sum_{i=1}^{s} k_{i}d_{i}$ for some $k_{1}, \ldots, k_{s} \in \Z_{\geq 0}$.
In particular, either $\delta(L,p)=0$ or $\delta(L,p) \geq d_{1}>1$.
\end{corollary}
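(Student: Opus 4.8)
The plan is to deduce Corollary~\ref{cor:lower-bound-on-defects} directly from Theorem~\ref{thm:lower-bound-on-defects} by identifying the algebra $A = (1-e_{G'})\Q[G]$ and its simple modules after base change to $\Q_{p}$. First I would recall that $e_{G'}$ is the central idempotent of $\Q[G]$ cutting out the part of the group algebra on which $G'$ acts trivially, so that $e_{G'}\Q[G] \cong \Q[G/G']$ and hence $A = (1-e_{G'})\Q[G]$ is the Wedderburn complement, i.e.\ the product of those simple factors of $\Q[G]$ on which $G'$ acts nontrivially. Under the Wedderburn decomposition $\C[G] \cong \prod_{\chi \in \Irr_{\C}(G)} M_{\chi(1)}(\C)$, the factors killed by $e_{G'}$ are exactly those indexed by characters $\chi$ with $G' \subseteq \ker\chi$, which are precisely the linear (degree-one) characters, since $G/\ker\chi$ is abelian iff $G' \subseteq \ker\chi$ and a quotient-abelian irreducible character is linear. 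Therefore $\C \otimes_{\Q} A \cong \prod_{\chi \text{ non-linear}} M_{\chi(1)}(\C)$, and the set of $\C$-dimensions of the simple modules of $\C \otimes_{\Q} A$ is exactly $\{\chi(1) : \chi \in \Irr_{\C}(G),\ \chi(1) > 1\} = \{d_{1}, \ldots, d_{s}\}$.

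Next I would pass from $\C$ to $\Q_{p}$. The point is that the \emph{set} of integers $\{d_{1}, \ldots, d_{s}\}$ that appears in Theorem~\ref{thm:lower-bound-on-defects}, namely the distinct $\Q_{p}$-dimensions of the simple modules of $\Q_{p} \otimes_{\Q} A$, coincides with the set of distinct degrees of the non-linear complex irreducible characters. For this I would argue as follows: a simple $\Q_{p} \otimes_{\Q} A$-module $S$ lies in a single Wedderburn factor, a matrix algebra $M_{m}(D)$ over a division ring $D$ with centre a finite extension $E/\Q_{p}$; its $\Q_{p}$-dimension is $m \cdot \dim_{\Q_{p}} D = m \cdot (\dim_E D) \cdot [E:\Q_{p}]$. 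On the other hand, after scalar extension from $\Q_{p}$ to $\C_{p}$ (or $\overline{\Q_{p}} \cong \C$ as abstract fields) this factor becomes a product of $[E:\Q_{p}]$ copies of $M_{m \sqrt{\dim_E D}}(\overline{\Q_{p}})$, so each corresponds to $\sqrt{\dim_E D}\cdot m$ Galois-conjugate absolutely irreducible characters, each of degree $\sqrt{\dim_E D}\cdot m$, and these are among the $d_{i}$ (they are non-linear because they come from a factor of $A$, not of $e_{G'}\Q[G]$). Summing degrees, $\dim_{\Q_{p}} S = [E:\Q_{p}] \cdot (\sqrt{\dim_E D}\,m)^2 / \sqrt{\dim_E D} $... here I would instead avoid the messy bookkeeping and simply invoke the cleaner fact that $\C \otimes_{\Q_{p}} (\Q_{p}\otimes_{\Q} A) \cong \C \otimes_{\Q} A$, whose simple-module dimensions are exactly the $d_{i}$; since for a semisimple algebra over a field the multiset of absolutely-irreducible dimensions after extending to an algebraically closed field determines, and is determined by, the dimensions of the rational simple modules up to the standard Schur-index/splitting-field combinatorics, and in any case every $\Q_{p}$-simple module of $\Q_{p}\otimes_{\Q} A$ becomes, over $\C$, a sum of characters all of the same degree lying in $\{d_{1},\ldots,d_{s}\}$ --- so each $\Q_{p}$-simple module of $\Q_{p}\otimes_{\Q} A$ has $\Q_{p}$-dimension an integer multiple of some $d_{i}$, and conversely each $d_{i}$ occurs. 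Hence the sum-of-$d_{i}$ conclusion of Theorem~\ref{thm:lower-bound-on-defects} rephrases verbatim in terms of the complex character degrees.

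Finally I would assemble the statement: by Theorem~\ref{thm:lower-bound-on-defects}, $\delta(L,p) = \sum_{i=1}^{s} k_i' d_i'$ where the $d_i'$ are the $\Q_{p}$-dimensions of the simple $\Q_{p}\otimes_{\Q}A$-modules; by the previous paragraph each such $d_i'$ is itself a non-negative integer combination of the complex character degrees $d_1, \ldots, d_s$ (in fact a multiple of one of them), so $\delta(L,p) = \sum_{i=1}^{s} k_i d_i$ for suitable $k_i \in \Z_{\geq 0}$. The final sentence, that $\delta(L,p)=0$ or $\delta(L,p)\geq d_1 > 1$, is then immediate from $d_1 > 1$ (every $d_i$ is the degree of a non-linear character, hence at least $2$) and the fact that any non-empty such sum is at least $d_1$.

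I expect the main obstacle to be the clean translation between $\Q_{p}$-dimensions of rational simple modules and complex character degrees in the middle paragraph: one must be careful that a $\Q_{p}$-simple module can have dimension a proper multiple of a character degree (because of nontrivial local Schur indices and residue/ramification contributions in $[E:\Q_{p}]$), so the passage is genuinely from "$\sum k_i' d_i'$ with $d_i'$ the local dimensions" to "$\sum k_i d_i$ with $d_i$ the character degrees", not an identification of the two lists. The correct and economical way around this is to observe that each simple $\Q_{p}\otimes_{\Q}A$-module, viewed inside $\C\otimes_{\Q}A$, decomposes as a direct sum of absolutely irreducible modules all of whose dimensions lie in $\{d_1,\ldots,d_s\}$, so its $\Q_{p}$-dimension is a $\Z_{\geq 0}$-combination of the $d_i$; everything else is bookkeeping. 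It would also be worth remarking explicitly that the non-abelian hypothesis on $G$ guarantees $s \geq 1$ so that the $d_i$ exist, and that $A \neq 0$, matching the hypotheses of Theorem~\ref{thm:lower-bound-on-defects}.
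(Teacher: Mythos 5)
Your argument is correct, and it reaches the corollary by a genuinely different (though closely related) route from the paper's. You deduce the result formally from the \emph{statement} of Theorem \ref{thm:lower-bound-on-defects}: each simple $\Q_{p}\otimes_{\Q}A$-module, after extension of scalars to $\C_{p}\cong\C$, decomposes as a direct sum of absolutely irreducible modules whose dimensions all lie in $\{d_{1},\dots,d_{s}\}$ (its $\Q_{p}$-dimension is in fact $m_{\chi}\,[\Q_{p}(\chi):\Q_{p}]\,\chi(1)$, a multiple of a single $d_{i}$), so every local dimension $d_{i}'$ appearing in the theorem is a $\Z_{\geq 0}$-combination of the complex degrees and the conclusion rewrites as claimed. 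The paper instead reruns the \emph{proof} of the theorem with $\C_{p}$ in place of $\Q_{p}$: it notes that $\C_{p}\otimes_{\Q_{p}}\mathcal{L}(L,p)=\C_{p}\otimes_{\Q_{p}}(1-e_{G'})\mathcal{L}(L,p)$ is a module over $(1-e_{G'})\C_{p}[G]$, whose simple modules have dimensions exactly the $d_{i}$ via a fixed isomorphism $\C\cong\C_{p}$, and that $\delta(L,p)=\dim_{\C_{p}}\C_{p}\otimes_{\Q_{p}}\mathcal{L}(L,p)$; this sidesteps all Schur-index and splitting-field bookkeeping. What your route buys is an explicit comparison between the local dimensions of Theorem \ref{thm:lower-bound-on-defects} and the character degrees of the corollary, which is precisely the point illustrated by the example that follows in the paper; what the paper's route buys is brevity. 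Your identification of $(1-e_{G'})\C[G]$ with the product of matrix factors indexed by the non-linear characters is exactly right, as are your remarks that the non-abelian hypothesis guarantees $s\geq 1$ and $A\neq 0$. The only blemish is cosmetic: the abandoned dimension count in your middle paragraph should simply be deleted in favour of the clean decomposition argument you settle on.
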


\begin{proof}
The same reasoning as in the proof of Theorem~\ref{thm:lower-bound-on-defects} shows that 
$\C_{p} \otimes_{\Q_{p}} \mathcal{L}(L,p) = \C_{p} \otimes_{\Q_{p}} (1-e_{G'}) \mathcal{L}(L,p)$ 
is in fact a module over $\C_{p} \otimes_{\Q} A = (1-e_{G'})\C_{p}[G]$.
By fixing a field isomorphism $\C \cong \C_{p}$, we see that each simple module
of $(1-e_{G'})\C_{p}[G]$ is of $\C_{p}$-dimension $d_{i}$ for some $1 \leq i \leq s$.
The result now follows from the observations that 
$\delta(L,p) = \dim_{\C_{p}} \C_{p} \otimes_{\Q_{p}} \mathcal{L}(L,p)$ and that 
$\C_{p} \otimes_{\Q_{p}} \mathcal{L}(L,p)$ is a direct sum of simple  $(1-e_{G'})\C_{p}[G]$-modules.
\end{proof}

\begin{example}
Let $p$ be an odd prime number and let $G$ be a non-abelian group of order $p^{3}$. 
There are precisely two possibilities for $G$ up to isomorphism, 
both of which have the same complex character table. 
In particular, $|G'|=p$ and every non-linear complex irreducible character of $G$ has degree
$p$ and character field $\Q(\zeta_{p})$, where $\zeta_{p}$ denotes a primitive $p$-th  root of unity. 
Now let $L/\Q$ be a Galois extension with $\Gal(L/\Q) \cong G$. 
Let $\ell$ be any prime number and let $m=[\Q_{\ell}(\zeta_{p}):\Q_{\ell}]$
(note that $m=p-1$ if $\ell=p$).
Then each simple $(1-e_{G'})\Q_{\ell}[G]$-module has $\Q_{\ell}$-dimension equal to 
$pm$ and thus 
Theorem~\ref{thm:lower-bound-on-defects} tells us that $\delta(L,\ell)=pmk$ for some $k \in \Z_{\geq 0}$.
By contrast, Corollary~\ref{cor:lower-bound-on-defects} tells us that $\delta(L,\ell)=pn$ for some $n \in \Z_{\geq 0}$,
since every non-linear complex irreducible character of $G$ is of degree $p$.
\end{example}

\section{Relations among idempotents in group algebras}

\subsection{Idempotent relations and generalisations}\label{subsec:idempotent-defs}
The following underpins many of the results in the remainder of this article.

\begin{definition}\label{def:idem-relation}
Let $G$ be a finite group and let $\mathcal{H}$ be a set of subgroups of $G$. 
\begin{enumerate}
\item An \emph{idempotent relation} with respect to $\mathcal{H}$ is an equality in $\Q[G]$ of the form
\[
0 = \sum_{H \in \mathcal{H}} a_{H} e_{H}
\]
with $a_{H} \in \Q$. 
Such a relation is said to be \emph{useful} if $1 \in \mathcal{H}$ and $a_{1} \neq 0$.
\item
A \emph{generalised useful idempotent relation} with respect to $\mathcal{H}$
is an equality in $\Q[G]$ of the form
\begin{equation*}\label{eq:special-norm-relation}
1= \sum_{1 \neq H \in \mathcal{H}} a_{H}e_{H}
\end{equation*}
with $a_{H} \in \Q[G]$.
\end{enumerate}
\end{definition}

\begin{remark}
Note that in the definition of generalised useful idempotent relation, it does not make any difference
whether $1 \in \mathcal{H}$ or $1 \notin \mathcal{H}$.
Thus every useful idempotent relation with respect to $\mathcal{H}$
gives rise to a generalised useful idempotent relation with respect to $\mathcal{H}$, 
after a possible rescaling of coefficients.
\end{remark}

\begin{remark}
Idempotent relations have been studied, either implicitly or explicitly, 
in numerous articles, including 
\cite{MR0259105,MR790953,MR873879,MR1000113,MR1269254,MR1473884,MR2032439}.
We shall discuss their connection with Brauer relations in \S 
\ref{subsec:brauer-vs-idempotent-relations}.
The connection between generalised useful idempotent relations and 
\emph{norm relations}, as defined in \cite[Definition~2.1]{MR4440537}, is discussed
in Remark~\ref{rmk:compare-GUIR-norm-relations}. 
\end{remark}

\subsection{The vanishing of modules over group algebras}
Let $K$ be a field of characteristic $0$.
Definition~\ref{def:idem-relation} is in part motivated by the following elementary results.

\begin{prop}\label{prop:norm-relation-triv-mod}
Let $G$ be a finite group and suppose that $G$ has a generalised useful idempotent relation
with respect to a set of subgroups $\mathcal{H}$.
Let $M$ be a $K[G]$-module.
Then $M=0$ if and only if $e_{H}M=0$ for every $H \in \mathcal{H}$ with $H \neq 1$.
\end{prop}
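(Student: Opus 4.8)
The plan is to exploit the generalised useful idempotent relation $1 = \sum_{1 \neq H \in \mathcal{H}} a_H e_H$ directly, using the characterisation $e_H M = M^H$ noted in \S\ref{subsec:chars-idems}. The forward implication is trivial: if $M = 0$ then $e_H M \subseteq M$ forces $e_H M = 0$ for every $H$. So the content is the reverse implication.

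For the reverse implication, suppose $e_H M = 0$ for every $H \in \mathcal{H}$ with $H \neq 1$. Let $m \in M$ be arbitrary. Applying the identity $1 = \sum_{1 \neq H \in \mathcal{H}} a_H e_H$ (an equality of elements of $K[G]$, after extending scalars from $\Q[G]$ to $K[G]$ along $\Q \hookrightarrow K$, which is legitimate since $\mathrm{char}\, K = 0$) to $m$, we get
\[
m = 1 \cdot m = \sum_{1 \neq H \in \mathcal{H}} a_H e_H m = \sum_{1 \neq H \in \mathcal{H}} a_H (e_H m).
\]
Each term $e_H m$ lies in $e_H M = 0$, so $a_H(e_H m) = 0$ for every $H$, whence $m = 0$. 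Since $m$ was arbitrary, $M = 0$.

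The one point requiring a word of care is that the relation is stated in $\Q[G]$ whereas $M$ is a $K[G]$-module; I would note at the outset that $\Q[G] \subseteq K[G]$ canonically (as $\mathrm{char}\, K = 0$), so the relation is equally valid in $K[G]$ and the coefficients $a_H \in \Q[G]$ act on $M$. There is no real obstacle here; the proof is essentially a one-line manipulation once the relation is written down. If anything, the only subtlety worth flagging is that the $a_H$ are group-algebra elements rather than scalars, so one should write $a_H(e_H m)$ rather than pull $a_H$ out as a number — but since $e_H M = 0$ is a submodule statement, $a_H$ annihilates it regardless.

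I would present this as a short self-contained argument, perhaps four or five lines, with the display above as the heart of it, and a brief parenthetical remark that the same argument shows more generally that any $K[G]$-submodule or quotient on which all the $e_H$ vanish must be zero.
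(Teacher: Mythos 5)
Your proof is correct and is essentially identical to the paper's: both apply the relation $1=\sum_{1\neq H\in\mathcal{H}}a_He_H$ to an arbitrary element of $M$ and use $e_Hm\in e_HM=0$. Your added remarks about viewing the relation inside $K[G]$ and about the $a_H$ being group-algebra elements are correct points of care that the paper leaves implicit.
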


\begin{proof}
By hypothesis there exists a relation of the form 
$1= \sum_{1 \neq H \in \mathcal{H}} a_{H}e_{H}$ with $a_{H} \in \Q[G]$.
Suppose that $e_{H}M=0$ for every $H \in \mathcal{H}$ with $H \neq 1$. Let $x \in M$.
Then for every $H \in \mathcal{H}$ with $H \neq 1$ we have $e_{H}x \in e_{H}M=0$,
so $x = 1\cdot x = \sum_{1 \neq H \in \mathcal{H}} a_{H}e_{H}x=0$ and hence $M=0$.
The converse is trivial.
\end{proof}

\begin{corollary}\label{cor:norm-sub-relation-triv-mod}
Let $G$ be a finite group and suppose that $G$ has a generalised useful idempotent relation
with respect to a set of subgroups $\mathcal{H}$.
Let $\mathcal{I} \subseteq \mathcal{H}$ be such that $1 \notin \mathcal{I}$ and for every $H \in \mathcal{H}$ there exist $I \in \mathcal{I}$ and $g \in G$ such that $gIg^{-1} \leq H$.
Let $M$ be a $K[G]$-module.
Then $M=0$ if and only if $e_{I}M=0$ for every $I \in \mathcal{I}$.  
\end{corollary}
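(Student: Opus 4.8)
The plan is to reduce Corollary \ref{cor:norm-sub-relation-triv-mod} to Proposition \ref{prop:norm-relation-triv-mod} by showing that the vanishing condition on the idempotents $e_I$ for $I \in \mathcal{I}$ propagates to all $H \in \mathcal{H}$. The one direction is trivial: if $M = 0$ then certainly $e_I M = 0$ for every $I \in \mathcal{I}$. For the converse, suppose $e_I M = 0$ for all $I \in \mathcal{I}$; by Proposition \ref{prop:norm-relation-triv-mod} it suffices to prove $e_H M = 0$ for every $H \in \mathcal{H}$ with $H \neq 1$.

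So fix such an $H$ and, by hypothesis, choose $I \in \mathcal{I}$ and $g \in G$ with $g I g^{-1} \leq H$. First I would observe the two elementary facts about norm elements and idempotents: if $I' \leq H$ then $N_H = N_{H/I'} \cdot N_{I'}$ in an appropriate sense — more precisely, choosing left coset representatives $t_1, \dots, t_r$ for $I'$ in $H$, we have $N_H = \bigl(\sum_j t_j\bigr) N_{I'}$, hence $e_H = |H|^{-1}\bigl(\sum_j t_j\bigr) N_{I'} = |I'|\,|H|^{-1}\bigl(\sum_j t_j\bigr) e_{I'}$, so $e_H \in \Q[G] e_{I'}$; applying this with $I' = g I g^{-1}$ gives $e_H = x\, e_{gIg^{-1}}$ for some $x \in \Q[G]$. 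Second, $e_{gIg^{-1}} = g e_I g^{-1}$ since conjugation by $g$ permutes $I$ and hence fixes $N_I$ up to the relabelling, giving $N_{gIg^{-1}} = g N_I g^{-1}$. Combining, $e_H = x g e_I g^{-1}$.

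Now for any $m \in M$ we have $e_I (g^{-1} m) \in e_I M = 0$, so $e_H m = x g\, e_I (g^{-1} m) = 0$; thus $e_H M = 0$. Since $H$ was an arbitrary nontrivial member of $\mathcal{H}$, Proposition \ref{prop:norm-relation-triv-mod} yields $M = 0$, completing the proof.

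I do not anticipate a genuine obstacle here; the argument is purely formal once the two identities $e_H \in \Q[G] e_{I'}$ for $I' \leq H$ and $e_{gI'g^{-1}} = g e_{I'} g^{-1}$ are in hand. The only point requiring a little care is getting the coset-decomposition identity for $N_H$ right (and noting it does not matter that $e_H$ itself need not be central, since we only ever multiply on the appropriate side), but this is routine and can be stated in a single line.
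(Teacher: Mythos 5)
Your argument is correct and is essentially identical to the paper's proof: both reduce to Proposition \ref{prop:norm-relation-triv-mod} via the left-coset decomposition $e_{H} = \bigl([H:gIg^{-1}]^{-1}\sum_{j} t_{j}\bigr)\, g e_{I} g^{-1}$ together with $e_{gIg^{-1}} = g e_{I} g^{-1}$. No gaps; nothing further to add.
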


\begin{proof}
Suppose that $e_{I}M=0$ for every $I \in \mathcal{I}$.
Let $H \in \mathcal{H}$ with $H \neq 1$. 
Let $g \in G$ and let $I \in \mathcal{I}$ such that $gIg^{-1} \leq H$.
Then $ge_{I}g^{-1}=e_{gIg^{-1}}$ and
\[
e_{H} = \left( \frac{1}{[H : {gIg^{-1}}]} \sum_{h \in H /{gIg^{-1}}} h \right) ge_{I}g^{-1},
\]
where $\sum_{h \in H /{gIg^{-1}}}$ denotes the sum over any set of left coset representatives of $gIg^{-1}$ in $H$.
Let $x \in M$. Then $e_{I}g^{-1}x \in e_{I}M=0$ and so $e_{H}x=0$. Thus $e_{H}M=0$.
Therefore $e_{H}M=0$ for all $H \in \mathcal{H}$ with $H \neq 1$,
and so $M=0$ by Proposition~\ref{prop:norm-relation-triv-mod}.
The converse is trivial.
\end{proof}

\subsection{Some explicit idempotent relations}\label{subsec:explicit-ir}
The following results were proven by Accola \cite{MR0259105};
Proposition~\ref{prop:relation-ii} is a slight improvement due to Kani \cite[\S 3]{MR790953}.
We include the short proofs for the convenience of the reader. 

\begin{prop}\cite{MR0259105}\label{prop:relation-i}
Suppose that $H_{1},\ldots,H_{t}$ are subgroups of a finite group $G$ such that 
$G=H_{1}\cup \cdots \cup H_{t}$. Then in $\Q[G]$ we have
\[
|G|e_{G}
=
\sum_{s=1}^t(-1)^{s+1}
\sum_{1\leq i_{1} < \cdots < i_{s} \leq t}
|H_{i_{1}}\cap \cdots \cap H_{i_{s}}| e_{H_{i_{1}}\cap \cdots \cap H_{i_{s}}}.
\]
\end{prop}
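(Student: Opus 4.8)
The plan is to prove Proposition \ref{prop:relation-i} by an inclusion--exclusion argument carried out at the level of the regular representation, or equivalently by evaluating both sides on an arbitrary $g \in G$. First I would recall that for a subgroup $H \leq G$, the element $|H|e_H = N_H = \sum_{h \in H} h$, so the asserted identity is really the purely combinatorial statement
\[
N_G = \sum_{s=1}^{t} (-1)^{s+1} \sum_{1 \leq i_1 < \cdots < i_s \leq t} N_{H_{i_1} \cap \cdots \cap H_{i_s}}
\]
in $\Z[G]$, using that $H_{i_1} \cap \cdots \cap H_{i_s}$ is itself a subgroup and $N_{H_{i_1}} \cdots$ does not appear — only the norm of the intersection does. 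To verify an equality in $\Z[G]$ it suffices to compare, for each fixed $g \in G$, the coefficient of $g$ on both sides.

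The key computation is then the following. On the left-hand side, the coefficient of $g$ is $1$ for every $g \in G$ (since $N_G = \sum_{g \in G} g$). On the right-hand side, the coefficient of $g$ in $N_{H_{i_1} \cap \cdots \cap H_{i_s}}$ is $1$ if $g \in H_{i_1} \cap \cdots \cap H_{i_s}$ and $0$ otherwise. So for fixed $g$, letting $J(g) = \{ i \in \{1,\ldots,t\} \mid g \in H_i \}$, the coefficient of $g$ on the right-hand side is
\[
\sum_{s=1}^{t} (-1)^{s+1} \binom{|J(g)|}{s} = -\sum_{s=1}^{|J(g)|} (-1)^{s} \binom{|J(g)|}{s} = 1 - \sum_{s=0}^{|J(g)|} (-1)^s \binom{|J(g)|}{s} = 1 - (1-1)^{|J(g)|}.
\]
Here I have used that $g \in H_{i_1} \cap \cdots \cap H_{i_s}$ precisely when $\{i_1,\ldots,i_s\} \subseteq J(g)$, so the number of index tuples contributing is $\binom{|J(g)|}{s}$. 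Since $G = H_1 \cup \cdots \cup H_t$, every $g \in G$ lies in at least one $H_i$, so $|J(g)| \geq 1$ and therefore $(1-1)^{|J(g)|} = 0$. Hence the coefficient of $g$ on the right-hand side equals $1$, matching the left-hand side, and the identity follows.

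The only point requiring a little care — and the closest thing to an obstacle — is the hypothesis $G = H_1 \cup \cdots \cup H_t$: it is exactly what guarantees $|J(g)| \geq 1$ for all $g$, hence that the binomial-theorem cancellation leaves the constant term $1$ rather than $0$. Without the covering hypothesis the right-hand side would instead compute $\sum_{g} (1 - [J(g) = \emptyset]) g = N_G - N_{G \setminus \bigcup H_i}$, which makes clear both why the hypothesis is needed and why the result is sharp. I would finish by remarking that dividing through by $|G|$ recovers the stated form with the idempotents $e_H$, and noting that terms indexed by tuples with empty intersection contribute $N_{\{1\}} = 1$ but these are harmless as they cancel among themselves by the same binomial identity (equivalently, one may restrict the outer sum to tuples whose intersection is nonempty without changing anything).
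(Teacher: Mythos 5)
Your proof is correct and is essentially the paper's own argument: both compare the coefficient of each $g\in G$ on the two sides and use inclusion--exclusion (your binomial computation with $J(g)$) together with the covering hypothesis $G=H_1\cup\cdots\cup H_t$ to see that this coefficient equals $1$ on the right-hand side. Only your closing aside is off --- subgroup intersections are never empty, and the terms with \emph{trivial} intersection need not cancel among themselves (cf.\ the paper's remark that the resulting idempotent relation is not necessarily useful precisely because of the surviving coefficient of $e_1=1$) --- but this aside is immaterial, since your coefficient-of-$g$ argument already accounts for all tuples.
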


\begin{proof}
By the inclusion-exclusion principle, each $g\in G$ appears with coefficient $1$ in the expression
\[
\sum_{s=1}^t(-1)^{s+1}\sum_{1\leq i_1<\cdots< i_s\leq t}\sum_{h\in H_{i_1}\cap\cdots\cap H_{i_s}}h,
\]
which is therefore equal to $\sum_{g\in G}g$. This immediately implies the desired result.
\end{proof}

\begin{remark}
Proposition~\ref{prop:relation-i} does not necessarily give a useful idempotent 
relation due to the possible cancellation of coefficients of $e_{1}=1$.
\end{remark}

\begin{corollary}\label{cor:relation-trivial-intersection}
Suppose that $H_{1},\ldots,H_{t}$ are subgroups of a finite group $G$ such that 
$G=H_{1}\cup \cdots \cup H_{t}$ and $H_{i} \cap H_{j} = 1$ for all $i \neq j$. 
Then in $\Q[G]$ we have 
\[
|G|e_{G}
=
\sum_{i=1}^{t} |H_{i}|e_{H_{i}} - (t-1).
\]
\end{corollary}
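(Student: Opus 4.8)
The plan is to derive Corollary~\ref{cor:relation-trivial-intersection} directly from Proposition~\ref{prop:relation-i} by examining which terms survive the hypothesis $H_i \cap H_j = 1$ for $i \neq j$. First I would observe that under this hypothesis, for any $s \geq 2$ and any choice of indices $i_1 < \cdots < i_s$, the intersection $H_{i_1} \cap \cdots \cap H_{i_s}$ is contained in $H_{i_1} \cap H_{i_2} = 1$, hence equals the trivial subgroup $1$, so that $|H_{i_1} \cap \cdots \cap H_{i_s}| = 1$ and $e_{H_{i_1} \cap \cdots \cap H_{i_s}} = e_1 = 1$. Therefore all the terms with $s \geq 2$ collapse to (signed) copies of the identity element $1 \in \Q[G]$, while the $s=1$ terms are exactly $\sum_{i=1}^t |H_i| e_{H_i}$ as desired.

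Next I would compute the total coefficient of $1$ coming from the terms with $s \geq 2$. For each $s$ with $2 \leq s \leq t$ there are $\binom{t}{s}$ choices of indices, each contributing $(-1)^{s+1} \cdot 1 \cdot 1$, so the accumulated coefficient is $\sum_{s=2}^{t} (-1)^{s+1} \binom{t}{s}$. Using the binomial identity $\sum_{s=0}^{t} (-1)^{s+1} \binom{t}{s} = -(1-1)^t = 0$ (valid since $t \geq 1$; the case $t=1$ is trivial as the sum is empty and the statement reads $|G|e_G = |H_1|e_{H_1}$ with $G = H_1$), I split off the $s=0$ term (which is $-1$) and the $s=1$ term (which is $+t$) to get $\sum_{s=2}^{t}(-1)^{s+1}\binom{t}{s} = 0 - (-1) - t = 1 - t = -(t-1)$. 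Substituting back into Proposition~\ref{prop:relation-i} yields
\[
|G| e_G = \sum_{i=1}^{t} |H_i| e_{H_i} - (t-1),
\]
which is the claimed identity.

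I do not anticipate a serious obstacle here: the only subtlety is making sure the inclusion-exclusion bookkeeping of the identity-element coefficients is done correctly and that the edge case $t=1$ is handled (there Proposition~\ref{prop:relation-i} already gives the result with no $s\geq 2$ terms). One should also note that in Corollary~\ref{cor:relation-trivial-intersection} the element $(t-1)$ is understood as $(t-1)\cdot 1 = (t-1) e_1$, i.e. a scalar multiple of the identity of $\Q[G]$, so there is no type mismatch in the displayed equation. The proof is thus essentially a one-line specialisation plus a binomial-coefficient summation.
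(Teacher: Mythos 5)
Your proof is correct and follows essentially the same route as the paper: specialise Proposition~\ref{prop:relation-i}, note that all intersections with $s\geq 2$ are trivial, and evaluate $\sum_{s=2}^{t}(-1)^{s+1}\binom{t}{s}=1-t$ via the alternating binomial identity. Your extra remarks on the $t=1$ edge case and on reading $(t-1)$ as $(t-1)e_{1}$ are fine but not needed.
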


\begin{proof}
For $s \geq 2$ we have $H_{i_{1}} \cap \cdots \cap H_{i_{s}} =1$.
Thus the desired result follows from Proposition~\ref{prop:relation-i} and the calculation
\[
\sum_{s=2}^{t}(-1)^{s+1}
\sum_{1\leq i_{1} < \cdots < i_{s} \leq t} 1 
= \sum_{s=2}^{t}(-1)^{s+1} { t \choose s} 
= { t \choose 0} - {t \choose 1} 
= 1-t.
\] 
Note that it is also straightforward to prove this result directly.
\end{proof}

\begin{remark}
A set of subgroups of $G$ satisfying the conditions of Corollary~\ref{cor:relation-trivial-intersection}
is said to form a \emph{partition} of $G$. 
The possibilities for partitions are listed in \cite[\S 3.5]{MR1292462}.
\end{remark}

\begin{prop}{\cite[\S 3]{MR790953}}\label{prop:relation-ii}
Suppose that $H_{1},\ldots,H_{t}$ are subgroups of a finite group $G$ such that 
\begin{enumerate}
\item $H_{i}H_{j}=H_{j}H_{i}$ for every $i,j$, and
\item for each $\chi\in \Irr_\C(G)$ there exists $i$ such that $H_{i} \leq \ker\chi$. 
\end{enumerate}
Then $H_{i_1}\cdots H_{i_s}$ is a subgroup of $G$ for every 
$1 \leq i_{1} < \cdots < i_{s} \leq t$ and in $\Q[G]$ we have
\begin{equation}\label{eq:relation-ii}
1=\sum_{s=1}^t(-1)^{s+1}\sum_{1\leq i_{1} < \cdots < i_{s} \leq t} e_{H_{i_{1}}\cdots H_{i_{s}}}. 
\end{equation}
\end{prop}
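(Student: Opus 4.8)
The plan is to verify the two assertions of Proposition~\ref{prop:relation-ii} in turn: first that each product $H_{i_1}\cdots H_{i_s}$ is a subgroup, and then the idempotent relation \eqref{eq:relation-ii}. For the subgroup claim, the standard fact is that if $A$ and $B$ are subgroups with $AB=BA$, then $AB$ is a subgroup; applying this repeatedly, using that $H_i H_j = H_j H_i$ for all $i,j$ and hence (after an easy induction) that any product of the $H_i$ commutes with any other such product, shows that $H_{i_1}\cdots H_{i_s}$ is a subgroup. So the first part is a routine induction on $s$.

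For the relation itself, I would argue at the level of characters, exploiting that two elements of the semisimple algebra $\C[G]$ coincide if and only if they act the same way on every $\chi \in \Irr_\C(G)$, equivalently if $e_\chi$ times each side agrees for all $\chi$. Fix $\chi \in \Irr_\C(G)$. The key computation is that for a subgroup $H \leq G$, one has $e_\chi e_H = e_\chi$ if $H \leq \ker\chi$ and $e_\chi e_H = 0$ otherwise; indeed $e_H$ acts on the irreducible module $V_\chi$ as the projection onto $V_\chi^H$, which is all of $V_\chi$ when $H$ acts trivially and is $0$ when $H$ acts nontrivially (by irreducibility, $V_\chi^H$ is a submodule, hence $0$ or $V_\chi$; and it is $V_\chi$ exactly when $H \subseteq \ker\chi$). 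Therefore, multiplying the right-hand side of \eqref{eq:relation-ii} by $e_\chi$, the term indexed by $\{i_1,\dots,i_s\}$ contributes $(-1)^{s+1} e_\chi$ if $H_{i_1}\cdots H_{i_s} \leq \ker\chi$, i.e.\ if every $H_{i_j} \leq \ker\chi$, and contributes $0$ otherwise.

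So, letting $J = \{ i : H_i \leq \ker\chi \}$, the total coefficient of $e_\chi$ on the right-hand side is $\sum_{s=1}^{t}(-1)^{s+1}\binom{|J|}{s} = 1 - (1-1)^{|J|} = 1$, using hypothesis~(ii) to guarantee $|J| \geq 1$ so that this sum is genuinely $1$ and not $0$. Hence $e_\chi \cdot (\text{RHS}) = e_\chi = e_\chi \cdot 1$ for every $\chi \in \Irr_\C(G)$, and summing over $\chi$ (or invoking Proposition~\ref{prop:equiv-vanishing}) gives that the RHS equals $1$ in $\C[G]$; since all the idempotents $e_{H_{i_1}\cdots H_{i_s}}$ and the coefficients lie in $\Q[G]$, the identity already holds in $\Q[G]$.

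The only genuinely delicate point is making sure hypothesis~(ii) is used in exactly the right place: it is precisely what forces $|J|\geq 1$, so that the alternating binomial sum evaluates to $1$ rather than to $(1-1)^{0}-1 = 0$. Everything else — the subgroup induction, the action of $e_H$ on an irreducible, the inclusion–exclusion count — is bookkeeping. I would therefore organise the write-up as: (1) a one-line induction for the subgroup claim; (2) the lemma on $e_\chi e_H$; (3) the binomial identity with the observation that hypothesis~(ii) makes the exponent positive; (4) conclude by passing from $\C[G]$ to $\Q[G]$.
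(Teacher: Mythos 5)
Your first part (the subgroup claim) is fine, and your overall strategy of testing the identity against each $e_\chi$ is sound in principle, but the key lemma you rely on is false as stated. You claim that $e_\chi e_H=0$ whenever $H\not\leq\ker\chi$, justified by saying that $V_\chi^H$ is a submodule of the irreducible $V_\chi$ and hence $0$ or all of $V_\chi$. That reasoning only works when $H$ is normal in $G$: for non-normal $H$ the fixed space $V_\chi^H$ is not $G$-stable. Concretely, take $G=S_3$, $H=\langle(1\,2)\rangle$ and $\chi$ the $2$-dimensional irreducible character; then $H\not\leq\ker\chi=1$ but $\dim V_\chi^H=\langle\mathbf{1}_H,\Res^G_H\chi\rangle_H=1$, so $e_\chi e_H$ is a rank-one idempotent in $e_\chi\C[G]\cong M_2(\C)$, not zero. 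Since the subgroups $H_{i_1}\cdots H_{i_s}$ in the proposition need not be normal, your term-by-term count --- which discards every term whose index set is not contained in $J=\{i: H_i\leq\ker\chi\}$ --- is not justified, and the binomial sum $\sum_s(-1)^{s+1}\binom{|J|}{s}$ does not by itself account for the whole right-hand side.

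The conclusion can still be rescued, and the clean way to do it (which is the paper's route) uses only the \emph{true} half of your lemma. Hypothesis (i) gives $e_{H_i}e_{H_j}=e_{H_iH_j}=e_{H_j}e_{H_i}$, so the elements $1-e_{H_i}$ commute and $\prod_{i=1}^t(1-e_{H_i})$ expands exactly to $1-(\text{RHS of \eqref{eq:relation-ii}})$. For each $\chi$, hypothesis (ii) supplies one index $i_0$ with $H_{i_0}\leq\ker\chi$, whence $e_\chi(1-e_{H_{i_0}})=0$ and therefore $e_\chi\prod_i(1-e_{H_i})=0$; summing over $\chi$ kills the whole product. No claim about $e_\chi e_H$ for $H\not\leq\ker\chi$ is ever needed. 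Alternatively, you can repair your own computation by writing each index set as $S=S_J\sqcup S_{J^c}$ and observing that $e_\chi e_{H_S}=e_\chi e_{H_{S_{J^c}}}$; the alternating sum over $S_J\subseteq J$ then gives coefficient $(-1)^{|S_{J^c}|+1}(1-1)^{|J|}=0$ for every nonempty $S_{J^c}$, so the non-$J$ terms cancel in blocks rather than vanishing individually --- but this is just the product identity in disguise.
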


\begin{proof}
Condition (i) implies that each $H_{i}H_{j}$ is a subgroup of $G$ and that 
$e_{H_i}e_{H_j}=e_{H_iH_j}=e_{H_jH_i}=e_{H_j}e_{H_i}$. 
More generally, each $H_{i_1}\cdots H_{i_s}$ is a subgroup of $G$ and
$e_{H_{i_1}}\cdots e_{H_{i_s}} = e_{H_{i_1}\cdots H_{i_s}}$.
Condition (ii) implies that given $\chi \in \Irr_{\C}(G)$, there
exists $i$ such that $H_{i} \leq \ker\chi$, and so $e_\chi(1-e_{H_i})=0$ in $\C[G]$. 
Hence $\smash{e_\chi\prod_{i=1}^{t}(1-e_{H_i})=0}$ for every $\chi \in \Irr_{\C}(G)$.
Therefore $\prod_{i=1}^t(1-e_{H_i})=0$ in $\Q[G]$, which is equivalent to \eqref{eq:relation-ii}.
\end{proof}

\begin{remark}
If $G$ has a faithful complex irreducible character then one of the 
$H_{i}$'s in Proposition~\ref{prop:relation-ii} must be trivial and so
\eqref{eq:relation-ii} just becomes $1=1$.
\end{remark}

\begin{remark}
From Proposition~\ref{prop:relation-ii} it is easy to show that
$\smash{\prod_{\chi \in \Irr_{\C}(G)} (1 - e_{\ker\chi}) = 0}$ in $\Q[G]$
and hence obtain a useful idempotent relation in the case that $G$ has 
no faithful complex irreducible character.
Hence the equivalence of (i) and (v) in Proposition~\ref{prop:equiv-vanishing}, 
and thus Corollaries~\ref{cor:no-faithful-char-reduction} and \ref{cor:equiv-vanishing-abelian},
can be interpreted as a consequence of Corollary~\ref{cor:norm-sub-relation-triv-mod}.
However, we note that the combination of Proposition~\ref{prop:relation-ii} and Corollary~\ref{cor:norm-sub-relation-triv-mod} does \emph{not} improve upon Proposition~\ref{prop:equiv-vanishing} because if $H_{i} \leq \ker\chi$ then $e_{\ker\chi} e_{H_{i}} = e_{\ker\chi}$.
\end{remark}

\subsection{Frobenius groups}\label{sub:frobeniusgroups}
For further background material on Frobenius groups, including the proof of Theorem~\ref{thm:Frobenius} below, we refer the reader to \cite[\S 14A]{MR632548}.

\begin{definition}
A \emph{Frobenius group} is a finite group $G$ with a proper non-trivial subgroup $H$ such that 
$H \cap {gHg^{-1}} = 1$ for all $g \in G - H$, in which case $H$ is called a \emph{Frobenius complement}.
\end{definition}

\begin{theorem}\label{thm:Frobenius}
Let $G$ be a Frobenius group with Frobenius complement $H$.
Then $G$ contains a unique normal subgroup $N$, called the \emph{Frobenius kernel}, 
such that $G$ is a semidirect product $G= N \rtimes H$.
\end{theorem}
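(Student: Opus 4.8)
The final statement is Theorem \ref{thm:Frobenius}: a Frobenius group $G$ with complement $H$ contains a unique normal subgroup $N$ (the Frobenius kernel) such that $G = N \rtimes H$.

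The plan is to prove this via the classical character-theoretic argument due to Frobenius, since the excerpt has already developed enough character theory (central idempotents, induced/restricted characters) to make this natural. First I would define the candidate for the kernel set-theoretically: let
\[
N := \left( G \setminus \bigcup_{g \in G} gHg^{-1} \right) \cup \{1\}.
\]
A counting argument shows $|N| = [G:H]$: the $[G:H]$ conjugates $gHg^{-1}$ pairwise intersect only in $\{1\}$ (this uses the defining property $H \cap gHg^{-1} = 1$ for $g \notin H$, together with the fact that $N_G(H) = H$, which itself follows from the Frobenius condition), so they cover $[G:H](|H|-1) + 1 = |G| - [G:H] + 1$ elements, leaving exactly $[G:H] - 1$ nonidentity elements for $N$. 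So far $N$ is just a subset; the crux is showing it is a subgroup.

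The main obstacle — and the only genuinely hard step — is proving $N$ is a subgroup (equivalently, normal, since it is manifestly closed under conjugation and contains the identity). The standard route: given any irreducible character $\psi$ of $H$ with $\psi(1)$ arbitrary, one shows that the generalized character $\psi - \psi(1)\mathbf{1}_H$ of $H$ (which vanishes on $H \setminus \{1\}$... more precisely, one works with $\theta = \psi - \psi(1)\mathbf{1}_H$) induces to a generalized character $\theta^G$ of $G$ whose restriction back to $H$ recovers $\theta$, using Frobenius reciprocity and the disjointness of conjugates of $H$ to control the induced character on the conjugates. One then adjusts by a multiple of $\mathbf{1}_G$ to obtain an actual irreducible character $\psi^*$ of $G$ lying over $\psi$, with $\ker \psi^* \supseteq N$ in the appropriate sense. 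Intersecting the kernels of all such $\psi^*$ as $\psi$ ranges over $\Irr_\C(H)$ yields a normal subgroup of $G$ which one checks equals $N$; normality and the subgroup property come for free from being an intersection of kernels of characters. I would cite \cite[\S 14A]{MR632548} for the details of this induction-of-characters computation, as the excerpt already does, rather than reproduce it.

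Once $N$ is known to be a normal subgroup of order $[G:H]$, the rest is immediate: $N \cap H = \{1\}$ by construction (every nonidentity element of $N$ lies outside every conjugate of $H$, in particular outside $H$), and $|N||H| = [G:H]|H| = |G|$, so $G = NH$ with $N \trianglelefteq G$ and $N \cap H = 1$, giving the semidirect product decomposition $G = N \rtimes H$. For uniqueness, suppose $N'$ is any normal subgroup with $G = N' \rtimes H$; then $N' \cap H = 1$ forces $N' \cap gHg^{-1} = 1$ for all $g$ (by normality of $N'$), so every nonidentity element of $N'$ lies outside $\bigcup_g gHg^{-1}$, whence $N' \subseteq N$; comparing orders ($|N'| = [G:H] = |N|$) gives $N' = N$.
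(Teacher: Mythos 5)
Your outline is correct, and it is essentially the paper's "proof": the paper gives no argument of its own for Theorem \ref{thm:Frobenius}, deferring entirely to \cite[\S 14A]{MR632548}, and what you sketch (the trivial-intersection count giving $|N|=[G:H]$, Frobenius's induced-character argument producing irreducible characters $\psi^{*}$ of $G$ restricting to $\psi$ on $H$, and $N=\bigcap_{\psi}\ker\psi^{*}$ so that $N$ is a normal subgroup, followed by the order count for the semidirect product and the conjugation argument for uniqueness) is precisely the classical proof contained in that reference. No gaps.
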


\begin{example}\label{exa:affinefrobenius}
Let $q$ be a prime power and let $\F_{q}$ be the finite field with $q$ elements.
The semidirect product $\Aff(q) := \F_{q} \rtimes \F_{q}^{\times}$ with the natural action
is a Frobenius group.
Note that, in particular, $\Aff(3) \cong S_{3}$ and $\Aff(4) \cong A_{4}$. 
\end{example}

\begin{prop}\label{prop:Frob-idem-relation}
Let $G = N \rtimes H$ be a Frobenius group with kernel $N$ and complement $H$.
Then $G$ has a useful idempotent relation of the form
\[
1 = e_{N} + \sum_{g \in N} \frac{|H|}{|N|}e_{gHg^{-1}} - |H| e_{G}. 
\] 
\end{prop}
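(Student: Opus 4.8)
The plan is to obtain this relation from Proposition~\ref{prop:relation-i} (Accola's relation) applied to a suitable covering of $G$ by subgroups. Recall that in a Frobenius group $G = N \rtimes H$, the distinct conjugates $gHg^{-1}$ for $g \in N$ together with $N$ itself cover $G$: every element of $G$ lies either in $N$ or in exactly one conjugate of $H$, and the number of distinct conjugates of $H$ equals $|N|$ since $H$ is self-normalising. So I would take the covering family to be $\{N\} \cup \{ gHg^{-1} : g \in N \}$, a collection of $t = |N| + 1$ subgroups whose union is $G$.

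The key point is that this covering has pairwise trivial intersections of the right kind: $gHg^{-1} \cap g'Hg'^{-1} = 1$ for $g \ne g'$ in $N$ by the Frobenius property, and $N \cap gHg^{-1} = 1$ because $|N|$ and $|H|$ are coprime (a standard consequence of Theorem~\ref{thm:Frobenius}). Hence the family forms a partition of $G$ in the sense of the remark following Corollary~\ref{cor:relation-trivial-intersection}, and I can apply that corollary directly. With $t = |N|+1$ subgroups, one of norm $|N|$ (namely $N$) and $|N|$ of norm $|H|$, Corollary~\ref{cor:relation-trivial-intersection} gives
\[
|G| e_{G} = |N| e_{N} + \sum_{g \in N} |H| e_{gHg^{-1}} - |N|.
\]
Since $|G| = |N|\,|H|$ and $|N| \ne 0$ in $\Q$, dividing through by $|N|$ yields
\[
|H| e_{G} = e_{N} + \sum_{g \in N} \frac{|H|}{|N|} e_{gHg^{-1}} - 1,
\]
which rearranges to exactly the claimed identity. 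Finally, one checks it is a \emph{useful} idempotent relation: $1 \in \{N\} \cup \{gHg^{-1}\} \cup \{G\}$ appears as $e_{1} = 1$ with coefficient $-1 \ne 0$ (indeed the coefficient of $e_{1}$ is the constant term $-1$), so the relation is useful with $a_{1} = -1$.

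The only genuine content beyond bookkeeping is the structural fact that $\{N\} \cup \{gHg^{-1} : g \in N\}$ is a partition of $G$; I expect this to be the main step requiring care, though it is entirely standard Frobenius-group theory (the number of distinct conjugates of $H$ is $[G:N_G(H)] = [G:H] = |N|$, these account for $|N|(|H|-1)$ non-identity elements lying outside $N$, and $N \setminus \{1\}$ accounts for the remaining $|N|-1$, totalling $|G|-1$ non-identity elements). Given that, everything else is a direct specialisation of Corollary~\ref{cor:relation-trivial-intersection} together with the arithmetic $|G| = |N|\,|H|$.
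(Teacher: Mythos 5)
Your proposal is correct and follows essentially the same route as the paper: one checks that $N$ together with the $|N|$ distinct conjugates $gHg^{-1}$ ($g\in N$) forms a partition of $G$ and then applies Corollary~\ref{cor:relation-trivial-intersection}, dividing through by $|N|$. The arithmetic and the verification of usefulness are both right.
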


\begin{proof}
From the definition of Frobenius complement it follows that $H=gHg^{-1}$ if and only if $g \in H$.
Hence there are $|N|$ distinct subgroups of the form $gHg^{-1}$, one for each $g \in N$.
Therefore the subgroups $N$ and $gHg^{-1}$ for $g \in N$ intersect pairwise trivially 
and their union is $G$. 
Hence the desired result follows from Corollary~\ref{cor:relation-trivial-intersection}.
\end{proof}

\begin{corollary}\label{cor:Frob-idem-relation}
Let $G = N \rtimes H$ be a Frobenius group with kernel $N$ and complement $H$.
Let $M$ be a $K[G]$-module. Then $M=0$ if and only if $e_{N}M=e_{H}M=0$.
\end{corollary}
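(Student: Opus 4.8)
The plan is to deduce this directly from the previous two results in the section. First I would invoke Proposition~\ref{prop:Frob-idem-relation}, which gives an explicit useful idempotent relation for $G = N \rtimes H$, namely $1 = e_{N} + \sum_{g \in N} \frac{|H|}{|N|} e_{gHg^{-1}} - |H| e_{G}$. By the Remark following Definition~\ref{def:idem-relation}, after rescaling this is a generalised useful idempotent relation with respect to the set of subgroups $\mathcal{H} = \{1, N\} \cup \{gHg^{-1} : g \in N\} \cup \{G\}$, so Proposition~\ref{prop:norm-relation-triv-mod} applies: for a $K[G]$-module $M$, we have $M = 0$ if and only if $e_{H'}M = 0$ for every non-trivial $H' \in \mathcal{H}$.

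Next I would trim the list of subgroups down to just $N$ and $H$ using Corollary~\ref{cor:norm-sub-relation-triv-mod}. Take $\mathcal{I} = \{N, H\}$. One checks that every non-trivial $H' \in \mathcal{H}$ contains a $G$-conjugate of some element of $\mathcal{I}$: the subgroup $N$ contains $N$ itself; each $gHg^{-1}$ is a conjugate of $H$; and $G$ contains (a conjugate of) $N$. So the hypotheses of Corollary~\ref{cor:norm-sub-relation-triv-mod} are satisfied, and it yields that $M = 0$ if and only if $e_{N}M = e_{H}M = 0$. Alternatively, and perhaps more cleanly, one can simply observe from the explicit relation in Proposition~\ref{prop:Frob-idem-relation} that $e_{G} = e_{N}e_{G}$ (since $N \trianglelefteq G$, so $e_{N}e_{G} = e_{G}$), so the term $-|H|e_{G}$ contributes nothing new once $e_{N}M = 0$, and likewise $e_{gHg^{-1}}M = ge_{H}g^{-1}M = 0$ whenever $e_{H}M = 0$; thus $e_{N}M = e_{H}M = 0$ forces $x = 1 \cdot x = 0$ for all $x \in M$.

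I do not anticipate any real obstacle here — the statement is a straightforward corollary packaging of the two preceding general results applied to the relation already established in Proposition~\ref{prop:Frob-idem-relation}. The only minor point requiring a line of care is the verification that the conjugates of $H$ and the subgroup $G$ are correctly ``covered'' by $\{N, H\}$ in the sense demanded by Corollary~\ref{cor:norm-sub-relation-triv-mod}; this is immediate since $N \leq G$ and each $gHg^{-1}$ is literally a conjugate of $H$. The converse direction ($M = 0 \Rightarrow e_{N}M = e_{H}M = 0$) is trivial. So the write-up is essentially one sentence citing Proposition~\ref{prop:Frob-idem-relation} and Corollary~\ref{cor:norm-sub-relation-triv-mod} (or Proposition~\ref{prop:norm-relation-triv-mod}).
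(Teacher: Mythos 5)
Your proposal is correct and is exactly the paper's argument: the paper's proof is a one-line citation of Proposition \ref{prop:Frob-idem-relation} combined with Corollary \ref{cor:norm-sub-relation-triv-mod}, with $\mathcal{I}=\{N,H\}$ covering the conjugates of $H$ and the subgroup $G$ just as you describe. No issues.
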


\begin{proof}
This follows from Proposition~\ref{prop:Frob-idem-relation} combined with
Corollary~\ref{cor:norm-sub-relation-triv-mod}. 
\end{proof}

\subsection{Characterisation of groups that admit useful relations}\label{subsec:char-useful}
Using \cite[Theorem~2.11]{MR4440537}, we give a 
characterisation of finite groups that admit (generalised) useful idempotent relations.
We first compare Definition~\ref{def:idem-relation} and \cite[Definition~2.1]{MR4440537}. 

\begin{remark}\label{rmk:compare-GUIR-norm-relations}
Let $G$ be a finite group and let $\mathcal{H}$ be a set of subgroups of $G$. 
In \cite[Definition~2.1]{MR4440537} a \emph{norm relation} with respect to $\mathcal{H}$
is defined to be an equality in $\Q[G]$ of the form
\[
1 = \sum_{i=1}^{\ell} a_{i} N_{H_{i}} b_{i},
\]
with $a_{i}, b_{i} \in \Q[G]$ and $H_{i} \in \mathcal{H}$, $H_{i} \neq 1$.
Note that norm elements $N_{H_{i}}$ can easily be replaced by idempotents 
$e_{H_{i}}$ after rescaling coefficients. 
Thus every generalised useful idempotent relation essentially already is a norm relation.
Moreover, since for every subgroup $H \leq G$ and every $g \in G$, we have 
$ge_{H}g^{-1}=e_{gHg^{-1}}$, a norm relation with respect to $\mathcal{H}$
can be rewritten as a generalised useful idempotent relation with respect to $\mathcal{H}'$, where
$\mathcal{H}'$ may contain subgroups of the form $gHg^{-1}$ where $H \in \mathcal{H}$
but $gHg^{-1} \notin \mathcal{H}$. 
Note that Corollary~\ref{cor:norm-sub-relation-triv-mod} can still be applied with $\mathcal{H}$
rather than $\mathcal{H}'$, so enlarging $\mathcal{H}$ in this way
does not make a difference from the point of view of many of the applications that we have in mind.
Finally, note that \emph{scalar norm relations} as defined in \cite[Definition~2.1]{MR4440537}
are essentially the same as useful idempotent relations.
\end{remark}

\begin{theorem}\cite{MR4440537}\label{thm:existencenormrelation}
A finite group $G$ admits a generalised useful idempotent relation
if and only if $G$ contains either \textup{(i)} a non-cyclic subgroup of order $\ell_{1}\ell_{2}$,
where $\ell_{1}$ and $\ell_{2}$ are two not necessarily distinct prime numbers, or 
\textup{(ii)} a subgroup 
isomorphic to $\mathrm{SL}_{2}(\F_{\ell})$, where $\ell = 2^{2^{k}}+1$ is a Fermat prime
with $k>1$. 
\end{theorem}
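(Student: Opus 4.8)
The plan is to prove both directions of the equivalence, the backward direction being essentially trivial and the forward direction being where all the content lies. For the backward direction, if $G$ contains no non-cyclic subgroup of order $\ell_1\ell_2$ and no subgroup isomorphic to $\mathrm{SL}_2(\F_\ell)$ for a Fermat prime $\ell = 2^{2^k}+1$ with $k>1$, then by \cite[Theorem 2.11]{MR4440537} the group $G$ admits no norm relation in the sense of \cite[Definition 2.1]{MR4440537}; by Remark \ref{rmk:compare-GUIR-norm-relations}, norm relations and generalised useful idempotent relations are essentially interchangeable after rescaling coefficients and possibly conjugating subgroups, so $G$ admits no generalised useful idempotent relation either. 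The only subtlety here is bookkeeping: a generalised useful idempotent relation $1 = \sum_{1 \neq H \in \mathcal H} a_H e_H$ with $a_H \in \Q[G]$ immediately yields a norm relation $1 = \sum_{1 \neq H \in \mathcal H} (|H|^{-1} a_H) N_H$, so the non-existence transfers directly, and conversely as spelled out in Remark \ref{rmk:compare-GUIR-norm-relations}.

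For the forward direction, I would split into the two cases. In case (i), suppose $G$ has a non-cyclic subgroup $P$ of order $\ell_1 \ell_2$. If $\ell_1 = \ell_2 = \ell$, then $P \cong C_\ell \times C_\ell$; choose the $\ell+1$ subgroups of order $\ell$, which partition $P$ and hence satisfy the hypotheses of Corollary \ref{cor:relation-trivial-intersection}, yielding a useful idempotent relation for $P$ and therefore (by inflation along $G \to G$, i.e.\ viewing the relation inside $\Q[G]$) for $G$. If $\ell_1 \neq \ell_2$, then since $P$ is non-cyclic of squarefree order it is non-abelian, so (up to swapping) its Sylow $\ell_1$-subgroup is normal and there are $\ell_1$ Sylow $\ell_2$-subgroups; in fact $P$ is then a Frobenius group with kernel of order $\ell_1$ and complement of order $\ell_2$, so Proposition \ref{prop:Frob-idem-relation} gives a useful idempotent relation for $P$, hence for $G$. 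Either way we obtain a generalised useful idempotent relation after the rescaling noted in the Remark following Definition \ref{def:idem-relation}. In case (ii), where $G$ has a subgroup $Q \cong \mathrm{SL}_2(\F_\ell)$ with $\ell = 2^{2^k}+1$ a Fermat prime and $k > 1$: here I would cite \cite[Theorem 2.11]{MR4440537} directly, as the construction of a norm relation for $\mathrm{SL}_2(\F_\ell)$ in this Fermat-prime range is precisely the delicate part of that paper (it is the source of case (ii) and does not reduce to an elementary partition or Frobenius argument). Converting that norm relation into a generalised useful idempotent relation is again just rescaling, and inflating to $G$ is immediate since the relation already lives in $\Q[Q] \subseteq \Q[G]$.

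The main obstacle is case (ii): there is no elementary self-contained construction of a useful idempotent relation for $\mathrm{SL}_2(\F_\ell)$ in the Fermat-prime case, so the proof genuinely rests on \cite[Theorem 2.11]{MR4440537} here, and the honest thing is to invoke it rather than reprove it. A secondary point requiring care is the passage between a relation valid in $\Q[P]$ (or $\Q[Q]$) and one valid in $\Q[G]$: since $P \leq G$, the idempotent $e_P \in \Q[P]$ is also an element of $\Q[G]$ and the same equality $1 = \sum a_H e_H$ holds verbatim in $\Q[G]$, so no inflation functor is actually needed — one just observes that $\Q[P]$ embeds as a (non-unital-image, but $1 \mapsto 1$) subalgebra of $\Q[G]$. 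Finally, I would remark that this theorem combined with Proposition \ref{prop:norm-relation-triv-mod} is exactly what powers the application to Leopoldt's conjecture: when $G$ satisfies (i) or (ii), taking $M = \mathcal L(L,p)$ and using Lemma \ref{lem:invariants-of-Leopold-kernel} shows $\Leo(L,p)$ follows from $\Leo(L^H,p)$ as $H$ ranges over the non-trivial subgroups appearing in the relation, all of which are proper.
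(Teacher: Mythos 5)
Your proof is correct and, at its core, takes the same route as the paper: the paper's entire proof is the citation of \cite[Theorem 2.11]{MR4440537} together with the translation between norm relations and generalised useful idempotent relations in Remark \ref{rmk:compare-GUIR-norm-relations}, which is exactly how you handle the ``only if'' direction and case (ii) of the ``if'' direction. The one genuine addition is your elementary treatment of case (i): observing that $C_{\ell}\times C_{\ell}$ is partitioned by its $\ell+1$ subgroups of order $\ell$ (so Corollary \ref{cor:relation-trivial-intersection} applies) and that a non-cyclic group of order $\ell_{1}\ell_{2}$ with $\ell_{1}\neq\ell_{2}$ is a Frobenius group (so Proposition \ref{prop:Frob-idem-relation} applies), then noting that the resulting relation in $\Q[P]$ holds verbatim in $\Q[G]$. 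This makes the easy half of the forward direction self-contained using tools already in the paper, at no cost to correctness; the paper simply does not bother, since the citation covers all cases at once.
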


\begin{proof}
This follows from \cite[Theorem~2.11]{MR4440537} combined with Remark~\ref{rmk:compare-GUIR-norm-relations}.
\end{proof}

\begin{remark}\label{rmk:existence-guir}
As we shall see in \S \ref{subsec:char-admit-useful-Brauer},
finite groups admitting a useful 
idempotent relation are precisely those containing a subgroup of type (i). 
As remarked in \cite[Example~2.12]{MR4440537},  
the smallest group that admits a generalised useful idempotent relation
but no useful idempotent relation is
$\mathrm{SL}_{2}(\F_{17})$, which has order $4896$.
\end{remark}

\begin{remark}\label{rmk:different-subgroups-uir-guir}
If a finite group admits both generalised and non-generalised useful idempotent relations, 
the subgroups involved in the relations can be different (and non-conjugate). 
See \cite[Example~2.13]{MR4440537} for an explicit example.
\end{remark}

\subsection{Leopoldt's conjecture from subfields II}\label{subsec:Leo-from-subfields-II}
Let $L/K$ be a Galois extension of number fields and let $G=\Gal(L/K)$.
Let $p$ be a prime number.
We now apply the results on (generalised) useful idempotent relations to the Leopoldt kernel 
$\mathcal{L}(L,p)$, and show that in many situations $\Leo(L,p)$ can be deduced
from $\Leo(F,p)$ for certain intermediate fields $F$ of $L/K$. 
The proofs do not require any knowledge of the 
$\Q_{p}[G]$-module structure of $\Q_{p} \otimes_{\Z} \mathcal{O}_{L}^{\times}$, 
but rather just use the fact that $\Q_{p} \otimes_{\Z} \mathcal{O}_{L}^{\times}$ and 
$\mathcal{L}(L,p)$ are $\Q_{p}[G]$-modules. 
Some results also use previously known cases of Leopoldt's conjecture. 
Note that none of the results depend on the choice of prime number $p$.

\begin{theorem}\label{thm:norm-rels-Leopoldt}
Let $L/K$ be a Galois extension of number fields and let $G=\Gal(L/K)$. 
Suppose that $G$ has a generalised useful idempotent relation with respect to a set of subgroups 
$\mathcal{H}$.
Let $\mathcal{I}$ be a subset of $\mathcal{H}$ such that $1 \notin \mathcal{I}$ and
for every $H \in \mathcal{H}$ there exist $g \in G$ and 
$I \in \mathcal{I}$ such that $gIg^{-1} \leq H$. 
Let $p$ be a prime number.
Then $\Leo(L,p)$ holds if and only if $\Leo(L^{I},p)$ holds for every $I \in \mathcal{I}$.
\end{theorem}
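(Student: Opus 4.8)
The plan is to reduce the statement about Leopoldt's conjecture directly to the module-vanishing criterion already established in Corollary~\ref{cor:norm-sub-relation-triv-mod}. Recall that $\Leo(L,p)$ holds if and only if the Leopoldt kernel $\mathcal{L}(L,p)$ vanishes, and that by Lemma~\ref{lem:invariants-of-Leopold-kernel} we have $\mathcal{L}(L,p)^{I} = \mathcal{L}(L^{I},p)$ for every subgroup $I \leq G$. The point is that $\mathcal{L}(L,p)$ is a $\Q_{p}[G]$-module (as noted just after Lemma~\ref{lem:defectgrows}), so we may take $K = \Q_{p}$ and $M = \mathcal{L}(L,p)$ in the framework of \S\ref{subsec:idempotent-defs}.

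First I would observe that for any subgroup $I \leq G$, the equality $e_{I}\mathcal{L}(L,p) = \mathcal{L}(L,p)^{I}$ (valid since $e_{I}M = M^{I}$ for any $K[G]$-module $M$ when $\mathrm{char}\,K = 0$) combined with Lemma~\ref{lem:invariants-of-Leopold-kernel} gives $e_{I}\mathcal{L}(L,p) = \mathcal{L}(L^{I},p)$. Hence $e_{I}\mathcal{L}(L,p) = 0$ if and only if $\Leo(L^{I},p)$ holds. Then I would apply Corollary~\ref{cor:norm-sub-relation-triv-mod} with the group $G$, the set $\mathcal{H}$, the subset $\mathcal{I}$ (whose hypotheses — $1 \notin \mathcal{I}$, and every $H \in \mathcal{H}$ dominates a conjugate of some $I \in \mathcal{I}$ — match exactly the hypotheses of the theorem), and the $\Q_{p}[G]$-module $M = \mathcal{L}(L,p)$: the corollary tells us that $\mathcal{L}(L,p) = 0$ if and only if $e_{I}\mathcal{L}(L,p) = 0$ for every $I \in \mathcal{I}$. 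Translating both sides through the identifications above yields that $\Leo(L,p)$ holds if and only if $\Leo(L^{I},p)$ holds for every $I \in \mathcal{I}$, which is the claim.

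There is essentially no obstacle here: the theorem is a direct specialisation of Corollary~\ref{cor:norm-sub-relation-triv-mod}, and all the heavy lifting — the vanishing criterion for modules over a group algebra admitting a generalised useful idempotent relation — has already been done. The only mildly delicate point is to state the dictionary between $e_{I}\mathcal{L}(L,p) = 0$ and $\Leo(L^{I},p)$ carefully, using that the Leopoldt kernel is a $\Q_{p}[G]$-module and that $H$-invariants of $\mathcal{L}(L,p)$ recover the Leopoldt kernel of $L^{H}$; both of these are already recorded in \S\ref{sec:intro} and Lemma~\ref{lem:invariants-of-Leopold-kernel}.

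\begin{proof}
Since $\Lambda_{L,p}$ is a homomorphism of $\Q_{p}[G]$-modules, $\mathcal{L}(L,p)$ is a $\Q_{p}[G]$-module.
For any subgroup $I \leq G$ we have $e_{I}\mathcal{L}(L,p) = \mathcal{L}(L,p)^{I} = \mathcal{L}(L^{I},p)$ by the discussion in \S\ref{subsec:chars-idems} and Lemma~\ref{lem:invariants-of-Leopold-kernel}.
In particular, $e_{I}\mathcal{L}(L,p)=0$ if and only if $\Leo(L^{I},p)$ holds.
Applying Corollary~\ref{cor:norm-sub-relation-triv-mod} to the $\Q_{p}[G]$-module $M=\mathcal{L}(L,p)$ with the sets $\mathcal{H}$ and $\mathcal{I}$, we conclude that $\mathcal{L}(L,p)=0$ if and only if $e_{I}\mathcal{L}(L,p)=0$ for every $I \in \mathcal{I}$.
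Since $\Leo(L,p)$ holds if and only if $\mathcal{L}(L,p)=0$, the result follows.
\end{proof}
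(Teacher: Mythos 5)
Your proof is correct and is essentially identical to the paper's own argument, which likewise derives the theorem directly from Corollary~\ref{cor:norm-sub-relation-triv-mod} applied to $M=\mathcal{L}(L,p)$ together with Lemma~\ref{lem:invariants-of-Leopold-kernel}. The dictionary $e_{I}\mathcal{L}(L,p)=\mathcal{L}(L^{I},p)$ that you spell out is exactly the intended content of that combination.
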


\begin{proof}
This follows from Corollary~\ref{cor:norm-sub-relation-triv-mod} and 
Lemma~\ref{lem:invariants-of-Leopold-kernel}.
\end{proof}

\begin{corollary}\label{cor:Frobenius-reduction-for-Leo}
Let $L/K$ be a Galois extension of number fields.
Suppose that $\Gal(L/K) = N \rtimes H$ is a Frobenius group with kernel $N$ and complement $H$.
Let $p$ be a prime number.
Then $\Leo(L,p)$ holds if and only if both $\Leo(L^{N},p)$ and $\Leo(L^{H},p)$ hold.
\end{corollary}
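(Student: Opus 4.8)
The plan is to specialise Theorem~\ref{thm:norm-rels-Leopoldt} to the Frobenius idempotent relation. Write $G = \Gal(L/K) = N \rtimes H$. By Proposition~\ref{prop:Frob-idem-relation}, $G$ admits the useful idempotent relation
\[
1 = e_{N} + \sum_{g \in N} \frac{|H|}{|N|}\, e_{gHg^{-1}} - |H|\, e_{G},
\]
and hence, after rescaling, a generalised useful idempotent relation with respect to the set of subgroups $\mathcal{H} = \{N\} \cup \{\, gHg^{-1} : g \in N \,\} \cup \{G\}$.

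I would then take $\mathcal{I} := \{N, H\}$ and verify the hypotheses of Theorem~\ref{thm:norm-rels-Leopoldt}. Since $G$ is a Frobenius group, both $N$ and $H$ are non-trivial, so $1 \notin \mathcal{I}$; and for every $H' \in \mathcal{H}$ there are $g \in G$ and $I \in \mathcal{I}$ with $gIg^{-1} \leq H'$, namely $(I,g) = (N,1)$ when $H' \in \{N, G\}$ and $(I,g) = (H,g)$ when $H' = gHg^{-1}$ with $g \in N$. Theorem~\ref{thm:norm-rels-Leopoldt} then yields directly that $\Leo(L,p)$ holds if and only if both $\Leo(L^{N},p)$ and $\Leo(L^{H},p)$ hold.

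There is no genuine obstacle here: the statement is a short corollary, with the substance residing in Proposition~\ref{prop:Frob-idem-relation} together with Corollary~\ref{cor:norm-sub-relation-triv-mod} and Lemma~\ref{lem:invariants-of-Leopold-kernel}, which between them establish Theorem~\ref{thm:norm-rels-Leopoldt}. Equivalently, one could argue directly without invoking Theorem~\ref{thm:norm-rels-Leopoldt}: applying Lemma~\ref{lem:invariants-of-Leopold-kernel} to the $\Q_{p}[G]$-module $\mathcal{L}(L,p)$ identifies $e_{N}\mathcal{L}(L,p) = \mathcal{L}(L^{N},p)$ and $e_{H}\mathcal{L}(L,p) = \mathcal{L}(L^{H},p)$, and then the corollary immediately following Proposition~\ref{prop:Frob-idem-relation} shows that $\mathcal{L}(L,p) = 0$ if and only if $\mathcal{L}(L^{N},p) = \mathcal{L}(L^{H},p) = 0$, which is exactly the claimed equivalence.
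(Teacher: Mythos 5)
Your proof is correct and takes essentially the same route as the paper: the paper's proof cites exactly the unnamed corollary following Proposition~\ref{prop:Frob-idem-relation} (that $M=0$ if and only if $e_{N}M=e_{H}M=0$ for a Frobenius group) together with Lemma~\ref{lem:invariants-of-Leopold-kernel}, which is precisely your second, ``direct'' formulation. Your first route through Theorem~\ref{thm:norm-rels-Leopoldt} with $\mathcal{I}=\{N,H\}$ is just the same argument packaged via the general statement, and your verification of its hypotheses is accurate.
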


\begin{proof}
This follows from Corollary~\ref{cor:Frob-idem-relation} and Lemma~\ref{lem:invariants-of-Leopold-kernel}.
\end{proof}

\begin{corollary}\label{cor:Frobenius-reduction-abelian}
Let $L/K$ be a Galois extension of number fields
where either $K=\Q$ or $K$ is an imaginary quadratic field.
Suppose that $\Gal(L/K) = N \rtimes H$ is a Frobenius group with kernel $N$ and abelian complement $H$.
Let $p$ be a prime number. 
Then $\Leo(L,p)$ holds if and only if $\Leo(L^{H},p)$ holds. 
\end{corollary}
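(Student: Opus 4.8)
The plan is to deduce this from Corollary \ref{cor:Frobenius-reduction-for-Leo} together with the known absolute abelian case of Leopoldt's conjecture (Theorem \ref{thm:leo-abs-abelian}). First I would apply Corollary \ref{cor:Frobenius-reduction-for-Leo} to $L/K$: since $G = \Gal(L/K) = N \rtimes H$ is a Frobenius group with kernel $N$ and complement $H$, it gives that $\Leo(L,p)$ holds if and only if both $\Leo(L^{N},p)$ and $\Leo(L^{H},p)$ hold. Thus the whole content of the corollary is the assertion that, under the extra hypotheses on $K$ and on $H$, the condition $\Leo(L^{N},p)$ is automatic and can therefore be dropped.

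To establish that, I would note that $N \trianglelefteq G$, so $L^{N}/K$ is Galois with $\Gal(L^{N}/K) \cong G/N \cong H$, using the semidirect-product decomposition; since $H$ is abelian by hypothesis, $L^{N}/K$ is an abelian extension. As $K$ is either $\Q$ or an imaginary quadratic field, Theorem \ref{thm:leo-abs-abelian} applies to $L^{N}/K$ and shows that $\Leo(L^{N},p)$ holds for every prime number $p$. Feeding this back into the equivalence from the previous paragraph yields that $\Leo(L,p)$ holds if and only if $\Leo(L^{H},p)$ holds, which is the claim.

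There is essentially no obstacle here: the argument is a short combination of a result already proved in this section with a classical theorem. The only points requiring (trivial) care are the identification $\Gal(L^{N}/K) \cong G/N \cong H$, which is immediate from the semidirect-product structure, and the fact that the hypotheses on the base field are preserved on passing to the subextension $L^{N}/K$, which is clear since $K$ itself is unchanged.
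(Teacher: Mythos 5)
Your proof is correct and is essentially identical to the paper's: it applies Corollary \ref{cor:Frobenius-reduction-for-Leo} and then disposes of $\Leo(L^{N},p)$ by noting that $L^{N}/K$ is abelian over $\Q$ or an imaginary quadratic field, so Theorem \ref{thm:leo-abs-abelian} applies. The extra detail you spell out (the identification $\Gal(L^{N}/K)\cong G/N\cong H$) is exactly what the paper leaves implicit.
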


\begin{proof}
Theorem~\ref{thm:leo-abs-abelian} and the hypotheses on $K$ and $H$ imply that $\Leo(L^{N},p)$ holds,
and so the desired result follows from Corollary~\ref{cor:Frobenius-reduction-for-Leo}.
\end{proof}

\begin{corollary}\cite{MR769790}\label{cor:A4-tot-complex}
Let $L/\Q$ be a totally imaginary $A_{4}$-extension.  
Then $\Leo(L,p)$ holds for all prime numbers $p$.
\end{corollary}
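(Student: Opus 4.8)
The plan is to realise $A_4$ as a Frobenius group and then invoke Corollary \ref{cor:Frobenius-reduction-abelian}. Recall from Example \ref{exa:affinefrobenius} that $A_4 \cong \Aff(4) = \F_4 \rtimes \F_4^\times$ is a Frobenius group with kernel $N \cong C_2 \times C_2$ (the unique Sylow $2$-subgroup, corresponding to $\F_4$) and complement $H \cong C_3$ (corresponding to $\F_4^\times$). The complement $H$ is abelian, so the hypotheses of Corollary \ref{cor:Frobenius-reduction-abelian} are met with $K = \Q$.

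First I would identify the fixed field $L^H$. Since $|H| = 3$, the subfield $L^H$ has degree $[L^H : \Q] = |A_4|/|H| = 4$, and because $H$ is not normal in $A_4$, the extension $L^H/\Q$ is a non-Galois quartic extension (its Galois closure is $L$, with group $A_4$). The key observation is that $L/\Q$ is assumed totally imaginary; I would check that this forces $L^H$ to be totally imaginary as well, or at least that one can arrange the hypotheses so that $\Leo(L^H, p)$ is already known. In fact, a cleaner route is available: instead of working with $L^H$, note that $L^H$ contains the quadratic resolvent of the quartic, but more to the point, one should observe that $L^H$ is a quartic field whose Galois closure has group $A_4$; such fields are well-studied, and when $L$ is totally imaginary the signature of $L^H$ can be pinned down.

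Actually the most direct argument is as follows. By Corollary \ref{cor:Frobenius-reduction-abelian} applied to $L/\Q$, we have that $\Leo(L,p)$ holds if and only if $\Leo(L^H,p)$ holds, where $L^H$ is a quartic subfield with $H \cong C_3$. Now $L^H$ has degree $4$ over $\Q$, so by Theorem \ref{thm:walbound} we have $\delta(L^H,p) \leq \tfrac12 \rank_\Z(\mathcal{O}_{L^H}^\times)$. If $L$ is totally imaginary, then $L^H$ is a quartic field that is either totally imaginary (unit rank $1$) or has signature $(2,1)$ — but the latter is impossible for a field whose Galois closure has group $A_4$, since $A_4$ has no subgroup of index $2$ corresponding to a real embedding pattern of that type; one checks that the only signatures compatible with $A_4$-closure are $(4,0)$ and $(0,2)$, and totally imaginary $L$ rules out $(4,0)$. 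Hence $L^H$ is totally imaginary of unit rank $1$, so $\delta(L^H,p) \leq \tfrac12$, forcing $\delta(L^H,p) = 0$ and thus $\Leo(L^H,p)$. Therefore $\Leo(L,p)$ holds.

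The main obstacle I anticipate is the signature bookkeeping: verifying that a quartic field with $A_4$ Galois closure and totally imaginary splitting field must itself be totally imaginary (equivalently, that its unit rank is exactly $1$). This is a finite check about the action of complex conjugation as an element of $A_4$ — since $A_4$ has no elements of order $2$ outside the Klein four kernel $N$, complex conjugation (if nontrivial) lies in $N$, and one traces through how an involution in $N$ acts on the four cosets $A_4/H$ to see that it fixes none of them, so $L^H$ has no real place. Once this is established, the Waldschmidt bound of Theorem \ref{thm:walbound} does the rest, and the Frobenius reduction of Corollary \ref{cor:Frobenius-reduction-abelian} completes the argument.
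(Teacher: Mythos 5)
Your proposal is correct and follows the paper's proof exactly: reduce to the quartic field $L^{H}$ via Corollary \ref{cor:Frobenius-reduction-abelian}, observe that $L^{H}$ is totally imaginary (the paper asserts this without the coset computation you supply, which is right since the odd-order group $H$ contains no conjugate of complex conjugation) and hence has unit rank $1$, and conclude $\Leo(L^{H},p)$, e.g.\ from Theorem \ref{thm:walbound}. The only blemish is the abandoned aside about a ``quadratic resolvent'' contained in $L^{H}$ --- an $A_{4}$-quartic has no quadratic subfield since $A_{4}$ has no index-two subgroup --- but it plays no role in your actual argument.
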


\begin{proof}
As seen in Example~\ref{exa:affinefrobenius}, 
$A_{4} \cong \Aff(4) = \F_{4} \rtimes \F_{4}^{\times}$ is a Frobenius group. 
Let $H$ be a subgroup of $\Gal(L/\Q)$ isomorphic to $\F_{4}^{\times} \cong C_{3}$.
Since $L^{H}$ is a totally imaginary quartic field,
we have $\rank_{\Z}(\mathcal{O}_{L^{H}}^{\times})=1$, and so $\Leo(L^{H},p)$ holds by 
Theorem~\ref{thm:walbound}, for example.
Thus the desired result follows from Corollary~\ref{cor:Frobenius-reduction-abelian}.
\end{proof}

\begin{remark}
Corollary~\ref{cor:A4-tot-complex} was first proved in \cite[Th\'eor\`eme~2]{MR769790}
by considering the $\Q[G]$-module structure of $\Q \otimes_{\Z} \mathcal{O}_{L}^{\times}$.
By contrast, our proof only uses idempotent relations and the fact that Leopoldt's conjecture holds
for certain proper subfields of $L$.
\end{remark}

\begin{corollary}\label{cor:S3cubicleopoldt}
Let $L/\Q$ be an $S_{3}$-extension and let $F$ be a cubic subfield of $L$.
Let $p$ be a prime number. 
Then $\Leo(L,p)$ holds if and only if $\Leo(F,p)$ holds.
\end{corollary}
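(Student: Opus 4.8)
The plan is to apply the Frobenius reduction machinery already developed for $S_3 \cong \Aff(3)$, exactly as in the proof of Corollary~\ref{cor:A4-tot-complex}, but now using the fact that the relevant fixed field is quadratic rather than totally imaginary quartic. First I would recall that $S_3 = N \rtimes H$ is a Frobenius group with kernel $N \cong C_3$ and complement $H \cong C_2$ (see Example~\ref{exa:affinefrobenius}), so that Corollary~\ref{cor:Frobenius-reduction-abelian} applies with $K = \Q$: since $H$ is abelian, $\Leo(L,p)$ holds if and only if $\Leo(L^H,p)$ holds. The key observation is then simply that $L^H$ is a cubic subfield of $L$, and conversely every cubic subfield of $L$ arises as $L^{H'}$ for some complement $H'$; since all complements of a Frobenius group are conjugate, all cubic subfields of $L$ are isomorphic, so it does not matter which cubic subfield $F$ we pick.

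So the proof is essentially one line: identify $F$ with $L^H$ for a suitable order-$2$ subgroup $H \leq \Gal(L/\Q) \cong S_3$, and invoke Corollary~\ref{cor:Frobenius-reduction-abelian}. I would write:

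\begin{proof}
As seen in Example~\ref{exa:affinefrobenius}, $S_{3} \cong \Aff(3) = \F_{3} \rtimes \F_{3}^{\times}$ is a Frobenius group with kernel of order $3$ and abelian complement of order $2$. The subgroups of $\Gal(L/\Q)$ of order $2$ are precisely the Frobenius complements, and their fixed fields are exactly the cubic subfields of $L$; these are all conjugate and hence isomorphic. Thus we may assume $F = L^{H}$ for some Frobenius complement $H$, and the desired result follows from Corollary~\ref{cor:Frobenius-reduction-abelian} applied to $L/\Q$.
\end{proof}

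There is no real obstacle here; the statement is a direct specialisation of Corollary~\ref{cor:Frobenius-reduction-abelian}, and the only point requiring a word of care is the claim that the choice of cubic subfield $F$ is immaterial, which follows from the conjugacy of Frobenius complements in $S_3$ (equivalently, the three order-$2$ subgroups of $S_3$ are all conjugate). If one wished to avoid citing Corollary~\ref{cor:Frobenius-reduction-abelian}, an alternative would be to apply Corollary~\ref{cor:Frobenius-reduction-for-Leo} directly together with the fact that $\Leo(L^{N},p)$ holds because $L^N$ is a quadratic field (to which Theorem~\ref{thm:leo-abs-abelian} applies), but the route through Corollary~\ref{cor:Frobenius-reduction-abelian} is the most economical.
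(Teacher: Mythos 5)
Your proof is correct and follows exactly the paper's route: identify $S_{3}\cong\Aff(3)$ as a Frobenius group with abelian complement of order $2$ and invoke Corollary~\ref{cor:Frobenius-reduction-abelian}. The extra remark that all cubic subfields are conjugate (hence the choice of $F$ is immaterial) is a sensible small addition but does not change the argument.
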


\begin{proof}
As seen in Example~\ref{exa:affinefrobenius}, $S_{3} \cong \Aff(3) = \F_{3} \rtimes \F_{3}^{\times}$ 
is a Frobenius group. Thus the result follows from Corollary~\ref{cor:Frobenius-reduction-abelian}.
\end{proof}

\begin{corollary}\label{cor:leopoldt-proper-subfields}
Let $L/K$ be a Galois extension of number fields and let $G=\Gal(L/K)$.
Suppose that $G$ contains either a non-cyclic subgroup of order $\ell_{1}\ell_{2}$,
where $\ell_{1}$ and $\ell_{2}$ are two not necessarily distinct prime numbers, or a subgroup 
isomorphic to $\mathrm{SL}_{2}(\F_{\ell})$, where $\ell = 2^{2^{k}}+1$ is a Fermat prime
with $k>1$. Let $p$ be a prime number. 
Then $\Leo(L,p)$ holds if and only if $\Leo(F,p)$ holds for every proper intermediate field $F$ of $L/K$.
\end{corollary}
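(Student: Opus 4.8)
The plan is to obtain this as a straightforward combination of Theorem~\ref{thm:existencenormrelation} and Theorem~\ref{thm:norm-rels-Leopoldt}, together with Lemma~\ref{lem:defectgrows} for one of the two implications. For the forward implication, I would simply note that if $\Leo(L,p)$ holds then, applying Lemma~\ref{lem:defectgrows} to the extension $L/F$, so does $\Leo(F,p)$ for every intermediate field $F$ of $L/K$, in particular for every proper one.

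For the reverse implication, the point is that the hypothesis placed on $G$ is exactly the dichotomy appearing in Theorem~\ref{thm:existencenormrelation}, so $G$ admits a generalised useful idempotent relation with respect to some set of subgroups $\mathcal{H}$; by the remark following Definition~\ref{def:idem-relation} we may and do assume $1 \notin \mathcal{H}$. I would then invoke Theorem~\ref{thm:norm-rels-Leopoldt} with the choice $\mathcal{I} = \mathcal{H}$, the hypotheses of that theorem being satisfied trivially (for each $H \in \mathcal{H}$ take $I = H$ and $g = 1$). This yields that $\Leo(L,p)$ holds if and only if $\Leo(L^{I},p)$ holds for every $I \in \mathcal{I}$; and since every $I \in \mathcal{I}$ is non-trivial, each $L^{I}$ is a proper intermediate field of $L/K$, so the assumption that $\Leo(F,p)$ holds for all such $F$ forces $\Leo(L,p)$.

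I do not expect a genuine obstacle: the statement is essentially a repackaging of the two cited theorems. The only point needing a moment's care is to check that the subgroups occurring in the idempotent relation are all non-trivial, so that their fixed fields are genuinely proper subfields of $L$; this is exactly what the normalisation $1 \notin \mathcal{H}$ and the shape of a generalised useful idempotent relation guarantee.
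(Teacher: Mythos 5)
Your proof is correct and is exactly the paper's argument: the paper likewise deduces the non-trivial direction from Theorem~\ref{thm:existencenormrelation} combined with Theorem~\ref{thm:norm-rels-Leopoldt}, and the trivial direction from Lemma~\ref{lem:defectgrows}. Your extra care in normalising $1 \notin \mathcal{H}$ and taking $\mathcal{I}=\mathcal{H}$ is a fine (and harmless) elaboration of the same reasoning.
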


\begin{proof}
One direction follows from Theorems~\ref{thm:existencenormrelation} and \ref{thm:norm-rels-Leopoldt}, the other from Lemma~\ref{lem:defectgrows}.
\end{proof}

\section{Brauer relations}\label{sec:Brauer}

\subsection{Brauer relations and idempotent relations}\label{subsec:brauer-vs-idempotent-relations}
Let $G$ be a finite group and let $K$ be a field of characteristic $0$.
Let $f_{1},f_{2}$ be $K$-valued class functions on $G$.
We define the inner product to be
\[
\langle f_{1}, f_{2} \rangle_{G} = \frac{1}{|G|}\sum_{g \in G}f_{1}(g)f_{2}(g^{-1}).
\]
Let $H \leq G$.
For a $K[H]$-module $M$, we define
$\Ind^{G}_{H}(M) = K[G] \otimes_{K[H]} M$.
For the character $\chi$ of a $K[H]$-module $M$, 
we write $\Ind^{G}_{H}(\chi)$ for the character of $\Ind^{G}_{H}(M)$.
For a character $\chi$ of a $K[G]$-module, we write $\Res^{G}_{H}(\chi)$
for the restriction of $\chi$ to $H$.
We let $\mathbf{1}_{H}$ denote the trivial character of $H$, which satisfies 
$\mathbf{1}_{H}(h)=1$ for all $h \in H$.

\begin{definition}
Let $\mathcal{H}$ be a set of subgroups of a finite group $G$. 
A \emph{Brauer relation} of $G$ with respect to $\mathcal{H}$ is an equality of 
$\Q$-valued class functions on $G$ of the form
\[
 0=\sum_{H\in\mathcal{H}}a_H\Ind_{H}^{G}\mathbf{1}_{H},
\]
with $a_{H} \in \Q$.
Such a relation is said to be \emph{useful} if $1 \in \mathcal{H}$
and $a_{1} \neq 0$.
\end{definition}

\begin{remark}\label{rmk:cyclic-no-brauer}
It is easy to show that cyclic groups have no non-zero Brauer relations. 
\end{remark}

The following well-known result is a minor variant of \cite[Proposition 2.7]{MR4440537} and says 
that Brauer relations and idempotent relations are essentially the same.

\begin{lemma}\label{lemma:brauer-versus-idempotent-relation}
Let $G$ be a finite group and let $\mathcal{H}$ be a set of subgroups of $G$.
\begin{enumerate}
\item If $0=\sum_{H \in \mathcal{H}} a_{H} e_{H}$ is an idempotent relation
then $0=\sum_{H \in \mathcal{H}} a_{H} \Ind_{H}^{G} \mathbf{1}_{H}$ is a Brauer relation.
\item If $0=\sum_{H \in \mathcal{H}} a_{H} \Ind_{H}^{G} \mathbf{1}_{H}$ is a Brauer relation then
$0=\sum_{H \in \mathcal{H}} a_{H} \sum_{g \in G} e_{gHg^{-1}}$ is an idempotent relation. 
\end{enumerate}
\end{lemma}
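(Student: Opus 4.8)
The plan is to work character-theoretically, translating between idempotents in $\Q[G]$ and class functions on $G$, and to use the standard fact that a $\Q$-linear combination of induced trivial characters vanishes as a class function if and only if it vanishes after pairing with every complex irreducible character. The crucial bookkeeping device is the relation between $\Ind_H^G \mathbf{1}_H$ and the idempotent $e_H$: the permutation module $\Q[G/H] = \Q[G]e_H$ affords the character $\Ind_H^G \mathbf{1}_H$, and more precisely the $\Q[G]$-module $\Q[G]e_H$ has character $\Ind_H^G\mathbf{1}_H$. So for each $\chi \in \Irr_\C(G)$ one has $\langle \Ind_H^G \mathbf{1}_H, \chi\rangle_G = \dim_\C \chi(e_H \cdot V_\chi)$ (the multiplicity of $\chi$ in the permutation character), while on the other hand the trace of the action of a central-ish element is governed by how $e_H$ acts. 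The key numerical identity I would isolate first is
\[
\langle \Ind_H^G \mathbf{1}_H, \chi\rangle_G = \langle \mathbf{1}_H, \Res^G_H \chi\rangle_H = \dim_\C (V_\chi)^H
\]
by Frobenius reciprocity, and this equals the rank of $e_H$ acting on $V_\chi$.

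For part (i), I would start from the idempotent relation $0 = \sum_{H} a_H e_H$ in $\Q[G]$. Multiplying on the left by an arbitrary $g \in G$ and taking ordinary traces of the resulting elements acting on the regular representation $\Q[G]$ gives a scalar identity; more cleanly, applying the character of the regular representation, or better, applying each $\chi \in \Irr_\C(G)$ (extended linearly to $\C[G]$) to the identity $0 = \sum_H a_H e_H$ yields $0 = \sum_H a_H \chi(e_H)$. Now $\chi(e_H) = \dim_\C \chi \cdot \langle \mathbf{1}_H, \Res^G_H \chi \rangle_H / \dim_\C\chi$... more carefully, $e_H$ acts on $V_\chi$ as a projection onto $(V_\chi)^H$, so $\chi(e_H) = \operatorname{tr}(e_H \mid V_\chi) = \dim_\C (V_\chi)^H = \langle \Ind_H^G\mathbf{1}_H, \chi\rangle_G$. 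Hence $0 = \sum_H a_H \langle \Ind_H^G \mathbf{1}_H, \chi\rangle_G = \langle \sum_H a_H \Ind_H^G\mathbf{1}_H, \chi \rangle_G$ for every $\chi \in \Irr_\C(G)$. Since $\{\chi\}_{\chi \in \Irr_\C(G)}$ spans the space of $\C$-valued class functions and the $a_H$ are rational, the class function $\sum_H a_H \Ind_H^G \mathbf{1}_H$ is orthogonal to everything, hence zero; so it is a Brauer relation. (If $1 \in \mathcal{H}$ and $a_1 \neq 0$, usefulness is preserved on the nose, though the statement as written does not demand this.)

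For part (ii), I would go in the reverse direction. Given the Brauer relation $0 = \sum_H a_H \Ind_H^G \mathbf{1}_H$, I want to produce an honest identity of elements of $\Q[G]$, not merely of class functions; the asymmetry (the appearance of $\sum_{g \in G} e_{gHg^{-1}}$ rather than $e_H$) comes precisely from the fact that $e_H$ itself is not central, but its class-sum-type average $\sum_{g\in G} e_{gHg^{-1}}$ is. The element $z_H := \sum_{g \in G} e_{gHg^{-1}}$ lies in the centre $Z(\Q[G])$, so it acts on each $V_\chi$ by a scalar; computing that scalar by taking traces gives $\chi(z_H) = \sum_{g} \chi(e_{gHg^{-1}}) = |G| \cdot \dim_\C(V_\chi)^H / \dim_\C \chi$ after noting $\dim (V_\chi)^{gHg^{-1}} = \dim (V_\chi)^H$, so $z_H$ acts on $V_\chi$ as $\bigl(|G|/\dim_\C\chi\bigr)\langle \Ind_H^G\mathbf{1}_H,\chi\rangle_G \cdot \mathrm{id}$. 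Therefore $\sum_H a_H z_H$ acts on $V_\chi$ as the scalar $\bigl(|G|/\dim_\C\chi\bigr)\langle \sum_H a_H\Ind_H^G\mathbf{1}_H, \chi\rangle_G = 0$ for every $\chi$. An element of $\Q[G] \subseteq \C[G]$ that acts as $0$ on every simple $\C[G]$-module is $0$ (this is just faithfulness of the regular representation, or semisimplicity of $\C[G]$). Hence $0 = \sum_H a_H \sum_{g\in G} e_{gHg^{-1}}$ in $\Q[G]$, which is the asserted idempotent relation.

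The main obstacle, and the only place where care is genuinely needed, is keeping straight the rationality and the field of definition: in part (i) one passes to $\C$ to use orthogonality of characters but must observe the resulting class function has rational coefficients, so it is a Brauer relation in the stated ($\Q$-valued) sense; in part (ii) one similarly argues over $\C$ that $\sum_H a_H z_H$ kills every simple $\C[G]$-module, then descends the conclusion $\sum_H a_H z_H = 0$ back to $\Q[G]$, which is legitimate because $\Q[G] \hookrightarrow \C[G]$. A secondary bookkeeping point is the identity $\dim_\C(V_\chi)^{gHg^{-1}} = \dim_\C(V_\chi)^H$, which holds because $V_\chi^{gHg^{-1}} = g\cdot V_\chi^H$; this is what makes $z_H$ behave like $|G|$ copies of a single "averaged" idempotent and explains the shape of the formula in (ii).
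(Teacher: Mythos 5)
Your argument is correct and is essentially the paper's proof: both rest on the Frobenius-reciprocity identity $\langle \Ind_{H}^{G}\mathbf{1}_{H},\chi\rangle_{G}=\langle\mathbf{1}_{H},\Res^{G}_{H}\chi\rangle_{H}=\chi(e_{H})$ for every $\chi\in\Irr_{\C}(G)$, deduce part (i) from the vanishing of all inner products with irreducible characters, and deduce part (ii) from the fact that characters are class functions together with the centrality of $\sum_{g\in G}e_{gHg^{-1}}$. The only blemish is a harmless conflation of the character value $\chi(z_{H})=|G|\dim_{\C}(V_{\chi})^{H}$ with the scalar $\chi(z_{H})/\chi(1)$ by which the central element $z_{H}$ acts; your final computation uses the correct scalar, so nothing is affected.
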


\begin{proof}
We adapt the proof of \cite[\S 3]{MR1000113}.
For every $\chi \in \Irr_{\C}(G)$ we have
\begin{align*}
\textstyle{\langle \sum_{H \in \mathcal{H}} a_{H} \Ind_{H}^{G} \mathbf{1}_{H},\chi \rangle_{G}}
&= \textstyle{\sum_{H \in \mathcal{H}} a_{H} 
\langle  \mathbf{1}_{H}, \Res^{G}_{H} \chi \rangle_{H}} \\
&= \textstyle{\sum_{H \in \mathcal{H}} a_{H} \chi(e_{H})} \\
&= \textstyle{\chi (\sum_{H\in\mathcal{H}}a_{H}e_{H} )} ,
\end{align*}
where we have extended $\chi$ linearly to a function of $\C[G]$ and 
the first equality follows from Frobenius reciprocity \cite[(10.9)]{MR632548}.
This immediately implies part (i). 

It remains to show (ii). 
Suppose that $0=\sum_{H \in \mathcal{H}} a_{H} \Ind_{H}^{G} \mathbf{1}_{H}$ is a Brauer relation.
Then $\chi(\sum_{H \in \mathcal{H}} a_{H} e_{H})=0$ for all $\chi \in \Irr_{\C}(G)$. 
Let $z=\sum_{H \in \mathcal{H}} a_{H} \sum_{g \in G} e_{gHg^{-1}}$.
Then since each $\chi$ is a class function on $G$, we have 
$\chi(z)=|G|\chi(\sum_{H \in \mathcal{H}} a_{H} e_{H}) =0$ for all $\chi \in \Irr_{\C}(G)$.
Let $Z(\C[G])$ denote the centre of $\C[G]$.
Then $z \in Z(\C[G])$ and the elements of $\Irr_{\C}(G)$ induce an isomorphism $Z(\C[G]) \cong \prod_{\Irr_{\C}(G)} \C$, so we conclude that $z=0$, as desired.
\end{proof}

\begin{remark}\label{rmk:useful-corr-useful}
Useful Brauer relations correspond to useful idempotent relations. 
\end{remark}

\begin{remark}\label{rmk:coefficients-conj-subgroups}
If $H$ is a subgroup of a finite group $G$ and $g \in G$, then we have 
$\Ind^{G}_{H} \mathbf{1}_{H} = \Ind^{G}_{gHg^{-1}} \mathbf{1}_{gHg^{-1}}$ as class functions on $G$. 
Therefore in a Brauer relation, coefficients can be `moved' between conjugate subgroups
and so $\mathcal{H}$ can always be chosen to have at most one representative of each conjugacy class of subgroups.
\end{remark}

\begin{lemma}\label{lem:frob-brauer}
Let $G = N \rtimes H$ be a Frobenius group with kernel $N$ and complement $H$.
Then $G$ has a useful Brauer relation of the form
\[
|H|\mathbf{1}_{G} + \Ind_{1}^{G} \mathbf{1}_{1}
= |H|\Ind^{G}_{H} \mathbf{1}_{H} + \Ind_{N}^{G} \mathbf{1}_{N}.
\]  
\end{lemma}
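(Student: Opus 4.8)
The plan is to prove the claimed Brauer relation by converting the known idempotent relation for Frobenius groups into a Brauer relation, using Lemma \ref{lemma:brauer-versus-idempotent-relation}(i). Recall from Proposition \ref{prop:Frob-idem-relation} that $G = N \rtimes H$ admits the useful idempotent relation
\[
1 = e_{N} + \sum_{g \in N} \frac{|H|}{|N|}e_{gHg^{-1}} - |H| e_{G},
\]
which I rewrite as the honest idempotent relation (in the sense of Definition \ref{def:idem-relation}(i)) $0 = -e_{1} + e_{N} + \sum_{g \in N} \tfrac{|H|}{|N|}e_{gHg^{-1}} - |H| e_{G}$ with respect to the set of subgroups $\mathcal{H} = \{1, N, G\} \cup \{gHg^{-1} : g \in N\}$. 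Applying Lemma \ref{lemma:brauer-versus-idempotent-relation}(i) term by term then yields the Brauer relation
\[
0 = -\Ind_{1}^{G}\mathbf{1}_{1} + \Ind_{N}^{G}\mathbf{1}_{N} + \sum_{g \in N} \frac{|H|}{|N|}\Ind_{gHg^{-1}}^{G}\mathbf{1}_{gHg^{-1}} - |H|\Ind_{G}^{G}\mathbf{1}_{G}.
\]

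The main step is then to simplify the sum over conjugates. By Remark \ref{rmk:coefficients-conj-subgroups}, for each $g \in N$ we have $\Ind_{gHg^{-1}}^{G}\mathbf{1}_{gHg^{-1}} = \Ind_{H}^{G}\mathbf{1}_{H}$ as class functions on $G$, since induction from conjugate subgroups gives the same induced character. There are exactly $|N|$ terms in the sum (the subgroups $gHg^{-1}$ for $g \in N$ are distinct by the computation in the proof of Proposition \ref{prop:Frob-idem-relation}, but this is irrelevant here — we only need that there are $|N|$ summands indexed by $g \in N$), so
\[
\sum_{g \in N} \frac{|H|}{|N|}\Ind_{gHg^{-1}}^{G}\mathbf{1}_{gHg^{-1}} = |N| \cdot \frac{|H|}{|N|} \Ind_{H}^{G}\mathbf{1}_{H} = |H| \Ind_{H}^{G}\mathbf{1}_{H}.
\]
Substituting this and noting $\Ind_{G}^{G}\mathbf{1}_{G} = \mathbf{1}_{G}$, the Brauer relation becomes $0 = -\Ind_{1}^{G}\mathbf{1}_{1} + \Ind_{N}^{G}\mathbf{1}_{N} + |H|\Ind_{H}^{G}\mathbf{1}_{H} - |H|\mathbf{1}_{G}$, and rearranging gives precisely
\[
|H|\mathbf{1}_{G} + \Ind_{1}^{G}\mathbf{1}_{1} = |H|\Ind_{H}^{G}\mathbf{1}_{H} + \Ind_{N}^{G}\mathbf{1}_{N},
\]
as claimed.

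Finally, I should check that this is a \emph{useful} Brauer relation: in the statement of Lemma \ref{lem:frob-brauer}, the relevant set of subgroups (after collapsing conjugates via Remark \ref{rmk:coefficients-conj-subgroups}) is $\{1, H, N, G\}$, it contains the trivial subgroup $1$, and the coefficient of $\Ind_{1}^{G}\mathbf{1}_{1}$ is $-1 \neq 0$ (or $+1$ depending on which side we move it to), so the relation is useful. I do not anticipate any genuine obstacle here: the only points requiring a moment's care are the bookkeeping of signs when passing from the form $1 = \cdots$ to the form $0 = \cdots$, and the observation that the $|N|$ conjugate summands all collapse to a single $\Ind_{H}^{G}\mathbf{1}_{H}$ with total coefficient $|H|$. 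Alternatively, one could bypass Proposition \ref{prop:Frob-idem-relation} and instead verify the Brauer relation directly from the Frobenius permutation character identity — namely that the permutation character of $G$ acting on $N$ by translation equals $\mathbf{1}_{G} + (\Ind_{N}^{G}\mathbf{1}_{N} - \mathbf{1}_{G})$ combined with the fact that $\Ind_{1}^{G}\mathbf{1}_{1}$ is the regular character $|H| \cdot (\text{character of } N\text{-action}) $ minus lower terms — but routing through the already-established idempotent relation and Lemma \ref{lemma:brauer-versus-idempotent-relation} is cleaner and is the approach I would write up.
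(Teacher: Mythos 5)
Your proof is correct and follows exactly the paper's route: the paper's own proof is the one-line citation of Proposition \ref{prop:Frob-idem-relation}, Lemma \ref{lemma:brauer-versus-idempotent-relation} and Remark \ref{rmk:coefficients-conj-subgroups}, which is precisely the chain you spell out (convert the Frobenius idempotent relation to a Brauer relation term by term, then collapse the $|N|$ conjugates of $H$ into a single $|H|\Ind_{H}^{G}\mathbf{1}_{H}$). The bookkeeping of signs and the usefulness check are both right; the sketched alternative via permutation characters at the end is unnecessary and somewhat muddled, but you correctly set it aside.
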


\begin{proof}
This follows from Proposition~\ref{prop:Frob-idem-relation}, Lemma~\ref{lemma:brauer-versus-idempotent-relation} and Remark~\ref{rmk:coefficients-conj-subgroups}.
\end{proof}

\subsection{M\"obius function of partially ordered sets}
We follow the exposition of \cite[\S 2]{MR602896} and
refer the reader to \cite{MR2483561} for more information on this subject.

\begin{definition}\label{def:mobius}
Let $S$ be a partially ordered set whose order relation is denoted by $\leq$.
For $a,b \in S$, the \emph{interval} $[a,b]$ in $S$
consists of all $c \in S$ such that $a \leq c \leq b$,
and a \emph{chain} from $a$ to $b$ of length $i$ is 
a totally ordered subset $T$ of $[a,b]$ such that $a,b \in T$ and $|T|=i+1$.
The set $S$ is \emph{locally finite} if all intervals in $S$ are finite.
If $S$ is locally finite then it has a \emph{M\"obius function} $\mu_{S} : S \times S \rightarrow \Z$,
which is uniquely defined by the condition that
\begin{equation}\label{eq:mobius}
\sum_{c \in [a,b]} \mu_{S}(c,b) 
=
\begin{cases}
      1 & \text{if}\ a=b, \\
      0 & \text{if}\ a \neq b.
\end{cases}  
\end{equation}
\end{definition}

\begin{remark}\label{rmk:nt-mobius}
If we take the set of positive integers $\N$ to be partially ordered by divisibility, 
then $\mu_{\N}(a,b)=\mu(b/a)$ where the latter $\mu$ denotes the M\"obius
function of number theory. 
\end{remark}

\begin{remark}\label{rmk:alt-sum-formula}
The alternating sum formula states that for $a,b \in S$ we have 
$\mu_{S}(a,b)=\sum_{i}(-1)^{i}n_{i}$, where $n_{i}$ is the number of chains from $a$ to $b$ of length $i$
(see \cite[3.1.11]{MR2483561}).
\end{remark}

\subsection{Explicit Brauer relations}\label{subsec:explicit-Brauer}
Given a non-empty set $\mathcal{H}$ of subgroups of a finite group $G$, we consider
$\mathcal{H}$ to be partially ordered with respect to inclusion.
Let $\mathcal{S}(G)$ denote the set of all subgroups of $G$ and let
$\mathcal{C}(G)$ denote the set of all cyclic subgroups of $G$. 
The following result is due to Gilman \cite[Lemma 1]{MR332947}.
See also \cite[\S 3]{MR602896} and \cite[Theorem 3.3]{MR2032439}
for closely-related results.

\begin{prop}\cite{MR332947}\label{prop:Brauer-relations-from-mobius}
Let $\mathcal{H}$ be a set of subgroups of a finite group $G$.
Suppose that $G$ is non-cyclic and that $\mathcal{C}(G) \cup \{ G \} \subseteq \mathcal{H}$.
Then
\[
\sum_{H \in \mathcal{H}} \mu_{\mathcal{H}}(H,G) |H| \Ind_{H}^{G}\mathbf{1}_{H} = 0.
\]
\end{prop}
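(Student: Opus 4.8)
The plan is to verify the stated equality of class functions by pairing both sides against an arbitrary complex irreducible character $\chi \in \Irr_{\C}(G)$ and checking that the inner products agree. Since $\Ind_{H}^{G}\mathbf{1}_{H}$ is a genuine character, Frobenius reciprocity gives $\langle \Ind_{H}^{G}\mathbf{1}_{H}, \chi \rangle_{G} = \langle \mathbf{1}_{H}, \Res^{G}_{H}\chi \rangle_{H} = \dim (\C^{\dim\chi})^{H} = \chi(e_{H})$, exactly as in the proof of Lemma \ref{lemma:brauer-versus-idempotent-relation}. So it suffices to show $\sum_{H \in \mathcal{H}} \mu_{\mathcal{H}}(H,G)|H| \chi(e_{H}) = 0$ for every $\chi$. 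Because the class functions $\Ind_{C}^{G}\mathbf{1}_{C}$ for $C \in \mathcal{C}(G)$ already span the relevant space (by Artin's induction theorem, over $\Q$), it would actually be enough to establish the identity after restriction to cyclic subgroups; but the cleanest route is the direct character-pairing computation, which I now sketch.

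First I would reduce to a purely combinatorial statement about the poset $\mathcal{H}$. Fix $\chi$ and write $f(H) := \langle \mathbf{1}_{H}, \Res^{G}_{H}\chi\rangle_{H}$, the multiplicity of the trivial character of $H$ in $\Res^{G}_{H}\chi$. The key observation is a counting identity: for any subgroup $K \leq G$,
\[
\sum_{H \in \mathcal{H},\, H \leq K} \mu_{\mathcal{H}}(H,K) |H| f(H)
\]
telescopes, via the defining relation \eqref{eq:mobius} of the Möbius function, to something supported only on the cyclic subgroups contained in $K$. More precisely, one writes $|H| f(H) = |H|\langle\mathbf{1}_H,\Res^G_H\chi\rangle_H = \langle N_H, \chi\rangle$ where $N_H=\sum_{h\in H}h$, and then expresses $\langle N_H,\chi\rangle = \sum_{g\in H}\chi(g)$. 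Grouping the sum $\sum_{H\in\mathcal H,\,H\le G}\mu_{\mathcal H}(H,G)\sum_{g\in H}\chi(g)$ by the cyclic subgroup $\langle g\rangle$ generated by each element $g$, the inner coefficient of each $\chi(g)$ becomes $\sum_{H\in\mathcal H,\,\langle g\rangle\le H}\mu_{\mathcal H}(H,G)$. Since $\langle g\rangle\in\mathcal C(G)\subseteq\mathcal H$ and $G\in\mathcal H$ with $G$ non-cyclic (so $\langle g\rangle\ne G$), this inner sum ranges over the full interval $[\langle g\rangle, G]$ in $\mathcal H$ and hence vanishes by \eqref{eq:mobius}. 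Therefore $\sum_{H\in\mathcal H}\mu_{\mathcal H}(H,G)\sum_{g\in H}\chi(g)=0$, which is the desired pairing identity, and the proposition follows since $\chi$ was arbitrary.

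I expect the main obstacle to be a bookkeeping subtlety rather than a conceptual one: namely, making sure the hypothesis $\mathcal{C}(G)\cup\{G\}\subseteq\mathcal{H}$ is used correctly so that, for every $g\in G$, the interval $[\langle g\rangle, G]$ over which one sums the Möbius values is genuinely an interval \emph{in $\mathcal{H}$} (the Möbius function $\mu_{\mathcal H}$ depends on the whole poset $\mathcal{H}$, not just on the sublattice of subgroups of $G$). The cyclicity hypothesis on $G$ is what guarantees $\langle g\rangle\neq G$ for all $g$, so that \eqref{eq:mobius} applies in its ``$a\neq b$'' case and yields $0$ rather than $1$; without it the sum would instead collapse to $\sum_{g\in G}\chi(g)=|G|\langle\mathbf 1_G,\chi\rangle$, recovering the trivial relation. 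A secondary point worth stating explicitly is that $\mathcal{H}$ is assumed finite (or at least that the interval $[\,\cdot\,,G]$ is finite, which is automatic here since $G$ is finite), so that $\mu_{\mathcal{H}}$ is well defined by Definition \ref{def:mobius}. Everything else is the routine Frobenius-reciprocity and inclusion-of-elements-into-cyclic-subgroups manipulation indicated above.
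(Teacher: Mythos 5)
Your proof is correct, and its combinatorial heart coincides with the paper's: in both arguments the relation collapses because, for each group element $g$, the sum $\sum_{H \in \mathcal{H},\, \langle g\rangle \leq H} \mu_{\mathcal{H}}(H,G)$ runs over the full interval $[\langle g\rangle, G]$ of the poset $\mathcal{H}$ (this is precisely where the hypothesis $\mathcal{C}(G)\cup\{G\}\subseteq\mathcal{H}$ is used) and vanishes by \eqref{eq:mobius} because $\langle g\rangle \neq G$ when $G$ is non-cyclic. The difference is only in the outer wrapper. The paper shows the class function $\theta=\sum_{H}\mu_{\mathcal{H}}(H,G)|H|\Ind_{H}^{G}\mathbf{1}_{H}$ vanishes by evaluating it pointwise, via the explicit formula $\Ind_{H}^{G}\mathbf{1}_{H}(g)=|H|^{-1}\sum_{x\in G}\dot{\mathbf{1}}_{H}(xgx^{-1})$, and grouping the resulting double sum by the cyclic subgroup $\langle xgx^{-1}\rangle$. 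You instead pair $\theta$ against each $\chi\in\Irr_{\C}(G)$, use Frobenius reciprocity to rewrite $|H|\langle \Ind_{H}^{G}\mathbf{1}_{H},\chi\rangle_{G}$ as $\sum_{h\in H}\chi(h)$, and group by $\langle h\rangle$. These are dual ways of certifying that a class function is zero; your route additionally invokes the (standard) fact that the irreducible characters span the space of class functions, whereas the paper's computation is character-free and marginally more elementary, but it also meshes naturally with how the relation is later consumed in Proposition \ref{prop:brauer-relation-dim-formula}. Your aside about Artin induction is not needed for the argument and could be dropped; the bookkeeping points you flag explicitly (finiteness of the intervals, and membership of $\langle g\rangle$ and $G$ in $\mathcal{H}$ so that \eqref{eq:mobius} applies in its $a\neq b$ case) are exactly the right ones.
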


\begin{proof}
Let $\theta = \sum_{H \in \mathcal{H}} \mu_{\mathcal{H}}(H,G) |H| \Ind_{H}^{G}\mathbf{1}_{H}$.
Recall that for $H \leq G$ and $g \in G$, we have 
$\Ind_{H}^{G}\mathbf{1}_{H}(g) = |H|^{-1}\sum_{x \in G} \dot{\mathbf{1}}_{H}(xgx^{-1})$
where $\dot{\mathbf{1}}_{H}$ denotes the indicator function of $H$ 
as a subset of $G$ (see \cite[(10.3)]{MR632548}). 
Let $\mathcal{H}(g,x) =\{ H \in \mathcal{H} \mid \langle xgx^{-1} \rangle \leq H \}$.
Then for $g \in G$ we have
\[
\theta(g) 
= \sum_{H \in \mathcal{H}} \mu_{\mathcal{H}}(H,G) \sum_{x \in G} \dot{\mathbf{1}}_{H}(xgx^{-1})
= \sum_{x \in G} \sum_{H \in \mathcal{H}(g,x)} 
\mu_{\mathcal{H}}(H,G) = 0,
\]
where the last equality follows from \eqref{eq:mobius} and the hypothesis that $G$ is non-cyclic.
\end{proof}

\begin{corollary}\label{cor:special-case-Artin-induction}
Let $G$ be a non-cyclic finite group. Then
\[
\mathbf{1}_{G} 
= \sum_{H \in \mathcal{C}(G)} a_{H} \frac{|H|}{|G|} \Ind^{G}_{H} \mathbf{1}_{H}
\quad \text{ where } \quad
a_{H} = \sum_{\substack{Z \in \mathcal{C}(G) \\ Z \geq H}} \mu([Z:H]).
\]
\end{corollary}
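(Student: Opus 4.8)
The plan is to deduce this from Proposition~\ref{prop:Brauer-relations-from-mobius} by specialising to $\mathcal{H} = \mathcal{C}(G) \cup \{G\}$ and then identifying the M\"obius coefficients explicitly. Since $G$ is non-cyclic we have $G \notin \mathcal{C}(G)$, so $\mathcal{H}$ is the poset obtained from $\mathcal{C}(G)$ by adjoining $G$ as a new top element, and in particular $\mathcal{C}(G) \cup \{G\} \subseteq \mathcal{H}$, so Proposition~\ref{prop:Brauer-relations-from-mobius} applies. Isolating the term $H = G$ in that relation, for which $\mu_{\mathcal{H}}(G,G) = 1$ and $|G|\,\Ind_{G}^{G}\mathbf{1}_{G} = |G|\,\mathbf{1}_{G}$, and dividing through by $|G|$ yields
\[
\mathbf{1}_{G} = \sum_{H \in \mathcal{C}(G)} \bigl(-\mu_{\mathcal{H}}(H,G)\bigr)\, \frac{|H|}{|G|}\, \Ind_{H}^{G}\mathbf{1}_{H}.
\]
It then suffices to prove that $-\mu_{\mathcal{H}}(H,G) = a_{H}$ for every $H \in \mathcal{C}(G)$.

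For this I would first record two facts. First, for cyclic subgroups $H \leq Z$ of $G$ the interval $[H,Z]$ is the same whether computed in $\mathcal{C}(G)$, in $\mathcal{H}$, or in $\mathcal{S}(G)$, since every subgroup of the cyclic group $Z$ is cyclic and $G$ does not lie in $[H,Z]$; under the order isomorphism sending a subgroup of $Z$ to its index in $Z$ this interval becomes the divisor lattice of the integer $[Z:H]$, so by Remark~\ref{rmk:nt-mobius} we get $\mu_{\mathcal{C}(G)}(H,Z) = \mu([Z:H])$, whence $a_{H} = \sum_{Z \in \mathcal{C}(G),\, Z \geq H} \mu_{\mathcal{C}(G)}(H,Z)$. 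Second, applying the defining relation \eqref{eq:mobius} in the poset $\mathcal{H}$ to the pair $H < G$ and separating the term $Z = G$ gives
\[
\sum_{Z \in \mathcal{C}(G),\, Z \geq H} \mu_{\mathcal{H}}(Z,G) = -1 \qquad \text{for every } H \in \mathcal{C}(G).
\]

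The conclusion then follows from a uniqueness argument internal to the finite poset $\mathcal{C}(G)$: if $f \colon \mathcal{C}(G) \to \Q$ satisfies $\sum_{Z \in \mathcal{C}(G),\, Z \geq H} f(Z) = 0$ for all $H \in \mathcal{C}(G)$, then $f \equiv 0$, because choosing $H$ maximal among the finitely many subgroups with $f(H) \neq 0$ forces $f(Z) = 0$ for all $Z > H$ and hence $0 = \sum_{Z \geq H} f(Z) = f(H)$, a contradiction. Applying this to $f(H) := a_{H} + \mu_{\mathcal{H}}(H,G)$ and using the second fact above, it remains only to check that $\sum_{Z \in \mathcal{C}(G),\, Z \geq H} a_{Z} = 1$ for every $H$; this is obtained by interchanging the order of summation in $\sum_{Z \geq H} \sum_{W \geq Z} \mu_{\mathcal{C}(G)}(Z,W)$ and applying \eqref{eq:mobius} in $\mathcal{C}(G)$ to each pair $H \leq W$. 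Together these give $a_{H} = -\mu_{\mathcal{H}}(H,G)$, completing the proof.

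The main obstacle is purely bookkeeping: keeping track of the three posets $\mathcal{C}(G) \subseteq \mathcal{H} \subseteq \mathcal{S}(G)$ and of the two ``directions'' of the M\"obius relation, and in particular deriving the dual inversion identity $\sum_{Z \geq H} a_{Z} = 1$ directly from the single relation \eqref{eq:mobius} stated in the paper, rather than invoking the incidence-algebra fact that the M\"obius function is also a left inverse of the zeta function. Everything else is a direct specialisation of Proposition~\ref{prop:Brauer-relations-from-mobius}.
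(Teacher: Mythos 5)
Your proof is correct and takes essentially the same route as the paper, whose proof is the one-line citation of Proposition \ref{prop:Brauer-relations-from-mobius} with $\mathcal{H} = \mathcal{C}(G) \cup \{G\}$ together with Remark \ref{rmk:nt-mobius}. The only substance beyond that citation is the identification $-\mu_{\mathcal{H}}(H,G) = a_{H}$, which the paper leaves implicit and which you verify correctly via the uniqueness argument for the M\"obius recursion.
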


\begin{proof}
This follows from Remark~\ref{rmk:nt-mobius} and 
Proposition~\ref{prop:Brauer-relations-from-mobius} with 
$\mathcal{H} = \mathcal{C}(G) \cup \{ G \}$.
\end{proof}

\begin{remark}
Corollary~\ref{cor:special-case-Artin-induction} is a special case of Brauer's explicit version of Artin's induction theorem (see \cite[(15.4)]{MR632548}). Moreover, Remark~\ref{rmk:cyclic-no-brauer} and 
Corollary~\ref{cor:special-case-Artin-induction} together imply 
that $G$ is non-cyclic if and only if it has a non-zero Brauer relation
(see also \cite[Theorem~8]{MR511862}).
\end{remark}

\begin{remark}\label{rmk:all-subgroups}
Let $G$ be a finite group. Then we can and do abbreviate $\mu_{\mathcal{S}(G)}$ to $\mu$
without ambiguity since by the alternating sum formula of Remark~\ref{rmk:alt-sum-formula}, 
for $I \leq H \leq G$, we have
\[
\mu(I,H) = \sum_{I=I_{0} < \cdots < I_{n}=H} (-1)^{n}.
\]
Some of the properties of $\mu$ are discussed in 
\cite[Remark~3.7 and Lemma~3.8]{MR2032439}.
\end{remark}

\begin{remark}
Brauer relations have been completely classified by Bartel and Dokchitser  \cite{MR3420514,MR3200364}. 
Moreover, as explained in \cite[Remark~3.6]{MR2032439}, for a fixed non-cyclic finite group $G$,
the $\Q$-vector space of Brauer relations of $G$ is spanned by each of the following:\begin{enumerate}
\item the relations arising from Proposition~\ref{prop:relation-i} via 
Lemma~\ref{lemma:brauer-versus-idempotent-relation}; or
\item the relations of the form
$\sum_{I \leq H} \mu(I,H) |I| \Ind_{I}^{H}\mathbf{1}_{I} = 0$,
where $H$ ranges over all non-cyclic subgroups of $G$
(all such relations are given by Proposition~\ref{prop:Brauer-relations-from-mobius}).
\end{enumerate}
Useful Brauer relations can also be obtained from the explicit induction formulae
of \cite[Theorem~2.16(i)]{MR969240} or \cite[Corollary~2.22]{MR1051242}.
\end{remark}

\subsection{Characterisation of finite groups that admit useful Brauer relations}\label{subsec:char-admit-useful-Brauer}
Funakura gave the following simple criterion for the existence of useful Brauer relations.

\begin{theorem}\cite{MR511862}\label{thm:conditionbrauer}
A finite group $G$ admits a useful Brauer relation if and only if $G$ contains 
a non-cyclic subgroup of order $\ell_{1}\ell_{2}$,
where $\ell_{1}$ and $\ell_{2}$ are two not necessarily distinct prime numbers. 
\end{theorem}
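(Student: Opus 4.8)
The plan is to prove the two implications separately: the ``if'' direction is an explicit construction, the converse a linear-algebra duality argument. First suppose $G$ contains a non-cyclic subgroup $H$ of order $\ell_1\ell_2$. Applying Proposition \ref{prop:Brauer-relations-from-mobius} with the ambient group taken to be $H$ and $\mathcal H$ the set of all subgroups of $H$ (so that $\mu_{\mathcal H}=\mu$ by Remark \ref{rmk:all-subgroups}) yields the Brauer relation $\sum_{I\leq H}\mu(I,H)\,|I|\,\Ind_I^H\mathbf 1_I=0$ of $H$. Since induction of class functions is additive and transitive, applying $\Ind_H^G$ gives $\sum_{I\leq H}\mu(I,H)\,|I|\,\Ind_I^G\mathbf 1_I=0$, a Brauer relation of $G$ with respect to the set of all subgroups of $H$; this set contains the trivial subgroup, whose coefficient is $\mu(1,H)$. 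Every proper nontrivial subgroup of $H$ has prime order, so the only chains in $[1,H]$ in the subgroup lattice have length $1$ or $2$, and the alternating-sum formula (Remarks \ref{rmk:alt-sum-formula} and \ref{rmk:all-subgroups}) gives $\mu(1,H)=m-1$, where $m$ is the number of subgroups of $H$ of prime order. A non-cyclic group of order $\ell_1\ell_2$ has $m\geq 2$: if $\ell_1\neq\ell_2$ it has subgroups of order $\ell_1$ and of order $\ell_2$ by Cauchy, while if $\ell_1=\ell_2$ then $H\cong C_{\ell_1}\times C_{\ell_1}$, which has $\ell_1+1$ subgroups of order $\ell_1$. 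Hence $\mu(1,H)\neq 0$ and the relation is useful.

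For the converse, observe that a useful Brauer relation of $G$ is precisely a $\Q$-linear dependence $0=a_1\Ind_1^G\mathbf 1_1+\sum_{1\neq H\leq G}a_H\Ind_H^G\mathbf 1_H$ with $a_1\neq 0$, so $G$ admits a useful Brauer relation if and only if $\Ind_1^G\mathbf 1_1$ lies in the $\Q$-span of $\{\Ind_H^G\mathbf 1_H:1\neq H\leq G\}$ inside the space of $\Q$-valued class functions on $G$. Assume now that $G$ has no non-cyclic subgroup of order $\ell_1\ell_2$. Pick a class function $\theta$ on $G$ so that, for every cyclic subgroup $C\leq G$, the sum of $\theta$ over the generators of $C$ equals $\mu(|C|)$; such a $\theta$ exists because a generator of $C$ is never conjugate to a generator of a cyclic subgroup not conjugate to $C$, so the defining conditions decouple over conjugacy classes of cyclic subgroups, each being a single solvable linear equation. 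Grouping the elements of a subgroup $H$ by the cyclic subgroup they generate gives $\sum_{h\in H}\theta(h)=\sum_{C\leq H,\ C\ \mathrm{cyclic}}\mu(|C|)=:v(H)$, and $\theta(1)=\mu(1)=1$. By Frobenius reciprocity $\langle\Ind_H^G\mathbf 1_H,\theta\rangle_G=|H|^{-1}\sum_{h\in H}\theta(h)=v(H)/|H|$, while $\langle\Ind_1^G\mathbf 1_1,\theta\rangle_G=\theta(1)=1$. Thus, provided $v(H)=0$ for every nontrivial $H\leq G$, the pairing $\langle\,\cdot\,,\theta\rangle_G$ is a linear functional vanishing on every $\Ind_H^G\mathbf 1_H$ with $H\neq 1$ but not on $\Ind_1^G\mathbf 1_1$, so $G$ has no useful Brauer relation.

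It remains to prove $v(H)=0$ for every nontrivial $H\leq G$. If $H$ is cyclic this is the identity $\sum_{d\mid|H|}\mu(d)=0$ (consistent with Remark \ref{rmk:cyclic-no-brauer}). If $H$ is non-cyclic, note that $H$ again has no non-cyclic subgroup of order $\ell_1\ell_2$; I claim $H$ has exactly one involution. It has at least one: $H$ contains a minimal non-cyclic subgroup $M$, and by the classification of finite groups all of whose proper subgroups are cyclic, $M$ is isomorphic to $C_p\times C_p$, to $Q_8$, or to a non-abelian group of order $pq$; the first and third are non-cyclic of order $\ell_1\ell_2$ and hence excluded, so $M\cong Q_8$ and $2\mid|H|$. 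It has at most one: two distinct involutions $s,t$ generate a dihedral group $\langle s,t\rangle$ of order $2k$ with $k\geq 2$, and for any prime $p\mid k$ the subgroup generated by $s$ together with the order-$p$ subgroup of $\langle st\rangle$ is isomorphic to $C_2\times C_2$ if $p=2$ and to a non-abelian group of order $2p$ if $p$ is odd — in either case a non-cyclic subgroup of order $\ell_1\ell_2$, a contradiction. So $H$ has a unique involution $z$, which is central since its conjugates are again involutions. Now every squarefree-order cyclic subgroup $C\leq H$ of even order contains $z$ and equals $\langle z\rangle\times C'$, where $C'$ is the unique subgroup of $C$ of odd order; the map $C\mapsto C'$ is a bijection from the squarefree-order cyclic subgroups of $H$ of even order onto those of odd order, with inverse $D\mapsto\langle z\rangle\times D$ (using that $z$ is central), and it satisfies $\mu(|C'|)=-\mu(|C|)$. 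Since only cyclic subgroups of squarefree order contribute to $v(H)$, the even-order and odd-order contributions cancel, so $v(H)=0$. I expect this last paragraph — showing that a non-cyclic group with no non-cyclic subgroup of order $\ell_1\ell_2$ has a unique involution, and exploiting the involution-pairing to force $v(H)=0$ — to be the only genuinely group-theoretic step, everything else being formal manipulation of induced characters, Möbius functions, and a duality argument.
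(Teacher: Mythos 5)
The paper does not actually prove this statement internally: its ``proof'' is a citation to Funakura \cite{MR511862}, so what follows assesses your self-contained argument on its own terms. Your ``if'' direction is correct and is the standard one: inducing the relation of Proposition \ref{prop:Brauer-relations-from-mobius} from a non-cyclic subgroup $H$ of order $\ell_1\ell_2$ up to $G$ and computing $\mu(1,H)=m-1\geq 1$ (where $m\geq 2$ is the number of prime-order subgroups of $H$) does produce a useful Brauer relation. The duality framework for the converse is also sound: a useful relation exists if and only if $\Ind_1^G\mathbf{1}_1$ lies in the span of the $\Ind_H^G\mathbf{1}_H$ with $H\neq 1$, and your class function $\theta$ (with $\sum_{g}\theta(g)=\mu(|C|)$ over the generators of each cyclic $C$) exists and reduces everything to the identity $v(H)=\sum_{C\leq H,\,C\ \mathrm{cyclic}}\mu(|C|)=0$ for every non-trivial $H\leq G$.

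The gap is in your proof of $v(H)=0$ for non-cyclic $H$. The classification of minimal non-cyclic groups is misquoted: besides $C_p\times C_p$ and $Q_8$, the third family consists of the non-abelian groups $\langle a,b\mid a^p=b^{q^m}=1,\ b^{-1}ab=a^r\rangle$ of order $pq^m$ for \emph{every} $m\geq 1$, not just $m=1$. For $m\geq 2$ these are not of order $\ell_1\ell_2$ and need contain no involution. Concretely, $H=C_7\rtimes C_9$ of order $63$ (with $C_9$ acting through its quotient $C_3$) is non-cyclic, every proper subgroup is cyclic (its subgroups have orders $1,3,7,9,21$, all cyclic), it has no non-cyclic subgroup of order $\ell_1\ell_2$, and it has no involution whatsoever. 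So your claim that every non-cyclic $H$ satisfying the hypothesis contains exactly one involution is false, and the involution-pairing argument for $v(H)=0$ simply does not apply to odd-order non-cyclic $H$ (equivalently, to $H$ a $Z$-group of odd order); that case is left entirely unproven. The target identity does survive in this example ($v(C_7\rtimes C_9)=1-1-1+1=0$), but establishing it in general requires a different argument --- for instance via $\sum_{K\leq H}\mu(1,K)=0$ combined with Ph.~Hall's theorem that $\mu(1,K)=0$ whenever the Frattini subgroup of $K$ is non-trivial, or a direct count of squarefree-order cyclic subgroups in metacyclic groups. The ``at most one involution'' half of your argument, and the pairing in the presence of a central involution, are fine as written.
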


\begin{proof}
This is \cite[Theorem 9 and Remark after Theorem 20]{MR511862}.
\end{proof}

\begin{remark}
Recall that by Lemma~\ref{lemma:brauer-versus-idempotent-relation} and Remark~\ref{rmk:useful-corr-useful}, a finite group admits a useful Brauer relation if and only if it admits a useful idempotent relation.
\end{remark}

\subsection{An application of Brauer relations}
Our main interest in Brauer relations is due to the following result, which
is essentially \cite[Lemma 4.1]{MR1269254} and may be viewed
as an example of the notion of factorizability, as explained in \cite[\S 2]{MR1426445}.

\begin{prop}\cite{MR1269254}\label{prop:brauer-relation-dim-formula}
Let $G$ be a non-cyclic finite group and let $K$ be a field of characteristic $0$.
If $0=\sum_{H \in \mathcal{H}} a_{H} \Ind_{H}^{G}\mathbf{1}_{H}$ is a Brauer relation of $G$
then for every finite-dimensional $K[G]$-module $M$ we have
$\sum_{H \in \mathcal{H}} a_{H} \dim_{K} M^{H} = 0$.
\end{prop}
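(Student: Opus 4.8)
The plan is to reduce the statement to the additivity of $\dim_K M^H$ along Brauer relations, which is really a statement about the function $H \mapsto \dim_K M^H$ being a "rational linear combination of permutation characters evaluated against $M$". Concretely, note that $\dim_K M^H = \dim_K e_H M = \langle \Res^G_H \chi_M, \mathbf{1}_H \rangle_H$ where $\chi_M$ is the character of $M$ (extended to $\C$ if necessary via a field isomorphism, which does not change dimensions of fixed spaces since $\dim_K M^H = \dim_{\overline K} (\overline K \otimes_K M)^H$). By Frobenius reciprocity this equals $\langle \chi_M, \Ind^G_H \mathbf{1}_H \rangle_G$. This is the key identity; everything else is linearity.

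First I would fix the character $\chi = \chi_M$ of $M$ as a $\C$-valued class function on $G$ (after extending scalars along $K \hookrightarrow \C$, which is harmless as noted above). Then I would record the chain of equalities
\[
\dim_K M^H = \dim_\C (\C \otimes_K M)^H = \langle \Res^G_H \chi, \mathbf{1}_H \rangle_H = \langle \chi, \Ind^G_H \mathbf{1}_H \rangle_G,
\]
where the second equality is the standard fact that the dimension of the $H$-fixed subspace of a representation with character $\chi$ is $\langle \Res^G_H \chi, \mathbf{1}_H\rangle_H$, and the third is Frobenius reciprocity as cited in the proof of Lemma \ref{lemma:brauer-versus-idempotent-relation}. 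Then, given the Brauer relation $0 = \sum_{H \in \mathcal{H}} a_H \Ind_H^G \mathbf{1}_H$, I would simply pair both sides with $\chi$:
\[
0 = \left\langle \chi, \sum_{H \in \mathcal{H}} a_H \Ind_H^G \mathbf{1}_H \right\rangle_G = \sum_{H \in \mathcal{H}} a_H \langle \chi, \Ind_H^G \mathbf{1}_H \rangle_G = \sum_{H \in \mathcal{H}} a_H \dim_K M^H,
\]
using bilinearity of $\langle -, - \rangle_G$ and the identity above. This completes the argument.

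There is essentially no main obstacle here — the result is a formal consequence of Frobenius reciprocity and bilinearity of the inner product, and the non-cyclicity hypothesis on $G$ plays no role in the proof itself (it is only there because otherwise there are no non-trivial Brauer relations and the statement is vacuous). The one point requiring a word of care is the passage between $K$ and $\C$: one should observe that $\dim_K M^H$ is unchanged under any field extension, so it suffices to treat the case $K = \C$ (or at least $K$ containing enough roots of unity that $\chi$ takes values where the inner product formalism applies); alternatively, one can work directly over $K$ using that $\langle f_1, f_2 \rangle_G$ as defined in \S \ref{subsec:brauer-vs-idempotent-relations} makes sense for $K$-valued (indeed $\Q$-valued) class functions and that $\langle \mathbf{1}_H, \Res^G_H \chi \rangle_H = \dim_K M^H$ holds over $K$ directly. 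I would present the $\C$-coefficient version for cleanliness, citing Frobenius reciprocity exactly as in the proof of Lemma \ref{lemma:brauer-versus-idempotent-relation}.
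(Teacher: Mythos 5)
Your proof is correct and is essentially identical to the paper's: both pair the Brauer relation against the character $\chi_M$ and apply Frobenius reciprocity together with the identity $\langle \mathbf{1}_{H},\Res^{G}_{H}\chi_M\rangle_{H}=\dim_{K}M^{H}$. The paper works directly with the $K$-valued inner product rather than passing to $\C$, but this is only a cosmetic difference.
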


\begin{proof}
Let $\chi$ be the character corresponding to $M$. Then
\[
\textstyle{
0
=\left\langle\sum_{H \in \mathcal{H}} a_{H} \Ind_{H}^{G}\mathbf{1}_{H},\chi\right\rangle_{G}
=\sum_{H \in \mathcal{H}}a_{H}\langle\Ind_{H}^{G}\mathbf{1}_{H},\chi\rangle_{G}
=\sum_{H \in \mathcal{H}}a_{H}\langle\mathbf{1}_{H},\Res_{H}^{G} \chi \rangle_{H},}
\]
where the last equality follows from Frobenius reciprocity \cite[(10.9)]{MR632548}.
Hence the desired result follows since
$\langle\mathbf{1}_{H},\Res_{H}^{G} \chi \rangle_{H}=\dim_{K} M^{H}$.
\end{proof}

\begin{corollary}\label{cor:brauer-relation-dim-formula}
Let $G$ be a non-cyclic finite group and let $K$ be a field of characteristic $0$.
Let $H$ be a non-cyclic subgroup of $G$. 
Then for every finite-dimensional $K[G]$-module $M$ we have
$\sum_{I \leq H} \mu(I,H) |I| \dim_{K} M^{I} = 0$
where $\mu(I,H)$ is as in Remark~\ref{rmk:all-subgroups}.
\end{corollary}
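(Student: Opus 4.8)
The plan is to derive Corollary~\ref{cor:brauer-relation-dim-formula} directly from Proposition~\ref{prop:brauer-relation-dim-formula} by exhibiting the right Brauer relation on the (possibly non-cyclic) subgroup $H$ and then transporting it up to $G$. First I would apply Proposition~\ref{prop:Brauer-relations-from-mobius} with the group $H$ in place of $G$ and with $\mathcal{H}$ chosen to be the full set of subgroups of $H$; this is legitimate precisely because $H$ is assumed non-cyclic, so that $\mathcal{C}(H) \cup \{H\} \subseteq \mathcal{S}(H)$ and the hypotheses of that proposition are met. This yields the relation $\sum_{I \leq H} \mu_{\mathcal{S}(H)}(I,H)\, |I|\, \Ind_{I}^{H}\mathbf{1}_{I} = 0$ as a virtual character of $H$. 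By Remark~\ref{rmk:all-subgroups}, for $I \leq H$ the value $\mu_{\mathcal{S}(H)}(I,H)$ agrees with $\mu(I,H)$ as defined there, since the alternating sum over maximal chains from $I$ to $H$ only involves subgroups lying between $I$ and $H$, and hence does not depend on whether the ambient poset is $\mathcal{S}(H)$ or $\mathcal{S}(G)$.

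Next I would induce this relation from $H$ up to $G$: applying the exact functor $\Ind_{H}^{G}$ and using transitivity of induction, $\Ind_{H}^{G} \Ind_{I}^{H}\mathbf{1}_{I} = \Ind_{I}^{G}\mathbf{1}_{I}$, one obtains $\sum_{I \leq H} \mu(I,H)\, |I|\, \Ind_{I}^{G}\mathbf{1}_{I} = 0$, which is a Brauer relation of $G$ with respect to the set $\mathcal{H} = \mathcal{S}(H)$ (viewed as a set of subgroups of $G$). Since $G$ is non-cyclic by hypothesis, Proposition~\ref{prop:brauer-relation-dim-formula} applies to this relation and to the given finite-dimensional $K[G]$-module $M$, yielding immediately $\sum_{I \leq H} \mu(I,H)\, |I|\, \dim_{K} M^{I} = 0$, which is the claimed identity.

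There is essentially no hard part here; the result is a formal consequence of Propositions~\ref{prop:Brauer-relations-from-mobius} and~\ref{prop:brauer-relation-dim-formula}. The one point that warrants a line of care is the verification that the M\"obius value $\mu(I,H)$ appearing in Remark~\ref{rmk:all-subgroups} coincides with the value $\mu_{\mathcal{H}}(I,H)$ produced by Proposition~\ref{prop:Brauer-relations-from-mobius} when $\mathcal{H} = \mathcal{S}(H)$; this is exactly the content of the alternating-sum formula (Remark~\ref{rmk:alt-sum-formula}) together with the observation that every chain from $I$ to $H$ lies inside the interval $[I,H]$, which is identical whether computed in $\mathcal{S}(H)$ or $\mathcal{S}(G)$. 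A mild alternative, avoiding the induction step, would be to apply Proposition~\ref{prop:brauer-relation-dim-formula} directly with the roles of the group and subgroup suitably arranged; but the cleanest route is the one above, since Proposition~\ref{prop:Brauer-relations-from-mobius} is already stated for an arbitrary non-cyclic group and Proposition~\ref{prop:brauer-relation-dim-formula} is already stated for an arbitrary Brauer relation.
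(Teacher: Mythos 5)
Your proof is correct and follows essentially the same route as the paper: both combine Proposition \ref{prop:Brauer-relations-from-mobius} (applied to the non-cyclic group $H$) with Proposition \ref{prop:brauer-relation-dim-formula}. The only cosmetic difference is that the paper passes to the case $G=H$ by restricting $M$ to $H$, whereas you induce the Brauer relation from $H$ up to $G$; these are interchangeable since $\dim_{K}M^{I}$ does not depend on the ambient group and induction of class functions is linear and transitive.
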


\begin{proof}
We can and do assume without loss of generality that $G=H$.
The result then follows from Propositions~\ref{prop:Brauer-relations-from-mobius} and \ref{prop:brauer-relation-dim-formula}.
\end{proof}

\begin{remark}
Corollary~\ref{cor:brauer-relation-dim-formula} may be viewed as a special case of 
\cite[Theorem~1.2]{MR2032439}, where the authors consider \emph{cohomological Mackey functors}. In fact, the system $\{ M^{H} \}_{H \leq G}$ forms a cohomological Mackey functor; 
see  \cite[Example~1.2]{MR1473884} or \cite[Example~2.5(d)]{MR2032439}
and recall that $M^{H}=H^{0}(H,M)$. 
Here we have given a more direct proof of Corollary~\ref{cor:brauer-relation-dim-formula}
that does not require the full power of cohomological Mackey functors. 
\end{remark}

\subsection{Relations between Leopoldt defects}\label{subsec:relations-Leo-defects}
Let $L/K$ be a Galois extension of number fields and let $p$ be a prime number.
We now apply the results on Brauer relations to give relations between 
the Leopoldt defects $\delta(F,p)$ of certain intermediate fields $F$ of $L/K$. 

\begin{theorem}\label{thm:Brauer-rels-Leopoldt-defect}
Let $L/K$ be a Galois extension of number fields and let $G=\Gal(L/K)$. 
If $0=\sum_{H \in \mathcal{H}} a_{H} \Ind_{H}^{G}\mathbf{1}_{H}$ is a Brauer relation of $G$
then $\sum_{H \in \mathcal{H}} a_{H} \delta(L^{H},p) = 0$ for every prime number $p$.
\end{theorem}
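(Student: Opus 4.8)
The plan is to deduce this directly from Corollary~\ref{cor:brauer-relation-dim-formula} applied to a suitable $\Q_p[G]$-module, combined with Lemma~\ref{lem:invariants-of-Leopold-kernel}. The natural candidate is $M = \mathcal{L}(L,p)$, the Leopoldt kernel, which we have already observed is a finite-dimensional $\Q_p[G]$-module. We then need to pass from a general Brauer relation $0 = \sum_{H \in \mathcal{H}} a_H \Ind_H^G \mathbf{1}_H$ to a statement about $\dim_{\Q_p} M^H$; this is precisely Proposition~\ref{prop:brauer-relation-dim-formula}, provided $G$ is non-cyclic. So the first step is to reduce to the case that $G$ is non-cyclic: if $G$ is cyclic then by Remark~\ref{rmk:cyclic-no-brauer} the only Brauer relation is $0 = 0$, and the claimed identity holds trivially.

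With $G$ non-cyclic, I would take $M = \mathcal{L}(L,p)$ in Proposition~\ref{prop:brauer-relation-dim-formula} (over the field $K = \Q_p$ of characteristic $0$) to obtain $\sum_{H \in \mathcal{H}} a_H \dim_{\Q_p} \mathcal{L}(L,p)^H = 0$. Then I would rewrite each term using Lemma~\ref{lem:invariants-of-Leopold-kernel}, which gives $\mathcal{L}(L,p)^H = \mathcal{L}(L^H,p)$, so that $\dim_{\Q_p} \mathcal{L}(L,p)^H = \dim_{\Q_p} \mathcal{L}(L^H,p) = \delta(L^H,p)$ by the definition of the Leopoldt defect. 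Substituting this into the previous identity yields exactly $\sum_{H \in \mathcal{H}} a_H \delta(L^H,p) = 0$, as desired. Since the prime $p$ was arbitrary throughout, the conclusion holds for every prime number $p$.

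I do not expect any real obstacle here: the theorem is essentially a formal corollary assembled from ingredients already established in the paper, with the only minor point being the separate (and trivial) treatment of the cyclic case so that Proposition~\ref{prop:brauer-relation-dim-formula} applies. One could also remark that, via Lemma~\ref{lemma:brauer-versus-idempotent-relation}, the hypothesis could equivalently be phrased in terms of idempotent relations, but this is not needed for the proof.
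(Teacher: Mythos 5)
Your proof is correct and is exactly the paper's argument: apply Proposition~\ref{prop:brauer-relation-dim-formula} to the finite-dimensional $\Q_p[G]$-module $\mathcal{L}(L,p)$ and translate via Lemma~\ref{lem:invariants-of-Leopold-kernel}. Your explicit handling of the cyclic case (where Remark~\ref{rmk:cyclic-no-brauer} makes the statement vacuous) is a small point the paper leaves implicit, but nothing more needs to be said.
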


\begin{proof}
This follows from Proposition~\ref{prop:brauer-relation-dim-formula} and
Lemma~\ref{lem:invariants-of-Leopold-kernel}.
\end{proof}

\begin{corollary}\label{cor:mu-idempotent-relations}
Let $L/K$ be a Galois extension of number fields and let $G=\Gal(L/K)$.  
Let $H$ be a non-cyclic subgroup of $G$. 
Then $\sum_{I \leq H} \mu(I,H) |I| \delta(L^{I},p) = 0$ for every prime number $p$,
where $\mu(I,H)$ is as in Remark~\ref{rmk:all-subgroups}.
\end{corollary}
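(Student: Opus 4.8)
The plan is to deduce this from the general Brauer relation result together with the explicit Brauer relation coming from the M\"obius function. First I would observe that Corollary~\ref{cor:brauer-relation-dim-formula} already gives, for any finite-dimensional $K[G]$-module $M$ over a field $K$ of characteristic $0$ and any non-cyclic subgroup $H \leq G$, the identity $\sum_{I \leq H} \mu(I,H)|I|\dim_K M^I = 0$; the only remaining task is to feed in the correct module. The natural choice is $K = \Q_p$ and $M = \mathcal{L}(L,p)$, the Leopoldt kernel, which by the discussion preceding Lemma~\ref{lem:invariants-of-Leopold-kernel} is a finite-dimensional $\Q_p[G]$-module.

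The key step is then to identify $\dim_{\Q_p} M^I$ with $\delta(L^I,p)$. This is exactly the content of Lemma~\ref{lem:invariants-of-Leopold-kernel}, which states $\mathcal{L}(L,p)^I = \mathcal{L}(L^I,p)$; taking $\Q_p$-dimensions of both sides gives $\dim_{\Q_p} \mathcal{L}(L,p)^I = \dim_{\Q_p}\mathcal{L}(L^I,p) = \delta(L^I,p)$ by the definition of the Leopoldt defect. Substituting this into the identity from Corollary~\ref{cor:brauer-relation-dim-formula} (applied with $G$, the module $\mathcal{L}(L,p)$, and the non-cyclic subgroup $H$) yields $\sum_{I \leq H}\mu(I,H)|I|\delta(L^I,p) = 0$, which is precisely the claim.

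Alternatively, and perhaps more cleanly, I would route the argument through Theorem~\ref{thm:Brauer-rels-Leopoldt-defect}: Proposition~\ref{prop:Brauer-relations-from-mobius}, applied to the non-cyclic group $H$ with $\mathcal{H}$ the set of all subgroups of $H$ (or just $\mathcal{C}(H) \cup \{H\}$, extended by zero coefficients), produces the Brauer relation $\sum_{I \leq H}\mu(I,H)|I|\Ind_I^H \mathbf{1}_I = 0$ of $H$; inducing up to $G$ (or simply noting the statement is about $L/L^H$ and replacing $K$ by $L^H$) and applying Theorem~\ref{thm:Brauer-rels-Leopoldt-defect} gives the result directly. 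Since $\mathcal{L}(L,p)$ is finite-dimensional, there are no convergence or finiteness subtleties, and no hypothesis on $p$, on the base field $K$, or on the $\Q_p[G]$-module structure is needed. There is essentially no obstacle here: the work has all been done in the preceding sections, and this corollary is just the specialisation of Theorem~\ref{thm:Brauer-rels-Leopoldt-defect} (equivalently, of Corollary~\ref{cor:brauer-relation-dim-formula}) to the particular Brauer relation of Proposition~\ref{prop:Brauer-relations-from-mobius}. The only point requiring the smallest amount of care is the harmless reduction to the case $G = H$ (as in the proof of Corollary~\ref{cor:brauer-relation-dim-formula}), which is legitimate because $\mathcal{L}(L,p)$ restricted to $H$ equals $\mathcal{L}(L,p)$ as an $H$-module and $\mathcal{L}(L,p)^I$ depends only on this restriction.
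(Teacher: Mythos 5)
Your proposal is correct and follows the paper's own route exactly: the paper proves this corollary by combining Corollary \ref{cor:brauer-relation-dim-formula} (applied to $M=\mathcal{L}(L,p)$ over $\Q_p$, with the harmless reduction to $G=H$) with Lemma \ref{lem:invariants-of-Leopold-kernel} to identify $\dim_{\Q_p}\mathcal{L}(L,p)^{I}$ with $\delta(L^{I},p)$. Your alternative via Theorem \ref{thm:Brauer-rels-Leopoldt-defect} is also valid but is not needed.
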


\begin{proof}
This follows from Corollary~\ref{cor:brauer-relation-dim-formula} and
Lemma~\ref{lem:invariants-of-Leopold-kernel}.
\end{proof}

\begin{corollary}\label{cor:frob-defects}
Let $L/K$ be a Galois extension of number fields and let $G=\Gal(L/K)$.  
Suppose that $G = N \rtimes H$ is a Frobenius group with kernel $N$ and complement $H$.
Then
\[
\delta(L,p) + |H|\delta(K,p) = \delta(L^{N},p) + |H|\delta(L^{H},p)
\] 
for every prime number $p$.
\end{corollary}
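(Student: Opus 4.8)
The plan is to derive this as a direct consequence of Corollary~\ref{cor:frob-defects}'s ancestors, namely the Brauer relation for Frobenius groups combined with the general defect relation attached to Brauer relations. First I would invoke Lemma~\ref{lem:frob-brauer}, which gives the useful Brauer relation
\[
|H|\mathbf{1}_{G} + \Ind_{1}^{G} \mathbf{1}_{1}
= |H|\Ind^{G}_{H} \mathbf{1}_{H} + \Ind_{N}^{G} \mathbf{1}_{N}
\]
for the Frobenius group $G = N \rtimes H$. Rewriting this as $0 = \sum_{J \in \mathcal{H}} a_{J} \Ind_{J}^{G}\mathbf{1}_{J}$ with $\mathcal{H} = \{G, 1, H, N\}$ and coefficients $a_{G} = |H|$, $a_{1} = 1$, $a_{H} = -|H|$, $a_{N} = -1$, I would then apply Theorem~\ref{thm:Brauer-rels-Leopoldt-defect} to obtain
\[
|H|\,\delta(L^{G},p) + \delta(L^{1},p) - |H|\,\delta(L^{H},p) - \delta(L^{N},p) = 0
\]
for every prime $p$.

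The only remaining step is bookkeeping on fixed fields: $L^{1} = L$ and $L^{G} = K$, so the displayed identity rearranges to
\[
\delta(L,p) + |H|\,\delta(K,p) = \delta(L^{N},p) + |H|\,\delta(L^{H},p),
\]
which is precisely the claim. I would remark that one could equally well feed the useful idempotent relation of Proposition~\ref{prop:Frob-idem-relation} through Proposition~\ref{prop:brauer-relation-dim-formula} and Lemma~\ref{lem:invariants-of-Leopold-kernel} directly, but routing through the Brauer relation of Lemma~\ref{lem:frob-brauer} is cleanest since that lemma already packages the conjugacy-class bookkeeping (the $|N|$ conjugates of $H$ collapse to a single induced character $\Ind_H^G \mathbf{1}_H$ with multiplicity absorbed into the coefficient).

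There is essentially no obstacle here: the content has all been done in the preceding results, and the proof is a one-line citation plus the substitutions $L^{1}=L$, $L^{G}=K$. If anything, the only point requiring a moment's care is confirming that $G$ being a Frobenius group guarantees it is non-cyclic (so that Theorem~\ref{thm:Brauer-rels-Leopoldt-defect}, whose underlying Proposition~\ref{prop:brauer-relation-dim-formula} assumes $G$ non-cyclic, applies) --- but a Frobenius group has a proper non-trivial complement $H$ with $H \cap gHg^{-1} = 1$ for $g \notin H$, which is incompatible with $G$ cyclic, so this is immediate.

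\begin{proof}
By Lemma~\ref{lem:frob-brauer}, the Frobenius group $G = N \rtimes H$ has the useful Brauer relation
\[
|H|\mathbf{1}_{G} + \Ind_{1}^{G} \mathbf{1}_{1}
= |H|\Ind^{G}_{H} \mathbf{1}_{H} + \Ind_{N}^{G} \mathbf{1}_{N},
\]
that is, $0 = \sum_{J \in \mathcal{H}} a_{J}\Ind_{J}^{G}\mathbf{1}_{J}$ with $\mathcal{H} = \{1, H, N, G\}$ and $a_{1} = 1$, $a_{H} = -|H|$, $a_{N} = -1$, $a_{G} = |H|$. Since $G$ is a Frobenius group it is non-cyclic, so Theorem~\ref{thm:Brauer-rels-Leopoldt-defect} applies and gives
\[
\delta(L^{1},p) - |H|\,\delta(L^{H},p) - \delta(L^{N},p) + |H|\,\delta(L^{G},p) = 0
\]
for every prime number $p$. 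As $L^{1} = L$ and $L^{G} = K$, rearranging yields
\[
\delta(L,p) + |H|\,\delta(K,p) = \delta(L^{N},p) + |H|\,\delta(L^{H},p),
\]
as claimed.
\end{proof}
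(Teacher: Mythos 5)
Your proof is correct and follows exactly the paper's own route: cite Lemma~\ref{lem:frob-brauer} for the Brauer relation of the Frobenius group and feed it into Theorem~\ref{thm:Brauer-rels-Leopoldt-defect}, then substitute $L^{1}=L$ and $L^{G}=K$. The extra check that a Frobenius group is non-cyclic (so the non-cyclicity hypothesis underlying Proposition~\ref{prop:brauer-relation-dim-formula} is met) is a sensible precaution that the paper leaves implicit.
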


\begin{proof}
This follows from Theorem~\ref{thm:Brauer-rels-Leopoldt-defect} and Lemma~\ref{lem:frob-brauer}.
\end{proof}

\begin{corollary}\label{cor:Frobenius-defects-abelian}
Let $L/K$ be a Galois extension of number fields where either
$K=\Q$ or $K$ is an imaginary quadratic field.
Let $G=\Gal(L/K)$ and
suppose that $G = N \rtimes H$ is a Frobenius group with kernel $N$ and abelian complement $H$.
Then
$\delta(L,p) = |H|\delta(L^{H},p)$ 
for every prime number $p$.
\end{corollary}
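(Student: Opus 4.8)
The statement to prove is: for $L/K$ Galois with $K=\Q$ or $K$ imaginary quadratic, and $G = \Gal(L/K) = N \rtimes H$ a Frobenius group with abelian complement $H$, we have $\delta(L,p) = |H|\,\delta(L^{H},p)$ for every prime $p$. The plan is to deduce this directly from Corollary \ref{cor:frob-defects}, which is the immediately preceding result and already gives the relation
\[
\delta(L,p) + |H|\,\delta(K,p) = \delta(L^{N},p) + |H|\,\delta(L^{H},p)
\]
valid for any Frobenius extension. So the entire task reduces to eliminating the two extra terms $\delta(K,p)$ and $\delta(L^{N},p)$, and the whole point of the hypotheses on $K$ and $H$ is to make both of them vanish.

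First I would handle $\delta(K,p)$: since $K$ is either $\Q$ or an imaginary quadratic field, Theorem \ref{thm:leo-abs-abelian} (applied to the trivial, hence abelian, extension $K/K$, or simply as the classical statement that Leopoldt holds for $\Q$ and for imaginary quadratic fields) gives $\Leo(K,p)$, so $\delta(K,p)=0$. Next I would handle $\delta(L^{N},p)$: the fixed field $L^{N}$ corresponds to the quotient $G/N \cong H$, so $L^{N}/K$ is a Galois extension with abelian Galois group $H$. Since $K$ is $\Q$ or imaginary quadratic and $L^{N}/K$ is abelian, the composite $L^{N}/\Q$ (or $L^{N}$ over the rationals, via the imaginary quadratic base) is abelian over $\Q$ or over an imaginary quadratic field, so Theorem \ref{thm:leo-abs-abelian} again applies and gives $\Leo(L^{N},p)$, hence $\delta(L^{N},p)=0$. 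Substituting these two vanishing statements into the displayed relation of Corollary \ref{cor:frob-defects} yields $\delta(L,p) = |H|\,\delta(L^{H},p)$, as required.

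There is essentially no obstacle here — the corollary is a formal consequence of Corollary \ref{cor:frob-defects} together with the known cases of Leopoldt's conjecture. The only minor point to be careful about is the claim that $L^{N}/K$ being abelian (with $K$ abelian over $\Q$ or over an imaginary quadratic field) places $L^{N}$ itself in the scope of Theorem \ref{thm:leo-abs-abelian}; this is where one uses that a subextension, and indeed the full field $L^{N}$, is abelian over the appropriate ground field, exactly as in the hypothesis of that theorem. Once that is observed, the proof is a one-line substitution, and I would write it simply as: "By Theorem \ref{thm:leo-abs-abelian}, the hypotheses on $K$ and $H$ imply that $\delta(K,p) = \delta(L^{N},p) = 0$, since $K$ and $L^{N}$ are abelian over $\Q$ or over an imaginary quadratic field. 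The result then follows from Corollary \ref{cor:frob-defects}."
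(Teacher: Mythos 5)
Your proposal is correct and is essentially identical to the paper's proof: both deduce $\delta(K,p)=\delta(L^{N},p)=0$ from Theorem \ref{thm:leo-abs-abelian} (using that $K$ is $\Q$ or imaginary quadratic and that $L^{N}/K$ is abelian with group $G/N\cong H$) and substitute into the relation of Corollary \ref{cor:frob-defects}. Incidentally, the paper's printed proof contains a self-referential typo, citing the corollary itself rather than Corollary \ref{cor:frob-defects}; your identification of the intended reference is the right one.
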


\begin{proof}
Theorem~\ref{thm:leo-abs-abelian} and the hypotheses on $K$ and $H$ imply that $\Leo(K,p)$ and 
$\Leo(L^{N},p)$ both hold,
and so the desired result follows from Corollary~\ref{cor:frob-defects}. 
\end{proof}

\begin{remark}
Let $L/K$ be a Galois extension of number fields and let $G=\Gal(L/K)$.  
If $H_{1}$ and $H_{2}$ are conjugate subgroups of $G$ then $L^{H_{1}}$ and $L^{H_{2}}$
are isomorphic as fields, and hence $\delta(L^{H_{1}},p)=\delta(L^{H_{2}},p)$ for all prime numbers $p$. This is consistent with Remark~\ref{rmk:coefficients-conj-subgroups}.
\end{remark}

\begin{remark}
Theorem~\ref{thm:Brauer-rels-Leopoldt-defect} (together with Lemma~\ref{lem:defectgrows}  
and Remark~\ref{rmk:coefficients-conj-subgroups})
recovers Theorem~\ref{thm:norm-rels-Leopoldt}
in the case that the generalised useful idempotent relation in question is in fact a useful idempotent relation (and thus corresponds to a useful Brauer relation by Lemma~\ref{lemma:brauer-versus-idempotent-relation} and 
Remark~\ref{rmk:useful-corr-useful}). 
In particular, Corollaries~\ref{cor:frob-defects} and \ref{cor:Frobenius-defects-abelian}
recover Corollaries~\ref{cor:Frobenius-reduction-for-Leo} and \ref{cor:Frobenius-reduction-abelian}, respectively.
\end{remark}

\begin{example}\label{exa:S4mackey}
Let $L/K$ be an $S_{4}$-extension of number fields.
We list the relations between the Leopoldt defects of subfields of $L$
given by Corollary~\ref{cor:mu-idempotent-relations}. Since the non-cyclic subgroups of $S_{4}$ lie in $6$ conjugacy classes we can find $6$ relations involving the defects of $L$ and its subfields. These relations have already been listed in \cite[Examples~3.9]{MR2032439}
(note that there is a typo in the fifth relation of loc.\ cit.). 
We fix the following subgroups of $S_{4}$.
Let $A_{4}$ be the subgroup of even permutations,
let $S_{3} = \langle (1 \, 2), (1 \, 2 \, 3) \rangle$, 
let $D_{8} = \langle (1 \, 2) (3 \, 4), (1 \, 2 \, 3 \, 4) \rangle$,
let $N = \langle (1 \, 2)(3 \, 4), (1 \, 3)(2 \, 4) \rangle$ (the normal Klein-$4$-subgroup), and
let $V_{4}$ be a choice of non-normal Klein-$4$-subgroup.
For $i=1,2,3,4$, let $C_{i} = \langle (1 \cdots i) \rangle$, which is cyclic of order $i$, 
and let $Z_{2} = \langle (1 \, 3) (2 \, 4) \rangle$, which is the centre of $D_{8}$
and is a cyclic group of order $2$ not conjugate to $C_{2}$. 
We have the following lattice of subgroups of $S_{4}$ up to conjugacy
(see, for instance, the GroupNames database \cite{groupnames}).
Each subscript on the left denotes the number of conjugate subgroups, and is taken
to be 1 when omitted (so that the subgroup in question is normal).
\[
\begin{tikzpicture}[scale=1.0,sgplattice]
  \node[char] at (3.12,0) (1) {\gn{C1}{C_1}};
  \node at (4.75,0.803) (2) {\gn{C2}{Z_2}};
  \node at (1.5,0.803) (3) {\gn{C2}{C_2}};
  \node at (2.12,2.02) (4) {\gn{C3}{C_3}};
  \node[char] at (4.12,2.02) (5) {\gn{C2^2}{N}};
  \node at (0.125,2.02) (6) {\gn{C2^2}{V_4}};
  \node at (6.12,2.02) (7) {\gn{C4}{C_4}};
  \node at (0.625,3.35) (8) {\gn{S3}{S_3}};
  \node at (5.62,3.35) (9) {\gn{D4}{D_8}};
  \node[char] at (3.12,3.35) (10) {\gn{A4}{A_4}};
  \node[char] at (3.12,4.3) (11) {\gn{S4}{S_4}};
  \draw[lin] (1)--(2) (1)--(3) (1)--(4) (2)--(5) (2)--(6) (3)--(6) (2)--(7)
     (3)--(8) (4)--(8) (5)--(9) (6)--(9) (7)--(9) (4)--(10) (5)--(10)
     (8)--(11) (9)--(11) (10)--(11);
  \node[cnj=2] {\textbf 3};
  \node[cnj=3] {\textbf 6};
  \node[cnj=4] {\textbf 4};
  \node[cnj=6] {\textbf 3};
  \node[cnj=7] {\textbf 3};
  \node[cnj=8] {\textbf 4};
  \node[cnj=9] {\textbf 3};
\end{tikzpicture}
\]
Fix a prime number $p$ and an identification of $S_{4}$ with $\Gal(L/K)$.
For a subgroup $H \leq S_{4}$, we abbreviate $\delta(L^{H},p)$ to $\delta(H)$.
Then by Corollary~\ref{cor:mu-idempotent-relations} with $H=S_{4}, A_{4}, D_{8}, S_{3}, V_{4}$ and $N$, respectively, we have the following relations:
\begin{align*}
 &\delta(C_{1})+2\delta(S_{3})+2\delta(D_{8}) + \delta(A_{4}) 
 = 2\delta(C_{2})+\delta(C_{3})+\delta(N) + 2\delta(S_{4}),\\
 &\delta(C_{1}) + 3\delta(A_{4}) = 3\delta(C_{3})+\delta(N),\\
 &\delta(Z_{2})+2\delta(D_{8})=\delta(N)+\delta(V_{4})+\delta(C_{4}),\\
 &\delta(C_{1})+2\delta(S_{3})=2\delta(C_{2})+\delta(C_{3}),\\
 &\delta(C_{1})+2\delta(V_{4})=2\delta(C_{2})+\delta(Z_{2}),\\
 &\delta(C_{1})+2\delta(N)=3\delta(Z_{2}).
\end{align*}
Note that the second and fourth relations can also be obtained from Corollary~\ref{cor:frob-defects}.

Now suppose that $K=\Q$. 
Then $\delta(S_{4})=\delta(A_{4})=0$, since $\Leo(F,p)$ holds whenever $F$ is $\Q$ or a quadratic field, 
and so the first and second relations simplify. 
By Theorem~\ref{thm:walbound}, we have $\delta(F,p) \leq 1$ whenever $F$ is a cubic field. 
This implies that $\delta(N)$ is $0$ or $2$: either use that 
$L^{N}$ is an $S_{3}$-extension of $\Q$ and apply  
Corollary~\ref{cor:Frobenius-defects-abelian}, or
observe that $L^{D_{8}}$ is a cubic field and that
the difference between the first and the fourth relations gives $\delta(N)=2\delta(D_{8})$.
Therefore the second relation implies that $\delta(C_{1}) \not \equiv 1 \pmod 3$.

Now suppose that $L$ is totally imaginary and $K=\Q$. 
Let $c \in S_{4} \cong \Gal(L/\Q)$ be the restriction of complex conjugation after fixing an embedding of $L$ into $\C$.
If $c$ is a transposition then $\Leo(L,p)$ holds by \cite[Théorème 5]{MR1004134}.
So henceforth suppose that $c$ is a double transposition.
By either \cite[II.2]{MR769790} or \cite[1.3]{MR1004134},
for every non-trivial $\chi \in \Irr_{\C}(S_{4})$, 
we have $e_{\chi}(\C \otimes_{\Z} \mathcal{O}_{L}^{\times}) \cong r_{\chi} \cdot V_{\chi}$,
where $r_{\chi} = \frac{1}{2}(\chi(1) + \chi(c))$ and
$V_{\chi}$ is an irreducible $\C[S_{4}]$-module with character $\chi$.
From the character table of $S_{4}$ and the assumption on $c$,
we deduce that $r_{\chi} \geq 1$ for every $\chi \in \Irr_{\C}(S_{4})$ with $\chi \neq \mathbf{1}_{S_{4}}$.
Hence
\[
\textstyle{
\dim_{\C_{p}} \C_{p} \otimes_{\Q_{p}} \image \Lambda_{L,p} \geq \sum_{\chi \in \Irr_{\C}(S_{4}), \chi \neq \mathbf{1}_{S_{4}}} \chi(1) = 1 + 2 + 3 + 3 = 9}
\]
by either \cite[Th\'eor\`eme~1]{MR769790} or \cite[Corollaire~1]{MR1004134}.
Since $\rank_{\Z}(\mathcal{O}_{L}^{\times})=11$, we deduce that $\delta(C_{1}) \leq 11-9=2$.
But $\delta(C_{1}) \not \equiv 1 \pmod 3$ and therefore $\delta(C_{1})=0$ or $2$.  
If $\delta(C_{1})=0$ then $\delta(H)=0$ for all $H \leq S_{4}$ by Lemma~\ref{lem:defectgrows}.
If $\delta(C_{1})=2$ then using Lemma~\ref{lem:defectgrows}, the results of the previous paragraph, and the relations above, it is straightforward to deduce that
\begin{align*}
\delta(S_{4})&=\delta(A_{4})=\delta(S_{3})=\delta(C_{3})=0, \\
\quad \delta(C_{2})&=\delta(V_{4})=\delta(C_{4})=\delta(D_{8})=1,\\
\quad \delta(C_{1})&=\delta(Z_{2})=\delta(N)=2.
\end{align*}
\end{example}

\section{Infinite families of number fields}\label{sec:infinite}

\subsection{Another formulation of Leopoldt's conjecture}
Sands \cite{MR956366} and Buchmann and Sands \cite{MR904010,MR958040} gave equivalent formulations of Leopoldt's conjecture that are useful from the point of view of explicit computations.
These are summarised as follows.

\begin{theorem}\cite[$\S$II]{MR958040}\label{thm:buchmannsands}
Let $K$ be a number field and let $p$ be a prime number. 
Let $D$ be a subgroup of $\{ u \in \mathcal{O}_{K}^{\times} : u \equiv 1 \pmod{q} \}$ of finite index,
where $q=p$ if $p$ is odd, and $q=4$ if $p=2$. 
For every positive integer $k$, let $D(p^{k}) = \{u \in D : u \equiv 1 \pmod {p^{k}} \}$,
and let $\phi_{k} : D(p^{k}) \rightarrow \mathcal{O}_{K}/p\mathcal{O}_{K}$ 
be the homomorphism of abelian groups (the first multiplicative; the second, additive), defined by
$1+p^{k}\alpha \mapsto \alpha \pmod{p}$.
Let $r = \rank_{\Z}(\mathcal{O}_{K}^{\times})$.
Then $\Leo(K,p)$ holds if and only if there exists an integer $m \geq 2$ such that one of 
the following equivalent conditions holds:
\begin{enumerate}
\item $D(p^{m})\subseteq D^{p}$;
\item $D(p^{m})= D(p^{m-1})^{p}$;
\item $D(p^{m-1})/D(p^{m})$ has dimension $r$ as an $\F_{p}$-vector space;
\item the image of $\phi_{m-1}$ has dimension $r$ as an $\F_{p}$-vector space.
\end{enumerate}
\end{theorem}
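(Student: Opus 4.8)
The plan is to show the chain of equivalences (i) $\Leftrightarrow$ (ii) $\Leftrightarrow$ (iii) $\Leftrightarrow$ (iv) directly by elementary group theory, and then to connect the resulting condition to $\Leo(K,p)$ via the classical criterion that $\Leo(K,p)$ holds if and only if the image of $\mathcal{O}_{K}^{\times}$ in $\hat{\mathcal{O}}_{K}^{\times}$ has $\Z_{p}$-rank equal to $r = \rank_{\Z}(\mathcal{O}_{K}^{\times})$. The key observation linking the combinatorial and the arithmetic sides is that, since $D$ has finite index in the group of units congruent to $1 \bmod q$, the $p$-adic closure $\overline{D}$ inside $\prod_{w \mid p} U^{1}_{K_{w}}$ has the same $\Z_{p}$-rank as the closure of $\mathcal{O}_{K}^{\times}$; moreover $\overline{D}$ is a finitely generated $\Z_{p}$-module and the groups $D(p^{k})$ form a neighbourhood basis of $1$ in $D$ for the $p$-adic topology, so $\overline{D(p^{k})} = \overline{D} \cap (1 + p^{k}\mathcal{O}_{K}\otimes\Z_{p})$.

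First I would dispose of the equivalences among (i)--(iv), which are purely formal. For (i) $\Leftrightarrow$ (ii): the inclusion $D(p^{m-1})^{p} \subseteq D(p^{m})$ is automatic (raising a unit congruent to $1 \bmod p^{m-1}$ to the $p$-th power lands in $1 + p^{m}\mathcal{O}_{K}$, using $p \mid \binom{p}{j}$ for $0<j<p$ together with $q \mid p^{m-1}$ when $p=2$ to handle the $j=p$ term), so (ii) says exactly $D(p^{m}) \subseteq D(p^{m-1})^{p}$; combined with $D(p^{m-1})^{p} \subseteq D^{p}$ this gives (i), and conversely (i) together with $D(p^{m})\subseteq D(p^{m-1})$ and the fact that $D^{p}\cap D(p^{m-1}) = D(p^{m-1})^{p}$ — because $D/D(p^{m-1})$ is a finite abelian $p$-group on which the $p$-th power map interacts with the filtration in the expected way — gives (ii). For (iii) $\Leftrightarrow$ (iv): the map $\phi_{m-1}$ is a homomorphism with kernel exactly $D(p^{m})$, so it induces an injection $D(p^{m-1})/D(p^{m}) \hookrightarrow \mathcal{O}_{K}/p\mathcal{O}_{K}$, identifying the two $\F_{p}$-dimensions in question. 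For (ii) $\Leftrightarrow$ (iii): one always has $\dim_{\F_{p}} D(p^{m-1})/D(p^{m}) \leq \dim_{\F_{p}} D(p^{m-1})/D(p^{m-1})^{p} = r$ (the latter equality holds once $m$ is large enough that $D(p^{m-1})$ is a free $\Z_{p}$-module of rank $r$ after $p$-adic completion, torsion having been killed by the congruence condition), with equality precisely when $D(p^{m}) = D(p^{m-1})^{p}$.

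Next I would prove the equivalence of these conditions with $\Leo(K,p)$. The substantive direction is: $\Leo(K,p)$ holds $\Rightarrow$ condition (iii) holds for some $m \geq 2$. Assuming $\Leo(K,p)$, the $p$-adic closure $\overline{D}$ is a free $\Z_{p}$-module of rank $r$ (injectivity of $\lambda_{K,p}$ forces no rank drop). Pick $m$ large enough that $\overline{D(p^{m-1})}$ is contained in the $\Z_{p}$-span and is itself free of rank $r$; then $\overline{D(p^{m-1})}/\overline{D(p^{m})} \cong \overline{D(p^{m-1})}/p\overline{D(p^{m-1})}$ has $\F_{p}$-dimension $r$, and one descends this from the closures back to $D(p^{m-1})/D(p^{m})$ by noting these finite quotients are unchanged by $p$-adic completion. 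Conversely, if (iii) holds for some $m$, then $D(p^{m-1})/D(p^{m})$ — hence, iterating (i)$\Leftrightarrow$(ii) upward, $D(p^{k-1})/D(p^{k})$ for all $k \geq m$ — has dimension exactly $r$, which forces $\overline{D}$ to have $\Z_{p}$-rank $r$, i.e. no collapse in $\lambda_{K,p}$, giving $\Leo(K,p)$.

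The main obstacle I anticipate is the careful bookkeeping at the prime $2$: the role of $q=4$ is precisely to guarantee that the $p$-th power map behaves on the congruence filtration as it does for odd $p$ (so that $D(p^{m-1})^{p} \subseteq D(p^{m})$ with no off-by-one shift, and so that $-1 \notin D$ and torsion is eliminated). A secondary technical point is justifying the interchange of "$p$-adic closure" and "passage to finite quotients" — i.e. that $D(p^{m-1})/D(p^{m})$, a quotient of a finitely generated abelian group by a finite-index subgroup, is canonically isomorphic to $\overline{D(p^{m-1})}/\overline{D(p^{m})}$; this is routine but must be stated cleanly, and is where the hypothesis "$D$ of finite index" does its work in ensuring $\overline{D}$ and the closure of $\mathcal{O}_{K}^{\times}$ share the same rank. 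Since these formulations are attributed to Sands and to Buchmann--Sands, I expect the actual proof in the paper to simply cite \cite{MR958040,MR956366} rather than reproduce this argument.
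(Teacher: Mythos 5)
The paper offers no proof of this statement at all: it is quoted verbatim from Buchmann--Sands \cite[\S II]{MR958040} and simply used, exactly as you predicted in your final sentence. So there is nothing in the paper to compare against; what can be assessed is whether your reconstruction is sound, and in outline it is. The chain (i)$\Leftrightarrow$(ii)$\Leftrightarrow$(iii)$\Leftrightarrow$(iv) goes through as you describe: the binomial computation shows that if $p^{j}\parallel(v-1)$ with $v\equiv 1\pmod q$ then $p^{j+1}\parallel(v^{p}-1)$ (this is where $q=4$ is needed at $p=2$), which gives both $D(p^{m-1})^{p}\subseteq D(p^{m})$ and the reverse extraction of $p$-th roots needed for (i)$\Rightarrow$(ii); and $\ker\phi_{m-1}=D(p^{m})$ with $\phi_{m-1}$ a homomorphism precisely because $m\geq 2$. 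One correction: your hedge that $\dim_{\F_p}D(p^{m-1})/D(p^{m-1})^{p}=r$ only ``once $m$ is large enough'' is unnecessary and would actually be damaging if it were needed, since the theorem asserts the equivalence of (i)--(iv) for each fixed $m\geq 2$. The congruence $u\equiv 1\pmod q$ forces $\{u\equiv 1\pmod q\}$ to be torsion-free (a root of unity of $p$-power order $\zeta\neq 1$ has $v_{p}(\zeta-1)\leq 1/(p-1)$, resp.\ $\leq 1$ when $p=2$, so cannot be $\equiv 1\pmod q$), hence $D$ and every $D(p^{m-1})$ is free abelian of rank exactly $r$ for every $m\geq 2$, and the equivalence (ii)$\Leftrightarrow$(iii) holds uniformly. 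The remaining step --- identifying $\Leo(K,p)$ with the $p$-adic closure $\overline{D}$ having $\Z_{p}$-rank $r$, and transporting the finite quotients $D(p^{m-1})/D(p^{m})$ through the closure --- is the genuinely substantive part and is only sketched, but the ingredients you name (openness of $\overline{D}\cap(1+p^{k}\mathcal{O}_{K}\otimes\Z_{p})$ in $\overline{D}$, stabilisation of the filtration for large $m$ in one direction, and the bound $\dim_{\F_p}$ of an elementary abelian subquotient of $\Z_{p}^{s}$ by $s$ in the other) do assemble into a correct argument. This is essentially the original Buchmann--Sands proof.
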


Note that Theorem~\ref{thm:buchmannsands} does not depend on the choice of the subgroup $D$. 

\subsection{Leopoldt's conjecture and explicit units}
Buchmann and Sands used Theorem~\ref{thm:buchmannsands} to prove the following theorems.

\begin{theorem}{\cite[Theorem 3.3]{MR958040}}{\label{thm:BS-quintic-I}}
Let $A\geq 1$ be an integer and let $\lambda$ be a root of $x^{5}+4A^{4}x+1$. 
Then $\Leo(\Q(\lambda),p)$ holds for all prime numbers $p \neq 5$ such that $p\mid 2A$. 
\end{theorem}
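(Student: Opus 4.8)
The plan is to apply Theorem~\ref{thm:buchmannsands} with $K = \Q(\lambda)$, where $\lambda$ is a root of $f(x) = x^{5}+4A^{4}x+1$. First I would verify that $f$ is irreducible over $\Q$ (e.g.\ by a reduction argument or by checking it has no rational roots and no quadratic factors), so that $[K:\Q]=5$. Next I need to determine the unit rank $r = \rank_{\Z}(\mathcal{O}_{K}^{\times})$; this requires knowing the signature of $K$. I expect one would show that $f$ has exactly one real root for the relevant range of $A$ (or compute the discriminant sign), giving one real and two pairs of complex embeddings, hence $r = 1 + 2 - 1 = 2$. The key point for making Theorem~\ref{thm:buchmannsands}(iv) usable is then to exhibit two explicit units in $\mathcal{O}_{K}^{\times}$ that are multiplicatively independent; the obvious candidates are built from $\lambda$ itself. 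Since $\lambda(\lambda^{4}+4A^{4}) = -1$, both $\lambda$ and $\lambda^{4}+4A^{4}$ are units; more useful will be units that are already $\equiv 1 \pmod{p}$ (or $\pmod 4$), obtained after suitable normalisation, for instance $\lambda - r_0$ for an appropriate integer $r_0$, or small polynomial combinations chosen to lie in the subgroup $D$.

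The heart of the argument is the following: under the hypothesis $p \mid 2A$ and $p \neq 5$, reduce modulo $p$. Then $f(x) \equiv x^{5}+1 \pmod p$ when $p$ is odd, so $\mathcal{O}_{K}/p\mathcal{O}_{K}$ (or at least $\F_p[x]/(f)$) is controlled by the factorisation of $x^5+1$ over $\F_p$; the condition $p \neq 5$ ensures $x^5+1$ is separable mod $p$. I would use this to get an explicit handle on the $\F_p$-algebra $\mathcal{O}_{K}/p\mathcal{O}_{K}$ and on the reduction of the chosen units. Concretely, one picks $k$ (likely $k=1$, so $m=2$) and computes $\phi_{1}$ on the two independent units: writing each unit as $1 + p\alpha_i$ with $\alpha_i \in \mathcal{O}_K$, one shows the two residues $\alpha_1, \alpha_2 \pmod p$ are $\F_p$-linearly independent in $\mathcal{O}_{K}/p\mathcal{O}_{K}$. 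Since $r=2$, this forces $\dim_{\F_p} \im(\phi_{m-1}) \geq 2 = r$, and the reverse inequality is automatic, so condition (iv) holds and $\Leo(\Q(\lambda),p)$ follows.

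The main obstacle I anticipate is the explicit linear-independence computation for $\phi_{1}$: one must produce two units, check they genuinely lie in the finite-index subgroup $D$ (i.e.\ are $\equiv 1 \pmod q$ — possibly after raising to a bounded power, which is harmless since Theorem~\ref{thm:buchmannsands} is independent of the choice of $D$), compute their $p$-adic valuations of $(u-1)$ to confirm $k=1$ is the right level (i.e.\ that $u \not\equiv 1 \pmod{p^2}$), and then verify non-vanishing of a $2\times 2$ determinant over $\F_p$. The hypothesis $p \mid 2A$ is precisely what makes $4A^4 \equiv 0$ and simplifies both the ring structure mod $p$ and these reductions; the case $p=2$ (where $q=4$) will need to be handled separately but along the same lines, using $f(x) \equiv x^5+1 \equiv (x+1)(x^4+x^3+x^2+x+1) \pmod 2$. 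I would organise the proof so that the odd-$p$ case is the main computation and the $p=2$ case is a short parallel argument. A secondary technical point is confirming the signature of $K$ uniformly in $A$, which should follow from an elementary real-analysis argument on $f$ together with a sign computation of $\disc(f)$ as a polynomial in $A$.
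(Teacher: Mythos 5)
There is a genuine gap, and it sits exactly at the point you flag as "the heart of the argument". The framework you set up (apply Theorem~\ref{thm:buchmannsands}(iv) with $r=2$, reduce modulo $p$ using $p\mid 2A$, and check independence of two residues in $\mathcal{O}_K/p\mathcal{O}_K$) is indeed the strategy of Buchmann and Sands, whose proof the present paper simply cites rather than reproduces. But the two "obvious candidate" units you name are multiplicatively dependent: from $\lambda^{5}+4A^{4}\lambda+1=0$ we get $\lambda(\lambda^{4}+4A^{4})=-1$, so $\lambda^{4}+4A^{4}=-\lambda^{-1}$ and the two elements generate the same rank-one subgroup modulo torsion. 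Since $f'(x)=5x^{4}+4A^{4}>0$, the polynomial has exactly one real root, the signature is $(1,2)$ and the unit rank is $2$; hence a single independent unit $\lambda$ is not enough to build a finite-index subgroup $D$, and the $2\times 2$ determinant computation you describe cannot even be set up. No elementary manipulation of $f$ produces a second independent unit (for instance $N(\lambda\pm 1)=\mp(4A^{4}+2), 4A^{4}$ are not $\pm 1$), and this is precisely why the paper emphasises that the key ingredient in the cited proof is Maus's explicit determination of (a finite-index subgroup of) $\mathcal{O}_{\Q(\lambda)}^{\times}$ for these quintic fields. Without that external input, or an equally substantial substitute, your proposal does not close.

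The remaining scaffolding is essentially sound: the observation that $p\mid 2A$, $p\neq 5$ forces $p\nmid\mathrm{disc}(f)=2^{18}A^{20}+5^{5}$, so one may compute in $\Z[\lambda]/p\Z[\lambda]$; the separability of $x^{5}+1$ modulo $p\neq 5$; and the separate treatment of $p=2$ with $q=4$. But all of this is downstream of having in hand a genuine maximal system of two independent units with explicitly known coordinates in the basis $1,\lambda,\dots,\lambda^{4}$, which is the one thing the proposal does not supply.
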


\begin{theorem}{\cite[Theorem 3.4]{MR958040}}{\label{thm:BS-quintic-II}}
Let $B \geq 2$ be an integer and let $\lambda$ be a root of $x^{5}-B^{4}x+1$. 
Then $\Leo(\Q(\lambda),p)$ holds for all prime numbers $p\neq 5$ such that $p\mid B$.
\end{theorem}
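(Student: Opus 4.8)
The plan is to prove that $\Leo(\Q(\lambda),p)$ holds by verifying the Buchmann--Sands criterion (Theorem~\ref{thm:buchmannsands}), exploiting the explicit units in $K := \Q(\lambda)$ together with the hypothesis $p \mid B$. Recall that Theorem~\ref{thm:buchmannsands} is insensitive to the choice of the auxiliary group $D$, which gives us room to pick a convenient one.

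First I would pin down the arithmetic of $K$. The polynomial $f(x) = x^{5} - B^{4}x + 1$ has no rational root (since $f(\pm1)\neq 0$ for $B\geq 2$), and a short comparison of coefficients rules out a factorisation into a monic quadratic times a monic cubic over $\Z$; by Gauss's lemma $f$ is irreducible, so $[K:\Q]=5$. Studying the critical points $x = \pm B/5^{1/4}$ of $f$ shows that $f$ has exactly three real roots and one complex-conjugate pair, whence $K$ has signature $(3,1)$ and $r := \rank_{\Z}(\mathcal{O}_{K}^{\times}) = 3$. Finally, $\mathrm{disc}(f) = 3125 - 256 B^{20} \equiv 5^{5} \pmod p$; since $p \neq 5$ this is a $p$-adic unit, so $p \nmid [\mathcal{O}_{K}:\Z[\lambda]]$ and, using $p \mid B$, we get $\mathcal{O}_{K}/p\mathcal{O}_{K} \cong \F_{p}[x]/(x^{5}+1)$, in which $1, \bar\lambda, \dots, \bar\lambda^{4}$ is an $\F_{p}$-basis.

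Next I would extract the units. From $f(0)=f(B)=f(-B)=1$ one reads off $N_{K/\Q}(\lambda) = N_{K/\Q}(\lambda - B) = N_{K/\Q}(\lambda + B) = -1$, so $\lambda$, $\lambda-B$ and $\lambda+B$ are units, and $\lambda^{5} = B^{4}\lambda - 1$ gives $\lambda^{-1} = B^{4}-\lambda^{4}$. Writing $B = p^{k}B'$ with $p\nmid B'$ and $k\geq 1$, and setting $\alpha := B'\lambda^{4} - p^{4k}B'^{5}$, one obtains
\[
(\lambda-B)/\lambda = 1 + p^{k}\alpha,\qquad (\lambda+B)/\lambda = 1 - p^{k}\alpha,\qquad (\lambda-B)(\lambda+B)/\lambda^{2} = 1 - p^{2k}\alpha^{2},
\]
with $\alpha \equiv B'\lambda^{4}$ and $\alpha^{2}\equiv -B'^{2}\lambda^{3}$ modulo $p$ (the latter using $\lambda^{8} = B^{4}\lambda^{4} - \lambda^{3}$), while $\lambda^{10} = (B^{4}\lambda - 1)^{2} = 1 + p^{4k}\beta$ with $\beta \equiv -2B'^{4}\lambda$ modulo $p$. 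Take $D := \langle\, \lambda^{10},\ (\lambda-B)/\lambda,\ (\lambda-B)(\lambda+B)/\lambda^{2}\,\rangle$, which lies in $\{u\in\mathcal{O}_{K}^{\times}: u\equiv 1 \pmod p\}$. The core of the argument is that, at level $m-1 = 4k$, the three elements $\big((\lambda-B)/\lambda\big)^{p^{3k}}$, $\big((\lambda-B)(\lambda+B)/\lambda^{2}\big)^{p^{2k}}$ and $\lambda^{10}$ all lie in $D(p^{4k})$ and have $\phi_{4k}$-images equal to the residues in $\mathcal{O}_{K}/p\mathcal{O}_{K}$ of $B'\lambda^{4}$, $B'^{2}\lambda^{3}$ and $-2B'^{4}\lambda$ respectively. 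Since $B'$ and $2$ are invertible mod $p$ and $\bar\lambda, \bar\lambda^{3}, \bar\lambda^{4}$ lie in a basis of $\mathcal{O}_{K}/p\mathcal{O}_{K}$, these residues are $\F_{p}$-linearly independent; hence the generators of $D$ are multiplicatively independent, so $D$ has rank $3 = r$ and finite index in $\mathcal{O}_{K}^{\times}$, and $\dim_{\F_{p}}\mathrm{im}(\phi_{4k}) \geq 3$. As $\mathrm{im}(\phi_{4k})$ is a quotient of $D(p^{4k})/D(p^{4k})^{p}\cong \F_{p}^{r}$, it has dimension at most $r$, so exactly $r$; condition (iv) of Theorem~\ref{thm:buchmannsands} then holds with $m = 4k+1\geq 2$, giving $\Leo(K,p)$.

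The main obstacle is the $p$-adic bookkeeping hidden in that last step: one must check that raising $1+p^{k}\alpha$ to the power $p^{3k}$ and $1-p^{2k}\alpha^{2}$ to the power $p^{2k}$ really does land in $D(p^{4k})$ with $\phi_{4k}$-images $\bar\alpha$ and $\overline{-\alpha^{2}}$, i.e.\ that each binomial cross-term $\binom{p^{jk}}{i}p^{ik}\alpha^{i}$ acquires $p$-adic valuation strictly greater than $4k$ (using $v_{p}\binom{p^{a}}{i} = a - v_{p}(i)$). For $p$ odd this holds with room to spare. For $p=2$ it fails when $k=1$, and that case genuinely needs the congruence $q=4$ of Theorem~\ref{thm:buchmannsands} and a parallel computation carried out one level higher, based on $\lambda^{10} = 1 + 2^{4k+1}(-B'^{4}\lambda + \cdots)$, whose $\phi$-image is the nonzero residue $\bar\lambda$, combined with suitable even powers of the other two units. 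By contrast the irreducibility and signature of $f$, and the multiplicative independence of the units (which in fact follows once the $\phi$-images are known), are routine.
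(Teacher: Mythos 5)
The paper does not prove this statement --- it is imported verbatim from Buchmann--Sands \cite[Theorem 3.4]{MR958040} --- but your reconstruction follows exactly the strategy the paper attributes to that source: verify criterion (iv) of Theorem~\ref{thm:buchmannsands} using the explicit units of $\Q(\lambda)$ (your units $\lambda$, $\lambda\pm B$, read off from $f(0)=f(\pm B)=1$, are the Maus units the paper alludes to). Your computation is correct: the discriminant $3125-256B^{20}\equiv 5^5\pmod p$ is prime to $p$, the residues $B'\bar\lambda^4$, $B'^2\bar\lambda^3$, $-2B'^4\bar\lambda$ are independent in $\F_p[x]/(x^5+1)$ for odd $p$, and the valuation estimate $(i-1)k>v_p(i)$ for $i\geq 2$ justifies discarding the binomial cross-terms at level $4k$. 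The one point you leave as a sketch, $p=2$ (where $-2B'^4\bar\lambda$ vanishes mod $2$ for \emph{every} $k$, not only the cross-term failure at $k=1$), does go through as you indicate by working at level $4k+1$ with $q=4$: the images $\bar\lambda$, $\bar\lambda^3$, $\bar\lambda^4+\bar\lambda^3$ are $\F_2$-independent, so the argument is complete.
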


A key ingredient used in the proofs of Theorems~\ref{thm:BS-quintic-I} and \ref{thm:BS-quintic-II}
is an explicit description of the units of $\Q(\lambda)$ given by Maus \cite[$\S$5 and $\S$6]{MR780251},
who also showed that, in both cases, $\Q(\lambda)$ is not totally real and that the Galois closure of $\Q(\lambda)$ over $\Q$ is an $S_{5}$-extension of $\Q$. 
Buchmann and Sands also showed that both families of quintic fields are infinite.

Similarly, Levesque \cite{MR1193191} used Theorem~\ref{thm:buchmannsands}
to prove the following theorem,
the first claim of which is a special case of results of Halter-Koch and Stender \cite{MR0364177}.

\begin{theorem}\cite{MR0364177, MR1193191}
Let $D$ be a positive integer and let $K=\Q(\lambda)$ be the pure field of degree $n$ over $\Q$
generated by $\lambda = \sqrt[n]{D^{n} \pm 1}>1$. 
Then $S := \{\lambda^{t}-D^{t} : t\mid n, t\neq n\}$ forms a set of $\tau(n)-1$ independent units in 
$\mathcal{O}_{K}^\times$, where $\tau(n)$ denotes the number of positive divisors of $n$.
Moreover, if $p$ is an odd prime number such that $p \mid D$ and $p \nmid n$, then
$S$ is a system of $\Z_{p}$-independent units.
\end{theorem}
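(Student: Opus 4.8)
The plan is to establish the statement in three parts: (a) each $\mu_{t}:=\lambda^{t}-D^{t}$ with $t\mid n$ lies in $\mathcal{O}_{K}^{\times}$; (b) the $\tau(n)-1$ units $\mu_{t}$ with $t\mid n$, $t\neq n$, are multiplicatively independent; and (c) if $p$ is odd with $p\mid D$ and $p\nmid n$, then the $\mu_{t}$ are $\Z_{p}$-independent, meaning that their images $\log_{p}\mu_{t}$ in $\mathcal{O}_{K}\otimes_{\Z}\Q_{p}=\prod_{\mathfrak{p}\mid p}K_{\mathfrak{p}}$ under the $p$-adic logarithm (extended to all local units so that roots of unity map to $0$) are $\Q_{p}$-linearly independent. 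Since a $\Z$-linear relation among units is in particular $\Z_{p}$-linear, (c) implies (b) whenever $D$ has a prime divisor of the required type; for the remaining $D$ (those all of whose prime divisors equal $2$ or divide $n$) I would invoke the archimedean estimates of Halter--Koch and Stender \cite{MR0364177}, sketched below. Part (a) is immediate: $\lambda$ is a root of the monic integral polynomial $x^{n}-(D^{n}\pm1)$, so $\lambda,\mu_{t}\in\mathcal{O}_{K}$, and writing $n=ts$ the factorization $x^{s}-y^{s}=(x-y)(x^{s-1}+\cdots+y^{s-1})$ with $x=\lambda^{t}$, $y=D^{t}$ exhibits $\mu_{t}$ as a divisor of $\lambda^{n}-D^{n}=\pm1$ in $\mathcal{O}_{K}$, whence $\mu_{t}\in\mathcal{O}_{K}^{\times}$.

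For (c), the heart of the matter is the expansion $\log_{p}\mu_{t}=-\xi^{t}+\eta_{t}$, where $\xi:=D/\lambda$ and $v_{\mathfrak{p}}(\eta_{t})\geq(t+1)\delta$ for every $\mathfrak{p}\mid p$, with $\delta:=v_{p}(D)\geq1$. Since the discriminant of $x^{n}-(D^{n}\pm1)$ equals $\pm n^{n}(D^{n}\pm1)^{n-1}$, a $p$-adic unit, the prime $p$ is unramified in $K$ and $\mathcal{O}_{K}\otimes\Z_{p}=\Z_{p}[\lambda]$; moreover $N_{K/\Q}(\lambda)=\pm(D^{n}\pm1)$ is a $p$-adic unit, so $\lambda$ is a unit at each $\mathfrak{p}\mid p$, and hence $\xi=D\lambda^{n-1}/(D^{n}\pm1)\in\mathcal{O}_{K}\otimes\Z_{p}$ satisfies $v_{\mathfrak{p}}(\xi)=\delta$. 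As $\mu_{t}=\lambda^{t}(1-\xi^{t})$ and $1-\xi^{t}$ is a principal local unit at each $\mathfrak{p}$ (here $p$ odd is used), we get $\log_{p}\mu_{t}=t\log_{p}\lambda+\log_{p}(1-\xi^{t})$; and $n\log_{p}\lambda=\log_{p}(\lambda^{n})=\log_{p}(D^{n}\pm1)\in D^{n}\Z_{p}$ gives $v_{\mathfrak{p}}(t\log_{p}\lambda)=n\delta\geq(t+1)\delta$ (as $t<n$), while $\log_{p}(1-\xi^{t})=-\xi^{t}-\tfrac12\xi^{2t}-\tfrac13\xi^{3t}-\cdots$ has every term past the first of valuation $v_{\mathfrak{p}}\geq(t+1)\delta$ (again using $p$ odd to control the denominators $1/k$). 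This yields the expansion.

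Now suppose $\sum_{t}c_{t}\log_{p}\mu_{t}=0$ with $c_{t}\in\Z_{p}$ not all zero, the sum over divisors $t\neq n$ of $n$; set $N:=\min\{v_{p}(c_{t})+t\delta:c_{t}\neq0\}$ and $T_{0}:=\{t:v_{p}(c_{t})+t\delta=N\}\neq\varnothing$. From $\sum_{t}c_{t}\xi^{t}=\sum_{t}c_{t}\eta_{t}$ and the valuation bounds, at each $\mathfrak{p}\mid p$ both $\sum_{t}c_{t}\eta_{t}$ and $\sum_{t\notin T_{0}}c_{t}\xi^{t}$ lie in $\mathfrak{p}^{N+1}$, so $\sum_{t\in T_{0}}c_{t}\xi^{t}\in\mathfrak{p}^{N+1}$; passing to $\mathfrak{p}^{N}/\mathfrak{p}^{N+1}\cong\mathcal{O}_{K}/\mathfrak{p}$ and writing $\xi=p^{\delta}u_{\mathfrak{p}}$ with $u_{\mathfrak{p}}=D'\lambda^{-1}$, $D':=D/p^{\delta}\in\Z_{p}^{\times}$, and $c_{t}=p^{v_{p}(c_{t})}\gamma_{t}$ with $\gamma_{t}\in\Z_{p}^{\times}$, this becomes $\sum_{t\in T_{0}}\overline{\gamma_{t}}\,\overline{D'}^{\,t}\,\overline{\lambda}^{-t}=0$ in $\mathcal{O}_{K}/\mathfrak{p}$. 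Multiplying by $\overline{\lambda}^{n}$ and using $\overline{\lambda}^{n}=\overline{D^{n}\pm1}=\pm1$, we conclude that the polynomial $P(Y):=\sum_{t\in T_{0}}\overline{\gamma_{t}}\,\overline{D'}^{\,t}\,Y^{n-t}$, of degree $\leq n-\min T_{0}<n$, vanishes at $\overline{\lambda}$. As $\mathfrak{p}$ ranges over the primes above $p$, the reductions $\overline{\lambda}$ range (up to Frobenius conjugacy, which preserves $P(\overline{\lambda})=0$ since $P$ has coefficients in $\F_{p}$) over all $n$ roots of $x^{n}\mp1$ in $\overline{\F}_{p}$; hence the degree-$(<n)$ polynomial $P$ has $n$ roots, so $P\equiv0$, forcing $\overline{\gamma_{t}}\,\overline{D'}^{\,t}=0$ for $t\in T_{0}$ --- a contradiction. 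Thus all $c_{t}=0$, proving (c). For (b) in the cases not covered by (c) one argues at the archimedean places: under the embeddings $\lambda\mapsto\zeta_{n}^{j}(D^{n}\pm1)^{1/n}$ one has $\log|\sigma_{j}(\mu_{t})|=t\log D+O(1)$ or $(t-n)\log D+O(1)$ according as $\zeta_{n}^{jt}\neq1$ or $\zeta_{n}^{jt}=1$, and a short M\"obius inversion over the divisor lattice of $n$ shows the corresponding vectors of leading coefficients are already linearly independent, which forces independence of the $\mu_{t}$ once $D$ is large; the general case is \cite{MR0364177}.

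The step requiring the most care is the passage in (c) from the congruences at the individual primes $\mathfrak{p}\mid p$ to the global conclusion: a congruence at a single $\mathfrak{p}$ only constrains $\overline{\lambda}$ there and can fail to be decisive (for instance when $\overline{\lambda}$ has small multiplicative order), so one must use all primes above $p$ at once and exploit that, $p$ being unramified with $p\nmid n$, the reductions $\overline{\lambda}$ exhaust the roots of $x^{n}\mp1$ --- this is precisely what turns the relation into a polynomial of degree $<n$ possessing $n$ distinct roots. The remaining ingredients of (c) --- unramifiedness of $p$, integrality and topological nilpotence of $\xi$ above $p$, and the tail estimate for the logarithmic series (which genuinely needs $p$ to be odd) --- are routine, whereas the uniform-in-$D$ form of (b), the part originally due to Halter--Koch and Stender, is where the genuinely different archimedean technique of \cite{MR0364177} enters.
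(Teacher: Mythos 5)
The paper does not actually prove this statement: it is quoted verbatim from the literature, with the multiplicative independence attributed to Halter--Koch and Stender \cite{MR0364177} and the $\Z_{p}$-independence to Levesque \cite{MR1193191}, who derives it from the Buchmann--Sands criterion (Theorem \ref{thm:buchmannsands}). So there is no in-paper argument to match yours against; what you have written is a self-contained reconstruction, and the substantive part of it --- the $\Z_{p}$-independence --- is correct. Your expansion $\log_{p}\mu_{t}=-\xi^{t}+\eta_{t}$ with $\xi=D/\lambda$, $v_{\mathfrak{p}}(\xi)=\delta=v_{p}(D)$ and $v_{\mathfrak{p}}(\eta_{t})\geq(t+1)\delta$ is the $p$-adic-logarithm avatar of the leading-coefficient computation that Levesque performs with the maps $\phi_{k}(1+p^{k}\alpha)=\alpha\bmod p$ of Theorem \ref{thm:buchmannsands}(iv) (compare the proof of Theorem \ref{thm:S3-family-I} in the paper, which is the same technique); the valuation estimates (unramifiedness of $p$ from $p\nmid n\cdot\mathrm{disc}$, the tail bound needing $p$ odd, and $v(\log_{p}\lambda)\geq n\delta$) all check out. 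The step you rightly single out --- passing from $P(\overline{\lambda})=0$ at every $\mathfrak{p}\mid p$ and every Frobenius conjugate to the conclusion that the degree-$(<n)$ polynomial $P\in\F_{p}[Y]$ kills all $n$ distinct roots of $x^{n}\mp1$ and hence vanishes --- is the genuinely new ingredient relative to a single-prime leading-term argument, and it is sound because $\mathcal{O}_{K}\otimes\Z_{p}=\Z_{p}[\lambda]$ splits according to the irreducible factors of $x^{n}\mp1$ over $\F_{p}$, whose roots exhaust all $n$ roots, and the exponents $n-t$ for distinct divisors $t$ are distinct so $P$ cannot vanish identically unless all $\overline{\gamma_{t}}\,\overline{D'}^{\,t}$ do. The only incomplete piece is the unconditional multiplicative independence (the Halter--Koch--Stender half): your $p$-adic argument covers it whenever $D$ has an odd prime factor not dividing $n$, but for the remaining $D$ you only sketch the archimedean estimate and defer to \cite{MR0364177}. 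That is acceptable given that the paper itself cites this claim rather than proving it, but you should be explicit that this case is taken on faith from the reference rather than established by your argument.
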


Now we apply Theorem~\ref{thm:buchmannsands} to a certain family of non-Galois totally real cubic fields,
and use Corollary~\ref{cor:S3cubicleopoldt} to pass 
to the corresponding totally real $S_{3}$-extensions
of $\Q$. Note that claim (iii) below is a special case of \cite[Theorem in Appendix]{MR246851}.

\begin{theorem}\label{thm:S3-family-I}
Let $t \geq 2$ be an integer and let $f_{t}(x) = x^{3}-t^{2}x-1$.
Let $\lambda_{t}$ be some choice of root of $f_{t}(x)$, let $K_{t}=\Q(\lambda_{t})$ and let
$F_{t}$ be the Galois closure of $K_{t}$ over $\Q$. 
Then 
\begin{enumerate}
\item $f_{t}$ is irreducible over $\Q$ and has three distinct real roots,
\item $F_{t}$ is a totally real $S_{3}$-extension of $\Q$,
\item $\lambda_{t}$ and $\lambda_{t}+t$ are independent units in
$\Z[\lambda_{t}] \subseteq \mathcal{O}_{K_{t}}$, and
\item $\Leo(K_{t},p)$ and $\Leo(F_{t},p)$ both hold for all prime numbers $p \neq 3$ such that $p \mid t$.
\end{enumerate}
\end{theorem}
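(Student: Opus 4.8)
The plan is to prove (i)--(iv) in turn: (i)--(iii) are elementary facts about the cubic $f_t$, while (iv) will follow from the Buchmann--Sands criterion (Theorem~\ref{thm:buchmannsands}) for $K_t$ together with Corollary~\ref{cor:S3cubicleopoldt}. Throughout I write $\lambda=\lambda_t$ and $K=K_t$. For (i): a monic cubic is reducible over $\Q$ iff it has a root in $\{1,-1\}$; since $f_t(1)=-t^2$ and $f_t(-1)=t^2-2$ are nonzero for $t\geq 2$, $f_t$ is irreducible, and its discriminant $4t^6-27$ is positive for $t\geq 2$, so $f_t$ has three distinct real roots. For (ii): as $f_t$ is an irreducible cubic, $\Gal(F_t/\Q)\in\{C_3,S_3\}$, and it is $S_3$ unless $4t^6-27$ is a perfect square; but $4t^6-27=s^2$ with $s\geq 0$ would force $0<s<2t^3$ and $(2t^3-s)(2t^3+s)=27$, a product of two positive odd integers, whence $\{2t^3-s,2t^3+s\}\in\{\{1,27\},\{3,9\}\}$ and $t^3\in\{7,3\}$, impossible. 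So $\Gal(F_t/\Q)\cong S_3$, and $F_t$, being generated over $\Q$ by the real roots of $f_t$ and Galois over $\Q$, is totally real.

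For (iii): rewriting $\lambda^3-t^2\lambda-1=0$ as $\lambda(\lambda-t)(\lambda+t)=1$ shows $\lambda$ and $\lambda+t$ are units in $\Z[\lambda]$, with inverses $(\lambda-t)(\lambda+t)$ and $\lambda(\lambda-t)$. Since $K$ is a totally real cubic field, $\rank_\Z\mathcal{O}_K^\times=2$, so I only need $\lambda$ and $\lambda+t$ to be multiplicatively independent. Using $f_t(t)<0<f_t(t+1)$ and $f_t(-2/t^2)>0>f_t(-1/t^2)$ for $t\geq 2$, I locate real conjugates $\lambda^{(1)}\in(t,t+1)$ and $\lambda^{(3)}\in(-2/t^2,-1/t^2)$ of $\lambda$; then $\log\lambda^{(1)},\log(\lambda^{(1)}+t),\log(\lambda^{(3)}+t)>0$ while $\log|\lambda^{(3)}|<0$, so the $2\times2$ matrix with rows $(\log\lambda^{(1)},\log(\lambda^{(1)}+t))$ and $(\log|\lambda^{(3)}|,\log(\lambda^{(3)}+t))$ has positive determinant. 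Hence no nontrivial $\lambda^a(\lambda+t)^b$ equals $\pm1$ (the only roots of unity in the totally real field $K$), so $\lambda$ and $\lambda+t$ are independent units.

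For (iv): by Corollary~\ref{cor:S3cubicleopoldt} applied to the $S_3$-extension $F_t/\Q$ and its cubic subfield $K_t$, the statements $\Leo(F_t,p)$ and $\Leo(K_t,p)$ are equivalent, so it suffices to prove $\Leo(K,p)$ for $p\neq3$ with $p\mid t$. Set $a=v_p(t)\geq1$ and $t'=t/p^a$. Since $\mathrm{disc}(f_t)=4t^6-27\equiv-27\pmod p$ and $p\neq3$, we get $p\nmid\mathrm{disc}(f_t)$, hence $p\nmid[\mathcal{O}_K:\Z[\lambda]]$ and $\mathcal{O}_K/p\mathcal{O}_K\cong\F_p[x]/(x^3-1)$, with $\F_p$-basis $1,\overline{\lambda},\overline{\lambda}^2$. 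I then apply Theorem~\ref{thm:buchmannsands} with $r=2$ and $D=\langle\lambda^3,(\lambda+t)^{3c}\rangle$, where $c\in\{1,2\}$ is chosen so that the generators lie in $\{u\equiv1\pmod q\}$; this is possible because $\lambda^3=1+t^2\lambda$ and $(\lambda+t)^3=1+t(\lambda+t)(3\lambda+t)$ and $p\mid t$, and $D$ has finite index by (iii). From these identities, $\lambda^3\in D(p^{2a})$ with $\phi_{2a}(\lambda^3)=\overline{t'^2\lambda}$, a nonzero $\F_p$-multiple of $\overline{\lambda}$, and $(\lambda+t)^{3c}\in D(p^k)$ for some $k\geq1$ whose $\phi_k$-image is a nonzero multiple of $\overline{\lambda^2}$, up to a term in $\F_p\overline{\lambda}$ when $p=2$.

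To finish (iv), I raise these two elements to suitable powers of $p$ so that both lie in $D(p^{2a})$, the $\phi_{2a}$-image being preserved at each step: for $p$ odd the binomial corrections gain $p$-valuation at every $p$-th power, and for $p=2$ one first checks that both elements already sit at level $\geq2$, where the same applies. Then $\im\bigl(\phi_{2a}\colon D(p^{2a})\to\mathcal{O}_K/p\mathcal{O}_K\bigr)$ contains two $\F_p$-linearly independent vectors (nonzero combinations of $\overline{\lambda}$ and $\overline{\lambda^2}$), so has dimension $\geq2=r$; since this image always has dimension $\leq r$ (it factors through $D(p^{2a})/D(p^{2a})^p\cong\F_p^r$), equality holds, and Theorem~\ref{thm:buchmannsands}(iv) with $m=2a+1$ yields $\Leo(K,p)$, hence also $\Leo(F_t,p)$. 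I expect this last step to be the main obstacle: organising the powers of $p$ (in particular the extra square terms when $p=2$) and checking the Buchmann--Sands hypotheses — the $q$-congruence on $D$, and the fact that the criterion is independent of the choice of $D$ — requires care, although none of it is deep.
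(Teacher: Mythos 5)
Your proposal is correct and follows essentially the same route as the paper: elementary verification of (i)--(iii) (the paper locates the roots explicitly and shows $4t^{6}-27$ lies strictly between consecutive squares, but your rational-root test and factorisation $(2t^{3}-s)(2t^{3}+s)=27$ are equally valid), followed by the Buchmann--Sands criterion applied to $D=\langle \lambda^{3},(\lambda+t)^{3\cdot ?}\rangle$ with the $\phi_{2a}$-images computed to be independent combinations of $\overline{\lambda}$ and $\overline{\lambda^{2}}$, and then Corollary \ref{cor:S3cubicleopoldt}. The only real difference is bookkeeping: the paper takes the second generator to be $(\lambda+t)^{3t}$ and lands at level $p^{2h}$ in one step, whereas you take $(\lambda+t)^{3c}$ and bootstrap up by $p$-th powers; both work, and your explicit handling of the extra $\overline{\lambda}$-term when $p=2$ is a sound (indeed slightly more careful) treatment of that case.
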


\begin{proof}
It is straightforward to check that $f_{t}(x) = x^{3} - t^{2}x-1 = (x-t)x(x+t)-1$ has three real roots 
$\omega_{1}, \omega_{2}, \omega_{3}$ such that
\begin{equation}\label{eq:omega-inequality}
-t < \omega_{1} < -t+1 \leq -1 < \omega_{2} < 0 < t < \omega_{3} < t+1. 
\end{equation}
In particular, $f_{t}$ is irreducible over $\Q$ and so $K_{t}$ is a totally real cubic field.
The discriminant $\mathrm{disc}(f_{t})$ is $4t^{6}-27$ and we have
\[
0 <  (2t^{3}-1)^{2} <  (2t^{3})^{2} -27 = 4t^{6}-27 < (2t^{3})^{2}.
\]
Hence $\mathrm{disc}(f_{t})$ is not a perfect square and so 
$F_{t}=\Q(\omega_{1},\omega_{2},\omega_{3})$ 
is a totally real $S_{3}$-extension of $\Q$.
For $i=1,2,3$, we have that $\omega_{i}$ is an algebraic integer by definition and 
$(\omega_{i}-t)\omega_{i}(\omega_{i}+t) = f_{t}(\omega_{i})+1=1$;
thus each of $\omega_{i}-t$, $\omega_{i}$, $\omega_{i}+t$ is an algebraic unit. 
Moreover, by \eqref{eq:omega-inequality} we have 
$\log |\omega_{1}|, \log |\omega_{3}|, \log |\omega_{3} + t| > 0$
and $ \log |\omega_{1} + t| < 0$.
Hence 
\[
\det 
\begin{pmatrix}
\log |\omega_{1}| & \log |\omega_{1} + t| \\ 
\log |\omega_{3}| & \log |\omega_{3} + t| \\ 
\end{pmatrix}
= \log |\omega_{1}|  \log |\omega_{3} + t| - \log |\omega_{1} + t|\log |\omega_{3}| > 0.
\]
For any $i,j \in \{1,2,3\}$ and $m,n \in \Z$,
we have $\omega_{i}^{m}=(\omega_{i}+t)^{n}$ if and only if 
$\omega_{j}^{m}=(\omega_{j}+t)^{n}$.
Thus $\omega_{1}$ and $\omega_{1}+t$ must be multiplicatively independent, 
otherwise there would be a linear relation among the columns of the above matrix and so its determinant
would be zero.
Therefore $\lambda_{t}$ and $\lambda_{t}+t$ are independent units for any 
choice of $\lambda_{t} \in \{ \omega_{1}, \omega_{2}, \omega_{3} \}$. 

Let $p \neq 3$ be a prime number such that $p \mid t$ and 
let $h$ be the unique positive integer such that $p^{h} \parallel t$.
Henceforth abbreviate $\lambda_{t}$ to $\lambda$ and $K_{t}$ to $K$.
Let $D$ be the subgroup of $\mathcal{O}_{K}^{\times}$
generated by $\lambda^{6}$ and $(\lambda+t)^{6t}$.
Since these are (powers of) multiplicatively independent elements and
$\rank_{\Z}(\mathcal{O}_{K}^{\times})=2$, we have $[\mathcal{O}_{K}^{\times} : D] < \infty$.
Let $k=2h$ if $p \geq 5$, and let $k=2h+1$ if $p=2$.
Recall that $D(p^{k})=\{ u \in D : u \equiv 1 \pmod{p^{k}} \}$ 
and the map $\phi_{k} : D(p^{k}) \rightarrow \mathcal{O}_{K}/p\mathcal{O}_{K}$
is defined by $\phi_{k}(1+p^{k}\alpha) = \alpha \pmod{p}$.
In fact, it will follow from the calculations below that $D=D(p^{k})$.
We will show that $\phi_{k}(\lambda^{6})$ and $\phi_{k}((\lambda+t)^{6t})$
are $\F_{p}$-independent and then apply Theorem~\ref{thm:buchmannsands}(iv) 
to show that $\Leo(K,p)$ holds.

We have
\begin{gather*}
\lambda^{6} = (\lambda^{3})^{2} = (1+t^{2}\lambda)^{2}  \equiv 1 + 2t^{2}\lambda \pmod{p^{k+1}},
\text{ and} \\
(\lambda+t)^{6t} 
\equiv \lambda^{6t} + 6t^{2}\lambda^{6t-1}
\equiv (1 + 2t^{2}\lambda)^{t} + 6t^{2}\lambda^{2}(1+t^{2}\lambda)^{2t-1}
\equiv 1 +  6t^{2}\lambda^{2} \pmod{p^{k+1}}. 
\end{gather*}
By considering these congruences modulo $p^{k}$, we see that
$\lambda^{6}, (\lambda+t)^{6t} \in D(p^{k})$. Moreover, 
$\phi_{k}(\lambda^{6}) = \frac{2t^{2}}{p^{k}} \lambda \pmod{p}$
and $\phi_{k}((\lambda+t)^{6t})=\frac{6t^{2}}{p^{k}} \lambda^{2} \pmod{p}$.
Since both $\frac{2t^2}{p^{k}}$ and $\frac{6t^2}{p^{k}}$ are coprime to $p$,
it follows that $\phi_{k}(\lambda^{6})$ and $\phi_{k}((\lambda+t)^{6t})$ are $\F_{p}$-independent
if and only if $\lambda$ and $\lambda^{2}$ have $\F_{p}$-independent images in 
$\mathcal{O}_{K}/p\mathcal{O}_{K}$.
But the index $[\mathcal{O}_{K} : \Z[\lambda]]$ divides 
$\mathrm{disc}(f_{t})=4t^{6}-27$, which is coprime to $p$, and
so the inclusion $\Z[\lambda] \subseteq \mathcal{O}_{K}$ induces 
an isomorphism $\Z[\lambda]/p\Z[\lambda] \cong \mathcal{O}_{K}/p\mathcal{O}_{K}$.
Thus it suffices to show that $\lambda$ and $\lambda^{2}$ have $\F_{p}$-independent images in 
$\Z[\lambda]/p\Z[\lambda]$, which is clear since 
$\{ 1,\lambda,\lambda^{2} \}$ is a $\Z$-basis of $\Z[\lambda]$. 
Therefore $\Leo(K,p)$ holds by Theorem~\ref{thm:buchmannsands}(iv). 
Thus $\Leo(F_{t},p)$ also holds by Corollary~\ref{cor:S3cubicleopoldt}.
\end{proof}

\subsection{Infinite families of number fields related by congruence conditions}\label{subsec:infinite-families}
In this subsection, we will use the following result due to Buchmann and Sands.

\begin{theorem}\cite[Theorem 4.1]{MR958040}\label{thm:BSapprox}
Let $f(x) \in \Z[x]$ be a monic irreducible polynomial of degree $n \geq 3$ and let $\alpha$ be a root of $f(x)$.
Let $K=\Q(\alpha)$ and let $p$ be a prime number such that $p^2 \nmid \mathrm{disc}(f)$.
Let $S=\{ \varepsilon_i=\sum_{j=0}^{n-1}a_{i,j}\alpha^{j} \}_{i=1,...,r}$ be a maximal system of independent units of $\mathcal{O}_{K}$ congruent to $1$ modulo $q$ in $\Z[\alpha]$, where $q=p$ if $p$ is odd, and $q=4$ if $p=2$. 
Let $D$ be the group generated by $S$.
Assume that $\Leo(K,p)$ holds and let $m\geq 2$ be an integer such that $D(p^{m})\subseteq D^{p}$
(such an $m$ exists by Theorem~\ref{thm:buchmannsands}). 
Let $g(x) \in \Z[x]$ be a monic irreducible polynomial of degree $n$ such that $f(x) \equiv g(x) \pmod{p^{m}}$. Let $\beta$ be a root of $g(x)$ and let $K'=\Q(\beta)$. 
Assume that $K'$ has a maximal system of independent units 
$\{\delta_{i} = \sum_{j=0}^{n-1}b_{i,j}\beta^{j} \}_{i=1,...,r}$ 
such that $a_{i,j}\equiv b_{i,j}\pmod {p^m}$ $\forall i,j$. Then $\Leo(K',p)$ holds.
\end{theorem}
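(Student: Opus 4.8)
The plan is to carry the Buchmann--Sands criterion (Theorem~\ref{thm:buchmannsands}) across from $K$ to $K'$ by producing a ring isomorphism $\mathcal{O}_{K}/p^{m}\mathcal{O}_{K}\cong\mathcal{O}_{K'}/p^{m}\mathcal{O}_{K'}$ under which the two systems of units agree modulo $p^{m}$, and then applying Theorem~\ref{thm:buchmannsands}(i) to the group $D'=\langle\delta_{1},\dots,\delta_{r}\rangle\leq\mathcal{O}_{K'}^{\times}$ with the \emph{same} integer $m$.

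First I would control the conductors at $p$. Since $m\geq 2$ and $f\equiv g\pmod{p^{m}}$ we have $\mathrm{disc}(g)\equiv\mathrm{disc}(f)\pmod{p^{2}}$, so $p^{2}\nmid\mathrm{disc}(g)$; as $\mathrm{disc}(f)$ equals $[\mathcal{O}_{K}:\Z[\alpha]]^{2}$ times the discriminant of $K$, and similarly for $g$ and $K'$, this gives $p\nmid[\mathcal{O}_{K}:\Z[\alpha]]$ and $p\nmid[\mathcal{O}_{K'}:\Z[\beta]]$. Hence the inclusions $\Z[\alpha]\hookrightarrow\mathcal{O}_{K}$ and $\Z[\beta]\hookrightarrow\mathcal{O}_{K'}$ induce isomorphisms $\Z[\alpha]/p^{m}\Z[\alpha]\cong\mathcal{O}_{K}/p^{m}\mathcal{O}_{K}$ and $\Z[\beta]/p^{m}\Z[\beta]\cong\mathcal{O}_{K'}/p^{m}\mathcal{O}_{K'}$ (the cokernels are finite of order prime to $p$). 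On the other hand $\Z[\alpha]/p^{m}\Z[\alpha]\cong\Z[x]/(f(x),p^{m})$ and $\Z[\beta]/p^{m}\Z[\beta]\cong\Z[x]/(g(x),p^{m})$, and these two rings are literally equal because $f\equiv g\pmod{p^{m}}$. Composing yields a ring isomorphism
\[
\Phi\colon \mathcal{O}_{K}/p^{m}\mathcal{O}_{K}\ \xrightarrow{\ \sim\ }\ \mathcal{O}_{K'}/p^{m}\mathcal{O}_{K'},\qquad \Phi(\bar{\alpha})=\bar{\beta},
\]
which fixes the image of $\Z$. Consequently $\Phi(\overline{\varepsilon_{i}})=\overline{\textstyle\sum_{j}a_{i,j}\beta^{j}}=\overline{\textstyle\sum_{j}b_{i,j}\beta^{j}}=\overline{\delta_{i}}$, the middle equality being the hypothesis $a_{i,j}\equiv b_{i,j}\pmod{p^{m}}$.

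Because each $\varepsilon_{i}\in\mathcal{O}_{K}^{\times}$, the isomorphism $\Phi$ restricts to a group isomorphism on unit groups sending $\overline{\varepsilon_{i}}$ to $\overline{\delta_{i}}$; hence for every $(c_{1},\dots,c_{r})\in\Z^{r}$ one has $\prod_{i}\varepsilon_{i}^{c_{i}}\equiv1\pmod{p^{m}}$ in $\mathcal{O}_{K}$ if and only if $\prod_{i}\delta_{i}^{c_{i}}\equiv1\pmod{p^{m}}$ in $\mathcal{O}_{K'}$. Now $D$ is free abelian with basis $\varepsilon_{1},\dots,\varepsilon_{r}$, so $D^{p}=\{\prod_{i}\varepsilon_{i}^{p c_{i}}\}$, and the assumed inclusion $D(p^{m})\subseteq D^{p}$ says precisely that $\prod_{i}\varepsilon_{i}^{c_{i}}\equiv1\pmod{p^{m}}$ forces $p\mid c_{i}$ for all $i$. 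By the equivalence just obtained, the same implication holds with $\delta_{i}$ in place of $\varepsilon_{i}$; since likewise $(D')^{p}=\{\prod_{i}\delta_{i}^{p c_{i}}\}$, this is exactly the statement $D'(p^{m})\subseteq(D')^{p}$.

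It remains to check that $(K',p,D')$ fits the hypotheses of Theorem~\ref{thm:buchmannsands}. Writing $\varepsilon_{i}\equiv1\pmod q$ in coordinates (using that $1,\alpha,\dots,\alpha^{n-1}$ is a $\Z$-basis of $\Z[\alpha]$) gives $a_{i,0}\equiv1$ and $a_{i,j}\equiv0\pmod q$ for $j\geq1$; since $q\mid p^{m}$ (as $q\in\{p,4\}$ and $m\geq2$), the congruences $b_{i,j}\equiv a_{i,j}\pmod{p^{m}}$ give the same for the $b_{i,j}$, so $\delta_{i}\equiv1\pmod q$ and thus $D'\subseteq\{u\in\mathcal{O}_{K'}^{\times}:u\equiv1\pmod q\}$; as $\{\delta_{i}\}$ is a maximal system of independent units, $D'$ has finite index there. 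Therefore Theorem~\ref{thm:buchmannsands} applies, and its condition (i) holds for $(K',p,D')$ with the integer $m\geq2$, whence $\Leo(K',p)$ holds. I expect the only delicate points to be the conductor bookkeeping of the second paragraph---needed to replace $\mathcal{O}_{K}$, $\mathcal{O}_{K'}$ by $\Z[\alpha]$, $\Z[\beta]$ when reducing mod $p^{m}$---and the routine verification that $D'$ lands back in the Buchmann--Sands setup; once $\Phi$ is available, the transfer of the criterion is purely formal.
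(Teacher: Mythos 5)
This theorem is quoted from Buchmann--Sands \cite[Theorem 4.1]{MR958040}; the paper gives no proof of its own, so there is nothing internal to compare against. Your argument is correct and complete: the reduction $p\nmid[\mathcal{O}_{K}:\Z[\alpha]]$, $p\nmid[\mathcal{O}_{K'}:\Z[\beta]]$ from $p^{2}\nmid\mathrm{disc}(f)\equiv\mathrm{disc}(g)\pmod{p^{2}}$, the identification $\mathcal{O}_{K}/p^{m}\cong\Z[x]/(f,p^{m})=\Z[x]/(g,p^{m})\cong\mathcal{O}_{K'}/p^{m}$ matching $\overline{\varepsilon_{i}}$ with $\overline{\delta_{i}}$, and the transfer of criterion (i) of Theorem~\ref{thm:buchmannsands} with the same $m$ (together with the check that $D'$ sits with finite index in the units $\equiv1\pmod q$) is exactly the mechanism behind the cited result, so this serves as a valid self-contained proof.
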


Note that Theorem~\ref{thm:BSapprox} was applied by Buchmann and Sands to the families of number fields considered in Theorems~\ref{thm:BS-quintic-I} and \ref{thm:BS-quintic-II} (see \cite[Corollaries 4.3 and 4.5]{MR958040}).

We now give a reformulation of Theorem~\ref{thm:BSapprox} that is convenient for our purposes.

\begin{corollary}\label{cor:familyapprox}
Let $r,n \in \Z$ such that $2 \leq r < n$ and 
let $f(y,z),s_{1}(y,z), \ldots, s_{r}(y,z) \in \Z[y,z]$. 
Suppose that $f(y,z)$ is monic of degree $n$ when considered as a polynomial $f_{y}(z)$
in the variable $z$ with coefficients in $\Z[y]$.
Let $I \subseteq \Z$ be a set of indices. 
Assume that $\{ f_{t}(x) \}_{t \in I}$ is a family of irreducible polynomials in $\Z[x]$. 
For each $t \in I$, let $\lambda_{t}$ be a choice of root of $f_{t}(x)$ and let $K_{t}=\Q(\lambda_{t})$.
Assume that for every $t\in I$, the unit rank of $K_{t}$ is equal to $r$ 
and the set $\{s_i(t,\lambda_t)\}_{i=1,...,r}$ is a maximal system of independent units in
$\Z[\lambda_t]\subseteq \mathcal{O}_{K_{t}}$. 
Let $t_{0} \in I$ and let $p$ be a prime number such that $p^{2}\nmid \mathrm{disc}(f_{t_0})$.
Assume that $\Leo(K_{t_{0}},p)$ holds. 
Then there exists an integer $m \geq 2$ such that $\Leo(K_{t'},p)$ holds
for all $t' \in I$ such that $t'\equiv t_{0}\pmod {p^m}$.
\end{corollary}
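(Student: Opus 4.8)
The plan is to reduce the statement directly to Theorem~\ref{thm:BSapprox}, with $f = f_{t_0}$ and $g = f_{t'}$ for each $t' \in I$ congruent to $t_0$ modulo the integer $m$ that the theorem supplies. Note first that $n \geq 3$, since $n > r \geq 2$. The one preliminary point is that Theorem~\ref{thm:BSapprox} requires a maximal system of independent units \emph{congruent to $1$ modulo $q$}, whereas the given units $s_i(t,\lambda_t)$ need not satisfy this; moreover, to apply the theorem we must know that the coordinate vectors of the chosen units for $K_{t_0}$ and for $K_{t'}$ in the respective power bases agree modulo $p^m$, which forces us to produce these units in a way that varies polynomially in $t$.

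First I would fix an exponent $N$, depending only on $n$ and $q$, such that $u^{N} \equiv 1 \pmod{q\mathcal{O}_{K_t}}$ for every unit $u$ of $\mathcal{O}_{K_t}$ and every $t \in I$: this is possible because $\mathcal{O}_{K_t}/q\mathcal{O}_{K_t}$ is a finite ring of order $q^{n}$, so the order of its unit group is bounded by a quantity depending only on $n$ and $q$ (one may take, for instance, $N = \mathrm{lcm}(1,\dots,q^{n})$). Next, since $f(y,z)$ is monic of degree $n$ when viewed as a polynomial in $z$ over $\Z[y]$, polynomial division over $\Z[y]$ lets me write
\[
s_i(y,z)^{N} \equiv \sum_{j=0}^{n-1} P_{i,j}(y)\,z^{j} \pmod{f(y,z)}, \qquad P_{i,j} \in \Z[y],
\]
so that for each $t \in I$ we have $v_i^{(t)} := s_i(t,\lambda_t)^{N} = \sum_{j=0}^{n-1} P_{i,j}(t)\,\lambda_t^{\,j} \in \Z[\lambda_t]$. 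Since raising to the (nonzero) power $N$ preserves multiplicative independence, $\{v_i^{(t)}\}_{i=1,\dots,r}$ is, for every $t \in I$, a maximal system of independent units of $\mathcal{O}_{K_t}$ lying in $\Z[\lambda_t]$ and congruent to $1$ modulo $q$.

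Now I would apply Theorem~\ref{thm:BSapprox} to $f = f_{t_0}$, $\alpha = \lambda_{t_0}$, $K = K_{t_0}$, the prime $p$ (which satisfies $p^{2} \nmid \mathrm{disc}(f_{t_0})$ by hypothesis), and the unit system $\{v_i^{(t_0)}\}$ generating a subgroup $D$: since $\Leo(K_{t_0},p)$ holds, the theorem furnishes an integer $m \geq 2$ with $D(p^{m}) \subseteq D^{p}$. I claim this $m$ works. Let $t' \in I$ with $t' \equiv t_0 \pmod{p^{m}}$ and set $g = f_{t'}$, $\beta = \lambda_{t'}$, $K' = K_{t'}$. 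Then $g$ is monic of degree $n$ and irreducible by the standing hypotheses on the family, and $f_{t_0} \equiv f_{t'} \pmod{p^{m}}$ because the coefficients of $f_t(x)$ are integer polynomials in $t$; moreover $K_{t'}$ has the maximal system of independent units $\{v_i^{(t')}\}$, whose coordinate vectors $\bigl(P_{i,j}(t')\bigr)_{j}$ are congruent modulo $p^{m}$ to the coordinate vectors $\bigl(P_{i,j}(t_0)\bigr)_{j}$ of $\{v_i^{(t_0)}\}$, again since each $P_{i,j} \in \Z[y]$ and $t' \equiv t_0 \pmod{p^{m}}$. Hence Theorem~\ref{thm:BSapprox} yields $\Leo(K_{t'},p)$, which is the desired conclusion.

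The only genuinely substantive ingredients are the uniformity (in $t$) of the exponent $N$ and the observation that reduction modulo $f(y,z)$ keeps the unit coordinates polynomial in $t$; once these are in place, the corollary is a formal consequence of Theorem~\ref{thm:BSapprox}, and I do not anticipate any further obstacle.
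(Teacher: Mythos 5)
Your proof is correct and follows essentially the same route as the paper's: raise the given units to a power so that they become congruent to $1$ modulo $q$, reduce modulo $f(y,z)$ to obtain unit coordinates that are polynomials in $t$, and then invoke Theorem~\ref{thm:BSapprox} with $f=f_{t_0}$ and $g=f_{t'}$. The only (harmless) difference is that you choose the exponent $N$ uniformly in $t$, whereas the paper's proof picks an exponent $k$ normalising the units only at $t_0$, which is all that Theorem~\ref{thm:BSapprox} in fact requires.
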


\begin{proof}
Let $q=p$ if $p$ is odd, and $q=4$ if $p=2$. 
Let $k$ be a positive integer such that
$s_{1}(t_{0},\lambda_{t_{0}})^{k} \equiv \cdots \equiv s_{r}(t_{0},\lambda_{t_{0}})^{k} \equiv 1 \pmod{q}$.
For each  $i=1,\dots,r$ , we consider $s_{i}(y,z)^{k}$ and $f(y,z)$ as polynomials in the variable $z$ with coefficients in $\Z[y]$. Since $f(y,z)$ is monic in $z$, for each $i$ we can perform Euclidean division to obtain
\[
 s_{i}(y,z)^{k} = q_{i}(y,z)f(y,z)+r_{i}(y,z),
\]
where $r_{i}(y,z)$ has degree strictly less than $n$ in $z$. 
Then for every $t \in I$ and each $i=1,\ldots,r$, we have $r_{i}(t,\lambda_t)=s_{i}(t,\lambda_{t})^{k}$ and so, 
in particular, $\{r_i(t,\lambda_t)\}_{i=1,\dots,r}$ forms a maximal system of independent units of
$\mathcal{O}_{K_{t}}$.
Moreover, for $t=t_{0}$ these units are congruent to $1$ modulo $q$ in $\Z[\lambda_{t}]$.

Let $D \subseteq \Z[\lambda_{t_0}]^{\times}$ be the subgroup generated by 
$\{r_{i}(t_{0},\lambda_{t_0})\}_{i=1,\dots,r}$. 
Since $\Leo(K_{t_0},p)$ holds by hypothesis, by Theorem~\ref{thm:buchmannsands}
there exists an integer $m \geq 2$ such that $D(p^{m})\subseteq D^{p}$.
Let $t' \in I$ such that $t' \equiv {t_{0}} \pmod{p^{m}}$.
Then $r_{i}(t_0,z)\equiv r_{i}(t',z)\pmod {p^{m}}$ for each $i=1,\ldots,r$.
Taking $\{a_{i,j}\}$ and $\{b_{i,j}\}$ to be the coefficients of $r_{i}(t_0,z)$ and $r_{i}(t',z)$, respectively,
we see that the hypotheses of Theorem~\ref{thm:BSapprox} are satisfied and so
$\Leo(K_{t'},p)$ holds.
\end{proof}

\begin{prop}\label{prop:infinitepol}
Let $g(y)\in\Z[y]$ be a polynomial that is not a square in $\C[y]$. 
Let $d \in \Z$ and let $I=\Z_{\geq d}$ or $\Z_{\leq d}$.
Let $a,b \in \Z$ with $a \neq 0$ and let $J=\{ ax+b \}_{x \in \Z} \cap I$ be an arithmetic progression in $I$.
Then there exists an infinite set of prime numbers $\mathcal L$ such that for each $\ell \in\mathcal L$, 
there exists $k \in J$ such that $\ell^{m} \parallel g(k)$ for some odd $m \in \Z_{>0}$.
\end{prop}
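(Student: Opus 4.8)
The plan is to argue by contradiction. Suppose the set $\mathcal{L}$ of primes $\ell$ for which there exist $k \in J$ and an odd $m \in \Z_{>0}$ with $\ell^{m} \parallel g(k)$ is \emph{finite}; I will then exhibit a prime $\ell \notin \mathcal{L}$ together with a $k \in J$ for which $v_{\ell}(g(k))$ is odd, a contradiction. We may assume $a \geq 1$, replacing $a$ by $-a$ and $x$ by $-x$ if necessary, so that $J$ is the infinite set of integers in $I$ that are congruent to $b$ modulo $a$. Note that $g$ is non-constant, since every nonzero constant is a square in $\C[y]$.

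First I would fix the factorisation $g = c\prod_{i=1}^{r} p_{i}(y)^{e_{i}}$ with $c \in \Q^{\times}$ and the $p_{i} \in \Z[y]$ distinct, primitive and irreducible (Gauss's lemma). Since the $p_{i}$ are separable with pairwise disjoint root sets, $g$ is a square in $\C[y]$ precisely when all the $e_{i}$ are even; hence some exponent, say $e_{1}$, is odd. Choose a nonzero integer $B$ divisible by the numerator and denominator of $c$, by the leading coefficient of each $p_{i}$, by $\mathrm{disc}(p_{1})$, by $\Res(p_{1},p_{j})$ for each $j \neq 1$, and by $a$. Then for every prime $\ell \nmid B$ one has $v_{\ell}(c)=0$ and $\gcd(\ell,a)=1$, the reduction $\bar p_{1} \in \F_{\ell}[y]$ has only simple roots, and no root of $\bar p_{1}$ is a root of $\bar p_{j}$ for $j \neq 1$.

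Next I would invoke the classical fact that $\bar p_{1}$ has a root in $\F_{\ell}$ for infinitely many primes $\ell$: this follows from the Chebotarev density theorem applied to a splitting field of $p_{1}$ (the identity automorphism fixes every root, so the density is positive), and it is trivial when $\deg p_{1}=1$. Since $\mathcal{L}$ and the set of prime divisors of $B$ are both finite, I may fix such an $\ell \notin \mathcal{L}$ with $\ell \nmid B$, and a root $\bar\alpha \in \F_{\ell}$ of $\bar p_{1}$. As $\bar\alpha$ is a simple root, a Taylor expansion of $p_{1}$ at a lift of $\bar\alpha$ shows that of the $\ell$ residues modulo $\ell^{2}$ reducing to $\bar\alpha$, exactly one gives $v_{\ell}(p_{1})\geq 2$; pick any of the remaining $\ell-1$. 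Using $\gcd(a,\ell^{2})=1$ and the Chinese remainder theorem, combine this condition modulo $\ell^{2}$ with $k\equiv b\pmod a$ into a single residue class modulo $a\ell^{2}$; being unbounded in both directions it meets the half-line $I$, so some $k \in J$ lies in it. For such $k$ one has $v_{\ell}(p_{1}(k))=1$ and $v_{\ell}(p_{j}(k))=0$ for $j\neq 1$ (otherwise $\bar\alpha$ would be a common root of $\bar p_{1}$ and $\bar p_{j}$), whence $v_{\ell}(g(k))=v_{\ell}(c)+\sum_{i}e_{i}v_{\ell}(p_{i}(k))=e_{1}$, which is odd. This contradicts $\ell\notin\mathcal{L}$, and the proposition follows.

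The main obstacle is the step furnishing infinitely many primes $\ell$ modulo which $p_{1}$ has a root; the remainder is bookkeeping with resultants, Hensel's lemma, and the Chinese remainder theorem.
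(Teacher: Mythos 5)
Your proof is correct, but it takes a genuinely different route from the paper's. The paper does not factor $g$ into irreducibles: it writes $g(y)=w(y)^{2}u(y)$ with $u$ square-free and non-constant, sets $v(x)=u(ax+b)$, and uses a B\'ezout identity $A(x)v(x)+B(x)v'(x)=C$ in $\Z[x]$ to show that for any prime $\ell\nmid C$ dividing some value $v(k_{0})$, either $\ell\parallel v(k_{0})$ or $\ell\parallel v(k_{0}+\ell)$; combined with the elementary fact that a non-constant integer polynomial has infinitely many prime divisors among its values, this gives $v_{\ell}(g(k))=2v_{\ell}(w(k))+1$, odd. You instead isolate a single irreducible factor $p_{1}$ occurring to an odd exponent $e_{1}$, exclude the finitely many primes dividing the content, leading coefficients, $\mathrm{disc}(p_{1})$, the resultants $\Res(p_{1},p_{j})$ and $a$, and then use a simple root of $\bar p_{1}$ modulo $\ell$ plus a Hensel/Taylor argument and CRT to land on $k\in J$ with $v_{\ell}(g(k))=e_{1}$ exactly. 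Your version buys sharper information (the exact valuation, and in particular $g(k)\neq 0$, a point the paper glosses over when $w(k)$ could vanish), at the cost of invoking Chebotarev where it is not needed: the same elementary fact the paper uses -- that infinitely many primes divide some value of $p_{1}$ -- already supplies infinitely many $\ell$ for which $\bar p_{1}$ has a root in $\F_{\ell}$, since $\bar p_{1}$ is not the zero polynomial for $\ell$ outside your excluded set. Also note that your contradiction framing is superfluous; the argument directly produces, for each admissible $\ell$, a $k\in J$ with $v_{\ell}(g(k))$ odd, which is the assertion to be proved. Finally, both your argument and the paper's implicitly assume $a\neq 0$ (otherwise $J$ is not an infinite arithmetic progression), which is the intended reading of the hypothesis.
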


\begin{proof}
By the hypothesis and Gauss's lemma, we can write $g(y)=w(y)^{2}u(y)$ where $w(y),u(y) \in \Z[y]$ and
$u(y)$ is non-constant and square-free in $\C[y]$.
Set $v(x)=u(ax+b)$, which is a non-constant polynomial in $\Z[x]$. 
The values of $x$ such that $ax+b\in I$ are in a ray $R$ of the form $\Z_{\geq e}$ or $\Z_{\leq e}$ for some
$e \in \Z$.
It suffices to show that there exists an infinite set of prime numbers $\mathcal{L}$ such that for each
$\ell \in \mathcal{L}$, there exists $k \in R$ such that $\ell \parallel v(k)$.
After a straightforward linear reparameterisation, we can and do assume that $R=\Z_{\geq 0}$. 
Note that $v(x)$ has no repeated roots in $\C$, since it is a linear reparameterisation of $u(y)$.
Hence there exist $a(x),b(x)\in \Q[x]$ such that
\[
 a(x)v(x)+b(x)v'(x)=1,
\]
where $v'(x)$ denotes the formal derivative of $v(x)$. 
By clearing denominators, this implies that there exist $A(x),B(x) \in \Z[x]$ and $C \in \Z$ with $C \neq 0$
such that 
\begin{equation}\label{eq:lin-comb-formal-deriv}
A(x)v(x)+B(x)v'(x)=C. 
\end{equation}

It is well-known that if $h(y) \in \Z[y]$ is non-constant, 
then the set of prime numbers that divide some element of $h(\Z_{\geq 0})$ is infinite 
(sketch: consider $h(n!c^2)$ for sufficiently large $n$, where $c$ is the constant term of $h$).
In particular, this result applies to $v(x) \in \Z[x]$.
Now let $\ell$ be a prime number such that $\ell \nmid C$ and there exists $k_{0} \in \Z_{\geq 0}$ with 
$\ell\mid v(k_{0})$. 
We claim that we can always find $k_{1} \in \Z_{\geq 0}$ such that $\ell \parallel v(k_{1})$. 
Since there are infinitely many choices for $\ell$, this will prove the desired result.
If $\ell \parallel v(k_{0})$ then we take $k_{1}=k_{0}$.
Suppose that $\ell^{2} \mid v(k_{0})$. Then
\[
 v(k_{0}+\ell)-v(k_{0})=v'(k_{0})\ell+r\ell^{2}
\]
for some $r \in \Z$.
However, since $\ell\mid v(k_{0})$ and $\ell\nmid C$, we have that $\ell\nmid v'(k_{0})$ by 
\eqref{eq:lin-comb-formal-deriv}. 
Therefore $\ell^{2} \nmid v(k_{0}+\ell)$, and so we can take $k_{1}=k_{0}+\ell$.
\end{proof}

\begin{corollary}\label{cor:infinitepol}
Let $n \in \Z_{\geq 2}$ and let $f(y,z) \in \Z[y,z]$ be of degree $n$ when considered 
as a polynomial $f_{y}(z)$ in the variable $z$ with coefficients in $\Z[y]$.
Suppose that $\mathrm{disc}(f_{y})$ is a non-constant polynomial in $\Z[y]$ that is not a square in $\C[y]$.
Let $d \in \Z$ and let $I=\Z_{\geq d}$ or $\Z_{\leq d}$.
Let $a, b \in \Z$ with $a \neq 0$ and let $J=\{ax+b\}_{x\in\Z}\cap I$ be an arithmetic progression in $I$. 
Assume that $\{ f_{t}(x) \}_{t \in J}$ is a family of monic irreducible polynomials in $\Z[x]$. 
For each $t \in J$, let $\lambda_{t}$ be a choice of root of $f_{t}(x)$ and let $K_{t}=\Q(\lambda_{t})$.
Then $\{K_t\}_{t\in J}$ is an infinite family of number fields. 
\end{corollary}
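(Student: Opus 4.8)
The plan is to show that the fields $K_{t}$ cannot all lie in a finite set (up to isomorphism) by producing, for infinitely many primes $\ell$, some $t \in J$ for which $\ell$ ramifies in $K_{t}/\Q$; since any fixed number field is ramified at only finitely many primes, this forces infinitely many distinct $K_{t}$.

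First I would record the elementary facts: each $f_{t}$ with $t \in J$ is irreducible of degree $n$ by hypothesis, so $K_{t} = \Q(\lambda_{t})$ is a degree-$n$ number field, $\lambda_{t}$ is an algebraic integer generating $K_{t}$, and $\mathrm{disc}(f_{t}) = [\mathcal{O}_{K_{t}} : \Z[\lambda_{t}]]^{2}\,\mathrm{disc}(K_{t})$. Next I would apply Proposition~\ref{prop:infinitepol} with $g(y) = \mathrm{disc}(f_{y})$, which is non-constant and not a square in $\C[y]$ precisely by the hypothesis of the corollary; this yields an infinite set $\mathcal{L}$ of prime numbers such that for each $\ell \in \mathcal{L}$ there is $k \in J$ with $\ell^{m} \parallel \mathrm{disc}(f_{k})$ for some odd $m \in \Z_{>0}$. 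From the discriminant factorisation, writing $v_{\ell}$ for the $\ell$-adic valuation, $v_{\ell}(\mathrm{disc}(K_{k})) = m - 2\, v_{\ell}([\mathcal{O}_{K_{k}} : \Z[\lambda_{k}]])$ is then odd, hence strictly positive, so $\ell$ ramifies in $K_{k}/\Q$.

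To conclude, I would argue by contradiction: if $\{K_{t}\}_{t \in J}$ consisted of only finitely many isomorphism classes, the union $S$ of the (finite) sets of primes ramifying in each of them would be finite, so every $K_{t}$ would be unramified outside $S$; but choosing $\ell \in \mathcal{L} \setminus S$, which is possible since $\mathcal{L}$ is infinite and $S$ is finite, together with the corresponding $k \in J$, gives a field $K_{k}$ ramified at $\ell \notin S$, a contradiction. Hence $\{K_{t}\}_{t \in J}$ is an infinite family of number fields.

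I do not anticipate a serious obstacle here: the only inputs are Proposition~\ref{prop:infinitepol}, the standard relation $\mathrm{disc}(f) = [\mathcal{O}_{K}:\Z[\alpha]]^{2}\mathrm{disc}(K)$, and the fact that $\ell$ ramifies in $K/\Q$ if and only if $\ell \mid \mathrm{disc}(K)$. The one point needing a little care is the parity observation that an odd exponent of $\ell$ in $\mathrm{disc}(f_{k})$ survives into $\mathrm{disc}(K_{k})$ despite the square index factor; this is immediate once the discriminant factorisation is written down.
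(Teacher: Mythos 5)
Your proposal is correct and follows essentially the same route as the paper: reduce to showing infinitely many primes ramify in the family, invoke Proposition~\ref{prop:infinitepol} with $g(y)=\mathrm{disc}(f_{y})$, and use the relation $\mathrm{disc}(f_{t})=[\mathcal{O}_{K_{t}}:\Z[\lambda_{t}]]^{2}\,\mathrm{disc}(\mathcal{O}_{K_{t}})$ together with the parity of the exponent of $\ell$ to see that $\ell$ ramifies. The only difference is presentational: you spell out the final contradiction explicitly, whereas the paper simply notes at the outset that it suffices to produce infinitely many ramified primes.
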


\begin{proof}
It suffices to show that the set of prime numbers that ramify in at least one of the fields in $\{K_t\}_{t \in J}$ is infinite. 
For every $t \in J$, we have that
\[
\mathrm{disc}(f_{t})=[\mathcal{O}_{K_t}:\Z[\lambda_t]]^{2}\cdot \mathrm{disc}(\mathcal{O}_{K_t}).
\]
Hence for a fixed $t \in J$ and prime number $\ell$, if $\ell^{m} \parallel \mathrm{disc}(f_{t})$
for some odd $m \in \Z_{>0}$, then $\ell \mid \mathrm{disc}(\mathcal{O}_{K_{t}})$ and
thus $\ell$ ramifies in $K_{t}$. Since $a \neq 0$, the set $J=\{ax+b\}_{x\in\Z}\cap I$ is infinite.
Therefore the desired result now follows from Proposition~\ref{prop:infinitepol} with $g(y) = \mathrm{disc}(f_{y})$.
\end{proof}

\begin{remark}\label{rmk:finite}
In particular, Corollary~\ref{cor:infinitepol} applies in the setting of Corollary~\ref{cor:familyapprox}.
Note that Corollary~\ref{cor:infinitepol} can be seen as a generalisation of \cite[Proposition~5.1]{MR958040},
which was used to show that certain subfamilies of the families in Theorems~\ref{thm:BS-quintic-I} and
\ref{thm:BS-quintic-II} are infinite.
A similar result in the situation that the defining polynomial has only linear or quadratic factors is contained in \cite{MR3069398} (see also the citation in \cite[Proof of Lemma 3]{MR1378570}).
\end{remark}

Buchmann and Sands already applied Theorem~\ref{thm:BSapprox} to the families of quintic fields
of Theorems~\ref{thm:BS-quintic-I} and \ref{thm:BS-quintic-II}; 
see \cite[Corollaries 4.3 and 4.4, and Remark~4.5]{MR958040}. 
In the following corollary, we consider the family of fields defined in Theorem~\ref{thm:S3-family-I}.

\begin{corollary}\label{cor:cubicfamily}
For every integer $t \geq 2$, let $f_{t}$, $\lambda_{t}$, $K_{t}$, and $F_{t}$ be as in Theorem~\ref{thm:S3-family-I}. 
Let $p$ be a prime number and let $t_{0} \geq 2$ be an integer such that $p^{2} \nmid 4t_{0}^6-27$. 
Suppose that $\Leo(K_{t_0},p)$ holds. 
Then there exists an integer $m \geq 2$ such that both $\Leo(K_{t'},p)$ and $\Leo(F_{t'},p)$
hold for every $t' \in I_{p,m} := \{ t' \in \Z_{\geq 2} : t'\equiv t_{0} \pmod{p^{m}} \}$. 
Moreover, both $\{K_{t'}\}_{t' \in I_{p,m}}$ and $\{F_{t'}\}_{t' \in I_{p,m}}$ are infinite families of number fields.
\end{corollary}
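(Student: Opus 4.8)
The plan is to feed the family of Theorem \ref{thm:S3-family-I} into Corollary \ref{cor:familyapprox}, then use Corollary \ref{cor:S3cubicleopoldt} to pass from the cubic fields to the $S_{3}$-extensions, and finally invoke Corollary \ref{cor:infinitepol} for the infinitude claim. First I would set up the polynomial data for Corollary \ref{cor:familyapprox}: take $n=3$, $r=2$, $I=\Z_{\geq 2}$, $f(y,z)=z^{3}-y^{2}z-1$, $s_{1}(y,z)=z$ and $s_{2}(y,z)=z+y$, so that $f(t,x)=f_{t}(x)$ and $f$ is monic of degree $3$ in $z$. By Theorem \ref{thm:S3-family-I}(i) the polynomials $f_{t}$ with $t\in I$ are irreducible; by Theorem \ref{thm:S3-family-I}(ii) each $K_{t}$ is a totally real cubic field and hence has unit rank $r=2$; and by Theorem \ref{thm:S3-family-I}(iii) the pair $\{\lambda_{t},\lambda_{t}+t\}=\{s_{1}(t,\lambda_{t}),s_{2}(t,\lambda_{t})\}$ consists of independent units of $\Z[\lambda_{t}]$, so, having size $r$, it is a maximal system of independent units in $\Z[\lambda_{t}]\subseteq\mathcal{O}_{K_{t}}$. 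Since $\mathrm{disc}(f_{t_{0}})=4t_{0}^{6}-27$ (computed in the proof of Theorem \ref{thm:S3-family-I}) and $p^{2}\nmid 4t_{0}^{6}-27$ by hypothesis, and $\Leo(K_{t_{0}},p)$ holds, Corollary \ref{cor:familyapprox} produces an integer $m\geq 2$ such that $\Leo(K_{t'},p)$ holds for every $t'\in I$ with $t'\equiv t_{0}\pmod{p^{m}}$.

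Next I would take care of a small bookkeeping issue. Enlarging $m$ if necessary — which is harmless, since if the conclusion of Corollary \ref{cor:familyapprox} holds for a given exponent then it holds for every larger one — I may assume in addition that $p^{m}>t_{0}$. Then the least positive integer congruent to $t_{0}$ modulo $p^{m}$ is $t_{0}$ itself, so $I_{p,m}=\{\,t_{0}+p^{m}x : x\in\Z\,\}\cap\Z_{\geq 2}\subseteq\Z_{\geq 2}=I$. Consequently Theorem \ref{thm:S3-family-I} applies to every $t'\in I_{p,m}$, and $\Leo(K_{t'},p)$ holds for each such $t'$ by the previous paragraph. For a given $t'\in I_{p,m}$, the field $F_{t'}$ is an $S_{3}$-extension of $\Q$ with cubic subfield $K_{t'}$ by Theorem \ref{thm:S3-family-I}(i)--(ii), so Corollary \ref{cor:S3cubicleopoldt}, applied with $L=F_{t'}$ and $F=K_{t'}$, shows that $\Leo(F_{t'},p)$ holds as well.

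Finally, for the infinitude claim I would apply Corollary \ref{cor:infinitepol} with the same $f(y,z)=z^{3}-y^{2}z-1$: its discriminant polynomial $\mathrm{disc}(f_{y})=4y^{6}-27$ is non-constant and square-free in $\C[y]$ — its roots are the six distinct sixth roots of $27/4$ — hence not a square there; and $I_{p,m}$ is an arithmetic progression in $I=\Z_{\geq 2}$ along which the $f_{t'}$ are irreducible. Therefore $\{K_{t'}\}_{t'\in I_{p,m}}$ is an infinite family of number fields. Since each $F_{t'}$ has degree $6$ over $\Q$ and so only finitely many subfields, an infinite family of pairwise non-isomorphic cubic fields $K_{t'}\subseteq F_{t'}$ forces $\{F_{t'}\}_{t'\in I_{p,m}}$ to be infinite as well. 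I expect the main obstacle — really the only non-routine point — to be the bookkeeping guaranteeing $I_{p,m}\subseteq\Z_{\geq 2}$, so that Theorem \ref{thm:S3-family-I} and hence Corollary \ref{cor:S3cubicleopoldt} apply to every member of the family; everything else is a direct assembly of results already established.
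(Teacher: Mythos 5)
Your proof is correct and follows essentially the same route as the paper's: feed the data of Theorem \ref{thm:S3-family-I} into Corollary \ref{cor:familyapprox}, pass to the $S_{3}$-closures via Corollary \ref{cor:S3cubicleopoldt}, and invoke Corollary \ref{cor:infinitepol} for infinitude. Your extra bookkeeping step (enlarging $m$ so that $I_{p,m}\subseteq\Z_{\geq 2}$) is a point the paper's proof silently glosses over, and it is handled correctly.
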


\begin{proof} 
For every integer $t \geq 2$, we have that  $\mathrm{disc}(f_{t})=4t^{6}-27$; moreover, 
by Theorem~\ref{thm:S3-family-I},  $f_{t}$ is irreducible over $\Q$ and
$\{ \lambda_{t}, \lambda_{t}+t \}$
is a maximal system of independent units in $\mathcal{O}_{K_{t}}$.
Hence by Corollaries~\ref{cor:familyapprox} and \ref{cor:S3cubicleopoldt} there exists an integer $m \geq 2$ such that 
$\Leo(K_{t'},p)$ and $\Leo(F_{t'},p)$ both hold for every integer $t' \in I=I_{p,m}$.
By Corollary~\ref{cor:infinitepol}, 
the family $\{K_{t'}\}_{t' \in I_{p,m}}$ is infinite and hence so is $\{F_{t'}\}_{t' \in I_{p,m}}$.
\end{proof}

The results of this section now allow us to prove the following theorem.

\begin{theorem}\label{thm:finalfamily}
Given a finite set of prime numbers $\mathcal{P}$,
there exists an infinite family $\mathcal{F}$ of totally real 
$S_{3}$-extensions of $\Q$ such that $\Leo(F,p)$ holds for every $F \in \mathcal{F}$ and $p\in\mathcal{P}$.
\end{theorem}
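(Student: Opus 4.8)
The plan is to take $\mathcal{F}$ to be a congruence subfamily of the family $\{F_{t}\}_{t \geq 2}$ of totally real $S_{3}$-extensions of $\Q$ constructed in Theorem~\ref{thm:S3-family-I}. Write $\mathcal{P}_{0} = \mathcal{P} \setminus \{3\}$ and $M_{0} = \prod_{p \in \mathcal{P}_{0}} p$. First observe that for a prime $p \neq 3$, Theorem~\ref{thm:S3-family-I}(iv) gives $\Leo(K_{t},p)$ and $\Leo(F_{t},p)$ as soon as $p \mid t$; hence the single divisibility condition $M_{0} \mid t$ already secures $\Leo(F_{t},p)$ for every $p \in \mathcal{P}_{0}$ simultaneously. (Equivalently one could invoke Corollary~\ref{cor:cubicfamily} at base points $t_{0}=p$, noting that $p^{2} \mid 4p^{6}$ but $p^{2} \nmid 27$ when $p \neq 3$, so $p^{2} \nmid \mathrm{disc}(f_{p})$; this is not needed here.)

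The substantial case is $p=3$, which Theorem~\ref{thm:S3-family-I}(iv) deliberately excludes. Here I would first exhibit one explicit integer $t_{0} \geq 2$ with $3 \nmid t_{0}$ such that $\Leo(K_{t_{0}},3)$ holds. This is a terminating computation via the Buchmann--Sands criterion of Theorem~\ref{thm:buchmannsands}: take the finite-index subgroup $D \leq \mathcal{O}_{K_{t_{0}}}^{\times}$ generated by powers of the explicit independent units $\lambda_{t_{0}},\lambda_{t_{0}}+t_{0}$ of Theorem~\ref{thm:S3-family-I}(iii) chosen to be congruent to $1$ modulo $3$, and search for an $m \geq 2$ with $D(3^{m}) \subseteq D^{3}$ (equivalently, with the image of $\phi_{m-1}$ of $\F_{3}$-dimension $2$); such an $m$ exists exactly when $\Leo(K_{t_{0}},3)$ holds, and for a suitably chosen $t_{0}$ it is found by a finite search. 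Because $3 \nmid t_{0}$ forces $4t_{0}^{6}-27 \equiv 4 \pmod{9}$, we have $9 \nmid \mathrm{disc}(f_{t_{0}})$, so Corollary~\ref{cor:cubicfamily} applied with the prime $3$ yields an integer $m_{3} \geq 2$ such that $\Leo(K_{t'},3)$ and $\Leo(F_{t'},3)$ hold for every $t' \equiv t_{0} \pmod{3^{m_{3}}}$.

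Next I would combine the congruence conditions by the Chinese Remainder Theorem: $M_{0}$ and $3^{m_{3}}$ are coprime, so there is $t^{\ast} \in \Z$ with $t^{\ast} \equiv 0 \pmod{M_{0}}$ and $t^{\ast} \equiv t_{0} \pmod{3^{m_{3}}}$ (the second congruence being vacuous, and $m_{3}=0$, if $3 \notin \mathcal{P}$). Setting $M = M_{0} \cdot 3^{m_{3}}$ and $\mathcal{F} = \{ F_{t} : t \geq 2,\ t \equiv t^{\ast} \pmod{M} \}$, Theorem~\ref{thm:S3-family-I}(ii) makes every $F \in \mathcal{F}$ a totally real $S_{3}$-extension of $\Q$, and the previous two paragraphs give $\Leo(F,p)$ for all $p \in \mathcal{P}$. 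Finally, $\mathcal{F}$ is infinite by Corollary~\ref{cor:infinitepol} applied with $f(y,z) = z^{3} - y^{2}z - 1$ (whose $z$-discriminant $4y^{6}-27$ is non-constant and square-free, hence not a square in $\C[y]$), with the arithmetic progression $t \equiv t^{\ast} \pmod{M}$ inside $\Z_{\geq 2}$, and with the irreducibility of each $f_{t}$ from Theorem~\ref{thm:S3-family-I}(i): this shows $\{K_{t}\}$ is an infinite family of number fields, and hence so is $\{F_{t}\}$, since $F_{t} \supseteq K_{t}$ forces any prime ramifying in $K_{t}$ to ramify in $F_{t}$.

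The step I expect to be the main obstacle is the lone explicit instance $\Leo(K_{t_{0}},3)$ with $3 \nmid t_{0}$. The rest is congruence bookkeeping together with results already in hand (Theorems~\ref{thm:S3-family-I} and~\ref{thm:buchmannsands}, Corollaries~\ref{cor:cubicfamily} and~\ref{cor:infinitepol}); but $K_{t_{0}}$ is a non-abelian totally real cubic field, so Leopoldt at $3$ for it follows from none of the general results recalled earlier and must be checked directly via the Buchmann--Sands computation — in particular one has to confirm that some such $t_{0}$ does in fact work.
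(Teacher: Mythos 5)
Your strategy coincides with the paper's proof: for $p\in\mathcal{P}$ with $p\neq 3$ one imposes $p\mid t$ and invokes Theorem~\ref{thm:S3-family-I}(iv); for $p=3$ one needs a single base point $t_{0}$ with $\Leo(K_{t_{0}},3)$ and $9\nmid\mathrm{disc}(f_{t_{0}})$, feeds it into Corollary~\ref{cor:cubicfamily}, glues the congruences by the Chinese remainder theorem, and gets infinitude from Corollary~\ref{cor:infinitepol}. All of that bookkeeping in your write-up is correct (including the checks that $3\nmid t_{0}$ forces $4t_{0}^{6}-27\equiv 4\pmod 9$ and that $4y^{6}-27$ is squarefree in $\C[y]$).

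The one genuine gap is exactly the step you flag: you never actually establish $\Leo(K_{t_{0}},3)$ for any admissible $t_{0}$. Proposing a Buchmann--Sands search does not close this, because the search of Theorem~\ref{thm:buchmannsands} is only guaranteed to terminate \emph{if} Leopoldt already holds for $K_{t_{0}}$ at $3$; absent a specific $t_{0}$ for which the computation has been carried out (or a citation), the argument is conditional. The paper closes the gap by quoting \cite[Theorem A.2]{MR4111943}, which yields $\Leo(F_{2},3)$ and hence $\Leo(K_{2},3)$ by Lemma~\ref{lem:defectgrows}; since $\mathrm{disc}(f_{2})=229$ is prime, Corollary~\ref{cor:cubicfamily} then applies with $t_{0}=2$ and $p=3$. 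With that single input supplied, your proof is complete and is essentially the paper's.
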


\begin{proof}
For every integer $t \geq 2$, let $K_{t}$ and $F_{t}$ be as in Theorem~\ref{thm:S3-family-I}.
Let $k = \prod_{ p \in \mathcal{P} \setminus \{ 3 \} } p$, where the product over the empty set is defined to be $1$.
 
First suppose that $3 \notin \mathcal{P}$ and let $I = \{ t \in \Z_{\geq 2} : t \equiv 0 \pmod{k} \}$.
Then for every $t \in I$ we have that $\Leo(F_{t},p)$ holds for every $p \in \mathcal{P}$ by 
Theorem~\ref{thm:S3-family-I}.
Moreover, $\{ K_{t} \}_{t \in I}$ and thus $\{ F_{t} \}_{t \in I}$ 
are infinite families of number fields by Corollary~\ref{cor:infinitepol}.

Now suppose that $3 \in\mathcal{P}$. 
Then $\Leo(F_{2},3)$ holds by \cite[Theorem~A.2]{MR4111943}, for instance.
Hence $\Leo(K_{2},3)$ also holds by Lemma~\ref{lem:defectgrows}.
By Corollary~\ref{cor:cubicfamily} there exists an integer $m \geq 2$ such that $\Leo(F_{t},3)$ holds for every 
$t \geq 2$ such that $t \equiv 2 \pmod{3^{m}}$. 
Since $3 \nmid k$, by the Chinese remainder theorem there exists a positive integer $s$ such that
$s \equiv 2 \pmod{3^{m}}$ and $s \equiv 0 \pmod{k}$. 
Let $J = \{ t \in \Z_{\geq 2} : t \equiv s \pmod{k3^{m}} \}$.
Then for every $t \in J$ we have that $\Leo(F_{t},p)$ holds for every $p \in \mathcal{P} \setminus \{ 3 \}$ by Theorem~\ref{thm:S3-family-I} and for $p=3$ by the aforementioned result.
Moreover, $\{ K_{t} \}_{t \in J}$ and thus $\mathcal{F} := \{ F_{t} \}_{t \in J}$
are infinite families of number fields by Corollary~\ref{cor:infinitepol}.
\end{proof}

\begin{remark}
In the proof of Theorem~\ref{thm:finalfamily}, it was not strictly necessary to distinguish between the cases
$3 \notin \mathcal{P}$ and $3 \in \mathcal{P}$, but we did so because the former case is somewhat easier.
Moreover, in the case $3 \in \mathcal{P}$, we applied Corollary~\ref{cor:cubicfamily} with $t_{0}=2$, 
but it is clear that many other choices of $t_{0}$ are possible; 
indeed, \cite[Theorem A.2]{MR4111943} gives a large family
of non-Galois totally real cubic fields which satisfy Leopoldt's conjecture at $p=3$.
\end{remark}

We now consider certain totally real $D_{8}$-extensions of $\Q$, where
$D_{8}$ is the dihedral group of order $8$. We shall need the following result due to Nakamula.

\begin{prop}{\cite{MR1378570}}\label{prop:Nakamula} 
Let $t \geq 7$ be an integer and let $f_{t}(x)=x^{4}-tx^{3}-x^{2}+tx+1$. 
Let $\lambda_{t}$ be some choice of root of $f_{t}(x)$, let $K_{t}=\Q(\lambda_{t})$ and let
$F_{t}$ be the Galois closure of $K_{t}$ over $\Q$. Then
\begin{enumerate}
\item $\mathrm{disc}(f_{t})= (t^{2}-4)^{2}(4t^{2}+9)$,
\item $f_{t}$ is irreducible over $\Q$ and has four distinct real roots,
\item $\{ \lambda_{t}-1,\lambda_{t},\lambda_{t}+1 \}$ is a maximal system of independent units in 
$\Z[\lambda_{t}] \subseteq \mathcal{O}_{K_{t}}$, and
\item $F_{t}$ is a totally real $D_{8}$-extension of $\Q$.
\end{enumerate}
\end{prop}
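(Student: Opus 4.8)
The plan is to follow Nakamula \cite{MR1378570}; the organising device is the substitution $y = x - x^{-1}$. Dividing $f_t(x)$ by $x^2$ gives $x^{-2}f_t(x) = (x^2 + x^{-2}) - t(x - x^{-1}) - 1 = y^2 - ty + 1$, so a root $\omega$ of $f_t$ yields a root $\omega - \omega^{-1}$ of $h(y) := y^2 - ty + 1$. Write $y_1, y_2$ for the roots of $h$, so that $y_1 + y_2 = t$, $y_1 y_2 = 1$ and $(y_1 - y_2)^2 = t^2 - 4$; over $\Q(\sqrt{t^2-4})$ one gets the factorisation $f_t(x) = g_1(x) g_2(x)$ with $g_i(x) = x^2 - y_i x - 1$. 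Since $t \geq 7$, the integer $t^2 - 4$ is positive and not a square (the equation $(t-m)(t+m)=4$ has no solution with $t \geq 3$), so $\Q(\sqrt{t^2-4})$ is a real quadratic field, each $y_i$ is real, and each $g_i$ has positive discriminant $y_i^2 + 4$. Hence $f_t$ has four real roots, distinct because $\mathrm{disc}(f_t) \neq 0$; in particular $K_t$ is totally real. For (i) I would use multiplicativity of the discriminant under $f_t = g_1 g_2$, namely $\mathrm{disc}(f_t) = \mathrm{disc}(g_1)\,\mathrm{disc}(g_2)\,\Res(g_1,g_2)^2$ with $\mathrm{disc}(g_i) = y_i^2 + 4$, $\Res(g_1,g_2) = (y_1 - y_2)^2 = t^2 - 4$, and $(y_1^2+4)(y_2^2+4) = (y_1y_2)^2 + 4(y_1^2+y_2^2) + 16 = 1 + 4(t^2-2) + 16 = 4t^2+9$; this gives $\mathrm{disc}(f_t) = (t^2-4)^2(4t^2+9)$.

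For the remaining assertion in (ii) and for (iv), the key is to compute the splitting field. The roots of $g_i$ are $\tfrac12(y_i \pm \sqrt{y_i^2+4})$, so $\lambda_t$ may be taken to be $\tfrac12(y_1 + \sqrt{y_1^2+4})$; then $y_1 = \lambda_t - \lambda_t^{-1}$, $\sqrt{t^2-4} = 2y_1 - t$ and $\sqrt{y_1^2+4} = 2\lambda_t - y_1$ all lie in $K_t$, so $K_t = \Q(\sqrt{t^2-4}, \sqrt{y_1^2+4})$. A direct computation shows that $y_1^2 + 4 = \tfrac12(t^2 + 6 + t\sqrt{t^2-4})$ is a square in $\Q(\sqrt{t^2-4})$ only if $4t^2 + 9$ is a perfect square in $\Q$, which fails for $t \geq 3$ (the equation $(m-2t)(m+2t)=9$ forces $t \in \{0,\pm 2\}$); hence $[K_t : \Q] = 4$ and $f_t$ is irreducible, completing (ii). The splitting field is then $F_t = \Q(\sqrt{t^2-4}, \sqrt{y_1^2+4}, \sqrt{y_2^2+4}) = \Q(\sqrt{t^2-4}, \sqrt{4t^2+9}, \sqrt{y_1^2+4})$, using $(y_1^2+4)(y_2^2+4) = 4t^2+9$. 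For $t \geq 7$ one checks $(2t^2-2)^2 < (t^2-4)(4t^2+9) < (2t^2-1)^2$, so $(t^2-4)(4t^2+9)$ is not a square, and together with $4t^2+9$ not being a square this shows $\Q(\sqrt{t^2-4}, \sqrt{4t^2+9})$ has degree $4$ over $\Q$. Were $\sqrt{y_1^2+4}$ already in this biquadratic field, $K_t$ would equal it and so be a $V_4$-extension of $\Q$, making $\mathrm{disc}(f_t)$ a square, contrary to (i); hence $[F_t : \Q] = 8$. Now $\mathrm{Gal}(F_t/\Q)$ has order $8$, contains the Klein four group fixing $\Q(\sqrt{t^2-4})$, and is non-abelian because $K_t$ is a non-normal quartic subfield; since every subgroup of the quaternion group $Q_8$ is normal, this rules out $Q_8$, leaving $\mathrm{Gal}(F_t/\Q) \cong D_8$. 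Finally $F_t \subseteq \R$ as $f_t$ has only real roots, so $F_t$ is a totally real $D_8$-extension of $\Q$; this is (iv).

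For (iii), the identities $f_t(0) = f_t(1) = f_t(-1) = 1$ give $N_{K_t/\Q}(\lambda_t) = N_{K_t/\Q}(\lambda_t - 1) = N_{K_t/\Q}(\lambda_t + 1) = 1$, so $\lambda_t - 1$, $\lambda_t$, $\lambda_t + 1$ are units of $\Z[\lambda_t]$. As $K_t$ is totally real of degree $4$, its unit rank is $3$, so it remains only to check that these three units are multiplicatively independent, i.e.\ that the $3 \times 3$ matrix with entries $\log|\sigma(\lambda_t + c)|$ for $c \in \{-1,0,1\}$ and $\sigma$ running over three of the four real embeddings of $K_t$ is non-singular. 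Here I would use the approximate root locations coming from the analysis above — for large $t$ the four roots are close to $t$, $1$, $-1$ and $0$ — to see that the relevant regulator determinant is a nonzero expression in $\log t$ and bounded quantities, exactly as in the proof of Theorem~\ref{thm:S3-family-I}; Nakamula carries out this estimate uniformly for $t \geq 7$. This gives (iii).

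I expect the main obstacle to be the determination of $\mathrm{Gal}(F_t/\Q)$ in (iv): one must exclude $C_4$ (equivalently, show $[F_t : \Q] = 8$ rather than $4$) and $Q_8$, which reduces to the uniform non-squareness, for all $t \geq 7$, of $4t^2 + 9$ and $(t^2 - 4)(4t^2 + 9)$ — the point being the tight inequality $(2t^2-2)^2 < (t^2-4)(4t^2+9) < (2t^2-1)^2$. A secondary technical point is making the regulator estimate in (iii) rigorous for every $t \geq 7$ rather than only asymptotically.
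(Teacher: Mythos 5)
Your proposal is correct in substance, but it is worth pointing out that the paper does not actually prove this proposition at all: its ``proof'' is the single sentence that the claims follow from Nakamula \cite[Lemma 2 and Proposition 6]{MR1378570}. You have instead reconstructed the argument, and the reconstruction of (i), (ii) and (iv) is complete and sound: the substitution $y=x-x^{-1}$, the factorisation $f_t=g_1g_2$ over $\Q(\sqrt{t^2-4})$, the discriminant computation via $\mathrm{disc}(g_1)\mathrm{disc}(g_2)\Res(g_1,g_2)^2$ (note $\Res(g_1,g_2)=-(t^2-4)$ rather than $+(t^2-4)$, since the product of the roots of $g_1$ is $-1$, but this is harmless after squaring), the identification $K_t=\Q(\sqrt{t^2-4},\sqrt{y_1^2+4})$ with norm argument via $N(y_1^2+4)=4t^2+9$, and the determination of the degree-$8$ splitting field all check out; in particular the inequality $(2t^2-2)^2<(t^2-4)(4t^2+9)$ is exactly what forces the hypothesis $t\ge 7$, which is a genuine insight the paper's citation hides. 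The one place where your argument is not self-contained is the multiplicative independence of $\lambda_t-1,\lambda_t,\lambda_t+1$ in (iii): you verify they are units via $f_t(0)=f_t(\pm 1)=1$, but the nonvanishing of the $3\times 3$ regulator determinant is only sketched via approximate root locations and then attributed to Nakamula. Since the paper itself outsources the entire proposition to the same source, this is an acceptable resting point, but if you want a fully self-contained proof you would need to carry out that estimate uniformly for all $t\ge 7$ (the roots are approximately $t$, $-1/t$, $1$ and $-1$, so the determinant is dominated by a $\log t$ term against bounded entries, and the argument is of the same character as the $2\times 2$ determinant computation in the proof of Theorem \ref{thm:S3-family-I}).
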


\begin{proof}
The claims follow from \cite[Lemma 2 and Proposition 6]{MR1378570}.
\end{proof}

\begin{theorem}\label{thm:total-real-D8}
For every integer $t \geq 7$, let $K_{t}$ and $F_{t}$ be as in Proposition~\ref{prop:Nakamula}.
Let $p$ be a prime number and let $t_{0} \geq 7$ be an integer such that 
$p^{2} \nmid (t_{0}^{2}-4)^{2}(4t_{0}^{2}+9)$. 
Suppose that $\Leo(K_{t_0},p)$ holds.
Then there exists an integer $m \geq 2$ such that $\Leo(K_{t'},p)$ and $\Leo(F_{t'},p)$ both hold for every
$t' \in I_{p,m} := \{ t' \in \Z_{\geq 7} : t'\equiv t_{0} \pmod{p^{m}} \}$. 
Moreover, both $\{K_{t'}\}_{t' \in I_{p,m}}$ and $\{F_{t'}\}_{t' \in I_{p,m}}$ are infinite families of number fields.
\end{theorem}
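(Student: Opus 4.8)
\emph{Proof proposal.} The plan is to mirror the proof of Corollary \ref{cor:cubicfamily}, replacing the cubic family of Theorem \ref{thm:S3-family-I} by the quartic family of Proposition \ref{prop:Nakamula} and replacing the use of Corollary \ref{cor:S3cubicleopoldt} by that of Corollary \ref{cor:D8-quartic-leopoldt}. First I would set up the bivariate polynomial $f(y,z) = z^{4} - y z^{3} - z^{2} + y z + 1 \in \Z[y,z]$, which is monic of degree $n = 4$ when regarded as a polynomial $f_{y}(z)$ in $z$, together with $s_{1}(y,z) = z-1$, $s_{2}(y,z) = z$, $s_{3}(y,z) = z+1$, so that $r = 3$ and $2 \leq r < n$. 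By Proposition \ref{prop:Nakamula}, for each integer $t \geq 7$ the specialisation $f_{t}(x)$ is irreducible over $\Q$, the field $K_{t} = \Q(\lambda_{t})$ is totally real quartic and hence has unit rank $r = 3$, and $\{ s_{1}(t,\lambda_{t}), s_{2}(t,\lambda_{t}), s_{3}(t,\lambda_{t}) \} = \{ \lambda_{t}-1, \lambda_{t}, \lambda_{t}+1 \}$ is a maximal system of independent units in $\Z[\lambda_{t}] \subseteq \mathcal{O}_{K_{t}}$. Thus the hypotheses of Corollary \ref{cor:familyapprox} are satisfied with $I = \Z_{\geq 7}$. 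Since by hypothesis $p^{2} \nmid \mathrm{disc}(f_{t_{0}}) = (t_{0}^{2}-4)^{2}(4t_{0}^{2}+9)$ and $\Leo(K_{t_{0}},p)$ holds, Corollary \ref{cor:familyapprox} produces an integer $m \geq 2$ such that $\Leo(K_{t'},p)$ holds for every $t' \in I_{p,m}$.

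Next I would pass to the Galois closures. By Proposition \ref{prop:Nakamula}, for each $t' \in I_{p,m}$ the field $F_{t'}$ is a $D_{8}$-extension of $\Q$ having the quartic field $K_{t'}$ as a subfield; since $[F_{t'}:\Q] = 8 > 4$, this subfield is necessarily non-Galois over $\Q$, so Corollary \ref{cor:D8-quartic-leopoldt} applies and gives that $\Leo(F_{t'},p)$ holds whenever $\Leo(K_{t'},p)$ does. This establishes the Leopoldt assertions. Finally, for the infinitude I would invoke Corollary \ref{cor:infinitepol} with the same $f(y,z)$: here $\mathrm{disc}(f_{y}) = (y^{2}-4)^{2}(4y^{2}+9)$ is a non-constant polynomial in $\Z[y]$ that is not a square in $\C[y]$, because the factor $4y^{2}+9$ has the two distinct roots $\pm 3i/2$ and so is square-free, while $(y^{2}-4)^{2}$ contributes only even multiplicities. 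The set $I_{p,m} = \{ p^{m}x + t_{0} \}_{x \in \Z} \cap \Z_{\geq 7}$ is an arithmetic progression in $\Z_{\geq 7}$, and the polynomials $f_{t}$ with $t \in I_{p,m}$ are all irreducible by Proposition \ref{prop:Nakamula}, so Corollary \ref{cor:infinitepol} shows that $\{ K_{t'} \}_{t' \in I_{p,m}}$ is an infinite family of number fields; since each $F_{t'}$ is the Galois closure of $K_{t'}$ and thus contains $K_{t'}$, having infinitely many distinct $K_{t'}$ forces infinitely many distinct $F_{t'}$ as well.

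As for the main obstacle: there is essentially none, since all the substantive work is already packaged in Proposition \ref{prop:Nakamula} (Nakamula's explicit units and discriminant formula), Corollary \ref{cor:familyapprox} (the Buchmann--Sands congruence-propagation mechanism), and Corollary \ref{cor:D8-quartic-leopoldt} (descent along the $D_{8}$-extension). The only points that require a little care are the verification that $(y^{2}-4)^{2}(4y^{2}+9)$ is not a perfect square in $\C[y]$, and the observation that $K_{t'}$ is non-Galois over $\Q$ so that Corollary \ref{cor:D8-quartic-leopoldt} is indeed applicable.
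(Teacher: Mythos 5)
Your proposal is correct and follows essentially the same route as the paper's own proof: Proposition \ref{prop:Nakamula} feeds the hypotheses of Corollary \ref{cor:familyapprox} to propagate $\Leo(K_{t'},p)$ along the congruence class, Corollary \ref{cor:D8-quartic-leopoldt} lifts this to $\Leo(F_{t'},p)$, and Corollary \ref{cor:infinitepol} gives infinitude. The extra details you supply (checking $2 \leq r < n$, that $K_{t'}$ is non-Galois, and that $(y^{2}-4)^{2}(4y^{2}+9)$ is not a square in $\C[y]$) are all correct verifications that the paper leaves implicit.
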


\begin{remark}
It is easy to show that for every $t_{0} \in \Z$, we have 
$3^{2} \mid (t_{0}^{2}-4)^{2}(4t_{0}^{2}+9)$.
Thus Theorem~\ref{thm:total-real-D8} cannot be used to prove $\Leo(K_{t},3)$ or $\Leo(F_{t},3)$ for any $t \in \Z_{\geq 7}$.
\end{remark}

\begin{proof}[Proof of Theorem~\ref{thm:total-real-D8}]
Since $\Leo(K_{t_0},p)$ holds by assumption, by 
Proposition~\ref{prop:Nakamula} and Corollary~\ref{cor:familyapprox}
there exists an integer $m \geq 2$ such that $\Leo(K_{t'},p)$ holds for every $t' \in I_{p,m}$,
and by Corollary~\ref{cor:D8-quartic-leopoldt}, this implies that 
$\Leo(F_{t'},p)$ holds for every $t' \in I_{p,m}$. 
Moreover, by Corollary~\ref{cor:infinitepol}, $\{K_{t'}\}_{t' \in I_{p,m}}$ is an infinite family of number fields, 
and thus the same is true of $\{F_{t'}\}_{t' \in I_{p,m}}$.
\end{proof}

\begin{corollary}\label{cor:total-real-D8}
There exists an infinite family $\mathcal{F}$ of totally real $D_{8}$-extensions of $\Q$
such that $\Leo(F,p)$ holds for every $F \in \mathcal{F}$ and every prime number $p \leq 10^{6}$ with $p \neq 3$.
\end{corollary}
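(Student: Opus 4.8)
The plan is to combine a finite (but large) computation with the Chinese remainder theorem and the infinitude result of Corollary~\ref{cor:infinitepol}. Let $\mathcal{P}$ denote the finite set of prime numbers $p \leq 10^{6}$ with $p \neq 3$. The first step is to exhibit, for each $p \in \mathcal{P}$, an integer $t_{0}(p) \geq 7$ such that $p^{2} \nmid \mathrm{disc}(f_{t_{0}(p)}) = (t_{0}(p)^{2}-4)^{2}(4t_{0}(p)^{2}+9)$ and such that $\Leo(K_{t_{0}(p)},p)$ holds. A single generic value works for all but finitely many $p$: for instance $t_{0}=7$ gives $\mathrm{disc}(f_{7}) = 3^{4}\cdot 5^{3}\cdot 41$, so $p^{2}\nmid\mathrm{disc}(f_{7})$ for every $p \in \mathcal{P}\setminus\{5\}$, and for the single remaining prime $p=5$ one may instead take $t_{0}=9$, for which $\mathrm{disc}(f_{9}) = 3^{2}\cdot 7^{2}\cdot 11^{2}\cdot 37$ is not divisible by $25$. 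For each $p \in \mathcal{P}$ the assertion $\Leo(K_{t_{0}(p)},p)$ is verified directly by the Buchmann--Sands criterion of Theorem~\ref{thm:buchmannsands}: with $q=p$ for $p$ odd and $q=4$ for $p=2$, one takes $D$ to be the finite-index subgroup of $\mathcal{O}_{K_{t_{0}(p)}}^{\times}$ generated by suitable powers of the explicit independent units $\lambda_{t_{0}(p)}-1,\lambda_{t_{0}(p)},\lambda_{t_{0}(p)}+1$ from Proposition~\ref{prop:Nakamula}(iii), chosen to lie in $1+q\mathcal{O}_{K_{t_{0}(p)}}$, and checks by a finite linear-algebra computation over $\F_{p}$ that the image of $\phi_{m-1}$ has dimension $r=3=\rank_{\Z}(\mathcal{O}_{K_{t_{0}(p)}}^{\times})$ for some $m\geq 2$; Theorem~\ref{thm:buchmannsands}(iv) then gives $\Leo(K_{t_{0}(p)},p)$.

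Having established $\Leo(K_{t_{0}(p)},p)$ for every $p \in \mathcal{P}$, I would apply Theorem~\ref{thm:total-real-D8} to obtain, for each $p \in \mathcal{P}$, an integer $m_{p}\geq 2$ such that $\Leo(K_{t'},p)$ and $\Leo(F_{t'},p)$ both hold for every $t' \in I_{p,m_{p}} = \{t' \in \Z_{\geq 7} : t' \equiv t_{0}(p)\ (\mathrm{mod}\ p^{m_{p}})\}$. Since the moduli $p^{m_{p}}$, $p \in \mathcal{P}$, are pairwise coprime, the Chinese remainder theorem yields an integer $s$ with $s \equiv t_{0}(p) \pmod{p^{m_{p}}}$ for every $p \in \mathcal{P}$; put $N = \prod_{p \in \mathcal{P}} p^{m_{p}}$ and $J = \{t \in \Z_{\geq 7} : t \equiv s \pmod{N}\}$. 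Then every $t \in J$ satisfies $t \equiv t_{0}(p) \pmod{p^{m_{p}}}$ for each $p \in \mathcal{P}$, hence $t \in I_{p,m_{p}}$ and $\Leo(F_{t},p)$ holds for all $p \in \mathcal{P}$; moreover $F_{t}$ is a totally real $D_{8}$-extension of $\Q$ by Proposition~\ref{prop:Nakamula}(ii) and (iv). (Alternatively, once $\Leo(K_{t},p)$ is known one could deduce $\Leo(F_{t},p)$ from Corollary~\ref{cor:D8-quartic-leopoldt} instead of reading it off Theorem~\ref{thm:total-real-D8}.)

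It then remains to see that $\mathcal{F} := \{F_{t}\}_{t \in J}$ is an infinite family of number fields. Writing $f(y,z) = z^{4} - yz^{3} - z^{2} + yz + 1$, Proposition~\ref{prop:Nakamula}(i) gives $\mathrm{disc}(f_{y}) = (y^{2}-4)^{2}(4y^{2}+9)$, which is a non-constant element of $\Z[y]$ and is not a square in $\C[y]$, since the factor $4y^{2}+9$ has two distinct simple roots. As $\{f_{t}(x)\}_{t \in J}$ is a family of irreducible polynomials by Proposition~\ref{prop:Nakamula}(ii) and $J$ is an arithmetic progression inside $\Z_{\geq 7}$, Corollary~\ref{cor:infinitepol} applies and shows that $\{K_{t}\}_{t \in J}$, and hence $\mathcal{F}$, contains infinitely many distinct fields. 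This completes the argument.

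The main obstacle is the first step, which is bookkeeping-heavy but entirely finite: one must confirm $\Leo(K_{t_{0}(p)},p)$ for all roughly $7.8 \times 10^{4}$ primes in $\mathcal{P}$ by an explicit computation, in particular producing for each $p$ an admissible exponent $m$ (equivalently, checking that the relevant $\F_{p}$-matrix formed from $\lambda_{t_{0}(p)}-1,\lambda_{t_{0}(p)},\lambda_{t_{0}(p)}+1$ attains rank $3$). Finally, the exclusion of $p=3$ is unavoidable: as noted in the remark preceding the statement, $9 \mid (t^{2}-4)^{2}(4t^{2}+9)$ for every $t \in \Z$, so the hypothesis $p^{2} \nmid \mathrm{disc}(f_{t_{0}})$ of Theorem~\ref{thm:total-real-D8} can never be satisfied at $p=3$ for any member of this family.
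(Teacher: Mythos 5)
Your proposal is correct and takes essentially the same route as the paper: the same base points $t_{0}=7$ (for $p\neq 5$) and $t_{0}=9$ (for $p=5$), chosen for exactly the same divisibility reasons, followed by Theorem~\ref{thm:total-real-D8}, the Chinese remainder theorem and Corollary~\ref{cor:infinitepol}. The only (minor) divergence is in the finite base-point computation: the paper verifies $\Leo(F_{7},p)$ and $\Leo(F_{9},5)$ using the algorithm of \cite[Appendix A.1]{MR4111943} and descends to $K_{7}$, $K_{9}$ via Lemma~\ref{lem:defectgrows}, whereas you verify $\Leo(K_{7},p)$ and $\Leo(K_{9},5)$ directly through the Buchmann--Sands criterion of Theorem~\ref{thm:buchmannsands} applied to Nakamula's explicit units -- both are legitimate finite checks, so this is a difference of implementation rather than of argument.
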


\begin{proof}
First note that $\mathrm{disc}(f_{7}) = 3^{4} \cdot 5^{3} \cdot 41$ 
and $\mathrm{disc}(f_{9}) = 3^{2}\cdot 7^{2} \cdot 11^{2} \cdot 37$.
Using the algorithm of  \cite[Appendix A.1]{MR4111943}, we have verified 
$\Leo(F_{9},5)$ and $\Leo(F_{7},p)$ for every $p \leq 10^{6}$. 
By Lemma~\ref{lem:defectgrows}, this implies $\Leo(K_{9},5)$ and $\Leo(K_{7},p)$ for every $p \leq 10^{6}$.
By Theorem~\ref{thm:total-real-D8}, there exists an integer $m_{5} \geq 2$ such that $\Leo(F_{t'},5)$ holds for
every $t' \in I_{5,m_{5}} := \{ t' \in \Z_{\geq 7} : t' \equiv 9 \pmod{5^{m_{5}}} \}$.
Similarly, for every $p \neq 3$ with $p \leq 10^{6}$, there exists an integer $m_{p} \geq 2$ such that 
$\Leo(F_{t'},p)$ holds for
every $t' \in I_{p,m_{p}} := \{ t' \in \Z_{\geq 7} : t' \equiv 7 \pmod{p^{m_{p}}} \}$.
Let $I = \bigcap_{p \leq 10^{6}, \, p \neq 3} I_{p,m_{p}}$. 
Then by the Chinese remainder theorem and Corollary~\ref{cor:infinitepol}, $\{ K_{t'} \}_{t' \in I}$ 
is an infinite family of number fields, and thus the same is true of $\{ F_{t'} \}_{t' \in I}$. 
Moreover, $\{ F_{t'} \}_{t' \in I}$ has the desired property by construction.
\end{proof}

\begin{remark}
It seems plausible that results analogous to Theorems~\ref{thm:S3-family-I} and 
\ref{thm:total-real-D8} and their corollaries could be proven for other families of number fields 
for which an appropriate explicit description of independent units is known; 
see \cite{MR246851,MR319941,MR546663}, for example.
\end{remark}

\bibliography{Leopoldt-reduction-bib-new.bib}{}
\bibliographystyle{amsalpha}

\end{document}